\documentclass[10pt,reqno]{amsart}

\textheight=180mm \textwidth=148mm \topmargin=25mm
\oddsidemargin=-0mm \evensidemargin=0mm

\makeatletter
\@namedef{subjclassname@2020}{%
	\textup{2020} Mathematics Subject Classification}
\makeatother

\usepackage{amsthm,amsmath,amssymb,amscd}
\allowdisplaybreaks
\usepackage{mathrsfs}
\usepackage[colorlinks,citecolor=magenta,linkcolor=blue]{hyperref}
\usepackage{amssymb}
\usepackage{esint}
\usepackage{graphics}
\newtheorem{theorem}{Theorem}[section]

\newtheorem{assumption}[theorem]{Assumption}
\newtheorem{lemma}[theorem]{Lemma}
\newtheorem{proposition}[theorem]{Proposition}
\newtheorem{definition}{Definition}[section]
\theoremstyle{remark}
\newtheorem{remark}[theorem]{Remark}
\newtheorem*{ack}{Acknowledgments}

\setlength{\textwidth}{6in} \setlength{\oddsidemargin}{.25in}
\setlength{\evensidemargin}{.25in} \setlength{\textheight}{8.5in}
\setlength{\topmargin}{-0.2 in} \setlength{\headheight}{2ex}
\setlength{\headsep}{4ex}

\numberwithin{equation}{section}
%Symbols

\begin{document}
\title[A Talenti-type comparison theorem for the $p$-Laplacian]{A Talenti-type comparison theorem for the $p$-Laplacian  on $\mathrm{RCD}(K,N)$ spaces and some applications}
\author[Wenjing Wu]{Wenjing Wu}%${}^*$}
\address{School of Mathematical Sciences, University of Science and Technology of China, Hefei 230026, P.R. China}
\email{\href{mailto:anprin@mail.ustc.edu.cn}{anprin@mail.ustc.edu.cn}}

\begin{abstract}
In this paper, we prove a Talenti-type comparison theorem for the $p$-Laplacian with Dirichlet boundary conditions on open subsets of a $\mathrm{RCD}(K,N)$ space with $K>0$ and $N\in (1,\infty)$.
The obtained Talenti-type comparison theorem is sharp, rigid and stable with respect to measured Gromov-Hausdorff topology. As an application of such Talenti-type comparison, we establish a sharp and rigid reverse H\"{o}lder inequality for first eigenfunctions of the $p$-Laplacian and a related quantitative stability result.
\end{abstract}

\date{\today}
%\thanks{}
\keywords{Talenti comparison, Reverse H\"{o}lder inequality, $p$-Laplacian, $\mathrm{RCD}(K,N)$ spaces}
\subjclass[2020]{53C23, 35J92}

\maketitle
\tableofcontents

\section{Introduction}
The technique of symmetrization is in general an important tool in geometry and analysis and have proven to be crucial in the study of geometric and functional inequalities, two examples being the celebrated isoperimetric inequality \cite{0} and Rayleigh-Faber-Krahn inequality \cite{33,34,35}. Recently this technique also found application in the theory of metric geometry with particular interest in the regularity of singular spaces, see \cite{18,11,0.3,0.1,0.2}. 

In 1976, Talenti \cite{36} gave a pointwise comparison between the symmetrization of the solution of the Poisson problem with Dirichlet boundary conditions on an open bounded set $\Omega$ in $\mathbb{R}^n$ and the solution of a symmetrization problem on a Euclidean ball $\Omega^\sharp$  with $\mathscr{L}^n(\Omega)=\mathscr{L}^n(\Omega^\sharp)$.
Talenti’s comparison results
were generalized to semilinear and nonlinear elliptic equations, for instance, in
\cite{39,37,38,11}. Our interest will be in extending Talenti's comparison results to the $p$-Laplacian with Dirichlet boundary conditions on open subsets of a $\mathrm{RCD}(K,N)$ space with $K>0$ and $N\in (1,\infty)$.

In order to state the main theorem, let us introduce some notations about the  one-dimensional
model space and the Schwarz symmetrization.

Let $(\mathrm{X}, \mathrm{d}, \mathfrak{m})$ be a $\operatorname{RCD}(K, N)$ space with $K>0$, $N \in(1, \infty)$, and $\Omega \subset \mathrm{X}$ an open set with $v:=\mathfrak{m}(\Omega)\in(0,1)$. Let $p,q\in (1,\infty)$  be conjugate exponents.

For any $K>0$ and $N \in(1, \infty)$, we define the one dimensional model space $(J_{K, N}, \mathrm{d}_{eu}, \mathfrak{m}_{K, N})$ for the curvature dimension condition of parameters $K$ and $N$ by 
\begin{equation*}
J_{K, N} =[0, \pi \sqrt{(N-1)/K}],\quad \mathfrak{m}_{K, N}=h_{K,N}\mathscr{L}^1\llcorner J_{K, N}
\end{equation*}
where $\mathrm{d}_{eu}$ is the restriction to $J_{K, N}$ of the Euclidean distance over the real line, $\mathscr{L}^1$ is the standard $1$-dimensional Lebesgue measure and
$$
h_{K, N}(t) = \frac{1}{c_{K, N}} \sin ^{N-1}\bigg(t \sqrt{\frac{K}{N-1}}\bigg),t\in J_{K,N},\quad c_{K, N} = \int_{J_{K, N}} \sin ^{N-1}\bigg(t \sqrt{\frac{K}{N-1}}\bigg) \mathrm{d}t.
$$
Notice that $(J_{K, N}, \mathrm{d}_{eu}, \mathfrak{m}_{K, N})$ is a $\mathrm{RCD}(K,N)$ space.

We now introduce the Schwarz symmetrization. To this aim, given an $\mathfrak{m}$-measurable function $u: \Omega \rightarrow\mathbb{R}$ we define its distribution function $\mu:=\mu_u:[0,+\infty) \rightarrow [0, \mathfrak{m}(\Omega)]$ by
$$
\mu(t):=\mathfrak{m}(\{|u|>t\}).
$$
We will let $u^{\sharp}$ be the generalized inverse of $\mu$, defined in the following way:
$$
u^{\sharp}(s):= \begin{cases}\operatorname{ess} \sup |u| & \text { if } s=0 \\ \inf \{t: \mu(t)<s\} & \text { if } s>0.\end{cases}
$$
Then we define the Schwarz symmetrization as follows: first, we consider $r\in J_{K,N}$ such that $\mathfrak{m}(\Omega)=\mathfrak{m}_{K,N}([0,r])$, then we define the Schwarz symmetrization $u^\star=u_{K,N}^\star: [0,r]\rightarrow [0,\infty]$ as
$$
u^\star(x):=u^{\sharp}(\mathfrak{m}_{K,N}([0, x])), \quad \forall x \in[0, r].
$$

Let $f \in L^q(\Omega)$, we consider the Dirichlet problem
\begin{equation}\label{eq}
\begin{cases}
-\mathscr{L}_p u=f &\text { in } \Omega \\ 
u=0  &\text { on } \partial \Omega.
\end{cases}
\end{equation}
A function $u \in W^{1,p}(\Omega)$ is a weak solution to $(\ref{eq})$ if $u \in W_0^{1,p}(\Omega)$ and 
$$
\int_{\Omega} D\psi(\nabla u)|\nabla u|^{p-2} \mathrm{d} \mathfrak{m} =\int_{\Omega} f \psi\mathrm{d} \mathfrak{m} \quad \forall \psi \in W_0^{1,p}(\Omega).
$$
We will establish a comparison between the Schwarz symmetrization of the solution to $(\ref{eq})$ and  the solution to the problem
\begin{equation}\label{eq00}
\begin{cases}
-\mathscr{L}_p w =f^{\star}  \text { in } [0,r_v), \\ w(r_v)=0.
\end{cases}
\end{equation}
where $r_v\in J_{K,N}$ is such that $\mathfrak{m}_{K, N}([0, r_v))=\mathfrak{m}(\Omega)=v$, and $f^{\star}$ is the Schwarz symmetrization of $f$. More precisely, we will prove the following result: 
\begin{theorem}[A Talenti-type comparison theorem]\label{thm}
Let $p,q\in (1,\infty)$  be conjugate exponents. Let $(\mathrm{X}, \mathrm{d}, \mathfrak{m})$ be a $\operatorname{RCD}(K, N)$ space with $K>0$, $N \in(1, \infty)$, and $\Omega \subset \mathrm{X}$ an open set with $\mathfrak{m}(\Omega)=v \in(0,1)$. 

Let $f \in  L^q(\Omega)$. Assume that $u \in W_0^{1,p}(\Omega)$ is a weak solution to $(\ref{eq})$. Let also $w \in W^{1,p}([0,r_v),  \mathfrak{m}_{K, N})$ be a weak solution to $(\ref{eq00})$. Then\\
$(i)$ $u^{\star}(x) \leq w(x)$, for every $x \in [0, r_v]$. In particular, $u\in L^\infty(\Omega)$. In addition, this inequality is sharp.\\
$(ii)$ For any $1 \leq r \leq p$, the following $L^r$-gradient estimate holds:
\begin{equation}\label{40}
\int_{\Omega}|\nabla u|^r \mathrm{d} \mathfrak{m} \leq \int_0^{r_v}|\nabla w|^r \mathrm{d} \mathfrak{m}_{K, N}.
\end{equation}
\end{theorem}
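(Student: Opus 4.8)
The plan is to follow the classical Talenti scheme adapted to the RCD setting, the central analytic tool being the coarea formula together with the Polya--Szeg\H{o}-type rearrangement inequality available on $\mathrm{RCD}(K,N)$ spaces with $K>0$ (the sharp L\'evy--Gromov isoperimetric inequality, which guarantees that the isoperimetric profile of $(\mathrm{X},\mathrm{d},\mathfrak{m})$ is bounded below by that of the model $(J_{K,N},\mathrm{d}_{eu},\mathfrak{m}_{K,N})$). First I would fix $t>0$ and test the weak formulation of $(\ref{eq})$ with the truncation $\psi=(|u|-t)^+\,\mathrm{sgn}(u)$, which is admissible in $W_0^{1,p}(\Omega)$; this yields
\begin{equation*}
\int_{\{|u|>t\}}|\nabla u|^p\,\mathrm{d}\mathfrak{m}=\int_{\{|u|>t\}} f\,\mathrm{sgn}(u)\,(|u|-t)\,\mathrm{d}\mathfrak{m}\le \int_0^{\mu(t)} f^\sharp(s)\,\bigl(u^\sharp(s)-t\bigr)\,\mathrm{d}s,
\end{equation*}
where in the last step one uses the Hardy--Littlewood inequality and the layer-cake representation. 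Differentiating in $t$, the right-hand side is controlled by $\int_0^{\mu(t)} f^\sharp(s)\,\mathrm{d}s$.

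The next step is to bound the left-hand side from below. By the coarea formula, $\frac{\mathrm{d}}{\mathrm{d}t}\int_{\{|u|>t\}}|\nabla u|^p\,\mathrm{d}\mathfrak{m}=-\int_{\{|u|=t\}}|\nabla u|^{p-1}\,\mathrm{d}\mathrm{Per}$ for a.e.\ $t$, while $-\mu'(t)=\int_{\{|u|=t\}}|\nabla u|^{-1}\,\mathrm{d}\mathrm{Per}$. Applying H\"older's inequality on the level set $\{|u|=t\}$ with exponents chosen so that $\mathrm{Per}(\{|u|>t\})$ appears with the right power, one gets
\begin{equation*}
\mathrm{Per}(\{|u|>t\})^{p}\le \Bigl(-\tfrac{\mathrm{d}}{\mathrm{d}t}\!\int_{\{|u|>t\}}\!|\nabla u|^p\,\mathrm{d}\mathfrak{m}\Bigr)\,(-\mu'(t))^{p-1}.
\end{equation*}
Combining this with the differential inequality from the testing step and then invoking L\'evy--Gromov, $\mathrm{Per}(\{|u|>t\})\ge \mathcal{I}_{K,N}(\mu(t))$ where $\mathcal{I}_{K,N}$ is the model isoperimetric profile realized by sublevel sets of $J_{K,N}$, one arrives at a pointwise differential inequality for $u^\sharp$ after the change of variables $s=\mu(t)$, $t=u^\sharp(s)$: writing $\eta(s):=u^\sharp(s)$ one obtains, for a.e.\ $s\in(0,v)$,
\begin{equation*}
-\eta'(s)\le \frac{1}{h_{K,N}(\phi^{-1}(s))^{p'}}\Bigl(\int_0^s f^\sharp(\sigma)\,\mathrm{d}\sigma\Bigr)^{p'-1},
\end{equation*}
where $\phi(x)=\mathfrak{m}_{K,N}([0,x])$, which upon integrating and translating back through $u^\star(x)=\eta(\phi(x))$ is precisely the explicit formula defining the solution $w$ of the one-dimensional problem $(\ref{eq00})$. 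Comparing the two integral expressions gives $u^\star(x)\le w(x)$ for all $x\in[0,r_v]$, proving $(i)$; that $u\in L^\infty$ follows since $w$ is bounded on the compact interval, and sharpness is witnessed by taking $\mathrm{X}=J_{K,N}$ itself with $\Omega=[0,r_v)$, where equality holds throughout.

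For part $(ii)$, the $L^r$-gradient estimate, I would again use coarea: $\int_\Omega|\nabla u|^r\,\mathrm{d}\mathfrak{m}=\int_0^\infty\!\bigl(\int_{\{|u|=t\}}|\nabla u|^{r-1}\,\mathrm{d}\mathrm{Per}\bigr)\mathrm{d}t$, estimate the inner integral by H\"older against $-\mu'(t)$ and $\mathrm{Per}(\{|u|>t\})$ (this is where $1\le r\le p$ is needed so the exponents are admissible), then feed in the differential inequalities already established for $-\mu'$ and $\mathrm{Per}$, and finally recognize the resulting one-dimensional integral as $\int_0^{r_v}|\nabla w|^r\,\mathrm{d}\mathfrak{m}_{K,N}$, using that $w$ is monotone so its relaxed gradient is just $|w'|$ and that the formula for $w'$ matches the extremal case of all the inequalities used. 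The main obstacle I anticipate is making the differentiation-in-$t$ arguments rigorous in the metric-measure setting: one must justify that $t\mapsto\int_{\{|u|>t\}}|\nabla u|^p\,\mathrm{d}\mathfrak{m}$ and $\mu$ are absolutely continuous (or at least handle their singular parts), that the coarea formula applies to the Sobolev function $u$ with its minimal relaxed gradient, and that the level sets $\{|u|=t\}$ have finite perimeter for a.e.\ $t$ with the perimeter measure compatible with the coarea decomposition — all of which rely on the fine structure theory of $\mathrm{RCD}$ spaces and the $\mathrm{BV}$/coarea machinery developed by Miranda, Ambrosio, and collaborators; the L\'evy--Gromov inequality on $\mathrm{RCD}(K,N)$ spaces (Cavalletti--Mondino) supplies the geometric input and is the reason the model space enters with exactly the weight $h_{K,N}$.
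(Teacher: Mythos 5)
Your proposal follows essentially the same route as the paper's proof: test the weak formulation with the truncation $(|u|-t)^+\mathrm{sgn}(u)$, apply L\'evy--Gromov to replace $\mathrm{Per}(\{|u|>t\})$ by $\mathcal{I}_{K,N}(\mu(t))$, extract a first-order differential inequality relating $\mu$ and $\int_0^{\mu(t)}f^\sharp$, integrate and change variables, and finally recognize the explicit formula for $w$ coming from Proposition~\ref{prop1}; part~(ii) is the same bookkeeping with the Hölder exponents $\frac{r}{p}$, $\frac{p-r}{p}$, followed by multiplying by the differential inequality from part~(i).

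One point is worth flagging because it affects the rigor rather than the idea. Your intermediate Hölder on the level set $\{|u|=t\}$ uses the representation $-\mu'(t)=\int_{\{|u|=t\}}|\nabla u|^{-1}\,\mathrm{d}\mathrm{Per}$. This is not available in the present framework: the coarea formula~(\ref{coarea}) only produces integrals of the form $\int g\,|\nabla u|\,\mathrm{d}\mathfrak{m}$, and extracting $\mu(t)$ that way requires cancelling $|\nabla u|$ against $|\nabla u|^{-1}$, which breaks down on $\{|\nabla u|=0\}$. The paper avoids this entirely in Lemma~\ref{4}: it applies Hölder at the level of difference quotients, bounding $\frac1h\int_{\{t<|u|\le t+h\}}|\nabla u|\,\mathrm{d}\mathfrak{m}$ by $\bigl(\frac1h\int_{\{t<|u|\le t+h\}}|\nabla u|^p\,\mathrm{d}\mathfrak{m}\bigr)^{1/p}\bigl(\frac{\mathfrak{m}(\{t<|u|\le t+h\})}{h}\bigr)^{1/q}$, where the second factor converges to $(-\mu'(t))^{1/q}$ for a.e.\ $t$ by monotonicity of $\mu$, and the first is controlled by the testing identity; no negative power of $|\nabla u|$ ever appears. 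A second, smaller refinement: rather than inverting $\mu$ to obtain a pointwise inequality for $(u^\sharp)'$ — which presupposes $\mu$ injective — the paper integrates the differential inequality over $[\tau',\tau]$ and performs the substitution $\xi=\mu(t)$ only on intervals where $\mu$ is continuous, thereby handling jumps and plateaus of $\mu$. You anticipated both issues in your closing paragraph; these two difference-quotient devices are precisely how the paper resolves them, and with them in place your outline becomes a complete proof.
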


We are also interested in the rigidity and almost rigidity  associated to Theorem $\ref{thm}$. We will prove that \\
$\bullet$ A rigidity result (Theorem $\ref{thm5.6}$) roughly stating that if equality in the Talenti-type comparison Theorem $\ref{thm}$ is achieved, then the space is a spherical suspension;\\
$\bullet$ An almost rigidity result (Theorem $\ref{thm7}$) roughly stating that equality in the Talenti-type comparison Theorem $\ref{thm}$ is almost achieved (in $L^p$-sense) if and only if the space is mGH-close to a spherical suspension.

As an application of the Talenti's comparison, we also obtain a reverse H\"{o}lder inequality for the first Dirichlet eigenfunctions
of the $p$-Laplacian. 
In 1972, following the spirit of the works of Faber and Krahn \cite{33,34}, Payne and Rayner \cite{Pa2} proved a reverse H\"{o}lder inequality for the norms $L_1$ and $L_2$ of the first Dirichlet eigenfunction of the $2$-Laplacian in an open bounded set $\Omega$ of $\mathbb{R}^2$:
$$
\frac{(\int_{\Omega} u^2 \mathrm{d} \mathfrak{m})^{\frac{1}{2}}}{(\int_{\Omega} u \mathrm{d} \mathfrak{m})} \leq \frac{\sqrt{\lambda_1(\Omega)}}{2\sqrt{\pi}}.
$$
where $\lambda_1(\Omega)$ is the first Dirichlet eigenvalue of the $2$-Laplacian in $\Omega$, and $u$ is a first Dirichlet eigenfunction of the $2$-Laplacian in $\Omega$. Equality holds if and only if $\Omega$ is a disc. The inequality was generalized to  Riemannian manifolds with positive Ricci curvature by Gamara-Hasnaoui-Makni \cite{GHM15} and Colladay-Langford-McDonald \cite{38}. We refer also to \cite{PR73,KJ81,Chi82,AB01,BL07,C21,CL23} for further references.
\begin{theorem}
Let $p>1,r>0$ and let $(\mathrm{X}, \mathrm{d}, \mathfrak{m})$ be a $\operatorname{RCD}(N-1, N)$ space with $N>2$, $\Omega \subset \mathrm{X}$ an open set with $\mathfrak{m}(\Omega)=v \in(0,1)$, and $u\in W_0^{1,p}(\Omega)$ a positive first Dirichlet eigenfunction of the $p$-Laplacian in $\Omega$. Suppose that $\alpha \leq v$ is such that $\lambda_{N-1, N, \alpha}^p=\lambda_{\mathrm{X}}^p(\Omega)$. Let $z:[0, r_\alpha] \rightarrow [0,\infty)$ be the first Dirichlet eigenfunction of the $p$-Laplacian in $[0,r_\alpha)$, scaled such that:
$$
\int_{\Omega} u^r \mathrm{d} \mathfrak{m}=\int_0^{r_\alpha} z^r \mathrm{d} \mathfrak{m}_{N-1, N} .
$$
Then, for all $t \geq r$, it holds that:
\begin{equation}\label{1.4}
\frac{(\int_{\Omega} u^t \mathrm{d} \mathfrak{m})^{\frac{1}{t}}}{(\int_{\Omega} u^r \mathrm{d} \mathfrak{m})^{\frac{1}{r}}} \leq \frac{(\int_0^{r_\alpha} z^t\mathrm{d} \mathfrak{m}_{N-1, N})^{\frac{1}{t}}}{(\int_0^{r_\alpha} z^r \mathrm{d} \mathfrak{m}_{N-1, N})^{\frac{1}{r}}}.
\end{equation}
Furthermore, if the equality holds for some $t>\max\{1,r\}$ then $(\mathrm{X}, \mathrm{d}, \mathfrak{m})$ is a spherical suspension.
\end{theorem}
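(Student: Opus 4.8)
\emph{The plan} is to apply the Talenti comparison of Theorem~\ref{thm} (in the case $K=N-1$) to the positive eigenfunction $u$, combine it with a Chiti-type single-crossing argument carried out in the mass variable, and then read off rigidity by feeding equality back into Theorem~\ref{thm}. I would write $\varphi\colon[0,v]\to J_{N-1,N}$ for the generalized inverse of $x\mapsto\mathfrak{m}_{N-1,N}([0,x])$, let $u^{\sharp}$ be the decreasing mass-rearrangement of $u$ on $[0,v]$, and set $z^{\sharp}(s):=z(\varphi(s))$ on $[0,\alpha]$ (this is just $z$ read in the mass coordinate, since $z$ is nonincreasing) and $z^{\sharp}\equiv 0$ on $[\alpha,v]$. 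Applying Theorem~\ref{thm}(i) with $f=\lambda_{\mathrm{X}}^{p}(\Omega)u^{p-1}\in L^{q}(\Omega)$ gives $u\in L^{\infty}(\Omega)$, so all integrals below are finite; and since $z$ is normalized by $\int_{\Omega}u^{r}\,\mathrm{d}\mathfrak{m}=\int_{0}^{r_{\alpha}}z^{r}\,\mathrm{d}\mathfrak{m}_{N-1,N}$, equimeasurability turns \eqref{1.4} into the equivalent statement
$$
\int_{0}^{v}(u^{\sharp})^{t}\,\mathrm{d}s\ \le\ \int_{0}^{v}(z^{\sharp})^{t}\,\mathrm{d}s\qquad(t\ge r),\qquad\text{given}\qquad \int_{0}^{v}(u^{\sharp})^{r}\,\mathrm{d}s=\int_{0}^{v}(z^{\sharp})^{r}\,\mathrm{d}s.
$$

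I would first record two differential relations in the mass variable. By the same rearrangement argument as in the proof of Theorem~\ref{thm} (coarea, the H\"older inequality, the L\'evy--Gromov isoperimetric inequality, and $f^{\sharp}=\lambda_{\mathrm{X}}^{p}(\Omega)(u^{\sharp})^{p-1}$ since $t\mapsto t^{p-1}$ is increasing), the function $u^{\sharp}$ is a subsolution in the sense that
$$
\bigl(-(u^{\sharp})'(s)\bigr)^{p-1}\ \le\ \frac{\lambda_{\mathrm{X}}^{p}(\Omega)\int_{0}^{s}(u^{\sharp}(\sigma))^{p-1}\,\mathrm{d}\sigma}{h_{N-1,N}(\varphi(s))^{p}}\qquad\text{for a.e.\ }s\in(0,v),
$$
while, since $\lambda_{N-1,N,\alpha}^{p}=\lambda_{\mathrm{X}}^{p}(\Omega)$ and $z$ is the positive, nonincreasing first Dirichlet eigenfunction of the $p$-Laplacian on $[0,r_{\alpha})$, integrating its ODE from the pole $x=0$ (where $h_{N-1,N}$ vanishes) shows that $z^{\sharp}$ satisfies the same relation with equality on $(0,\alpha)$.

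The key step, and the one I expect to be the main obstacle, is a Chiti-type single-crossing lemma: apart from the trivial case $u^{\sharp}=z^{\sharp}$ a.e., there is $s_{1}\in(0,\alpha)$ with $u^{\sharp}\le z^{\sharp}$ on $[0,s_{1}]$ and $u^{\sharp}\ge z^{\sharp}$ on $[s_{1},v]$; equivalently $\{u^{\sharp}>z^{\sharp}\}$ is a terminal subinterval of $[0,v]$. I would prove this by contradiction: since $u^{\sharp}>0=z^{\sharp}$ on $(\alpha,v)$, if the lemma failed then $\{u^{\sharp}>z^{\sharp}\}$ would have a connected component $(a,b)$ with $b<v$, and picking $b$ minimal forces $u^{\sharp}\le z^{\sharp}$ on $[0,a]$, with $u^{\sharp}(a)=z^{\sharp}(a)$ and $u^{\sharp}(b)=z^{\sharp}(b)$. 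Consider the function $\eta$ which in the mass variable equals $z^{\sharp}$ on $[0,a]$, $u^{\sharp}$ on $[a,b]$ and $z^{\sharp}$ on $[b,\alpha]$: it is continuous, nonincreasing, vanishes at $\alpha$, and transports to an admissible test function for the Rayleigh quotient defining $\lambda_{N-1,N,\alpha}^{p}$ on $[0,r_{\alpha})$. Using the two relations above together with the orderings $z^{\sharp}\ge u^{\sharp}$ on $[0,a]$ and $u^{\sharp}\ge z^{\sharp}$ on $(a,b)$ --- so that the splicing does not decrease the nonlocal term $\int_{0}^{s}(\cdot)^{p-1}\,\mathrm{d}\sigma$ --- one checks that $\bigl(-\eta'(s)\bigr)^{p-1}h_{N-1,N}(\varphi(s))^{p}\le\lambda_{\mathrm{X}}^{p}(\Omega)\int_{0}^{s}\eta(\sigma)^{p-1}\,\mathrm{d}\sigma$ for a.e.\ $s\in(0,\alpha)$; multiplying by $-\eta'(s)$, integrating on $(0,\alpha)$, and integrating by parts (the boundary terms vanish because $\eta(\alpha)=0$) shows that the Rayleigh quotient of $\eta$ is $\le\lambda_{N-1,N,\alpha}^{p}$. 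By minimality it equals $\lambda_{N-1,N,\alpha}^{p}$, so $\eta$ is a first Dirichlet eigenfunction on $[0,r_{\alpha})$; by simplicity of the first eigenvalue of the model one-dimensional $p$-Laplacian, $\eta$ is a scalar multiple of $z^{\sharp}$, and since it coincides with $z^{\sharp}$ on a non-degenerate interval it equals $z^{\sharp}$, contradicting $u^{\sharp}>z^{\sharp}$ on $(a,b)$.

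Granting the lemma, the inequality and its rigidity follow by an elementary rearrangement estimate. With $c:=u^{\sharp}(s_{1})=z^{\sharp}(s_{1})>0$: on $[0,s_{1}]$ one has $u^{\sharp},z^{\sharp}\ge c$ and $z^{\sharp}\ge u^{\sharp}$, so $\xi^{t-1}\ge c^{t-r}\xi^{r-1}$ for $\xi$ between $u^{\sharp}(s)$ and $z^{\sharp}(s)$; on $[s_{1},v]$ one has $u^{\sharp},z^{\sharp}\le c$ and $u^{\sharp}\ge z^{\sharp}$, so $\xi^{t-1}\le c^{t-r}\xi^{r-1}$ there. Integrating in $\xi$ between $u^{\sharp}(s)$ and $z^{\sharp}(s)$ yields, in both cases,
$$
(z^{\sharp}(s))^{t}-(u^{\sharp}(s))^{t}\ \ge\ \tfrac{t}{r}\,c^{\,t-r}\bigl((z^{\sharp}(s))^{r}-(u^{\sharp}(s))^{r}\bigr),
$$
and integrating over $[0,v]$ with $\int_{0}^{v}\bigl((z^{\sharp})^{r}-(u^{\sharp})^{r}\bigr)\,\mathrm{d}s=0$ gives $\int_{0}^{v}(u^{\sharp})^{t}\le\int_{0}^{v}(z^{\sharp})^{t}$, i.e.\ \eqref{1.4}. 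If equality holds for some $t>\max\{1,r\}$, then (using $t>r$) the last display is an equality a.e., which forces $u^{\sharp}=z^{\sharp}$ a.e.\ on $[0,v]$, and since $z^{\sharp}\equiv 0$ on $[\alpha,v]$ while $u^{\sharp}>0$ on $[0,v)$ this forces $\alpha=v$. Then $f^{\star}=\lambda_{\mathrm{X}}^{p}(\Omega)(u^{\star})^{p-1}=\lambda_{\mathrm{X}}^{p}(\Omega)z^{p-1}$ and $z$ solves \eqref{eq00} on $[0,r_{v})$, so $z=w$ by uniqueness; hence $u^{\star}=z=w$, i.e.\ equality holds in Theorem~\ref{thm}(i), and the rigidity statement Theorem~\ref{thm5.6} then yields that $(\mathrm{X},\mathrm{d},\mathfrak{m})$ is a spherical suspension. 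Besides the Chiti lemma, the only non-routine ingredient needed is the simplicity of the first Dirichlet eigenvalue of the model one-dimensional $p$-Laplacian.
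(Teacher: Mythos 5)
Your proof is correct and follows essentially the same route as the paper's intended argument. The paper establishes precisely the two differential relations you use --- the subsolution inequality $(3.3)$ for $u^\sharp$ and the ODE $(3.4)$ for $z^\sharp$ --- and then states Chiti's comparison and the reverse H\"{o}lder inequality as ``slight modifications of [GM23, Theorem 1.2, 3.2]'', omitting the details. Your splicing proof of the single-crossing Chiti lemma is structurally the same as the paper's (written-out) proof of Lemma~4.4, which splices $au^\sharp$ with $z^\sharp$ at the first point where the max ratio $a=\max z^\sharp/u^\sharp$ is attained, estimates the Rayleigh quotient of the spliced function via the same integration-by-parts computation you describe, and then invokes simplicity of the first Dirichlet eigenvalue on the model interval (which the paper justifies via Theorem~\ref{thm4.5}); the only difference is that the paper first proves the one-sided ordering $z\le u^\star$ under the normalization $z(0)=u^\star(0)$ and then deduces the single crossing by rescaling, whereas you prove the single crossing directly under the $\int u^r=\int z^r$ normalization. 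The elementary rearrangement estimate $(z^\sharp)^t-(u^\sharp)^t\ge\frac{t}{r}c^{t-r}\bigl((z^\sharp)^r-(u^\sharp)^r\bigr)$ and the resulting rigidity are exactly as in [GM23]. Two small points worth making explicit: $u^\sharp$ is Lipschitz on $[0,v]$ (this is proved inside the paper's Lemma~4.4 and is what makes the spliced function a legitimate $W^{1,p}_0$ competitor and the matching at $a,b$ meaningful), and the case $a=0$ in your contradiction (i.e.\ $u^\sharp(0)\ge z^\sharp(0)$, so the component of $\{u^\sharp>z^\sharp\}$ touches $0$) needs a word --- your construction still works with $\eta=u^\sharp$ on $[0,b]$ and $z^\sharp$ on $[b,\alpha]$, but as written you implicitly assume $a>0$, which is what the paper's Lemma~4.4 (after rescaling) guarantees. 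For the rigidity, the paper would more directly conclude from $\alpha=v$ via the $p$-spectral-gap rigidity (Theorem~\ref{thm4.5}); your detour through $u^\star=z=w$ and Theorem~\ref{thm5.6} is also valid but slightly longer.
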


We also prove that\\
$\bullet$ A quantitative stability result (Theorem $\ref{thm8}$) roughly stating that equality in the reverse H\"{o}lder inequality $(\ref{1.4})$ is almost achieved for $t\in Q$ with an unbounded set $Q\subset (p-1,\infty)$ and $r=p-1$ if and only if the diameter of the space is close to $\pi$.

\section{Preliminaries}
Throughout this paper, a metric measure space is a triple $(\mathrm{X}, \mathrm{d}, \mathfrak{m})$ where $(\mathrm{X}, \mathrm{d})$ is a complete and separable metric space and $\mathfrak{m}$ is a \textbf{Borel probability measure} on $\mathrm{X}$ with $\operatorname{supp}[\mathfrak{m}]=\mathrm{X}$. Recall that a set $A \subset \mathrm{X}$ is called $\mathfrak{m}$-measurable if $A$ belongs to the completion of the Borel $\sigma$-algebra on $\mathrm{X}$ with respect to $\mathfrak{m}$ and a function $f:\mathrm{X} \rightarrow [-\infty,+\infty]$ is called $\mathfrak{m}$-measurable if all of its sub-level sets are $\mathfrak{m}$-measurable. We write $f=g$ $\mathfrak{m}$-a.e. if $\{f\neq g\}$ is an $\mathfrak{m}$-null set. Given $p\in [1,\infty]$ and an $\mathfrak{m}$-measurable set $E$, we use the standard notation $L^p(E,\mathfrak{m})$ for the $L^p$
spaces. We abbreviate $L^p(E,\mathfrak{m})$ by $L^p(E)$ or $L^p$ when this will cause no confusion.
\subsection{Riemannian curvature-dimension condition}
Let $(\mathrm{X}, \mathrm{d}, \mathfrak{m})$ be a metric measure space. The set of Borel probability measures on $\mathrm{X}$ will be denoted by $\mathscr{P}(\mathrm{X})$ and $\mathscr{P}_2(\mathrm{X})\subset \mathscr{P}(\mathrm{X})$ is the set of measures with finite $2$-moment, i.e. $\mu\in \mathscr{P}_2(\mathrm{X})$ if $\mu\in \mathscr{P}(\mathrm{X})$ and 
$\int \mathrm{d}^2(x_0, x) \mathrm{d} \mu(x)<\infty$ for some (and thus every) $x_0 \in \mathrm{X}$. $\mathscr{P}_2(\mathrm{X})$ is endowed with the $2$-Wasserstein distance $W_2$, defined by 
$$
W_2^2(\mu_1, \mu_2):=\inf \int \mathrm{d}^2(x, y) \mathrm{d}q(x, y)
$$
where the infimum is taken among all transport plans $q$ of $\mu_1$ and $\mu_2$, i.e. among all $q\in \mathscr{P}(\mathrm{X} \times \mathrm{X})$ such that $\pi_\sharp^1q=\mu_1$ and $\pi_\sharp^2q=\mu_2$. This infimum can be attained and we call optimal all the transport plans $q$ realizing the minimum. Let $\mathscr{P}_2(\mathrm{X}, \mathrm{d}, \mathfrak{m}) \subset \mathscr{P}_2(\mathrm{X})$ be the subspace of all $\mu\in \mathscr{P}_2(\mathrm{X})$ which are absolutely continuous w.r.t. $\mathfrak{m}$.
\begin{definition}
Let $(\mathrm{X}, \mathrm{d}, \mathfrak{m})$ be a metric measure space. We say that $(\mathrm{X}, \mathrm{d}, \mathfrak{m})$ is a $\mathrm{CD}(K,N)$ space, $K \in \mathbb{R}$ and $N \in(1, \infty)$ if for any $\nu_0=\rho_0 \mathfrak{m}$, $\nu_1=\rho_1 \mathfrak{m} \in \mathscr{P}_2(\mathrm{X}, \mathrm{d}, \mathfrak{m})$ there exist an optimal transport plan $q$ of them and a geodesic $(\nu_t:=\rho_t  \mathfrak{m})_{t \in[0,1]}$ in $\mathscr{P}_2(\mathrm{X}, \mathrm{d}, \mathfrak{m})$ connecting them such that for all $t \in[0,1]$ and all $N^{\prime} \geq N$:
\begin{align}
 \int \rho_t^{1-1 / N^{\prime}} \mathrm{d}\mathfrak{m} \geq \int [\tau_{K ,N^{\prime}}^{(1-t)}(\mathrm{d}(x_0, x_1)) \rho_0^{-1 / N^{\prime}}(x_0)+\tau_{K,N^{\prime}}^{(t)}(\mathrm{d}(x_0, x_1)) \rho_1^{-1 / N^{\prime}}(x_1)] \mathrm{d} q(x_0, x_1),\label{con}
\end{align}
where the distorsion coefficients are functions $[0,1]\times [0,\infty)\ni (t,\theta)\mapsto \tau_{K, N}^{(t)}(\theta)\in [0,\infty]$ defined by
\begin{align*}
\tau_{K, N}^{(t)}(\theta) &:= t^{\frac{1}{N}} \sigma_{K/(N-1)}^{(t)}(\theta)^{\frac{N-1}{N}}\\
\sigma_\kappa^{(t)}(\theta)&:= 
\begin{cases}
\frac{\sin (\sqrt{\kappa}  t \theta)}{\sin (\sqrt{\kappa}  \theta)}, & 0<\kappa \theta^2<\pi^2, \\ 
t, & \kappa \theta^2=0, \\ 
\frac{\sinh (\sqrt{-\kappa}  t \theta)}{\sinh (\sqrt{-\kappa}  \theta)}, & \kappa \theta^2<0, \\ +\infty, & \kappa \theta^2 \geq \pi^2 ,
\end{cases}
\end{align*}
where we adopt the convention that $0\cdot +\infty=+\infty$.
\end{definition}
Let $K \in \mathbb{R}$, $N \in(0, \infty)$, the reduced curvature dimension condition $\mathrm{CD}^*(K,N)$ asks for the same inequality $(\ref{con})$ of $\mathrm{CD}(K,N)$ but the coefficients
$\tau_{K,N^{\prime}}^{(t)}(\mathrm{d}(x_0, x_1))$ and $\tau_{K,N^{\prime}}^{(1-t)}(\mathrm{d}(x_0, x_1))$ are replaced by $\sigma_{K,N^{\prime}}^{(t)}(\mathrm{d}(x_0, x_1))$ and $\sigma_{K,N^{\prime}}^{(1-t)}(\mathrm{d}(x_0, x_1))$, respectively. If $N\in (1,\infty)$, it was shown by Cavalletti and Milman in \cite{27} that an essentially nonbranching metric measure space $(\mathrm{X}, \mathrm{d}, \mathfrak{m})$ is a $\mathrm{CD}^*(K,N)$ space if and only if it is a $\mathrm{CD}(K,N)$ space.

In the following proposition we collect those basic properties of $\mathrm{CD}(K, N)$ spaces we will use later on.
\begin{proposition}[Basic properties of $\mathrm{CD}(K, N)$ spaces]\cite{3,5}\label{prop5}
Let $(\mathrm{X}, \mathrm{d}, \mathfrak{m})$ be a $\mathrm{CD}(K, N)$ space, $K \in \mathbb{R}, N\in (1,\infty)$. Then $(\mathrm{X}, \mathrm{d})$ is proper and geodesic, $\mathfrak{m}$ is locally doubling, i.e. it holds
$$
0<\mathfrak{m}(B_{2 r}(x)) \leq C_D(R) \mathfrak{m}(B_r(x)), \quad \forall x \in \mathrm{X}, \quad 0<r \leq R,
$$
for some constant $C_D(R)$ depending only on $K,N,R$, and $(\mathrm{X}, \mathrm{d}, \mathfrak{m})$ supports a weak local Poincar\'{e} inequality, i.e. for any locally Lipschitz function $u: \mathrm{X} \rightarrow \mathbb{R}$ and any upper gradient $g$ of $u$ and $R>0$ it holds
\begin{equation}\label{poin}
\fint_{B_r(x)}\bigg|u-\fint_{B_r(x)} u \mathrm{d} \mathfrak{m}\bigg| \mathrm{d} \mathfrak{m} \leq C_P(R) r  \fint_{B_{2 r}(x)} g \mathrm{d} \mathfrak{m}, \quad \forall 0<r\leq R,
\end{equation}
for some constant $C_P(R)$ depending only on $K,N,R$. Also, the Bishop-Gromov comparison estimates hold,
i.e. for any $x \in \mathrm{X}$ it holds
$$
\frac{\mathfrak{m}(B_r(x))}{\mathfrak{m}(B_R(x))} \geq \begin{cases}\frac{\int_0^r \sin (t \sqrt{K /(N-1)})^{N-1} \mathrm{d} t}{\int_0^R \sin (t \sqrt{K /(N-1)})^{N-1} \mathrm{d} t} & \text { if } K>0 \\ \frac{r^N}{R^N} & \text { if } K=0 \\ \frac{\int_0^r \sinh (t \sqrt{K /(N-1)})^{N-1} \mathrm{d} t}{\int_0^R \sinh (t \sqrt{K /(N-1)})^{N-1} \mathrm{d} t} & \text { if } K<0\end{cases}
$$
for any $0<r \leq R \leq \pi \sqrt{(N-1) /(\max \{K, 0\})}$.

Finally, if $K>0$, then $(\mathrm{X}, \mathrm{d})$ is compact, with diameter at most $\pi\sqrt{\frac{N-1}{K}}$ and $C_P(R)$ can be taken by $2^{N+3}$ for all $R>0$.
\end{proposition}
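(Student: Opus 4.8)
Proposition~\ref{prop5} compiles classical structural consequences of the curvature--dimension condition, so the plan is to derive each listed property from the defining inequality $(\ref{con})$, following Sturm \cite{5} for the geometric estimates and Rajala \cite{3} for the Poincar\'e inequality. The engine is Bishop--Gromov. Fix $x_0\in\mathrm{X}$ and, for $0<r\le R\le\pi\sqrt{(N-1)/\max\{K,0\}}$, apply $(\ref{con})$ with $N'=N$ to $\nu_1:=\mathfrak{m}(B_R(x_0))^{-1}\,\mathfrak{m}\llcorner B_R(x_0)$ and to a sequence of absolutely continuous probability measures $\nu_0$ concentrating at $x_0$. Along the connecting geodesic in $\mathscr{P}_2(\mathrm{X},\mathrm{d},\mathfrak{m})$ the inequality forces a pointwise lower bound on the interpolated density $\rho_t$ in terms of $\tau_{K,N}^{(t)}(\mathrm{d}(x_0,\cdot))$; integrating it, letting $\nu_0\to\delta_{x_0}$, and rewriting in terms of $\mathfrak{m}(B_s(x_0))$ gives that the quotient appearing in the displayed Bishop--Gromov inequality is non-increasing in $s$, which is that inequality. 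Inserting $r$ and $\min\{2r,R\}$ into this quotient — for $K>0$ and large radii one instead uses the diameter bound below, so that $B_{2r}(x)=\mathrm{X}$, together with a Bishop--Gromov lower bound on $\mathfrak{m}(B_r(x))$ — yields $\mathfrak{m}(B_{2r}(x))\le C_D(R)\,\mathfrak{m}(B_r(x))$ with $C_D(R)$ depending only on $K,N,R$; positivity of $\mathfrak{m}(B_r(x))$ is automatic since $\operatorname{supp}\mathfrak{m}=\mathrm{X}$.

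Local doubling forces bounded sets to be totally bounded — a maximal $r$-separated subset of a ball has cardinality bounded polynomially in $1/r$ by iterating the doubling inequality — so by completeness every closed bounded set is compact, i.e.\ $(\mathrm{X},\mathrm{d})$ is proper; approximating an arbitrary pair of points by absolutely continuous measures and passing to the limit in the defining geodesics yields metric midpoints, hence the proper complete space $(\mathrm{X},\mathrm{d})$ is geodesic. For the weak local $(1,1)$-Poincar\'e inequality I would follow Rajala \cite{3}: between mollified uniform measures on $B_r(x)$ and on $B_{2r}(x)$ one can choose a $W_2$-geodesic whose interpolated densities are all bounded by a universal multiple of $\mathfrak{m}(B_{2r}(x))^{-1}$, which is possible because on $[0,1]\times(0,4r]$ the coefficients $\tau_{K,N}^{(t)}(\theta)$ are bounded below by a constant depending only on $K,N,r$; this uniform density control produces a rich enough family of curves joining the two balls, and the classical bound of $|u(x')-u(y')|$ by the line integral of an upper gradient $g$ along such curves gives $(\ref{poin})$ with $C_P(R)$ depending only on $K,N,R$. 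When $K>0$ the diameter bound below confines all relevant transport distances to a fixed compact subinterval of $(0,\pi\sqrt{(N-1)/K})$, so the lower bounds on $\tau_{K,N}^{(t)}$, and hence $C_P$, may be taken independent of $R$; carrying the constants through Rajala's estimate gives the uniform value $2^{N+3}$.

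It remains to treat the case $K>0$. Suppose for contradiction that $\mathrm{d}(x,y)>\pi\sqrt{(N-1)/K}$ for some $x,y\in\mathrm{X}$; choose $\varepsilon>0$ so small that $\mathrm{d}(x_0,x_1)>\pi\sqrt{(N-1)/K}$ whenever $x_0\in B_\varepsilon(x)$ and $x_1\in B_\varepsilon(y)$, and apply $(\ref{con})$ with $N'=N$ to the normalized restrictions of $\mathfrak{m}$ to $B_\varepsilon(x)$ and to $B_\varepsilon(y)$. For the optimal plan $q$ one has $\tfrac{K}{N-1}\mathrm{d}(x_0,x_1)^2>\pi^2$ on $\operatorname{supp}q$, hence $\sigma_{K/(N-1)}^{(t)}(\mathrm{d}(x_0,x_1))=\sigma_{K/(N-1)}^{(1-t)}(\mathrm{d}(x_0,x_1))=+\infty$ and $\tau_{K,N}^{(t)}=\tau_{K,N}^{(1-t)}=+\infty$ there for every $t\in(0,1)$; so the right-hand side of $(\ref{con})$ is $+\infty$, forcing $\int\rho_t^{1-1/N}\,\mathrm{d}\mathfrak{m}=+\infty$. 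But $\mathfrak{m}$ is a probability measure and $s\mapsto s^{1-1/N}$ is concave, so Jensen's inequality gives $\int\rho_t^{1-1/N}\,\mathrm{d}\mathfrak{m}\le\big(\int\rho_t\,\mathrm{d}\mathfrak{m}\big)^{1-1/N}=1$, a contradiction. Hence $\operatorname{diam}(\mathrm{X})\le\pi\sqrt{(N-1)/K}$, and a bounded proper space is compact. Among these steps the Bishop--Gromov monotonicity and this Bonnet--Myers argument are routine; the genuinely delicate point is the weak local Poincar\'e inequality — the construction of $W_2$-geodesics with quantitatively controlled interpolated densities, and the bookkeeping producing the explicit uniform constant $2^{N+3}$ when $K>0$ — which is the substance of \cite{3}.
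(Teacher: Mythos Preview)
The paper does not supply a proof of this proposition; it is stated with citations to Rajala \cite{3} and Sturm \cite{5} as a compilation of known facts, so there is no in-paper argument to compare against. Your sketch is a faithful outline of precisely those references: the Bishop--Gromov monotonicity and the Bonnet--Myers diameter bound via the distortion coefficients blowing up are Sturm's arguments, and the weak local Poincar\'e inequality via $W_2$-geodesics with uniformly bounded interpolated densities is Rajala's. The logical ordering is fine (the Bonnet--Myers step uses only $(\ref{con})$ directly, so the forward reference in the doubling paragraph is not circular), and your own caveat that the explicit constant $2^{N+3}$ really requires tracking Rajala's estimates is well placed --- that is the one part that cannot be reconstructed from a sketch.
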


Recall that $C([0,1], \mathrm{X})$ is the space of continuous curves from $[0,1]$ to $\mathrm{X}$ equipped with the sup norm, which is complete and separable. For $t \in[0,1]$, the evaluation map $\mathrm{e}_t: C([0,1], \mathrm{X}) \rightarrow \mathrm{X}$ is given by
$$
\mathrm{e}_t(\gamma):=\gamma_t, \quad \forall \gamma \in C([0,1], \mathrm{X}) .
$$
For $s, t \in[0,1]$, the restriction operator $\operatorname{restr}_s^t: C([0,1], \mathrm{X}) \rightarrow C([0,1], \mathrm{X})$ is defined by
$$
(\operatorname{restr}_s^t(\gamma))_r:=\gamma_{s+r(t-s)}, \quad \forall \gamma \in C([0,1], \mathrm{X}) .
$$
A curve $\gamma \in C([0,1], \mathrm{X})$ is said absolutely continuous if there exists $f \in L^1([0,1])$ such that
\begin{equation}\label{2.2}
\mathrm{d}(\gamma_t, \gamma_s) \leq \int_s^t f(r) \mathrm{d} r, \quad \forall \; 0\leq s<t\leq 1.
\end{equation}
The set of absolutely continuous curves from $[0,1]$ to $\mathrm{X}$ will be denoted by $\mathrm{AC}([0,1], \mathrm{X})$. More generally, if the function $f$ in $(\ref{2.2})$ belongs to $L^q([0,1])$, $q \in[1, \infty]$, $\gamma$ is said $q$-absolutely continuous, and $\mathrm{AC}^q([0,1], \mathrm{X})$ is the corresponding set of $q$-absolutely continuous curves. It turns out (see for instance \cite[Theorem 1.2.5]{1}) that for $\gamma \in \mathrm{AC}^q([0,1], \mathrm{X})$, the limit
$$
\lim_{h \rightarrow 0} \frac{\mathrm{d}(\gamma_{t+h}, \gamma_t)}{|h|},
$$
exists for $\mathscr{L}^1$-a.e. $t \in[0,1]$, and defines an $L^q$ function. Such function is called metric speed or metric derivative, is denoted by $|\dot{\gamma}_t|$ and is the minimal (in the $\mathscr{L}^1$-a.e. sense) $L^q$ function which can be chosen as $f$ in the right hand side of $(\ref{2.2})$.

\begin{definition}[$q$-test plan]
Let $\pi \in \mathscr{P}(C([0,1], \mathrm{X}))$. We say that $\pi$ has bounded compression if there exists $C>0$ such that
$$
(\mathrm{e}_t)_{\sharp} \pi \leq C \mathfrak{m}, \quad \forall t \in[0,1] .
$$
For $q \in(1, \infty)$, we say that $\pi$ is a $q$-test plan if it has bounded compression, is concentrated on $\mathrm{AC}^q([0,1], \mathrm{X})$ and
$$
\iint_0^1|\dot{\gamma}_t|^q \mathrm{d} t \mathrm{d} \pi(\gamma)<\infty.
$$
\end{definition}
\begin{definition}[Sobolev classes]
Let $p,q\in (1,\infty)$  be conjugate exponents. A Borel function $f:\mathrm{X}\rightarrow \mathbb{R}$ belongs to the Sobolev class $\mathrm{S}^p(\mathrm{X})$ if there exists a Borel function $G \in L^p(\mathrm{X})$ such that
\begin{equation}\label{2.10}
\int|f(\gamma_1)-f(\gamma_0)| \mathrm{d}\pi(\gamma) \leq \iint_0^1 G(\gamma_s)|\dot{\gamma}_s| \mathrm{d} s \mathrm{d} \pi(\gamma), \quad \forall \text { $q$-test plan }\pi.
\end{equation}
In this case, $G$ is called a $p$-weak upper gradient of $f$.
\end{definition}
Arguing as in \cite[Section 4.5]{2}, we get that for $f \in \mathrm{S}^p(\mathrm{X})$ there exists a minimal function $G \geq 0$, in the $\mathfrak{m}$-a.e. sense, in $L^p(\mathrm{X})$ such that $(\ref{2.10})$ holds. We will denote such minimal function by $|\nabla f|_{p}$.

The $p$-Cheeger energy, denoted by $\mathrm{Ch}_p: L^p(\mathrm{X}) \rightarrow[0, \infty]$, is defined by
\begin{equation*}
\operatorname{Ch}_p(f):=
\begin{cases}
\displaystyle{\frac{1}{p}\int |\nabla f|_{p}^p \mathrm{d}\mathfrak{m}}, &\text{ if }f\in  \mathrm{S}^p(\mathrm{X})\\
+\infty,&\text{ otherwise. }
\end{cases}
\end{equation*}
It is well known that $\mathrm{Ch}_p$ is a convex and lower semi-continuous functional on $L^p(\mathrm{X})$. We define the Sobolev space $W^{1, p}(\mathrm{X}):=L^p(\mathrm{X})\cap \mathrm{S}^p(\mathrm{X})$. $W^{1,p}(\mathrm{X})$ becomes a Banach space, when endowed with the norm
$$
\|f\|_{W^{1, p}(\mathrm{X})} =\{\|f\|_{L^p(\mathrm{X})}^p+p \mathrm{Ch}_p(f)\}^{\frac{1}{p}}.
$$
\begin{definition}
A metric measure space $(\mathrm{X}, \mathrm{d}, \mathfrak{m})$ is called infinitesimally Hilbertian if the $2$-Cheeger energy $\mathrm{Ch}_2$ is quadratic. Moreover, we say that $(\mathrm{X}, \mathrm{d}, \mathfrak{m})$ is a $\mathrm{RCD}(K, N)$$(\mathrm{RCD}^*(K, N))$ space, $K \in \mathbb{R}$ and $N \in(1, \infty)$, if it is infinitesimally Hilbertian and a $\mathrm{CD}(K, N)$$(\mathrm{CD}^*(K, N))$ space.
\end{definition}
Let $K\in \mathbb{R}$, $N \in(1, \infty)$, it is well known that $(\mathrm{X}, \mathrm{d}, \mathfrak{m})$ is a $\mathrm{RCD}(K, N)$ space if and only if it is a $\mathrm{RCD}^*(K, N)$ space, see \cite{27}. Notice that several different notions of Sobolev spaces on metric measure spaces have been established in \cite{6,7,8,9}. They are equivalent to each other on $\mathrm{RCD}(K, N)$ metric measure spaces (see, for example \cite{2}).

\begin{remark}\cite[Theorem 3.3]{in}
A non-trivial fact about a locally doubling space $(\mathrm{X}, \mathrm{d}, \mathfrak{m})$ supporting a weak local Poincar\'{e} inequality (and $\mathrm{RCD}(K, N)$ spaces are so) is that they have $p$-independent minimal weak upper gradients in the following sense: for any $p_1, p_2 \in(1, \infty)$, \\
(i) $W^{1, p_1}(\mathrm{X}) \cap W^{1, p_2}(\mathrm{X})$ is dense in both $W^{1, p_1}(\mathrm{X})$ and $W^{1, p_2}(\mathrm{X})$;\\
(ii) for any $f \in W^{1, p_1}(\mathrm{X}) \cap W^{1, p_2}(\mathrm{X})$ it holds $|\nabla f|_{p_1}=|\nabla f|_{p_2}$ $\mathfrak{m}$-a.e.;\\
(iii) any $f \in W^{1, p_1}(\mathrm{X})$ with $f,|\nabla f|_{p_1} \in L^{p_2}(\mathfrak{m})$ belongs to $W^{1, p_2}(\mathrm{X})$.	

Hence, in the following, we drop the dependence on $p$ on the notation $|\nabla f|_{p}$ and write $|\nabla f|$.
\end{remark}

It is possible to check that the object $|\nabla f|$ is local in the sense that
\begin{equation}\label{2.14}
\forall f \in \mathrm{S}^p(\mathrm{X}) \text { it holds }|\nabla f|=0, \quad \mathfrak{m}\text{-a.e. on } f^{-1}(N), \quad \forall N \subset \mathbb{R}, \text { s.t. } \mathcal{L}^1(N)=0
\end{equation}
and
\begin{equation}\label{2.15}
|\nabla f|=|\nabla g|, \quad \mathfrak{m}\text{-a.e. } \quad \text { on }\{f=g\}, \quad \forall f, g \in \mathrm{S}^p( \mathrm{X}).
\end{equation}
Also, for $f \in \mathrm{S}^p(\mathrm{X})$ and $\varphi: \mathbb{R} \rightarrow \mathbb{R}$ Lipschitz, the function $\varphi \circ f$ belongs to $\mathrm{S}^p(\mathrm{X})$ as well and it holds
\begin{equation}\label{2.16}
|\nabla(\varphi \circ f)|=|\varphi^{\prime} \circ f| | \nabla f| \quad \mathfrak{m}\text {-a.e.. }
\end{equation}

\subsection{Local Sobolev spaces and BV functions}
Let $(\mathrm{X}, \mathrm{d}, \mathfrak{m})$ be a $\mathrm{RCD}(K, N)$ space, $K \in \mathbb{R}$ and $N \in(1, \infty)$. Fix an open set $\Omega$ in $\mathrm{X}$, $\operatorname{Lip}(\Omega)$ will denote the space of Lipschitz functions on $\Omega$, while $\operatorname{Lip}_\mathrm{c}(\Omega)$ will be the subspace of Lipschitz functions on $\mathrm{X}$ with compact support in $\Omega$.
\begin{definition}
Let $p \in(1,\infty)$, we define:\\
$(a)$ The space $W_0^{1, p}(\Omega)$ as the closure of $\operatorname{Lip}_c(\Omega)$ with respect to the norm of $W^{1, p}(\mathrm{X})$.\\
$(b)$ The space $\hat{W}_0^{1,p}(\Omega)$ as the set of all $f \in W^{1,p}(\mathrm{X})$ such that $f=0$ $\mathfrak{m}$-a.e. in $\Omega^c$.
\end{definition}
It is trivial that $W_0^{1,p}(\Omega)$ and $\hat{W}_0^{1,p}(\Omega)$ are closed subspaces of $W^{1, p}(\mathrm{X})$, with $W_0^{1,p}(\Omega)\subset\hat{W}_0^{1,p}(\Omega)$. A kind of "reverse" inclusion is provided in the following lemma, whose proof is a slight modification of  \cite[Lemma 2.12]{41}.
\begin{lemma}
For all $x \in \mathrm{X}$ and $R>0$, one has
$$
\hat{W}_0^{1,p}(B_R(x))=\bigcap_{\epsilon>0} W_0^{1,p}(B_{R+\epsilon}(x)).
$$
In addition, for all $x \in \mathrm{X}$, the equality $\hat{W}_0^{1,p}(B_R(x))=W_0^{1,p}(B_R(x))$ holds with at most countably many exceptions. 
\end{lemma}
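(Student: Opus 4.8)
The plan is to prove the two assertions separately, using throughout that $(\mathrm{X},\mathrm{d})$ is proper, geodesic and compact and that $\mathfrak{m}$ is locally doubling and supports a Poincar\'e inequality (Proposition \ref{prop5}), together with the standard facts that Lipschitz functions are dense in $W^{1,p}(\mathrm{X})$ and that $W^{1,p}(\mathrm{X})$ is a separable reflexive Banach space. For the inclusion $\bigcap_{\epsilon>0}W_0^{1,p}(B_{R+\epsilon}(x))\subseteq\hat{W}_0^{1,p}(B_R(x))$, I would first record that $\mathfrak{m}(\{\mathrm{d}(x,\cdot)=R\})=0$ for every $R>0$: the map $r\mapsto\mathfrak{m}(B_r(x))$ is nondecreasing, while by the Bishop--Gromov inequality of Proposition \ref{prop5} the quotient $r\mapsto\mathfrak{m}(B_r(x))/v_{K,N}(r)$ is nonincreasing, with $v_{K,N}(r)=\int_0^r\sin^{N-1}(t\sqrt{K/(N-1)})\,\mathrm{d}t$ continuous and positive, so $r\mapsto\mathfrak{m}(B_r(x))$ — being nondecreasing yet nonincreasing after division by a continuous positive function — has no jumps, hence is continuous; as $B_R(x)=\{\mathrm{d}(x,\cdot)<R\}$ and $\bigcap_{\epsilon>0}B_{R+\epsilon}(x)=\{\mathrm{d}(x,\cdot)\le R\}$, continuity forces $\mathfrak{m}(\{\mathrm{d}(x,\cdot)=R\})=0$. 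Given this, any $f$ in the left-hand side lies in $W^{1,p}(\mathrm{X})$ and, by the trivial inclusion $W_0^{1,p}(B_{R+\epsilon}(x))\subseteq\hat{W}_0^{1,p}(B_{R+\epsilon}(x))$, vanishes $\mathfrak{m}$-a.e.\ on $\mathrm{X}\setminus B_{R+\epsilon}(x)$ for each $\epsilon>0$, hence $\mathfrak{m}$-a.e.\ on $\{\mathrm{d}(x,\cdot)>R\}$, hence $\mathfrak{m}$-a.e.\ on $B_R(x)^c$ by the measure-zero fact; thus $f\in\hat{W}_0^{1,p}(B_R(x))$.

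For the reverse inclusion, fix $f\in\hat{W}_0^{1,p}(B_R(x))$ and $\epsilon>0$. The closed ball $\overline{B_R(x)}=\{\mathrm{d}(x,\cdot)\le R\}$ is compact and contained in $B_{R+\epsilon}(x)$; I would choose a Lipschitz cutoff $\chi\colon\mathrm{X}\to[0,1]$ equal to $1$ on $\{\mathrm{d}(x,\cdot)\le R+\epsilon/3\}$ and supported in $\{\mathrm{d}(x,\cdot)\le R+2\epsilon/3\}$, and approximate $f$ in $W^{1,p}(\mathrm{X})$ by Lipschitz functions $f_n$, which are bounded because $\mathrm{X}$ is compact, so that $\chi f_n\in\operatorname{Lip}_c(B_{R+\epsilon}(x))$. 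Since $f=0$ $\mathfrak{m}$-a.e.\ on $\{\chi\neq1\}\subseteq B_R(x)^c$ one has $\chi f=f$ $\mathfrak{m}$-a.e., while the Leibniz rule for minimal weak upper gradients gives $|\nabla(\chi f_n-f)|=|\nabla(\chi(f_n-f))|\le|\chi|\,|\nabla(f_n-f)|+|f_n-f|\,|\nabla\chi|$ $\mathfrak{m}$-a.e., with $|\nabla\chi|$ bounded; hence $\|\chi f_n-f\|_{W^{1,p}(\mathrm{X})}\le C(\epsilon)\,\|f_n-f\|_{W^{1,p}(\mathrm{X})}\to0$, so $f\in W_0^{1,p}(B_{R+\epsilon}(x))$. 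As $\epsilon>0$ was arbitrary, $f\in\bigcap_{\epsilon>0}W_0^{1,p}(B_{R+\epsilon}(x))$, completing the first identity.

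For the second assertion, put $V_R:=W_0^{1,p}(B_R(x))$ and $\hat{V}_R:=\hat{W}_0^{1,p}(B_R(x))$, increasing families of closed subspaces of $W^{1,p}(\mathrm{X})$ with $V_R\subseteq\hat{V}_R$; by the first identity $\hat{V}_R=\bigcap_{R'>R}V_{R'}$, and $V_R=\overline{\bigcup_{R'<R}V_{R'}}$ because every $g\in\operatorname{Lip}_c(B_R(x))$ is already supported in $\overline{B_{R'}(x)}$ for some $R'<R$. Fix a countable dense set $\{h_k\}_{k\in\mathbb{N}}\subseteq W^{1,p}(\mathrm{X})$ and set $\psi_k(R):=\operatorname{dist}_{W^{1,p}(\mathrm{X})}(h_k,V_R)$; this is nonincreasing and, by $V_R=\overline{\bigcup_{R'<R}V_{R'}}$, left-continuous, so $\psi_k(R)=\inf_{R'<R}\psi_k(R')$. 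I would then show $\psi_k(R^+)=\operatorname{dist}_{W^{1,p}(\mathrm{X})}(h_k,\hat{V}_R)$: one inequality is immediate from $\hat{V}_R\subseteq V_{R'}$ for $R'>R$, and the other follows by choosing near-minimizers $u_n\in V_{R_n}$ with $R_n\downarrow R$, extracting a weakly convergent subsequence $u_{n_j}\rightharpoonup u$ (using reflexivity of $W^{1,p}(\mathrm{X})$), observing that $u\in V_{R'}$ for every $R'>R$ since each $V_{R'}$ is weakly closed, so $u\in\bigcap_{R'>R}V_{R'}=\hat{V}_R$, and invoking weak lower semicontinuity of the norm. A monotone function has at most countably many discontinuities, so $D_k:=\{R>0:\psi_k(R)>\psi_k(R^+)\}$ is at most countable; and if $V_R\subsetneq\hat{V}_R$, choosing $g\in\hat{V}_R\setminus V_R$, $\delta:=\operatorname{dist}(g,V_R)>0$ and $k$ with $\|h_k-g\|<\delta/2$ forces $\psi_k(R)>\delta/2>\psi_k(R^+)$, i.e.\ $R\in D_k$. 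Therefore $\{R>0:W_0^{1,p}(B_R(x))\neq\hat{W}_0^{1,p}(B_R(x))\}\subseteq\bigcup_{k}D_k$ is at most countable.

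The cutoff-and-density step and the elementary facts about monotone functions are routine; I expect the main obstacle to be the identity $\psi_k(R^+)=\operatorname{dist}(h_k,\hat{V}_R)$, which is the crux of the second assertion and genuinely requires both the reflexivity of $W^{1,p}(\mathrm{X})$ (to pass to weak limits along radii decreasing to $R$) and the representation $\hat{V}_R=\bigcap_{R'>R}V_{R'}$ obtained in the first part.
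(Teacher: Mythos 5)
Your proof is correct and follows the same outline as the Ambrosio--Honda lemma the paper cites for this statement: a cutoff argument (together with $\mathfrak{m}$-negligibility of metric spheres) for the first identity, and a monotone-function/discontinuity-counting argument for the countability claim, with the natural replacement of Hilbert-space projections by the distance functions $\psi_k$ plus weak compactness and weak lower semicontinuity in the reflexive Banach space $W^{1,p}(\mathrm{X})$, which is the adjustment that $p\neq 2$ forces. One minor point: the Bishop--Gromov monotonicity argument yields continuity of $r\mapsto\mathfrak{m}(B_r(x))$ only for $r<\pi\sqrt{(N-1)/K}$, so negligibility of $\{\mathrm{d}(x,\cdot)=R\}$ at the extremal radius $R=\pi\sqrt{(N-1)/K}$ needs a separate (standard) observation — for instance, in the maximal-diameter case the rigidity theorem forces a spherical-suspension structure in which the antipodal set has measure zero — but this does not affect the architecture of your argument.
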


\begin{definition}[The local Sobolev class $\mathrm{S}^p(\Omega)$]\label{defsobo}
Let $p \in(1, \infty)$. The space $\mathrm{S}^p(\Omega)$ is the space of Borel functions $g: \Omega \rightarrow \mathbb{R}$ such that $g \chi \in \mathrm{S}^p(\mathrm{X})$ for any $\chi\in \operatorname{Lip}_c(\Omega)$, where $g \chi$ is taken $0$ by definition on $\mathrm{X} \backslash \Omega$.\\
It is immediate to verify that this is a good definition, and that for $g \in \mathrm{S}^p(\Omega)$ the function $|\nabla g| \in L^p(\Omega)$ is well defined, thanks to $(\ref{2.15})$, by
$$
|\nabla g|:=|\nabla(g \chi)|, \quad \mathfrak{m} \text {-a.e. on }\{\chi=1\}
$$
where $\chi\in \operatorname{Lip}_c(\Omega)$. \\
We say that $f \in L^p(\Omega)$ belongs to $W^{1,p}(\Omega)$ if
$|\nabla f| \in L^p(\Omega)$.
\end{definition}
\begin{definition}[$D^{\pm} f(\nabla g)$]
Let $f, g \in \mathrm{S}^p(\mathrm{X})$. The functions $D^{\pm} f(\nabla g):\mathrm{X} \rightarrow \mathbb{R}$ are $\mathfrak{m}$-a.e. well defined by
\begin{align*}
D^{+} f(\nabla g) & :=\mathrm{ess} \inf_{\epsilon>0} \frac{|\nabla(g+\epsilon f)|^p-|\nabla g|^p}{p \epsilon|\nabla g|^{p-2}}, \\
D^{-} f(\nabla g) & :=\mathrm{ess} \sup_{\epsilon<0} \frac{|\nabla(g+\epsilon f)|^p-|\nabla g|^p}{p \epsilon|\nabla g|^{p-2}},
\end{align*}
on $\{x:|\nabla g|(x) \neq 0\}$, and are taken $0$ by definition on $\{x:|\nabla g|(x)=0\}$.		
\end{definition}
Obviously, for any $f, g \in \mathrm{S}^p(\mathrm{X})$, we have 
\begin{equation}\label{24}
|D^{\pm} f(\nabla g)|\leq |\nabla f||\nabla g|,\quad \mathfrak{m}\text{-a.e.}.
\end{equation}
\begin{definition}
Let $p,q\in (1,\infty)$  be conjugate exponents. We say that $(\mathrm{X}, \mathrm{d}, \mathfrak{m})$ is $q$-infinitesimally strictly convex, if
\begin{equation}\label{3.10}
\int D^{+} f(\nabla g)|\nabla g|^{p-2}\mathrm{d} \mathfrak{m}= \int D^{-} f(\nabla g)|\nabla g|^{p-2}\mathrm{d} \mathfrak{m},\;\;  \forall f, g \in \mathrm{S}^p(\mathrm{X}).
\end{equation}
\end{definition}
\begin{remark}
Since $D^{-} f(\nabla g)\leq D^{+} f(\nabla g)$ $\mathfrak{m}$-a.e. for any $f, g \in \mathrm{S}^p(\mathrm{X})$, we get that the integral equality $(\ref{3.10})$ is equivalent to the pointwise one:
$$
 D^{+} f(\nabla g)=D^{-} f(\nabla g), \quad \mathfrak{m}\text{-a.e. }, \quad \forall f, g \in \mathrm{S}^p(\mathrm{X}).
$$
On $q$-infinitesimally strictly convex spaces, we will denote by $D f(\nabla g)$ the common value of $D^{+} f(\nabla g)=D^{-} f(\nabla g)$, for $f,g\in \mathrm{S}^p(\mathrm{X})$.
\end{remark}

We recall the following proposition in \cite[Proposition 5.9]{in}, which allows us to define a notion of (local) $p$-Laplacian(Definition $\ref{Lap}$) on $\mathrm{RCD}(K,N)$ spaces.
\begin{proposition}\label{prop2.5}
Let $(\mathrm{X}, \mathrm{d}, \mathfrak{m})$ be a $\mathrm{RCD}(K,N)$ space with $K\in \mathbb{R}$, $N\in (1,\infty)$. Then there are unique couples $(L^0(T^*\mathrm{X}),\mathrm{d})$ and $(L^0(T\mathrm{X}),\nabla)$ of universal cotangent and tangent module, with associated linear universal differential and gradient, respectively. In particular, for any $p\in (1,\infty)$, $(\mathrm{X}, \mathrm{d}, \mathfrak{m})$ is $q$-infinitesimally strictly convex, where $q$ is the conjugate exponent of $p$.
\end{proposition}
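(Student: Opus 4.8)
The plan is to obtain the universal modules by gluing together Gigli's $L^p$-cotangent modules across all exponents $p$, the glue being provided precisely by the $p$-independence of minimal weak upper gradients; the $q$-infinitesimal strict convexity will then follow from a pointwise differentiation once the Hilbert-module structure (coming from infinitesimal Hilbertianity at the level $p=2$) is in hand. First I would recall that, for each fixed $p\in(1,\infty)$, Gigli's construction attaches to the $p$-Cheeger energy an $L^p(\mathfrak{m})$-normed $L^\infty(\mathfrak{m})$-module $(L^p(T^*\mathrm{X}),\mathrm{d}_p)$, generated in the module sense by $\{\mathrm{d}_p h : h\in \mathrm{S}^p(\mathrm{X})\cap L^p(\mathrm{X})\}$ and satisfying $|\mathrm{d}_p h|=|\nabla h|$ $\mathfrak{m}$-a.e., whose dual module carries a gradient map $h\mapsto\nabla_p h$. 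Since $(\mathrm{X},\mathrm{d},\mathfrak{m})$ is infinitesimally Hilbertian, $\mathrm{Ch}_2$ is quadratic, so $(L^2(T^*\mathrm{X}),\mathrm{d}_2)$ is a Hilbert module: pointwise polarization of the parallelogram identity produces a symmetric $L^\infty(\mathfrak{m})$-bilinear pointwise scalar product $\langle\cdot,\cdot\rangle$ with $\langle\omega,\omega\rangle=|\omega|^2$.

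The heart of the matter is the gluing. By the Remark recalled from \cite{in}, on any $\mathrm{RCD}(K,N)$ space (being locally doubling and supporting a weak local Poincar\'e inequality) the minimal weak upper gradient is $p$-independent: $|\nabla h|_{p_1}=|\nabla h|_{p_2}$ $\mathfrak{m}$-a.e. for $h\in\mathrm{S}^{p_1}(\mathrm{X})\cap\mathrm{S}^{p_2}(\mathrm{X})$, and $\mathrm{S}^{p_1}(\mathrm{X})\cap\mathrm{S}^{p_2}(\mathrm{X})$ is dense in each $\mathrm{S}^{p_i}(\mathrm{X})$. This is exactly the compatibility needed to realize all the $(L^p(T^*\mathrm{X}),\mathrm{d}_p)$ as the ``$L^p$-parts'' of a single $L^0(\mathfrak{m})$-module: one defines $L^0(T^*\mathrm{X})$ as the $L^0$-completion of, say, $L^2(T^*\mathrm{X})$, extends the differential to $\bigcup_{p}\mathrm{S}^p(\mathrm{X})$ by the consistency relation $|\mathrm{d} h|=|\nabla h|$, and then checks that for every $p$ the submodule $\{V\in L^0(T^*\mathrm{X}):|V|\in L^p(\mathfrak{m})\}$ is generated by $\{\mathrm{d} h: h\in\mathrm{S}^p(\mathrm{X})\cap L^p(\mathrm{X})\}$, hence is isomorphic to $L^p(T^*\mathrm{X})$. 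Uniqueness of the couple $(L^0(T^*\mathrm{X}),\mathrm{d})$ up to unique isomorphism then follows from this generation property together with locality and the chain rule \eqref{2.14}--\eqref{2.16}: any two realizations must agree on differentials of Sobolev functions and on their $L^\infty$-combinations, hence on a generating set, hence everywhere. The tangent module $(L^0(T\mathrm{X}),\nabla)$ is obtained as the pointwise dual, with $\nabla$ the composition of $\mathrm{d}$ with the isomorphism $L^0(T^*\mathrm{X})\to L^0(T\mathrm{X})$ induced by the pointwise scalar product; its uniqueness is inherited from that of the cotangent module.

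For the ``in particular'' clause, one fixes $p\in(1,\infty)$ with conjugate $q$ and $f,g\in\mathrm{S}^p(\mathrm{X})$, and works $\mathfrak{m}$-a.e. on $\{|\nabla g|>0\}$, the complement being trivial since there $D^{\pm}f(\nabla g)=0$ by definition. Linearity of $\mathrm{d}$ and bilinearity of $\langle\cdot,\cdot\rangle$ give
$$
|\nabla(g+\epsilon f)|^{2}=|\nabla g|^{2}+2\epsilon\,\langle\nabla f,\nabla g\rangle+\epsilon^{2}|\nabla f|^{2}\qquad\mathfrak{m}\text{-a.e.},
$$
so $\Phi(\epsilon):=|\nabla(g+\epsilon f)|^{p}=\big(|\nabla g|^{2}+2\epsilon\langle\nabla f,\nabla g\rangle+\epsilon^{2}|\nabla f|^{2}\big)^{p/2}$ is, pointwise, a smooth convex function of $\epsilon$ near $0$ with $\Phi'(0)=p|\nabla g|^{p-2}\langle\nabla f,\nabla g\rangle$. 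By convexity the difference quotients $(\Phi(\epsilon)-\Phi(0))/(p\epsilon|\nabla g|^{p-2})$ are monotone in $\epsilon$, so their essential infimum over $\epsilon>0$ and essential supremum over $\epsilon<0$ both equal $\Phi'(0)/(p|\nabla g|^{p-2})=\langle\nabla f,\nabla g\rangle$. Hence $D^{+}f(\nabla g)=D^{-}f(\nabla g)=\langle\nabla f,\nabla g\rangle$ $\mathfrak{m}$-a.e., which after multiplication by $|\nabla g|^{p-2}$ and integration is exactly \eqref{3.10}; thus $(\mathrm{X},\mathrm{d},\mathfrak{m})$ is $q$-infinitesimally strictly convex.

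The first and third steps are essentially bookkeeping once the Hilbert-module structure is available; the genuine obstacle is the second step, namely establishing that the family $\{(L^p(T^*\mathrm{X}),\mathrm{d}_p)\}_p$ is mutually compatible, glues to a single $L^0$-module, and that this $L^0$-module together with its differential is unique up to unique isomorphism. This is where the hypothesis ``$\mathrm{RCD}(K,N)$'' enters decisively, via locally doubling plus Poincar\'e and hence $p$-independence of minimal weak upper gradients, and it is the content worked out in detail in \cite[Proposition 5.9]{in}.
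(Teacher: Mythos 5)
Proposition~\ref{prop2.5} is not proved in the paper: it is stated as a recall of \cite[Proposition 5.9]{in} and used as a black box, so there is no in-paper argument to compare your proposal against. Your sketch is nonetheless a plausible reconstruction of the cited result, and its overall architecture (build the universal $L^0$-modules by gluing the $L^p$-cotangent modules via $p$-independence of weak upper gradients, then read off $q$-infinitesimal strict convexity from the pointwise Hilbert structure) is the right one.

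Two remarks on the substance. The final paragraph, deriving $q$-infinitesimal strict convexity by observing that $\epsilon\mapsto\bigl(|\nabla g|^2+2\epsilon\langle\nabla f,\nabla g\rangle+\epsilon^2|\nabla f|^2\bigr)^{p/2}$ is pointwise convex and differentiable at $\epsilon=0$ with derivative $p|\nabla g|^{p-2}\langle\nabla f,\nabla g\rangle$, is correct; but note the identity $|\nabla(g+\epsilon f)|^2=|\nabla g|^2+2\epsilon\langle\nabla f,\nabla g\rangle+\epsilon^2|\nabla f|^2$ for arbitrary $f,g\in\mathrm{S}^p(\mathrm{X})$ only makes sense once the universal $L^0$-module is in place, since polarization of $\mathrm{Ch}_2$ a priori gives $\langle\cdot,\cdot\rangle$ only on the $L^2$ cotangent module. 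The order of your two halves is therefore essential, not merely expository, and it would be worth stating that explicitly. For the main clause, you correctly identify the $p$-independence of minimal weak upper gradients (itself a theorem in \cite{in} resting on doubling plus Poincar\'e) as the decisive hypothesis and the gluing/uniqueness of $(L^0(T^*\mathrm{X}),\mathrm{d})$ as the genuine work; but you ultimately defer that step to the cited reference rather than carrying it out. Since the paper itself does exactly that, this is acceptable for the present purpose, but your proposal should be read as an annotated outline rather than a self-contained proof.
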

We state here a collection of useful facts concerning local Sobolev functions: these can be found for instance in \cite[Chapter 3,4]{10}.
\begin{proposition}\label{propp}
Let $p,q\in (1,\infty)$  be conjugate exponents. Assume that $(\mathrm{X}, \mathrm{d}, \mathfrak{m})$ is $q$-infinitesimally strictly convex, then for any $f,g\in \mathrm{S}^p(\Omega)$.\\
$(i)$ The functions $D f(\nabla g):\Omega\rightarrow [-\infty,+\infty]$ is well defined by 
$$
D f(\nabla g):= D (f\chi)(\nabla g\chi), \quad \mathfrak{m}\text{-a.e. on }\{\chi=1\},
$$
where $\chi\in \operatorname{Lip}_c(\Omega)$.\\
$(ii)$ \underline{Chain rules.} Let $\varphi: \mathbb{R} \rightarrow \mathbb{R}$ be Lipschitz. Then $\varphi \circ f \in \mathrm{S}^p(\Omega)$ and it holds
$$
D(\varphi \circ f)(\nabla g)=\varphi^{\prime} \circ f D f(\nabla g), \quad \mathfrak{m}\text{-a.e.},
$$
where the value of $\varphi^{\prime} \circ f$ is defined arbitrarily at points $x$ where $\varphi$ is not differentiable at $f(x)$. \\
Similarly, assume that $\varphi: \mathbb{R} \rightarrow \mathbb{R}$ is Lipschitz. Then $\varphi \circ g \in \mathrm{S}^p(\Omega)$ and it holds
$$
D f(\nabla(\varphi \circ g))=\varphi^{\prime} \circ g D f(\nabla g), \quad \mathfrak{m}\text{-a.e.},
$$
where the value of $\varphi^{\prime} \circ g$ is defined arbitrarily at points $x$ where $\varphi$ is not differentiable at $g(x)$.\\
$(iii)$ \underline{Leibniz rule.} Let $f_1, f_2 \in \mathrm{S}^p\cap L^{\infty}(\Omega)$ and $g \in \mathrm{S}^p(\Omega)$.
Then it holds
\begin{align*}
D(f_1 f_2)(\nabla g) = f_1 D f_2(\nabla g)+f_2 D f_1(\nabla g),\quad \mathfrak{m}\text{-a.e.},
\end{align*}
$(iv)$ \underline{Linearity.} The map
$$
\mathrm{S}^p(\Omega) \ni f \quad \mapsto \quad D f(\nabla g)
$$
is linear $\mathfrak{m}$-a.e., i.e.
$$
D(\alpha_1 f_1+\alpha_2 f_2)(\nabla g)=\alpha_1 D f_1(\nabla g)+\alpha_2 D f_2(\nabla g), \quad \mathfrak{m}\text{-a.e., }
$$
for any $f_1, f_2 \in \mathrm{S}^p(\Omega), \alpha_1, \alpha_2 \in \mathbb{R}$.
\end{proposition}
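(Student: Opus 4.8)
The plan is to derive all four items from the convex-analytic structure underlying the definition of $D^{\pm}f(\nabla g)$, together with the calculus rules $(\ref{2.14})$–$(\ref{2.16})$ and the bound $(\ref{24})$ already recorded for $|\nabla\cdot|$. The starting point is the observation that $|\nabla\cdot|$ is, $\mathfrak{m}$-a.e., a pointwise seminorm on $\mathrm{S}^p(\mathrm{X})$ (subadditivity and $1$-homogeneity of minimal $p$-weak upper gradients), so running the convexity and Lipschitz estimates for $\varepsilon\mapsto|\nabla(g+\varepsilon f)|(x)$ over countably many rational parameters and extending by continuity, one gets a single full-measure set on which $\varepsilon\mapsto|\nabla(g+\varepsilon f)|$ is convex and locally Lipschitz; composing with the convex nondecreasing map $t\mapsto t^p$ on $[0,\infty)$ shows $\varepsilon\mapsto\frac1p|\nabla(g+\varepsilon f)|^p$ is convex $\mathfrak{m}$-a.e.. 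Hence on $\{|\nabla g|\neq0\}$ the quantities $D^{-}f(\nabla g)$ and $D^{+}f(\nabla g)$ are, respectively, its left and right derivatives at $\varepsilon=0$ divided by $|\nabla g|^{p-2}$, and one automatically has $D^{-}\le D^{+}$. The hypothesis of $q$-infinitesimal strict convexity, i.e. $(\ref{3.10})$ (guaranteed here by Proposition $\ref{prop2.5}$), forces $D^{-}=D^{+}$ $\mathfrak{m}$-a.e., so $Df(\nabla g)$ is the genuine two-sided derivative at $0$ of a convex function, normalized by $|\nabla g|^{p-2}$. For item $(i)$ one then checks that for $f,g\in\mathrm{S}^p(\Omega)$ and $\chi\in\operatorname{Lip}_c(\Omega)$ the products $f\chi,g\chi\in\mathrm{S}^p(\mathrm{X})$, and that if $\chi_1=\chi_2=1$ on an open set $U$ then $(\ref{2.15})$ applied to $g\chi_i+\varepsilon f\chi_i$ makes the corresponding difference quotients agree $\mathfrak{m}$-a.e. on $U$; letting $\chi$ exhaust $\operatorname{Lip}_c(\Omega)$ gives a well-defined $Df(\nabla g)$ on $\Omega$, and since every identity below is pointwise it transfers from $\mathrm{S}^p(\mathrm{X})$ to $\mathrm{S}^p(\Omega)$ verbatim. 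From here on I work in $\mathrm{S}^p(\mathrm{X})$.

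Item $(iv)$ (linearity in the first slot) follows because the derivative at $0$ of a finite sum of convex functions is the sum of the derivatives once these exist: subadditivity of the increment gives $D^{+}(f_1+f_2)(\nabla g)\le D^{+}f_1(\nabla g)+D^{+}f_2(\nabla g)$ and the reverse for $D^{-}$, and since all four quantities equal their two-sided versions the additive identity holds; scalars are handled by $1$-homogeneity of the increment together with the sign bookkeeping in the definition of $D^{\pm}$. For the first chain rule $D(\varphi\circ f)(\nabla g)=\varphi'(f)\,Df(\nabla g)$ I would first take $\varphi$ piecewise affine: on a level set $f^{-1}(I)$ where $\varphi$ has slope $a$ on $I$, the functions $\varphi\circ f$ and $af$ differ by a constant, so $g+\varepsilon(\varphi\circ f)$ and $g+\varepsilon af$ differ by a constant as well, and $(\ref{2.15})$ combined with $1$-homogeneity yields $D(\varphi\circ f)(\nabla g)=a\,Df(\nabla g)$ $\mathfrak{m}$-a.e. there. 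A general Lipschitz $\varphi$ is reached by uniform approximation by piecewise-affine $\varphi_n$ with $\varphi_n'\to\varphi'$ $\mathscr{L}^1$-a.e.; on $f^{-1}(\{\varphi_n'\not\to\varphi'\})$ one has $|\nabla f|=0$ by $(\ref{2.14})$ so both sides already vanish, and on $\{|\nabla f|\neq0\}$ one concludes by pointwise convergence, using $(iv)$ and $(\ref{24})$ to control $|D((\varphi_n-\varphi)\circ f)(\nabla g)|\le|\nabla((\varphi_n-\varphi)\circ f)||\nabla g|=|\varphi_n'(f)-\varphi'(f)||\nabla f||\nabla g|\to0$.

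The second chain rule $Df(\nabla(\varphi\circ g))=\varphi'(g)\,Df(\nabla g)$ is the one I expect to be the main obstacle, since now the normalizing weight $|\nabla(\varphi\circ g)|^{p-2}$ itself changes. On $\{|\nabla g|=0\}$ both sides vanish by convention, and on $\{\varphi'(g)=0\}$ one has $|\nabla(\varphi\circ g)|=|\varphi'(g)||\nabla g|=0$ by $(\ref{2.16})$, so both sides vanish again; on the complement I reduce once more to piecewise-affine $\varphi$, where on $g^{-1}(I)$ the function $\varphi\circ g$ equals $ag$ up to a constant with $a\neq0$, hence $\varphi\circ g+\varepsilon f$ agrees with $ag+\varepsilon f$ there and $|\nabla(\varphi\circ g)|^{p-2}=|a|^{p-2}|\nabla g|^{p-2}$. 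Writing $|\nabla(ag+\varepsilon f)|=|a|\,|\nabla(g+(\varepsilon/a)f)|$ and substituting $\varepsilon\mapsto\varepsilon/a$, the difference quotient defining $D^{\pm}f(\nabla(\varphi\circ g))$ becomes $a$ times the difference quotient for $Df(\nabla g)$, so in the limit it equals $a\,Df(\nabla g)=\varphi'(g)\,Df(\nabla g)$; the general case follows by the same approximation as above. (Both chain rules can alternatively be read off transparently from the cotangent/tangent module of Proposition $\ref{prop2.5}$, in which $\mathrm{d}(\varphi\circ h)=\varphi'(h)\,\mathrm{d}h$ holds by construction and $Df(\nabla g)$ is the pointwise duality pairing; the elementary route above simply avoids invoking the full module machinery.) Finally, the Leibniz rule $(iii)$ is obtained by polarization: for $f_1,f_2\in\mathrm{S}^p\cap L^\infty(\Omega)$ write $f_1f_2=\tfrac14\left((f_1+f_2)^2-(f_1-f_2)^2\right)$, apply the first chain rule with $\varphi(t)=t^2$ modified to be affine outside a ball containing the essential ranges of $f_1\pm f_2$ (harmless by $(\ref{2.15})$) to get $D(h^2)(\nabla g)=2h\,Dh(\nabla g)$, and combine with linearity $(iv)$. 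This reproduces the calculus of \cite[Chapters 3,4]{10}.
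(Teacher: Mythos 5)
The paper does not actually prove Proposition~\ref{propp}: it is introduced as a "collection of useful facts" with a pointer to Gigli's memoir \cite[Chapter 3,4]{10}, where $Df(\nabla g)$ is identified with a pointwise duality pairing in the cotangent/tangent module of Proposition~\ref{prop2.5} and all four calculus rules are immediate from the module structure. Your proposal is therefore a genuinely different, self-contained route, built directly from the defining formulas for $D^{\pm}f(\nabla g)$ together with $(\ref{2.14})$--$(\ref{2.16})$ and $(\ref{24})$. Its backbone is sound: $\varepsilon\mapsto\tfrac1p|\nabla(g+\varepsilon f)|^p$ is $\mathfrak m$-a.e.\ convex, $D^{\pm}f(\nabla g)$ are its one-sided derivatives at $0$ after normalization by $|\nabla g|^{p-2}$, and $q$-infinitesimal strict convexity collapses them to a single two-sided derivative. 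Item (i), linearity (iv), the first chain rule by piecewise-affine approximation, and the Leibniz rule by polarization all go through as you describe. (One small imprecision: in (iv), what you call "subadditivity of the increment" should be the sublinearity of the one-sided directional derivative of the convex function $v\mapsto\tfrac1p|\nabla(g+v)|^p$ at $0$; the increment itself need not be subadditive in $f$.)

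The place where the written argument is genuinely incomplete is the passage from piecewise-affine to general Lipschitz $\varphi$ in the \emph{second} chain rule $Df(\nabla(\varphi\circ g))=\varphi'(g)\,Df(\nabla g)$, which you correctly flag as the main obstacle but then settle with "the general case follows by the same approximation as above." The first-chain-rule approximation hinges on a first-slot estimate, $|D((\varphi_n-\varphi)\circ f)(\nabla g)|\le|\varphi_n'(f)-\varphi'(f)|\,|\nabla f|\,|\nabla g|$, that has no analogue in the second slot: replacing $\varphi_n$ by $\varphi$ now perturbs both the argument inside $|\nabla\cdot|$ and the normalizing weight $|\nabla(\cdot)|^{p-2}$, and one must justify interchanging $\lim_n$ with the one-sided limit $\varepsilon\to0^{+}$ in the definition of $D^{\pm}$. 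Uniform convergence of convex functions does \emph{not} by itself give convergence of one-sided derivatives (e.g.\ $\sqrt{\varepsilon^2+1/n^2}\to|\varepsilon|$), so an extra argument is genuinely needed here, for instance that $\varepsilon\mapsto|\nabla(\varphi_n\circ g+\varepsilon f)|$ converges uniformly to a limit which is differentiable at $0$ by strict convexity, whence the subgradients at $0$ collapse to the derivative. Alternatively, the module identity $\mathrm{d}(\varphi\circ g)=\varphi'(g)\,\mathrm{d}g$ from \cite{10}, which you mention parenthetically, dispatches both chain rules at once --- and is in fact what the paper's citation is relying on.
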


\begin{definition}[The local $p$-Laplacian]\label{Lap}
Let $p,q\in (1,\infty)$  be conjugate exponents. Assume that the space $(\mathrm{X}, \mathrm{d}, \mathfrak{m})$ is $q$-infinitesimally strictly convex space and $\Omega \subset \mathrm{X}$ is an open set. The $p$-Laplacian on $\Omega$ is an operator $\mathscr{L}_p$ on $W^{1,p}(\Omega)$ defined as the follows. For each function $u \in W^{1,p}(\Omega)$, its $p$-Laplacian
$\mathscr{L}_p u$ is a functional acting on $W_0^{1,p}(\Omega)$ given by
$$
\mathscr{L}_p u(v):=-\int_{\Omega} Dv(\nabla u)|\nabla u|^{p-2} \mathrm{d} \mathfrak{m} \quad \forall\; v \in W_0^{1,p}(\Omega) .
$$
\end{definition} 
\begin{remark}
Using Proposition $\ref{propp}$(iv) and $(\ref{24})$, it is easy to see that $\mathscr{L}_p u$ is a bounded linear functional on $W_0^{1,p}(\Omega)$ for any $u \in W^{1,p}(\Omega)$.
\end{remark}

\begin{definition}[The Dirichlet problem]\label{defDiri}
Let $p,q\in (1,\infty)$  be conjugate exponents. Assume that the space $(\mathrm{X}, \mathrm{d}, \mathfrak{m})$ is $q$-infinitesimally strictly convex space and $\Omega \subset \mathrm{X}$ is an open set. Let $f \in L^q(\Omega)$. We say that a function $u \in W^{1,p}(\Omega)$ is a weak solution to the Dirichlet problem
\begin{equation*}
\begin{cases}
-\mathscr{L}_p u=f & \text { in } \Omega \\ 
u=0 & \text { on } \partial \Omega
\end{cases}
\end{equation*}
if $u \in W_0^{1,p}(\Omega)$ and
\begin{equation}\label{28}
-\mathscr{L}_p u(v)=\int_{\Omega} f v \mathrm{d}\mathfrak{m}, \quad \text { for any } v \in W_0^{1,p}(\Omega).
\end{equation}		
\end{definition} 
For $u\in W^{1,p}(\Omega)$ and $f\in L^q(\Omega)$, we say that $u$ is a weak solution of $-\mathscr{L}_p u=f$ if $(\ref{28})$ holds.

\vspace{0.5cm}

Finally, we recall the notions of functions of bounded variation and sets of finite perimeter. We start by recalling the notion of slope of a real valued function. Let $u: \mathrm{X} \rightarrow \mathbb{R}$ be a real valued function. We define the slope of $u$ at the point $x \in \mathrm{X}$ as
$$
\operatorname{lip} (u)(x) = \begin{cases}
\displaystyle{\limsup_{y \rightarrow x} \frac{|u(x)-u(y)|}{\mathrm{d}(x, y)}} & \text { if } x \text { is not isolated } \\ 
0 & \text { otherwise. }\end{cases}
$$
\begin{definition}[BV functions and perimeter]
Given $f \in L^1(\Omega)$ we define
$$
\|D f\|(A):=\inf \bigg\{\liminf_i \int_A \operatorname{lip} f_i \mathrm{d}\mathfrak{m}: f_i \in \operatorname{Lip}_{\mathrm{loc}}(A), f_i \rightarrow f \text { in } L^1(A)\bigg\}
$$
for any open set $A \subset \Omega$. A function $f \in L^1(\Omega)$ is said to belong to the space of bounded variation functions $\mathrm{BV}(\Omega)$ if $\|D f\|(\Omega)<+\infty$. If $E \subset \mathrm{X}$ is an $\mathfrak{m}$-measurable set and $A \subset \mathrm{X}$ is open, we define the perimeter $\operatorname{Per}(E, A)$ of $E$ in $A$ by
$$
\operatorname{Per}(E, A):=\|D \chi_E\|(A).
$$
We say that $E$ has finite perimeter if $\operatorname{Per}(E, \mathrm{X})<+\infty$, and we denote $\operatorname{Per}(E):=\operatorname{Per}(E, \mathrm{X})$.
\end{definition}
Here, $\operatorname{Lip}_\mathrm{loc}(A)$ will denote the space of locally Lipschitz functions, i.e. for all $x\in A$ there exists $r>0$ such that $B_r(x)\subset A$ and the restriction of $f$ to $B_r(x)$ is Lipschitz. Let us remark that when $f \in \mathrm{BV}(\mathrm{X})$, the set function $\|D f\|$ above is the restriction to open sets of a Borel measure that we still denote by $\|D f\|$, see \cite{14,AD14}.

In the sequel, we shall frequently make use of the following coarea formula.
\begin{theorem}[Coarea formula]\cite{14,15}
Let $f \in L^1(\Omega)$ be given. Then for any $\mathfrak{m}$-measurable function $g:\Omega\rightarrow [0,+\infty]$ and any $t\in \mathbb{R}$, it holds that 
$$
\int_{\{f>t\}} g\mathrm{d}\|D f\|=\int_t^\infty\int g \mathrm{d}\operatorname{Per}(\{f>r\}) \mathrm{d}r .
$$
Moreover, if $f\in W_0^{1,p}(\Omega)$ is a non-negative function, then $\|Df\|=|\nabla f|\mathfrak{m}$, in other words, for any $\mathfrak{m}$-measurable function $g:\Omega\rightarrow [0,+\infty]$ and any $t>0$, it holds that
\begin{equation}\label{coarea}
\int_{\{f>t\}} g|\nabla f|\mathfrak{m}=\int_t^\infty\int g \mathrm{d}\operatorname{Per}(\{f>r\}) \mathrm{d} r .
\end{equation}
\end{theorem}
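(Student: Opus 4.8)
The statement bundles two facts that hold on every metric measure space supporting a weak local Poincar\'{e} inequality, hence on $\mathrm{RCD}(K,N)$ spaces: the coarea formula for $\mathrm{BV}$ functions in measure form, $\|Df\| = \int_{\mathbb{R}}\operatorname{Per}(\{f>r\},\,\cdot\,)\,\mathrm{d}r$ on Borel subsets of $\Omega$, which is \cite{14,15}; and the identification $\|Df\| = |\nabla f|\mathfrak{m}$ valid whenever $f$ is a Sobolev function. So the plan is to derive the first identity of the theorem from the measure form of the coarea formula by restricting it to the superlevel set $\{f>t\}$, and then to obtain the second identity by checking that $\|Df\|=|\nabla f|\mathfrak{m}$ for the zero-extension of a non-negative $f\in W_0^{1,p}(\Omega)$ and reconciling the perimeters computed in $\mathrm{X}$ and in $\Omega$.

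For the first identity, fix a quasi-continuous representative of $f$ and a Borel representative of $g$. Since all integrands are non-negative, Tonelli's theorem applied to the coarea formula gives
\begin{equation*}
\int_{\{f>t\}} g\,\mathrm{d}\|Df\| = \int_{\mathbb{R}}\Big(\int_{\{f>t\}} g\,\mathrm{d}\operatorname{Per}(\{f>r\})\Big)\mathrm{d}r ,
\end{equation*}
so it suffices to show that the inner integral is $0$ for a.e.\ $r<t$ and equals $\int g\,\mathrm{d}\operatorname{Per}(\{f>r\})$ for a.e.\ $r>t$. This uses the structure theory of sets of finite perimeter on spaces supporting a Poincar\'{e} inequality, namely that $\operatorname{Per}(E,\,\cdot\,)$ is concentrated on the measure-theoretic boundary of $E$, i.e.\ on the set of points where the density of $E$ is neither $0$ nor $1$. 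For $r<t$ one has $\{f>t\}\subset\{f>r\}$, so by the Lebesgue density theorem $\mathfrak{m}$-a.e.\ point of $\{f>t\}$ is a density-one point of $\{f>r\}$ and therefore carries no $\operatorname{Per}(\{f>r\})$-mass; for $r>t$ the same inclusion, together with quasi-continuity of $f$ (which rules out an essential jump straddling the level $t$), shows that the measure-theoretic boundary of $\{f>r\}$ sits inside $\{f>t\}$ up to a $\operatorname{Per}(\{f>r\})$-null set. This proves the first identity.

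For the second identity, let $f\in W_0^{1,p}(\Omega)$ with $f\ge 0$ and extend it by $0$; by definition of $W_0^{1,p}(\Omega)$ the extension lies in $W^{1,p}(\mathrm{X})$, and since $\mathfrak{m}$ is finite it can be approximated in $W^{1,1}(\mathrm{X})$ by Lipschitz functions, so that $\|Df\| = |\nabla f|\mathfrak{m}$ on $\mathrm{X}$ by the standard $\mathrm{BV}$-versus-Sobolev density identification on spaces supporting a Poincar\'{e} inequality. By the locality relations $(\ref{2.14})$ and $(\ref{2.15})$, $|\nabla f|=0$ $\mathfrak{m}$-a.e.\ on $\Omega^c\supset\partial\Omega$; hence $\|Df\|(\partial\Omega)=\int_{\partial\Omega}|\nabla f|\,\mathrm{d}\mathfrak{m}=0$, and combining this with the coarea formula and with the fact that $\{f>r\}\subset\Omega$ up to $\mathfrak{m}$-null sets for every $r>0$ (this is where non-negativity of $f$ enters) one gets $\operatorname{Per}(\{f>r\})=\operatorname{Per}(\{f>r\},\Omega)$ for a.e.\ $r>0$. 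Substituting $\|Df\|=|\nabla f|\mathfrak{m}$ into the first identity with $t>0$ then yields
\begin{equation*}
\int_{\{f>t\}} g\,|\nabla f|\,\mathrm{d}\mathfrak{m} = \int_{\{f>t\}} g\,\mathrm{d}\|Df\| = \int_t^\infty\!\!\int g\,\mathrm{d}\operatorname{Per}(\{f>r\})\,\mathrm{d}r ,
\end{equation*}
which is the asserted formula. I expect the only genuine obstacle to be the level-$t$ peeling in the first step: it relies on the non-trivial fact that perimeter measures are concentrated on the measure-theoretic boundary, and on choosing the right representatives of $f$ and $g$, without which the identity can actually fail (e.g.\ for $\mathrm{BV}$ functions with an essential jump). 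The remaining ingredients --- Lipschitz approximation, the $\mathrm{BV}$--Sobolev density identification, and the $\partial\Omega$ bookkeeping --- are routine once the appropriate references are invoked.
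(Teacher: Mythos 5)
The paper does not prove this theorem; it is invoked as a known result with the citations \cite{14,15}, so there is no "paper's own proof" to compare against, and your task was really to reconstruct a proof from the literature. Your overall route --- start from the measure-form coarea identity $\|Df\|(\cdot)=\int_{\mathbb{R}}\operatorname{Per}(\{f>r\},\cdot)\,\mathrm{d}r$ on PI spaces, localize to the superlevel set, then identify $\|Df\|$ with $|\nabla f|\mathfrak{m}$ for Sobolev functions and handle the $\partial\Omega$ bookkeeping --- is the natural one and is essentially how the cited references proceed.

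The one step where you are too casual is the level-$t$ peeling in the first (general $L^1$/BV) part. You want to argue, for the same Borel representative of $\{f>t\}$, that $\operatorname{Per}(\{f>r\},\{f>t\})=0$ for a.e.\ $r<t$ and $\operatorname{Per}(\{f>r\},\{f>t\}^c)=0$ for a.e.\ $r>t$. For $r<t$ the density argument naturally wants $\{f>t\}$ to be $\{f^\wedge>t\}$ (approximate liminf), while for $r>t$ it wants $\{f>t\}$ to be $\{f^\vee>t\}$ (approximate limsup). These two representatives differ exactly on the jump set $\{f^\wedge\le t<f^\vee\}$, and for a general BV function the perimeter measures $\operatorname{Per}(\{f>r\},\cdot)$ do charge this set when $r$ ranges over the jump interval, so the identity genuinely depends on which representative of $\{f>t\}$ is taken. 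You invoke "quasi-continuity of $f$" to dismiss this, but quasi-continuity is a Sobolev-type property (tied to capacity), not a BV one; it does not hold for a generic $f\in L^1$ with finite total variation. So your derivation of the first identity is incomplete in the way you already suspected, and the fix requires either restricting the first identity to a.e.\ $t$, or being explicit about the precise representative of $\{f>t\}$. None of this, however, affects the second identity, which is the only one the paper actually uses: there $f$ is $W_0^{1,p}$, $\|Df\|=|\nabla f|\mathfrak{m}\ll\mathfrak{m}$, so $\{f>t\}$ is unambiguous up to $\|Df\|$-null sets and the peeling is immediate; your treatment of that part, including the observation that non-negativity of $f$ and locality kill the contribution of $\partial\Omega$ and force $\operatorname{Per}(\{f>r\})=\operatorname{Per}(\{f>r\},\Omega)$ for a.e.\ $r>0$, is correct.
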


\begin{definition}[Isoperimetric profile]
Let $(\mathrm{X}, \mathrm{d}, \mathfrak{m})$ be a metric measure space. The isoperimetric profile $\mathcal{I}_{(\mathrm{X}, \mathrm{d}, \mathfrak{m})}:[0,1] \rightarrow[0,+\infty]$ is defined as
$$
\mathcal{I}_{(\mathrm{X}, \mathrm{d}, \mathfrak{m})}(v)= \inf \{\operatorname{Per}(E) : E\subset \mathrm{X}\text{ Borel }, \mathfrak{m}(E)=v\}, \quad v \in[0,1]
$$	
\end{definition}

\subsection{Measured Gromov-Hausdorff convergence and stability of $\operatorname{RCD}(K, N)$}
In this subsection, let us recall the notion of measured Gromov-Hausdorff convergence (mGH for short). The readers can refer to \cite{19} for a discussion on the various notions of convergence of metric measure spaces and their relations. Since in this paper we will apply it to compact metric measure space, we will restrict to this framework for simplicity. From now on, we set $\overline{\mathbb{N}}_+=\mathbb{N}_+\cup \{\infty\}$.
\begin{definition}
A sequence $(\mathrm{X}_j, \mathrm{d}_j, \mathfrak{m}_j)$ of compact metric measure space is said to converge in the $\mathrm{mGH}$ sense to a compact metric measure space $(\mathrm{X}_\infty, \mathrm{d}_\infty, \mathfrak{m}_\infty)$ if there exists a complete and separable metric space $(\mathrm{Y}, \mathrm{d})$ and isometric embeddings $\{\iota_j:(\mathrm{X}_j, \mathrm{d}_j) \rightarrow(\mathrm{Y}, \mathrm{d})\}_{j \in \overline{\mathbb{N}}_+}$ such that, for every $\epsilon>0$, there exists $j_0$ such that, for every $j>j_0$,
$$
\iota_{\infty}(\mathrm{X}_{\infty}) \subset B_{\epsilon}[\iota_j(\mathrm{X}_j)] \text { and } \iota_j(\mathrm{X}_j) \subset B_{\epsilon}[\iota_{\infty}(\mathrm{X}_{\infty})],
$$
where $B_{\epsilon}[A]:=\{z \in \mathrm{Y}: \mathrm{d}(z, A)<\epsilon\}$ for every subset $A \subset Y$, and
$$
\int_{\mathrm{Y}} \varphi\mathrm{d}(\iota_j)_{\sharp}(\mathfrak{m}_j) \rightarrow \int_{\mathrm{Y}} \varphi\mathrm{d}(\iota_{\infty})_{\sharp}(\mathfrak{m}_{\infty}) \quad \text { for all } \varphi \in C_{\mathrm{b}}(\mathrm{Y}),
$$
where $C_{\mathrm{b}}(\mathrm{Y})$ denotes the set of real-valued bounded continuous functions in $\mathrm{Y}$.
\end{definition}
The following theorem summarizes the compactness/stability properties we will use in the proof of the almost rigidity result.
\begin{theorem}[metrizability and compactness]\label{com}
Let $K>0$ and $N\in (1,\infty)$ be fixed. Then the $\mathrm{mGH}$ convergence restricted to $($isomorphism classes of$)$ $\operatorname{RCD}(K, N)$ spaces is metrizable by a distance function $\mathrm{d}_{\mathrm{mGH}}$. Furthermore, every sequence $(\mathrm{X}_j, \mathrm{d}_j, \mathfrak{m}_j)$ of $\operatorname{RCD}(K, N)$ spaces admits a subsequence which converges in the $\mathrm{mGH}$ sense to a $\operatorname{RCD}(K, N)$ space.
\end{theorem}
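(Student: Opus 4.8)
The plan is to combine an extrinsic description of the mGH topology, which makes metrizability transparent, with Gromov's precompactness theorem and Prokhorov's theorem for the compactness part, and with the stability of the synthetic curvature-dimension condition for the identification of the limit. For metrizability I would first invoke the last assertion of Proposition \ref{prop5}: since $K>0$, every $\mathrm{RCD}(K,N)$ space is compact with diameter at most $D:=\pi\sqrt{(N-1)/K}$, so we work within a family of uniformly bounded compact metric measure spaces. On this family set
\[
\mathrm{d}_{\mathrm{mGH}}\big((\mathrm{X}_1,\mathrm{d}_1,\mathfrak m_1),(\mathrm{X}_2,\mathrm{d}_2,\mathfrak m_2)\big):=\inf\Big\{\mathrm{d}_H^{\mathrm Y}\big(\iota_1(\mathrm{X}_1),\iota_2(\mathrm{X}_2)\big)+W_{\mathrm Y}\big((\iota_1)_\sharp\mathfrak m_1,(\iota_2)_\sharp\mathfrak m_2\big)\Big\},
\]
the infimum running over all complete separable $(\mathrm Y,\mathrm d)$ and isometric embeddings $\iota_i\colon(\mathrm{X}_i,\mathrm{d}_i)\hookrightarrow(\mathrm Y,\mathrm d)$, where $\mathrm{d}_H^{\mathrm Y}$ is the Hausdorff distance and $W_{\mathrm Y}$ the $1$-Wasserstein distance on $\mathscr{P}(\mathrm Y)$. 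Symmetry is immediate; the triangle inequality follows by gluing two intermediate ambient spaces along a common isometric copy of the middle space; finiteness uses the uniform diameter bound. That $\mathrm{d}_{\mathrm{mGH}}$ vanishes exactly on isomorphism classes, and that $\mathrm{d}_{\mathrm{mGH}}(\mathrm{X}_j,\mathrm{X}_\infty)\to0$ is equivalent to mGH convergence in the sense of the definition above, is verified by a diagonal argument: from asymptotically isometric, asymptotically measure-preserving embeddings one extracts in the limit a measure-preserving isometry, using compactness of the spaces involved.

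For the compactness, let $(\mathrm{X}_j,\mathrm{d}_j,\mathfrak m_j)$ be a sequence of $\mathrm{RCD}(K,N)$ spaces. The Bishop--Gromov estimate of Proposition \ref{prop5}, with the model comparison ratio for $K>0$, gives a bound $N(\varepsilon)$ depending only on $K,N,\varepsilon$ on the maximal cardinality of an $\varepsilon$-separated subset of $\mathrm{X}_j$: such a subset of size $m$ produces $m$ disjoint balls of radius $\varepsilon/2$, each of $\mathfrak m_j$-measure bounded below by a fixed positive constant since $\mathfrak m_j(\mathrm{X}_j)=1$ and the diameter is at most $D$. Hence $\{(\mathrm{X}_j,\mathrm{d}_j)\}$ is uniformly totally bounded and uniformly bounded, so Gromov's precompactness theorem yields a subsequence converging in the Gromov--Hausdorff sense to a compact metric space $(\mathrm{X}_\infty,\mathrm{d}_\infty)$, which we realize together with all the $\mathrm{X}_j$ via isometric embeddings into a single fixed compact space $(\mathrm Y,\mathrm d)$. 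The pushed-forward measures $(\iota_j)_\sharp\mathfrak m_j$ are Borel probability measures on the compact $\mathrm Y$, hence tight, so by Prokhorov's theorem a further subsequence converges weakly to some $\mathfrak m\in\mathscr{P}(\mathrm Y)$; the Hausdorff convergence forces $\operatorname{supp}\mathfrak m=\iota_\infty(\mathrm{X}_\infty)$, and setting $\mathfrak m_\infty:=(\iota_\infty^{-1})_\sharp\mathfrak m$ we obtain $(\mathrm{X}_j,\mathrm{d}_j,\mathfrak m_j)\to(\mathrm{X}_\infty,\mathrm{d}_\infty,\mathfrak m_\infty)$ in the mGH sense.

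The remaining and genuinely substantial point is that $(\mathrm{X}_\infty,\mathrm{d}_\infty,\mathfrak m_\infty)$ is again $\mathrm{RCD}(K,N)$. Here I would invoke the stability of the synthetic conditions: first, the $\mathrm{CD}(K,N)$ condition --- equivalently $\mathrm{CD}^*(K,N)$, by the equivalence for $N\in(1,\infty)$ recalled after the definition --- is preserved under mGH limits, by passing to the limit in the convexity inequality $(\ref{con})$ along optimal plans and $W_2$-geodesics, exploiting the stability of optimal transport plans and of $W_2$-geodesics under mGH convergence together with the lower semicontinuity of the R\'enyi entropy functionals; second, infinitesimal Hilbertianity also passes to the limit, since the $2$-Cheeger energies converge in the Mosco sense, compatibly with mGH convergence, and quadraticity is preserved under such limits. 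Combining these, the limit is infinitesimally Hilbertian and $\mathrm{CD}(K,N)$, hence $\mathrm{RCD}(K,N)$. I expect this last stability step to be the real obstacle; by contrast the metrizability and the Gromov--Prokhorov precompactness are comparatively soft once the uniform diameter and Bishop--Gromov bounds from Proposition \ref{prop5} are in hand.
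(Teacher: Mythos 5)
The paper does not actually prove this theorem: it is stated as a recollection of known facts, with the survey reference \cite{19} (Gigli--Mondino--Savar\'e) doing the work. So there is no internal proof to compare against, and the only question is whether your sketch would constitute a correct reassembly of the standard literature argument. In outline it does: the $\mathrm{d}_{\mathrm{mGH}}$-type distance you write down (Hausdorff plus Wasserstein after embedding in a common ambient space) is one of the standard metrizations; the uniform diameter bound for $K>0$ and the Bishop--Gromov inequality from Proposition~\ref{prop5} give uniform total boundedness, hence Gromov precompactness for the metric part; Prokhorov handles the measures; and the stability of $\mathrm{CD}(K,N)$ (lower semicontinuity of the R\'enyi entropies along converging plans and geodesics) together with Mosco convergence of the $2$-Cheeger energies is exactly how infinitesimal Hilbertianity is seen to pass to the limit. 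You also correctly flag the last step as the substantive one.

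One place where your sketch is a little too quick is the assertion that ``Hausdorff convergence forces $\operatorname{supp}\mathfrak m=\iota_\infty(\mathrm{X}_\infty)$.'' Hausdorff convergence alone only gives the inclusion $\operatorname{supp}\mathfrak m\subseteq\iota_\infty(\mathrm{X}_\infty)$. For the reverse inclusion you need the uniform lower volume bound furnished by Bishop--Gromov and the fixed diameter: for each $r>0$ there is a $c=c(K,N,r)>0$ with $\mathfrak m_j(B_r(x))\geq c$ for all $x\in\mathrm{X}_j$, and then, given $y\in\iota_\infty(\mathrm{X}_\infty)$ approximated by $y_j\in\iota_j(\mathrm{X}_j)$, one deduces $\mathfrak m(\bar B_{r}(y))\geq\limsup_j\mathfrak m_j(\bar B_{r}(y))\geq c$ via the Portmanteau theorem applied to the closed ball, so $y\in\operatorname{supp}\mathfrak m$. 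Without this quantitative input the Hausdorff convergence of the supports is not enough, since weak convergence allows mass to ``escape'' in the $\liminf$ inequality for open sets. Similarly, the claim that $\mathrm{d}_{\mathrm{mGH}}=0$ implies isomorphism needs a compactness argument to extract a genuine measure-preserving isometry from a sequence of near-isometric embeddings; this is standard but worth flagging if you were to write the proof out. Neither point is a serious obstruction, and with these two small repairs your sketch lines up with the proofs in the references the paper relies on.
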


\subsection{1-dimensional model spaces}
In this subsection we recall the $1$-dimensional "model" metric measure spaces with Ricci curvature bounded below by $K>0$ and dimension bounded above by $N \in(1, \infty)$ on which we will construct the needed symmetrizations. Let $K>0$ and $N \in(1, \infty)$. Let $J_{K, N}$ be the interval $
J_{K, N} =[0, \pi \sqrt{(N-1)/K}]$
and define the following probability density function on $J_{K, N}$:
$$
h_{K, N}(t) = \frac{1}{c_{K, N}} \sin ^{N-1}\bigg(t \sqrt{\frac{K}{N-1}}\bigg),\text{ where }c_{K, N} = \int_{J_{K, N}} \sin ^{N-1}\bigg(t \sqrt{\frac{K}{N-1}}\bigg) \mathrm{d}t.
$$
\begin{definition}[Model spaces]
Given $K>0$, $N \in(1, \infty)$. We define the one dimensional model space with curvature parameter $K$ and dimension parameter $N$ as $(J_{K, N}, \mathrm{d}_{eu}, \mathfrak{m}_{K, N})$, where $\mathfrak{m}_{K, N}= h_{K, N} \mathscr{L}^1\llcorner J_{K, N}$, $\mathrm{d}_{eu}$ is the restriction to $J_{K,N}$ of the Euclidean distance over the real line and $\mathscr{L}^1$ is the standard $1$-dimensional Lebesgue measure.			
\end{definition}
We will also denote by $H_{K,N}$ the cumulative distribution function of $\mathfrak{m}_{K,N}$, i.e. $H_{K,N}(x)=\mathfrak{m}_{K,N}([0,x])$ for any $x\in J_{K, N}$.\\
The following Lemma is an elementary consequence of the definitions of $h_{K,N}$ and $H_{K,N}$:
\begin{lemma}\label{lem2.11}
Let $K>0$ and $N \in(1, \infty)$ be fixed. Then:\\
$(i)$ If $\gamma_1(K, N) = c_{K, N}^{-1}(K/(N-1))^{\frac{N-1}{2}}$, then
$$
\lim _{t \rightarrow 0^{+}} \frac{h_{K, N}(t)}{t^{N-1}}=\gamma_1(K, N), \quad \text { and } \quad h_{K, N}(t) \leq \gamma_1(K, N) t^{N-1} \quad \forall t \in J_{K, N} .
$$
Moreover, for any $r_1 \in(0, \pi \sqrt{(N-1)/K})$ there exists $C=C(r_1, K, N)>0$ such that
$$
h_{K, N}(t) \geq C t^{N-1}, \quad \forall t \in[0,r_1].
$$
$(ii)$ $H_{K, N}$ is invertible on $J_{K, N}$; moreover, if $\gamma_2(K, N) = \gamma_1(K, N)/N$:
$$
\begin{array}{ll}
\displaystyle{\lim_{t \rightarrow 0^{+}} \frac{H_{K, N}(t)}{t^N}=\gamma_2(K, N)}, \quad & \text { and } \quad H_{K, N}(t) \leq \gamma_2(K, N) t^N \quad \forall t \in J_{K, N} ; \\
\quad\\
\displaystyle{\lim_{t \rightarrow 0^{+}} \frac{H_{K, N}^{-1}(t)}{t^{\frac{1}{N}}}=\frac{1}{\gamma_2(K, N)^{\frac{1}{N}}}}, \quad& \text { and } \quad H_{K, N}^{-1}(t) \geq \displaystyle{\frac{t^{\frac{1}{N}}}{\gamma_2(K, N)^{\frac{1}{N}}}} \quad \forall t \in[0,1].
\end{array}
$$	
\end{lemma}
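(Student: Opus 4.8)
The plan is to reduce the whole statement to the single elementary inequality $\sin x\le x$ for $x\ge 0$, the monotonicity of $x\mapsto x^{N-1}$ on $[0,\infty)$ (valid since $N>1$), and then to propagate everything to $H_{K,N}$ by integration and to $H_{K,N}^{-1}$ by substitution. Throughout I would abbreviate $a:=\sqrt{K/(N-1)}$, so that $J_{K,N}=[0,\pi/a]$ and $h_{K,N}(t)=c_{K,N}^{-1}\sin^{N-1}(at)$.

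First I would handle $h_{K,N}$. Since $\sin(at)/t\to a$ as $t\to 0^+$, raising to the power $N-1$ and dividing by $c_{K,N}$ gives $h_{K,N}(t)/t^{N-1}\to c_{K,N}^{-1}a^{N-1}=\gamma_1(K,N)$. For the pointwise upper bound, note that for $t\in J_{K,N}$ one has $at\in[0,\pi]$, hence $0\le \sin(at)\le at$; applying the increasing map $x\mapsto x^{N-1}$ and dividing by $c_{K,N}$ yields $h_{K,N}(t)\le \gamma_1(K,N)\,t^{N-1}$. For the lower bound on $[0,r_1]$, I would use that $x\mapsto \sin(x)/x$ is continuous on $[0,\pi)$ and strictly decreasing there (its derivative has numerator $x\cos x-\sin x$, which vanishes at $0$ and has derivative $-x\sin x<0$ on $(0,\pi)$), with value $1$ at $0$; hence on $[0,ar_1]$ it is bounded below by $\sin(ar_1)/(ar_1)>0$, giving $\sin(at)\ge \big(\sin(ar_1)/(ar_1)\big)\,at$ for $t\in[0,r_1]$ and therefore $h_{K,N}(t)\ge C\,t^{N-1}$ with $C=\gamma_1(K,N)\big(\sin(ar_1)/(ar_1)\big)^{N-1}$.

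Next I would pass to $H_{K,N}(x)=\int_0^x h_{K,N}(s)\,\mathrm{d}s=\mathfrak{m}_{K,N}([0,x])$. Since $h_{K,N}$ is continuous and strictly positive on $(0,\pi/a)$, $H_{K,N}$ is strictly increasing on $J_{K,N}$, with $H_{K,N}(0)=0$ and $H_{K,N}(\pi/a)=\mathfrak{m}_{K,N}(J_{K,N})=1$; thus $H_{K,N}\colon J_{K,N}\to[0,1]$ is a homeomorphism, in particular invertible. The asymptotics follows from de l'Hôpital's rule combined with the previous step: $\lim_{t\to 0^+}H_{K,N}(t)/t^N=\lim_{t\to 0^+}h_{K,N}(t)/(N t^{N-1})=\gamma_1(K,N)/N=\gamma_2(K,N)$. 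Integrating the bound $h_{K,N}(s)\le\gamma_1(K,N)s^{N-1}$ over $[0,t]$ gives $H_{K,N}(t)\le \gamma_1(K,N)t^N/N=\gamma_2(K,N)t^N$ for all $t\in J_{K,N}$. Finally, writing $t=H_{K,N}^{-1}(s)$ with $s\in[0,1]$ in this last inequality gives $s\le \gamma_2(K,N)\big(H_{K,N}^{-1}(s)\big)^N$, i.e.\ $H_{K,N}^{-1}(s)\ge s^{1/N}/\gamma_2(K,N)^{1/N}$; and since $t=H_{K,N}^{-1}(s)\to 0^+$ as $s\to 0^+$ we get $H_{K,N}^{-1}(s)/s^{1/N}=\big(t^N/H_{K,N}(t)\big)^{1/N}\to \gamma_2(K,N)^{-1/N}$.

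Since every step is soft, there is no genuine obstacle; the only places requiring a little care are the correct orientation of $\sin x\le x$ together with monotonicity of the power $x\mapsto x^{N-1}$ (which needs $N>1$), and keeping track of which variable tends to $0$ when transferring limits between $h_{K,N}$, $H_{K,N}$ and $H_{K,N}^{-1}$.
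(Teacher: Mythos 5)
Your proof is correct and complete. The paper does not supply a proof of this lemma; it simply states that the lemma is ``an elementary consequence of the definitions of $h_{K,N}$ and $H_{K,N}$'' and leaves the verification to the reader. Your argument fills in that gap exactly as one would expect: the concavity/monotonicity of $\sin x/x$ on $[0,\pi)$ (for the pointwise bounds on $h_{K,N}$), the equality $c_{K,N}^{-1}a^{N-1}=\gamma_1(K,N)$ with $a=\sqrt{K/(N-1)}$, l'H\^{o}pital's rule plus termwise integration (to pass to $H_{K,N}$), and the change of variable $t=H_{K,N}^{-1}(s)$ (to transfer both the limit and the inequality to $H_{K,N}^{-1}$). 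All of the details --- the need for $N>1$ so that $x\mapsto x^{N-1}$ is increasing, the range $at\in[0,\pi]$ ensuring $\sin(at)\le at$ with the right sign, and the fact that $H_{K,N}(\pi/a)=1$ because $\mathfrak{m}_{K,N}$ is a probability measure --- are accounted for.
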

For the model space $J_{K,N}$ defined before, we can find an almost explicit expression for the isoperimetric profile.
\begin{lemma}[Isoperimetric profile of $(J_{K, N},\mathrm{d}_{eu}, \mathfrak{m}_{K, N})$]\cite[Lemma 2.12]{11}\label{lem2.12}
The isoperimetric profile $\mathcal{I}_{K, N}$ of the model space $(J_{K, N},\mathrm{d}_{eu}, \mathfrak{m}_{K, N})$
is given by the following formula:
$$
\mathcal{I}_{K, N}(v)=h_{K, N}(H_{K, N}^{-1}(v)), \quad v \in[0,1] .
$$
\end{lemma}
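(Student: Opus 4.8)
The plan is to establish the two inequalities $\mathcal{I}_{K,N}(v)\le h_{K,N}(H_{K,N}^{-1}(v))$ and $\mathcal{I}_{K,N}(v)\ge h_{K,N}(H_{K,N}^{-1}(v))$ separately, fixing $v\in(0,1)$ (the cases $v=0,1$ being trivial, since both sides vanish there).

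For the upper bound I would exhibit the half-interval $[0,\bar x]$ with $\bar x:=H_{K,N}^{-1}(v)$ as a competitor. Since $\mathfrak{m}_{K,N}([0,\bar x])=H_{K,N}(\bar x)=v$ it is admissible, and testing the definition of $\|D\chi_{[0,\bar x]}\|$ with the Lipschitz functions $f_\epsilon$ equal to $1$ on $[0,\bar x-\epsilon]$, equal to $0$ on $[\bar x,\pi\sqrt{(N-1)/K}]$ and affine in between gives $f_\epsilon\to\chi_{[0,\bar x]}$ in $L^1(\mathfrak{m}_{K,N})$ and $\int\operatorname{lip} f_\epsilon\,\mathrm{d}\mathfrak{m}_{K,N}=\tfrac{1}{\epsilon}\int_{\bar x-\epsilon}^{\bar x}h_{K,N}\,\mathrm{d}t\to h_{K,N}(\bar x)$ by continuity of $h_{K,N}$. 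Hence $\operatorname{Per}([0,\bar x])\le h_{K,N}(\bar x)$, so that $\mathcal{I}_{K,N}(v)\le h_{K,N}(H_{K,N}^{-1}(v))$.

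For the lower bound I would take an arbitrary set of finite perimeter $E\subset J_{K,N}$ with $\mathfrak{m}_{K,N}(E)=v$ and exploit the rigidity of one-dimensional finite-perimeter sets. Since $h_{K,N}$ is bounded below by a positive constant on each $[\delta,\pi\sqrt{(N-1)/K}-\delta]$ (Lemma \ref{lem2.11}$(i)$), $\chi_E$ has locally bounded pointwise variation in the open interval, so up to an $\mathfrak{m}_{K,N}$-null set $E=\bigsqcup_j(a_j,b_j)$ is an at most countable disjoint union of intervals whose endpoints can accumulate only at $0$ and at $\pi\sqrt{(N-1)/K}$, and one checks that $\operatorname{Per}(E)=\sum_j\bigl(h_{K,N}(a_j)+h_{K,N}(b_j)\bigr)$ (the endpoints of $J_{K,N}$, where $h_{K,N}$ vanishes, not contributing). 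Setting $I:=h_{K,N}\circ H_{K,N}^{-1}$, $s_j:=H_{K,N}(a_j)$ and $t_j:=H_{K,N}(b_j)$, this becomes $\operatorname{Per}(E)=\sum_j\bigl(I(s_j)+I(t_j)\bigr)$ with $v=\sum_j(t_j-s_j)$, and the goal is $\sum_j\bigl(I(s_j)+I(t_j)\bigr)\ge I(v)$. The three facts about $I$ I would record are: $I\ge0$ with $I(0)=I(1)=0$; $I$ is \emph{concave} on $[0,1]$, since $I'=(\log h_{K,N})'\circ H_{K,N}^{-1}$ with $(\log h_{K,N})'$ decreasing (as $\log\sin$ is concave on $(0,\pi)$) and $H_{K,N}^{-1}$ increasing; and $I$ is \emph{symmetric}, $I(1-v)=I(v)$, because $h_{K,N}(\pi\sqrt{(N-1)/K}-t)=h_{K,N}(t)$ forces $H_{K,N}(\pi\sqrt{(N-1)/K}-t)=1-H_{K,N}(t)$. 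Concavity together with $I(0)=0$ yields superadditivity, $I(\alpha)+I(\beta)\ge I(\alpha+\beta)$ for $\alpha,\beta\ge0$ with $\alpha+\beta\le1$ (compare $I$ with its chord through the origin). I would then prove $\sum_{j=1}^m\bigl(I(s_j)+I(t_j)\bigr)\ge I\bigl(\sum_{j=1}^m(t_j-s_j)\bigr)$ by induction on $m$: for $m=1$ the function $s\mapsto I(s)+I(s+v)-I(v)$ is concave on $[0,1-v]$ and vanishes at $s=0$ and, by symmetry, at $s=1-v$, hence is nonnegative; for the inductive step I would merge the last two intervals into the single interval $\bigl(a_{m-1},H_{K,N}^{-1}(t_{m-1}+t_m-s_m)\bigr)$, which has the same total measure and one interval fewer, the perimeter not increasing because, with $p:=t_{m-1}\le q:=s_m\le r:=t_m$, superadditivity (twice) and symmetry give $I(p+r-q)\le I(p)+I(r-q)=I(p)+I(1-r+q)\le I(p)+I(1-r)+I(q)=I(p)+I(r)+I(q)$. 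Passing $m\to\infty$ and using continuity of $I$ then yields $\operatorname{Per}(E)\ge I(v)=h_{K,N}(H_{K,N}^{-1}(v))$, which together with the upper bound proves the formula.

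I expect the main obstacle to be the structural step underlying the lower bound: the identification $\operatorname{Per}(E)=\sum_j(h_{K,N}(a_j)+h_{K,N}(b_j))$ for a general finite-perimeter set on the weighted interval $J_{K,N}$, together with the control of a possible accumulation of the endpoints $a_j,b_j$ at $0$ and $\pi\sqrt{(N-1)/K}$, where the density degenerates; the one-dimensional calculus afterwards is routine. A more economical but less self-contained alternative would be to obtain the lower bound directly from the sharp (L\'evy--Gromov) isoperimetric inequality on $\mathrm{RCD}(K,N)$ spaces applied to the model space $(J_{K,N},\mathrm{d}_{eu},\mathfrak{m}_{K,N})$ itself, and to combine it with the explicit upper bound above.
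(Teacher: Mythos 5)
The paper itself supplies no proof of this lemma: it is imported verbatim from Mondino--Vedovato via the citation \cite[Lemma~2.12]{11}, so there is no internal argument to compare against. Your proposal is a self-contained proof and is essentially correct. The upper bound via half-intervals and mollified indicators is routine. For the lower bound, the two structural facts you extract about $I=h_{K,N}\circ H_{K,N}^{-1}$ do hold: $I'=(\log h_{K,N})'\circ H_{K,N}^{-1}$ with $(\log h_{K,N})'=\sqrt{K(N-1)}\cot(\cdot\sqrt{K/(N-1)})$ decreasing gives concavity, and $h_{K,N}(\pi\sqrt{(N-1)/K}-t)=h_{K,N}(t)$ forces $H_{K,N}(\pi\sqrt{(N-1)/K}-t)=1-H_{K,N}(t)$ and hence $I(1-v)=I(v)$. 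Superadditivity from concavity together with $I(0)=0$, the base case via the concave function $s\mapsto I(s)+I(s+v)-I(v)$ vanishing at $s=0$ and $s=1-v$, and the merging estimate $I(p+r-q)\le I(p)+I(r-q)=I(p)+I(1-r+q)\le I(p)+I(1-r)+I(q)=I(p)+I(r)+I(q)$ are all correct. The only genuine technical debt — which you yourself flag — is the one-dimensional structure theorem: that a finite-perimeter $E\subset J_{K,N}$ is, up to an $\mathfrak{m}_{K,N}$-null set, a countable disjoint union of intervals whose endpoints accumulate only at the two tips, with $\operatorname{Per}(E)=\sum_j\bigl(h_{K,N}(a_j)+h_{K,N}(b_j)\bigr)$ and no lost contribution at the tips because $h_{K,N}$ vanishes there. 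This is standard for weighted intervals but would need to be stated and referenced (or proved) to make the argument complete; passing $m\to\infty$ then needs only continuity of $I$ on $[0,1]$, which follows from concavity plus $I(0)=I(1)=0$. Your alternative — getting the lower bound directly from L\'evy--Gromov (Proposition~\ref{levy}) applied to the model space and combining with the explicit half-interval upper bound — is shorter, sidesteps the structure theorem entirely, and is the route commonly taken; since the paper already invokes Cavalletti--Mondino, it comes at no extra cost. The direct combinatorial proof you give is more elementary and in particular does not rely on the (deep) L\'evy--Gromov theorem, which is a genuine advantage if one wants a self-contained verification of the model isoperimetric profile.
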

Finally, we recall the L\'{e}vy-Gromov isoperimetric inequality in $\mathrm{RCD}$ spaces, as obtained by Cavalletti and Mondino in \cite{12} (for the Minkowski content) and in \cite{13} (for the perimeter).
\begin{proposition}[L\'{e}vy-Gromov inequality]\label{levy}
Let $(\mathrm{X}, \mathrm{d}, \mathfrak{m})$ be a $\operatorname{RCD}(K, N)$ space with $K>0$ and $N\in (1,\infty)$. Then for any Borel set $E\subset \mathrm{X}$,
$$ \mathrm{Per}(E)\geq \mathcal{I}_{K, N}(\mathfrak{m}(E)).$$
\end{proposition}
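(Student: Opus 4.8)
The plan is to prove the inequality by the one-dimensional localization (``needle decomposition'') technique, following the approach of Cavalletti and Mondino: one slices $(\mathrm{X},\mathrm{d},\mathfrak{m})$ along an $L^1$-optimal-transport geodesic decomposition into one-dimensional $\mathrm{CD}(K,N)$ pieces whose isoperimetric profile is controlled by Lemma~\ref{lem2.12}, and then superposes the one-dimensional bounds. Throughout we may assume $v:=\mathfrak{m}(E)\in(0,1)$ — otherwise $\mathcal{I}_{K,N}(\mathfrak{m}(E))=0$ and there is nothing to prove — and $\operatorname{Per}(E)<+\infty$.

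\textbf{Step 1 (localization).} Apply the $L^1$-localization theorem to $f:=\chi_E-v\in L^1(\mathrm{X},\mathfrak{m})$, which has vanishing $\mathfrak{m}$-mean and, since $v\in(0,1)$, never vanishes, so its transport set has full $\mathfrak{m}$-measure. Because an $\mathrm{RCD}(K,N)$ space is an essentially non-branching $\mathrm{CD}(K,N)$ space, one obtains a probability space $(Q,\mathfrak{q})$ and a disintegration $\mathfrak{m}=\int_Q\mathfrak{m}_\alpha\,\mathrm{d}\mathfrak{q}(\alpha)$ such that, for $\mathfrak{q}$-a.e.\ $\alpha$: $\mathfrak{m}_\alpha$ is a probability measure concentrated on the image $X_\alpha$ of a non-constant unit-speed geodesic; the one-dimensional space $(X_\alpha,\mathrm{d}_{eu},\mathfrak{m}_\alpha)$ is itself $\mathrm{CD}(K,N)$, hence $\operatorname{diam}(X_\alpha)\le\pi\sqrt{(N-1)/K}$ and its density $h_\alpha$ with respect to $\mathscr{L}^1$ (in arclength) is such that $h_\alpha^{1/(N-1)}$ is $\big(K/(N-1)\big)$-concave; and $\int_{X_\alpha}f\,\mathrm{d}\mathfrak{m}_\alpha=0$, i.e.\ $\mathfrak{m}_\alpha(E)=v$.

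\textbf{Steps 2--3 (one-dimensional comparison and superposition).} Fix $\alpha$ in the $\mathfrak{q}$-full set above. Since $(X_\alpha,\mathrm{d}_{eu},\mathfrak{m}_\alpha)$ is a one-dimensional $\mathrm{CD}(K,N)$ probability space, the sharp one-dimensional comparison gives $\mathcal{I}_{(X_\alpha,\mathrm{d}_{eu},\mathfrak{m}_\alpha)}\ge\mathcal{I}_{K,N}$ on $[0,1]$, the right-hand side being $h_{K,N}\circ H_{K,N}^{-1}$ by Lemma~\ref{lem2.12} — the model $(J_{K,N},\mathrm{d}_{eu},\mathfrak{m}_{K,N})$ realizing the smallest profile among such one-dimensional spaces; together with $\mathfrak{m}_\alpha(E\cap X_\alpha)=v$ this yields $\operatorname{Per}_{X_\alpha}(E\cap X_\alpha)\ge\mathcal{I}_{K,N}(v)$, where $\operatorname{Per}_{X_\alpha}$ denotes perimeter in $(X_\alpha,\mathrm{d}_{eu},\mathfrak{m}_\alpha)$. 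To superpose, pick locally Lipschitz $f_i\to\chi_E$ in $L^1(\mathrm{X},\mathfrak{m})$ with $\int_{\mathrm{X}}\operatorname{lip}f_i\,\mathrm{d}\mathfrak{m}\to\operatorname{Per}(E)$; passing to a subsequence and using the disintegration together with Fubini, we may assume $f_i|_{X_\alpha}\to\chi_{E\cap X_\alpha}$ in $L^1(\mathfrak{m}_\alpha)$ for $\mathfrak{q}$-a.e.\ $\alpha$. Restricting to the geodesic $X_\alpha$ only shrinks the set over which the $\limsup$ defining the slope is computed, so $\operatorname{lip}_{X_\alpha}(f_i|_{X_\alpha})\le(\operatorname{lip}f_i)|_{X_\alpha}$; hence, by the disintegration, Fatou's lemma, lower semicontinuity of the one-dimensional total variation along each needle, and the previous inequality,
\begin{align*}
\operatorname{Per}(E)=\lim_i\int_Q\Big(\int_{X_\alpha}\operatorname{lip}f_i\,\mathrm{d}\mathfrak{m}_\alpha\Big)\mathrm{d}\mathfrak{q}(\alpha)
&\ge\int_Q\liminf_i\Big(\int_{X_\alpha}\operatorname{lip}_{X_\alpha}(f_i|_{X_\alpha})\,\mathrm{d}\mathfrak{m}_\alpha\Big)\mathrm{d}\mathfrak{q}(\alpha)\\
&\ge\int_Q\operatorname{Per}_{X_\alpha}(E\cap X_\alpha)\,\mathrm{d}\mathfrak{q}(\alpha)\ge\int_Q\mathcal{I}_{K,N}(v)\,\mathrm{d}\mathfrak{q}(\alpha)=\mathcal{I}_{K,N}(v),
\end{align*}
using $\mathfrak{q}(Q)=1$ in the last step. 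This is the claimed inequality.

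\textbf{Main obstacle.} The heart of the argument is Step~1: the localization theorem itself and the measurability/regularity of the associated ray decomposition, where the essential non-branching of $\mathrm{RCD}(K,N)$ spaces is crucial to obtain \emph{genuine} one-dimensional $\mathrm{CD}(K,N)$ needles carrying probability conditional measures, together with the compatibility of this decomposition with the $\mathrm{BV}$/perimeter calculus used in Steps~2--3 — this is precisely the point where the perimeter formulation (rather than the Minkowski-content one) demands additional care, see \cite{12,13}. By contrast, the one-dimensional comparison and the measure-theoretic bookkeeping are routine once Lemma~\ref{lem2.12} and the coarea formula recalled above are available.
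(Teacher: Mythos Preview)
The paper does not give its own proof of this proposition: it is simply quoted from Cavalletti--Mondino \cite{12,13} as a known input. Your sketch faithfully reproduces the strategy of those references --- $L^1$-localization applied to $\chi_E-v$, the one-dimensional model comparison $\mathcal{I}_{(X_\alpha,\mathrm{d}_{eu},\mathfrak{m}_\alpha)}\ge\mathcal{I}_{K,N}$ on each needle, and superposition via disintegration and Fatou --- and the argument is sound at the level of a proof outline. Your identification of the main obstacle is accurate: the passage from Minkowski content \cite{12} to genuine perimeter \cite{13} is precisely where extra work is needed, and the measurability of $\alpha\mapsto\operatorname{Per}_{X_\alpha}(E\cap X_\alpha)$ together with the approximation-and-restriction step you wrote down is the substantive content of \cite{13}. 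In short, there is nothing to compare against in the paper itself, and your proposal is the correct (cited) proof.
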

\begin{remark}\label{rmk2}
Let $(\mathrm{X}, \mathrm{d}, \mathfrak{m})$ be a $\operatorname{RCD}(K, N)$ space with $K>0$ and $N\in (1,\infty)$. There exists $\omega:(0, \infty) \rightarrow (0,1 / 2]$, depending on $K,N$, such that for all $\epsilon>0$ one has the implication
$$
\mathfrak{m}(E) \leq \omega(\epsilon) \quad \Longrightarrow \quad \mathfrak{m}(E) \leq \epsilon\mathrm{Per}(E)
$$
for any Borel set $E \subset \mathrm{X}$.
\end{remark}

\subsection{Rearrangements and symmetrizations}
Throughout this subsection, $(\mathrm{X}, \mathrm{d}, \mathfrak{m})$ will be a metric measure space and $\Omega \subset \mathrm{X}$ an open subset.
\begin{definition}[Distribution function and Decreasing rearrangement]
Let $u: \Omega \rightarrow \mathbb{R}$ be an $\mathfrak{m}$-measurable function. We define $\mu=\mu_u:[0,+\infty) \rightarrow[0, \mathfrak{m}(\Omega)]$, the distribution function of $u$, as
$$
\mu(t):=\mathfrak{m}(\{|u|>t\}) .
$$
For $u$ and $\mu$ as above, we define $u^{\sharp}:[0, \mathfrak{m}(\Omega)]\rightarrow [0,+\infty]$, the decreasing rearrangement of $u$, as
$$
u^{\sharp}(s):= \begin{cases}\operatorname{ess} \sup |u| & \text { if } \mathrm{s}=0 \\ \inf \{t: \mu(t)<s\} & \text { if } \mathrm{s}>0.\end{cases}
$$	
\end{definition}
It is not difficult to check that $u^{\sharp}$ is non-increasing and left-continuous and $u^{\sharp}$ is right-continuous at $0$.

Given $K>0,N\in (1,\infty)$ and $u: \Omega \rightarrow\mathbb{R}$ an $\mathfrak{m}$-measurable function, we define the Schwarz symmetrization as follows: first, we consider $r>0$ such that $\mathfrak{m}(\Omega)=\mathfrak{m}_{K,N}([0,r])$, then we define the Schwarz symmetrization $u_{K,N}^\star=u^\star: [0,r]\rightarrow [0,\infty]$ as
$$
u^\star(x):=u^{\sharp}(\mathfrak{m}_{K,N}([0, x])), \quad \forall x \in[0, r].
$$

\begin{proposition}[Some properties of $u^\sharp$ and $u^*$]\label{3}
With the notations above, we have\\
$(a)$ $u, u^{\sharp}$ and $u^{\star}$ are equimeasurable, in the sense that
$$
\mathfrak{m}(\{|u|>t\})=\mathscr{L}^1(\{u^{\sharp}>t\})=\mathfrak{m}_{K, N}(\{u^{\star}>t\})
$$
for all $t>0$. \\
$(b)$ If $u \in L^p(\Omega)$ for some $1 \leq p \leq \infty$, then $u^{\sharp} \in L^p([0, \mathfrak{m}(\Omega)], \mathscr{L}^1)$ and $u^{\star} \in L^p([0, r], \mathfrak{m}_{K, N})$. The converse implications also hold. In that case, moreover,
$$
\|u\|_{L^p(\Omega,\mathfrak{m})}=\|u^{\sharp}\|_{L^p([0,\mathfrak{m}(\Omega)], \mathscr{L}^1)}=\|u^{\star}\|_{L^p([0, r], \mathfrak{m}_{K, N})} .
$$
$(c)$ If $u\in L^1(\Omega)$ and let $E\subset \Omega$ be $\mathfrak{m}$-measurable, then 
\begin{equation}
\int_E u\mathrm{d}\mathfrak{m}\leq \int_0^{\mathfrak{m}(E)} u^\sharp(s) \mathrm{d}s.
\end{equation}
Moreover, if $u$ is non-negative, equality holds if and only if $(u|_E)^\sharp=(u^\sharp)|_{[0,\mathfrak{m}(E)]}$.\\
$(d)$ If $\phi$ is a strictly increasing and continuous function, then $(\phi\circ |u|)^\sharp=\phi\circ u^\sharp$ and $(\phi\circ |u|)^\star=\phi\circ u^\star$.\\
$(e)$ If $\psi:J_{K,N}\rightarrow [0,\infty)$ be a nonincreasing and non-negative function. Then $\psi^\star=\psi$ for all $x\in J_{K,N}\backslash L$, where $L$ is a countable set.
\end{proposition}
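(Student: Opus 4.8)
The plan is to derive all five items from classical one–dimensional rearrangement theory, the only genuinely new point being the passage from the real–line rearrangement $u^\sharp$ on $[0,\mathfrak m(\Omega)]$ to the symmetrization $u^\star$ on $[0,r]$. This passage is governed by the cumulative distribution function $H_{K,N}(x):=\mathfrak m_{K,N}([0,x])$, and the first thing I would record is that $H_{K,N}\colon[0,r]\to[0,\mathfrak m(\Omega)]$ is a continuous strictly increasing bijection — strictly increasing because $h_{K,N}$ is positive on the interior of $J_{K,N}$, so $\int_0^x h_{K,N}$ strictly increases even though $h_{K,N}$ vanishes at the endpoints — and that $(H_{K,N})_\sharp\mathfrak m_{K,N}=\mathscr L^1\llcorner[0,\mathfrak m(\Omega)]$ by the change of variables $s=H_{K,N}(x)$. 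Since $u^\star=u^\sharp\circ H_{K,N}$, every statement about $u^\sharp$ relative to $\mathscr L^1$ transfers verbatim to $u^\star$ relative to $\mathfrak m_{K,N}$, and I will not comment further on this transfer.

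For $(a)$ the core step is to note that $\mu=\mu_u$ is non-increasing and right–continuous and to prove, straight from the definition of the generalized inverse, the sandwich
$$(0,\mu(t))\ \subseteq\ \{s\in(0,\mathfrak m(\Omega)]:u^\sharp(s)>t\}\ \subseteq\ (0,\mu(t)],$$
where the left inclusion uses right–continuity of $\mu$ and the right inclusion is immediate; taking $\mathscr L^1$ gives $\mathscr L^1(\{u^\sharp>t\})=\mu(t)$. Part $(b)$ then follows for $p<\infty$ from Cavalieri's formula $\int_\Omega|u|^p\,\mathrm d\mathfrak m=p\int_0^\infty t^{p-1}\mu(t)\,\mathrm dt$ applied to $|u|$, $u^\sharp$ and $u^\star$ in turn, and for $p=\infty$ from $\operatorname{ess\,sup}_{\mathscr L^1}u^\sharp=\lim_{s\to0^+}u^\sharp(s)=\operatorname{ess\,sup}_{\mathfrak m}|u|$; the converse implications use the same equimeasurability. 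For $(c)$ I would write $\int_E|u|\,\mathrm d\mathfrak m=\int_0^\infty\mathfrak m(E\cap\{|u|>t\})\,\mathrm dt$ and, using the sandwich from $(a)$, $\int_0^{\mathfrak m(E)}u^\sharp\,\mathrm ds=\int_0^\infty\min\{\mathfrak m(E),\mu(t)\}\,\mathrm dt$, so the inequality reduces to the trivial pointwise bound $\mathfrak m(E\cap\{|u|>t\})\le\min\{\mathfrak m(E),\mu(t)\}$; equality forces $\mathfrak m(E\cap\{|u|>t\})=\min\{\mathfrak m(E),\mu(t)\}$ for a.e.\ $t$, and, for non-negative $u$, unwinding $(u|_E)^\sharp(s)=\inf\{t:\mathfrak m(E\cap\{u>t\})<s\}$ shows this condition is equivalent to $(u|_E)^\sharp=(u^\sharp)|_{[0,\mathfrak m(E)]}$.

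For $(d)$, since $\phi$ is a strictly increasing continuous bijection onto its image, $\{\phi\circ|u|>t\}=\{|u|>\phi^{-1}(t)\}$ for $t$ interior to the range of $\phi$, so $\mu_{\phi\circ|u|}=\mu\circ\phi^{-1}$ there; substituting $\tau=\phi^{-1}(t)$ in the infimum defining $(\phi\circ|u|)^\sharp$ and using continuity and monotonicity of $\phi$ gives $(\phi\circ|u|)^\sharp=\phi\circ u^\sharp$ on $(0,\mathfrak m(\Omega)]$, while at $s=0$ it reads $\phi(\operatorname{ess\,sup}|u|)=\operatorname{ess\,sup}(\phi\circ|u|)$. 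For $(e)$, a non-increasing $\psi$ on $J_{K,N}$ has super-level sets $\{\psi>t\}$ equal to intervals with left endpoint $0$, so $\mathfrak m_{K,N}(\{\psi>t\})=H_{K,N}(b(t))$ with $b(t):=\sup\{x:\psi(x)>t\}$, whence $\psi^\star(x)=\psi^\sharp(H_{K,N}(x))=\inf\{t:b(t)<x\}$; a direct comparison shows this infimum equals $\psi(x)$ at every continuity point $x$ of $\psi$ (if $t<\psi(x)$ then $b(t)\ge x$ since $\psi(x)>t$; if $t>\psi(x)$, continuity and monotonicity give $\{\psi>t\}\subset[0,x)$, so $b(t)<x$). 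Since a monotone function has at most countably many discontinuities, putting these together with the point $x=0$ into $L$ makes $L$ countable.

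I expect the only real difficulty to be bookkeeping rather than anything conceptual: keeping the half-open versus closed endpoint conventions straight in $(a)$ (hence the sandwich rather than an equality of sets, and the separate treatment of $s=0$), and, in $(c)$, converting the a.e.\ identity $\mathfrak m(E\cap\{|u|>t\})=\min\{\mathfrak m(E),\mu(t)\}$ into the clean equimeasurability statement $(u|_E)^\sharp=(u^\sharp)|_{[0,\mathfrak m(E)]}$ for non-negative $u$. No step should require input beyond the definitions and elementary measure theory.
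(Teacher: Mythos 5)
The paper states Proposition \ref{3} without proof, treating it as a standard collection of facts from one-dimensional rearrangement theory (essentially \cite[Proposition 2.14]{11}), so there is no in-paper argument to compare against. Your derivation is correct and self-contained: the reduction of every statement about $u^\star$ and $\mathfrak{m}_{K,N}$ to the corresponding statement about $u^\sharp$ and $\mathscr{L}^1$ via the push-forward identity $(H_{K,N})_\sharp\mathfrak{m}_{K,N}=\mathscr{L}^1\llcorner[0,\mathfrak{m}(\Omega)]$ is exactly the right organizing device, the sandwich $(0,\mu(t))\subseteq\{u^\sharp>t\}\cap(0,\mathfrak{m}(\Omega)]\subseteq(0,\mu(t)]$ correctly exploits right-continuity of $\mu$, and the Cavalieri-type identity $\int_0^{\mathfrak{m}(E)}u^\sharp=\int_0^\infty\min\{\mathfrak{m}(E),\mu(t)\}\,\mathrm{d}t$ cleanly handles both the inequality and the equality case in $(c)$. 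The only places that would need a little extra bookkeeping in a fully written version are in $(d)$ the values of $t$ outside the range of $\phi$ (harmless when, as in the paper's applications, $\phi(0)=0$), and in $(e)$ the convention for $b(t)=\sup\{x:\psi(x)>t\}$ when the superlevel set is empty, but these do not affect the substance of the argument.
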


\subsection{Poisson problem on the model space}
As already said in the introduction, one of the main results of this paper is a comparison between the Schwarz symmetrization of the solution of an elliptic problem on $(\mathrm{X},\mathrm{d},\mathfrak{m})$ and the solution of a symmetrized problem on the model space. We discuss here the "model problem" on $(J_{K, N},\mathrm{d}_{eu},\mathfrak{m}_{K,N})$. Let $I =[0, r_1)$ with $0<r_1<\pi \sqrt{(N-1)/K}$.
\begin{proposition}[Sobolev space on $I$]\label{defsobo1}
Let $p\in (1,\infty)$. \\
$(i)$ If $v\in W^{1,p}(I, \mathfrak{m}_{K, N})$, then $v^\prime$ exists and $|\nabla v|_{\mathfrak{m}_{K,N}}=|v^\prime|$ $\mathfrak{m}_{K,N}$-a.e., where $v^{\prime}$ is the distributional derivative defined by
\begin{equation}\label{11}
\int_I v \phi^{\prime} \mathrm{d} \mathscr{L}^1=-\int_I v^{\prime} \phi \mathrm{d} \mathscr{L}^1 \quad \forall \phi \in C_c^{\infty}((0,r_1)).
\end{equation}
$(ii)$ It holds that
$$
W^{1,p}(I, \mathfrak{m}_{K, N}) =\big\{v \in L^p(I, \mathfrak{m}_{K, N}) : v^{\prime}\text{ exists and belongs to } L^p(I, \mathfrak{m}_{K, N})\big\}.
$$
\begin{proof}
To prove (i), given $v\in W^{1,p}(I, \mathfrak{m}_{K, N})$, since $\mathfrak{m}_{K,N}:=h_{K,N}\mathscr{L}^1\llcorner J_{K, N}$ with $h_{K,N}$ locally bounded away from $0$ out of the two end points of $J_{K,N}$, it follows that $v\in W^{1,p}((\epsilon,r_1), \mathscr{L}^1\llcorner J_{K, N})$ and $|\nabla v|_{\mathscr{L}^1\llcorner J_{K, N}}\leq |\nabla v|_{\mathfrak{m}_{K,N}}$ $\mathscr{L}^1$-a.e. on $(\epsilon,r_1)$ for any $\epsilon\in (0,r_1)$. By the very definition of $W^{1,p}((\epsilon,r_1), \mathscr{L}^1\llcorner J_{K, N})$ and the arbitrariness of $\epsilon\in (0,r_1)$, we deduce that $v^\prime$ exists, $|v^\prime|=|\nabla v|_{\mathscr{L}^1\llcorner J_{K, N}}\leq |\nabla v|_{\mathfrak{m}_{K,N}}$ $\mathfrak{m}_{K,N}$-a.e. on $[0,r_1)$ and thus $v^\prime\in L^p(I, \mathfrak{m}_{K, N})$. On the other hand, the facts that $v\in W^{1,p}((\epsilon,r_1), \mathscr{L}^1\llcorner J_{K, N})$ for any $\epsilon\in (0,r_1)$ and $h_{K,N}$ is bounded on $J_{K,N}$ imply that $|\nabla v|_{\mathfrak{m}_{K,N}}\leq |v^\prime|$ $\mathfrak{m}_{K,N}$-a.e..

For part (ii), $\subset$ is a direct consequence of (i). The other direction is also easy. Indeed, given $v\in L^p(I, \mathfrak{m}_{K, N})$ with $v^\prime \in L^p(I, \mathfrak{m}_{K, N})$, where $v^\prime$ is defined as in $(\ref{11})$. Obviously, $v$ belongs to $W^{1,p}((\epsilon,r_1), \mathscr{L}^1\llcorner J_{K, N})$ for any $\epsilon\in (0,r_1)$. Therefore, $v\in W^{1,p}((\epsilon,r_1), \mathfrak{m}_{K, N})$ for any $\epsilon\in (0,r_1)$ because $h_{K,N}$ is bounded on $J_{K,N}$ and $|\nabla v|_{\mathfrak{m}_{K,N}}\leq |v^\prime|$ $\mathfrak{m}_{K,N}$-a.e.. Hence we can conclude that $v\in W^{1,p}(I,  \mathfrak{m}_{K, N})$. 
\end{proof}
\end{proposition}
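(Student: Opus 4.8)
The plan is to transfer the question from the weighted interval $(I,\mathrm{d}_{eu},\mathfrak{m}_{K,N})$, whose density $h_{K,N}$ degenerates only at the left endpoint $t=0$, to the classical unweighted one-dimensional Sobolev theory on compact subintervals bounded away from $0$. The structural facts to be used are: $h_{K,N}$ is continuous and bounded above on $J_{K,N}$; since $r_1<\pi\sqrt{(N-1)/K}$, $h_{K,N}$ is bounded below by a positive constant on every $[\epsilon,r_1]$ with $\epsilon\in(0,r_1)$ (so near $r_1$ there is no degeneracy, only near $0$); and $\mathfrak{m}_{K,N}(\{0\})=0$. Consequently, on each $(\epsilon,r_1)$ one has $c_\epsilon\mathscr{L}^1\le\mathfrak{m}_{K,N}\le M\mathscr{L}^1$ for suitable constants, and since both the Sobolev class $\mathrm{S}^p$ and the minimal $p$-weak upper gradient depend on the reference measure only through which plans qualify as $q$-test plans (an invariant under replacing the measure by a comparable one), we obtain $W^{1,p}((\epsilon,r_1),\mathfrak{m}_{K,N})=W^{1,p}((\epsilon,r_1),\mathscr{L}^1)$ with identical minimal weak upper gradients; by the classical one-dimensional theory this common space consists exactly of the functions admitting an absolutely continuous representative with derivative in $L^p$, whose minimal weak upper gradient is $|v'|$.

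For part $(i)$, given $v\in W^{1,p}(I,\mathfrak{m}_{K,N})$, restricting to $(\epsilon,r_1)$ and invoking the identification above shows $v$ has an absolutely continuous representative there with $|\nabla v|_{\mathfrak{m}_{K,N}}=|v'|$ $\mathscr{L}^1$-a.e. on $(\epsilon,r_1)$; letting $\epsilon\downarrow 0$, the representatives and their derivatives agree on overlaps, so $v'$ exists on $(0,r_1)$ in the sense of $(\ref{11})$, the identity $|\nabla v|_{\mathfrak{m}_{K,N}}=|v'|$ holds $\mathscr{L}^1$-a.e. on $(0,r_1)$ hence $\mathfrak{m}_{K,N}$-a.e. on $I$ (as $\{0\}$ is $\mathfrak{m}_{K,N}$-null), and $v'\in L^p(I,\mathfrak{m}_{K,N})$ since it coincides with $|\nabla v|_{\mathfrak{m}_{K,N}}$. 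For part $(ii)$, the inclusion $W^{1,p}(I,\mathfrak{m}_{K,N})\subset\{v\in L^p(I,\mathfrak{m}_{K,N}):v'\in L^p(I,\mathfrak{m}_{K,N})\}$ is exactly part $(i)$. Conversely, if $v\in L^p(I,\mathfrak{m}_{K,N})$ has distributional derivative $v'\in L^p(I,\mathfrak{m}_{K,N})$, then on each $(\epsilon,r_1)$ the comparability of $\mathfrak{m}_{K,N}$ and $\mathscr{L}^1$ gives $v\in W^{1,p}((\epsilon,r_1),\mathscr{L}^1)$ in the classical sense, and since $\mathfrak{m}_{K,N}\le M\mathscr{L}^1$ globally, every $q$-test plan for $\mathfrak{m}_{K,N}$ is a $q$-test plan for $\mathscr{L}^1$, so $|v'|$ is also a $p$-weak upper gradient of $v$ with respect to $\mathfrak{m}_{K,N}$ on $(\epsilon,r_1)$; hence $v\in W^{1,p}((\epsilon,r_1),\mathfrak{m}_{K,N})$ with $|\nabla v|_{\mathfrak{m}_{K,N}}\le|v'|$. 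It then remains to glue these local statements: one checks directly that $|v'|$ (set equal to $0$ at $t=0$) is a $p$-weak upper gradient of $v$ on $I$ by testing against $q$-test plans $\pi$, splitting each such $\pi$ along the exhaustion $(\epsilon,r_1)\uparrow(0,r_1)$ and using the bounded compression of $\pi$ together with $\mathfrak{m}_{K,N}(\{0\})=0$ to absorb the degenerate endpoint; combined with $v\in L^p(I,\mathfrak{m}_{K,N})$ this yields $v\in W^{1,p}(I,\mathfrak{m}_{K,N})$.

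The only genuine difficulty is the behaviour at $t=0$, where by Lemma $\ref{lem2.11}$ one has $h_{K,N}(t)\sim\gamma_1(K,N)t^{N-1}\to 0$ and the weighted and unweighted $L^p$ norms cease to be comparable, so one cannot apply classical absolute continuity on all of $I$ at once and an element of the weighted space need not have an $\mathscr{L}^1$-integrable derivative near $0$. The remedy is precisely the exhaustion of $(0,r_1)$ by intervals on which the weight is comparable to Lebesgue measure, together with locality of the minimal weak upper gradient (cf. $(\ref{2.14})$ and $(\ref{2.15})$) and the $\mathfrak{m}_{K,N}$-negligibility of $\{0\}$, which is what lets the local one-dimensional facts glue into the global statements $(i)$ and $(ii)$.
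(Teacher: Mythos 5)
Your proposal is correct and follows essentially the same route as the paper's proof: compare $\mathfrak{m}_{K,N}$ with $\mathscr{L}^1$ on the exhausting subintervals $(\epsilon,r_1)$ where $h_{K,N}$ is bounded above and below, invoke the classical one-dimensional Sobolev theory there, use locality of the minimal weak upper gradient and arbitrariness of $\epsilon$, and finally absorb the degenerate endpoint $t=0$ using $\mathfrak{m}_{K,N}(\{0\})=0$. The paper phrases the local comparison as two one-sided inequalities between $|\nabla v|_{\mathscr{L}^1}$, $|\nabla v|_{\mathfrak{m}_{K,N}}$ and $|v'|$, whereas you state it as literal equality of the two local Sobolev classes with identical minimal weak upper gradients (which is the same thing, packaged via invariance of the test-plan class under comparable measures); both ways of organizing the argument yield the same conclusion. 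Your final gluing step for the converse inclusion — checking the weak upper gradient inequality directly against $q$-test plans and exploiting bounded compression together with $\mathfrak{m}_{K,N}(\{0\})=0$ — is left at the same level of sketchiness as the paper's terse "Hence we can conclude that $v\in W^{1,p}(I,\mathfrak{m}_{K,N})$", and your closing paragraph correctly identifies the degenerate endpoint as the only genuine difficulty.
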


\begin{remark}
Let $p,q\in (1,\infty)$  be conjugate exponents. We consider the metric measure space $(J_{K, N}, \mathrm{d}_{eu}, \mathfrak{m}_{K, N})$ and open set $[0,r_1)$. Then $(J_{K, N}, \mathrm{d}_{eu}, \mathfrak{m}_{K, N})$ is $q$-infinitesimally strictly convex, for all $p\in (1,\infty)$ and 
$$
\Delta_{p,K, N} w(\phi):=\mathscr{L}_p w(\phi)=-\int_I |w^{\prime}|^{p-2}w^\prime \phi^{\prime} \mathrm{d}\mathfrak{m}_{K, N}.
$$	

The corresponding Dirichlet problem: let $f \in L^q(I, \mathfrak{m}_{K, N})$. We say that $w \in W^{1,p}(I, \mathfrak{m}_{K, N})$ is a weak solution to
\begin{equation}\label{20}
\begin{cases}
-\Delta_{p,K, N} w & =f \quad \text { in } I \\
w(r_1) & =0
\end{cases}
\end{equation}
if:\\
$(i)$ for any $\phi \in W_0^{1,p}(I,  \mathfrak{m}_{K, N})$,
\begin{equation}\label{we}
\int_{I} |w^{\prime}|^{p-2}w^\prime \phi^{\prime} \mathrm{d}\mathfrak{m}_{K, N}=\int_{I} f \phi \mathrm{d}\mathfrak{m}_{K, N};
\end{equation}
$(ii)$ Boundary condition: $w \in W_0^{1,p}(I,  \mathfrak{m}_{K, N})$.
\end{remark}

In the next proposition, we give an explicit solution to the problem in $(\ref{20})$.
\begin{proposition}[An explicit solution]\label{prop1}
Let $p,q\in (1,\infty)$  be conjugate exponents. Let $f \in L^q(I, \mathfrak{m}_{K, N})$ be a nonnegative function, the problem in $(\ref{20})$ admits a unique weak solution $w \in W_0^{1,p}\cap L^\infty(I,  \mathfrak{m}_{K, N})$, which can be represented as
\begin{equation}\label{21}
w(\rho)=\int_\rho^{r_1} \bigg(\frac{1}{h_{K, N}(r)} \int_0^r f \mathrm{d}\mathfrak{m}_{K, N}\bigg)^{\frac{1}{p-1}}\mathrm{d}r, \quad \forall \rho \in[0, r_1],
\end{equation}
or equivalently as
\begin{equation}\label{22}
w(\rho)=\int_{H_{K, N}(\rho)}^{H_{K, N}(r_1)}\frac{1}{\mathcal{I}_{K, N}(\sigma)} \bigg(\frac{1}{\mathcal{I}_{K, N}(\sigma)}\int_0^\sigma f\circ H_{K,N}^{-1}(t)\mathrm{d}t\bigg)^{\frac{1}{p-1}} \mathrm{d} \sigma, \quad \forall \rho \in[0, r_1].
\end{equation}
\begin{proof}
\textbf{Step 1:} we show that the two expressions $(\ref{21})$, $(\ref{22})$ are actually equivalent. Indeed, using the fact $\mathcal{I}_{K, N}=h_{K, N} \circ H_{K, N}^{-1}$ by Lemma $\ref{lem2.12}$, we deduce that
\begin{align}
\int_\rho^{r_1} \bigg(\frac{1}{h_{K, N}(r)} \int_0^r f \mathrm{d}\mathfrak{m}_{K, N}\bigg)^{\frac{1}{p-1}}\mathrm{d}r & =\int_\rho^{r_1} \frac{1}{h_{K, N}(r)}\bigg(\frac{1}{h_{K, N}(r)} \int_0^r f \mathrm{d}\mathfrak{m}_{K, N}\bigg)^{\frac{1}{p-1}} h_{K, N}(r) \mathrm{d} r\notag \\
& =\int_{H_{K, N}(\rho)}^{H_{K, N}(r_1)}\frac{1}{\mathcal{I}_{K, N}(\sigma)} \bigg(\frac{1}{\mathcal{I}_{K, N}(\sigma)}\int_0^{H_{K,N}^{-1}(\sigma)} f(s)\mathrm{d}\mathfrak{m}_{K, N}(s)\bigg)^{\frac{1}{p-1}} \mathrm{d} \sigma\notag\\
&=\int_{H_{K, N}(\rho)}^{H_{K, N}(r_1)}\frac{1}{\mathcal{I}_{K, N}(\sigma)} \bigg(\frac{1}{\mathcal{I}_{K, N}(\sigma)}\int_0^\sigma f\circ H_{K,N}^{-1}(t)\mathrm{d}t\bigg)^{\frac{1}{p-1}} \mathrm{d} \sigma,\label{1}
\end{align}
where we have used the change of variables $\sigma=H_{K, N}(r)$ in the external integral, and then the change of variables $t=H_{K, N}(s)$ in the internal integral. \\
\textbf{Step 2:} we show that a weak solution to $(\ref{20})$ must coincide with the function in  $(\ref{21})$; in particular, we have the uniqueness of weak solutions to $(\ref{20})$. Let $\bar{w} \in W^{1,p}(I, \mathfrak{m}_{K, N})$ be a weak solution to $(\ref{20})$. First, we prove that the weak derivative as in $(\ref{11})$ of $\bar{w}$ coincides $\mathfrak{m}_{K, N}$-a.e. with the function $-g^{\frac{1}{p-1}}$, where
\begin{equation*}
g(x)=\frac{1}{h_{K, N}(x)} \int_0^x f \mathrm{d}\mathfrak{m}_{K, N},\quad x\in [0,r_1].
\end{equation*} Indeed, for any test function $\phi \in {C}_c^{\infty}(I)$ one has $(x,s)\mapsto\chi_{[0, x]}(s) f(s)\phi^{\prime}(x)h_{K, N}(s)\in L^1([0,r_1]^2,(\mathscr{L}^1)^2)$, and thus by the Fubini Theorem:
\begin{align}
\int_I g(x) \phi^{\prime}(x) \mathrm{d} \mathfrak{m}_{K, N}(x) & =\int_0^{r_1}\bigg(\int_0^{r_1} \chi_{[0, x]}(s) f(s) \frac{\phi^{\prime}(x)}{h_{K, N}(x)} \mathrm{d}\mathfrak{m}_{K, N}(s)\bigg) \mathrm{d}\mathfrak{m}_{K, N}(x) \notag\\
& =\int_0^{r_1} f(s)\bigg(\int_s^{r_1} \phi^{\prime}(x) \mathrm{d}x\bigg) \mathrm{d} \mathfrak{m}_{K, N}(s) \notag\\
& =-\int_0^{r_1} f\phi\mathrm{d} \mathfrak{m}_{K, N}\label{23}.
\end{align}
Thus, since $\bar{w}$ is a weak solution to $(\ref{20})$, for any $\phi \in {C}_c^{\infty}(I)$
$$
\int_I[g(x)+|\bar{w}^{\prime}|^{p-2}(x)\bar{w}^{\prime}(x)] h_{K, N}(x) \phi^{\prime}(x) \mathrm{d} x=0 .
$$
By a classical result (see for example \cite[Lemma 8.1]{16}), there exists a constant $C \in \mathbb{R}$ such that $|\bar{w}^{\prime}|^{p-2}(x)\bar{w}^{\prime}(x) h_{K, N}(x)+g(x) h_{K, N}(x)=C$ for $\mathscr{L}^1$-a.e. $x \in I$. This however implies that for any $\phi \in {C}_c^{\infty}(I)$,
$$
0=C \int_0^{r_1} \phi^{\prime}(x) \mathrm{d}x=-C \phi(0)
$$
hence $C=0$. Moreover, we have $\bar{w}^{\prime}=-g^{\frac{1}{p-1}}$ (recall that $g\geq 0$). Since 
$\bar{w} \in W^{1,p}_0(I,  \mathfrak{m}_{K, N})$ and $h_{K,N}$ is bounded by below on $[\epsilon,r_1]$, thus in particular it belongs to $W^{1,p}((\epsilon, r_1),  \mathscr{L}^1)$ for any $\epsilon>0$. Thus, by well known results about Sobolev functions on intervals (see \cite[Theorem 8.2]{16}) and trace theorem (see \cite{17}), we obtain that 
\begin{equation*}
\bar{w}(\rho)=-\int_\rho^{r_1}\bar{w}^{\prime}(s)\mathrm{d}s=\int_\rho^{r_1}g^{\frac{1}{p-1}}(s)\mathrm{d}s=w(\rho),\text{ for $\mathfrak{m}_{K,N}$-a.e. }\rho.
\end{equation*}
\textbf{Step 3.} we show that $w$ defined by $(\ref{21})$ is actually a solution to $(\ref{20})$. Since the integrand is continuous on $(0, r_1]$, $w$ is $C^1$ on $(0, r_1]$ and continuous on $[0,r_1]$(with $w(r_1)=0$).

By straightforward computations, we show that $w$ and $w^{\prime}$ are $L^p(I, \mathfrak{m}_{K, N})$ functions. Indeed, by H\"{o}lder inequality we have that
\begin{equation}\label{25}
\int_0^r|f(s)| \mathrm{d}\mathfrak{m}_{K, N}(s) \leq\|f\|_{L^q(I, \mathfrak{m}_{K, N})} H_{K, N}(r)^{\frac{1}{p}},\text{ for any }r\in (0,r_1],
\end{equation}
thus, by Lemma $\ref{lem2.11}$, for any $\rho\in (0,r_1)$,
\begin{align*}
|w(\rho)| & \leq\|f\|_{L^q}^{\frac{1}{p-1}} \int_\rho^{r_1} \frac{H_{K, N}(r)^{\frac{1}{p(p-1)}}}{h_{K, N}(r)^{\frac{1}{p-1}}} \mathrm{d} r\leq C_1\|f\|_{L^q}^{\frac{1}{p-1}} \int_\rho^{r_1} \frac{r^{\frac{N}{p(p-1)}}}{r^{\frac{N-1}{p-1}}} \mathrm{d} r \\
&=\begin{cases}
C_2\|f\|_{L^q}^{\frac{1}{p-1}}\bigg(r_1^{\frac{N}{p(p-1)}-\frac{N-1}{p-1}+1}-\rho^{\frac{N}{p(p-1)}-\frac{N-1}{p-1}+1}\bigg),&\quad \frac{N}{p(p-1)}-\frac{N-1}{p-1}> -1,\\
C_1 \|f\|_{L^q}^{\frac{1}{p-1}} (\log r_1 -\log \rho), &\quad \frac{N}{p(p-1)}-\frac{N-1}{p-1}= -1,\\
C_2\|f\|_{L^q}^{\frac{1}{p-1}}\bigg(\rho^{\frac{N}{p(p-1)}-\frac{N-1}{p-1}+1}-r_1^{\frac{N}{p(p-1)}-\frac{N-1}{p-1}+1}\bigg),&\quad \frac{N}{p(p-1)}-\frac{N-1}{p-1}<-1.
\end{cases}
\end{align*}
Hence, $w$ is bounded on $[0,r_1]$. Consequently, by Lemma $\ref{lem2.11}$ again, if $\frac{N}{p(p-1)}-\frac{N-1}{p-1}\neq -1$:
\begin{equation}\label{es1}
\int_0^{r_1}|w|^p \mathrm{d}\mathfrak{m}_{K, N} \leq C_3\|f\|_{L^q}^{\frac{p}{p-1}} \int_0^{r_1}(r_1^{\frac{N}{p-1}-\frac{(N-1)p}{p-1}+p}+\rho^{\frac{N}{p-1}-\frac{(N-1)p}{p-1}+p})\rho^{N-1} \mathrm{d} \rho<\infty,
\end{equation}
if instead $\frac{N}{p(p-1)}-\frac{N-1}{p-1}=-1$,
\begin{equation}\label{es3}
\int_0^{r_1}|w|^p \mathrm{d}\mathfrak{m}_{K, N} \leq C_3 \|f\|_{L^q}^{\frac{p}{p-1}} \int_0^{r_1}(\log r_1-\log \rho)^p \rho^{N-1} \mathrm{d} \rho<\infty.
\end{equation}
Moreover, exploiting $(\ref{25})$ and by Lemma $\ref{lem2.11}$ again,
\begin{equation}\label{es2}
\int_0^{r_1}|w^{\prime}|^p \mathrm{d} \mathfrak{m}_{K, N} \leq\|f\|_{L^q}^{\frac{p}{p-1}} \int_0^{r_1} \frac{H_{K, N}(r)^{\frac{1}{p-1}}}{h_{K, N}(r)^{\frac{1}{p-1}}} \mathrm{d} r \leq C_4\|f\|_{L^q}^{\frac{p}{p-1}} \int_0^{r_1} \frac{r^{\frac{N}{p-1}}}{r^{\frac{N-1}{p-1}}} \mathrm{d} r<\infty.
\end{equation}
Note that all constants $C_1,C_2,C_3$ and $C_4$ depend only on $r_1,K,N$ and $p$.

It is well known that $C^\infty(\overline{\Omega})$ is dense in $W^{1,p}(\Omega)$ for any $p\in [1,\infty)$ and $\Omega\subset \mathbb{R}^n$ with $C^1$ boundary, see \cite{17}. The proofs contained there still work in the case of $W^{1,p}(I,  \mathfrak{m}_{K, N})$ with very few straightforward modifications because $h_{K,N}$ is bounded. By the fact that $w(r_1)=0$, we can conclude that $w\in W_0^{1,p}(I,\mathfrak{m}_{K,N})$.

Finally, by tracing back the identity in $(\ref{23})$, the very same argument shows that $w$ satisfies $(\ref{we})$ and thus it is a weak solution to $(\ref{20})$.
\end{proof}
\end{proposition}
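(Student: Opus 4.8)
The plan is to read $(\ref{20})$ as a one-dimensional weighted $p$-Laplace equation on $I=[0,r_1)$, solve it explicitly by integration, and then verify separately that the candidate is a weak solution and that it is the only one. The first step is to unwind the weak formulation $(\ref{we})$: tested against $\phi\in C_c^\infty(I)$ it reads $\int_I |w'|^{p-2}w'\,\phi'\,h_{K,N}\,\mathrm{d}x=\int_I f\phi\,h_{K,N}\,\mathrm{d}x$. I would rewrite the right-hand side by introducing $g(x):=h_{K,N}(x)^{-1}\int_0^x f\,\mathrm{d}\mathfrak{m}_{K,N}$ and applying Fubini's theorem to the kernel $\chi_{[0,x]}(s)\,f(s)\,h_{K,N}(s)\,\phi'(x)$, which is integrable on the square since $f\in L^1(I,\mathfrak{m}_{K,N})$ and $\phi'$ is bounded with compact support; this yields $\int_I g\,\phi'\,\mathrm{d}\mathfrak{m}_{K,N}=-\int_I f\phi\,\mathrm{d}\mathfrak{m}_{K,N}$, so the weak equation becomes $\int_I\bigl(|w'|^{p-2}w'+g\bigr)h_{K,N}\,\phi'\,\mathrm{d}x=0$ for every $\phi\in C_c^\infty(I)$.

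Uniqueness and the explicit formula come out of this together. By the classical fundamental lemma of the calculus of variations (du Bois-Reymond), $\bigl(|w'|^{p-2}w'+g\bigr)h_{K,N}$ is $\mathscr{L}^1$-a.e. equal to a constant $C$ on $I$; since the admissible test functions are only required to vanish near $r_1$ and not at the left endpoint, integrating $\phi'$ forces $C\phi(0)=0$, whence $C=0$. As $g\ge0$ and $h_{K,N}>0$ on $(0,r_1]$, this gives $w'=-g^{1/(p-1)}$, and integrating from $\rho$ to $r_1$ with the boundary value $w(r_1)=0$ — legitimate through the one-dimensional Sobolev and trace theory on $(\epsilon,r_1)$, where $h_{K,N}$ is bounded away from $0$ — produces exactly $(\ref{21})$; in particular the weak solution is unique. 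The equivalence of $(\ref{21})$ and $(\ref{22})$ is then a change of variables: substitute $\sigma=H_{K,N}(r)$ in the outer integral and $t=H_{K,N}(s)$ in the inner one, and use $\mathcal{I}_{K,N}=h_{K,N}\circ H_{K,N}^{-1}$ from Lemma $\ref{lem2.12}$.

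It remains to check that $w$ defined by $(\ref{21})$ genuinely lies in $W_0^{1,p}\cap L^\infty(I,\mathfrak{m}_{K,N})$ and satisfies $(\ref{we})$. The formula shows at once that $w$ is $C^1$ on $(0,r_1]$ with $w(r_1)=0$. For the quantitative bounds I would combine the H\"{o}lder estimate $\int_0^r|f|\,\mathrm{d}\mathfrak{m}_{K,N}\le\|f\|_{L^q(I,\mathfrak{m}_{K,N})}\,H_{K,N}(r)^{1/p}$ with the endpoint asymptotics $h_{K,N}(t)\asymp t^{N-1}$ and $H_{K,N}(t)\asymp t^N$ near $0$ from Lemma $\ref{lem2.11}$; this reduces the size of $w$ and the integrals $\int_0^{r_1}|w|^p\,\mathrm{d}\mathfrak{m}_{K,N}$, $\int_0^{r_1}|w'|^p\,\mathrm{d}\mathfrak{m}_{K,N}$ to elementary power or logarithmic integrals near $0$, controlled after a short case distinction on the sign of $\frac{N}{p(p-1)}-\frac{N-1}{p-1}+1$. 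This gives $w\in W^{1,p}(I,\mathfrak{m}_{K,N})$ by the characterization in Proposition $\ref{defsobo1}$, and $w\in W_0^{1,p}(I,\mathfrak{m}_{K,N})$ then follows from the density of functions smooth up to the boundary (again using that $h_{K,N}$ is bounded) together with $w(r_1)=0$. Finally, running the Fubini identity of the first paragraph backwards with $w'=-g^{1/(p-1)}$ shows that $w$ satisfies $(\ref{we})$, hence is the weak solution.

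I expect the only genuinely technical point to be the one in the last paragraph: the behaviour of $w$ and $w'$ at the degenerate endpoint $0$, where $h_{K,N}$ vanishes like $t^{N-1}$, which is what forces the case analysis on the exponent $\frac{N}{p(p-1)}-\frac{N-1}{p-1}+1$. Everything else — the two Fubini identities, the du Bois-Reymond lemma, the change of variables, and one-dimensional Sobolev theory on subintervals bounded away from the endpoints — should be routine.
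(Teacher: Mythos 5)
Your proposal is correct and follows essentially the same route as the paper's proof: the same Fubini identity reducing $\int_I f\phi\,\mathrm{d}\mathfrak{m}_{K,N}$ to $-\int_I g\,\phi'\,\mathrm{d}\mathfrak{m}_{K,N}$, the du Bois--Reymond lemma forcing $|w'|^{p-2}w'\,h_{K,N}+g\,h_{K,N}$ to be a constant which the boundary condition at $0$ drives to zero, the one-dimensional Sobolev/trace theory on subintervals away from the endpoints to recover $(\ref{21})$, the change of variables $\sigma=H_{K,N}(r)$ for the equivalence with $(\ref{22})$, and the same endpoint estimates via H\"older and Lemma~$\ref{lem2.11}$ with the case split on the exponent $\frac{N}{p(p-1)}-\frac{N-1}{p-1}+1$. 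The only difference is cosmetic: you derive the explicit formula and uniqueness first and treat the equivalence of the two representations afterward, whereas the paper leads with the change-of-variables equivalence; the mathematical content and the lemmas invoked are identical.
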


\section{The Talenti-type comparison theorem}
\subsection{Proof of the Talenti-type comparison theorem}
Throughout this section, let $(\mathrm{X}, \mathrm{d}, \mathfrak{m})$ be a $\operatorname{RCD}(K, N)$ space, $K>0$, $N \in(1, \infty)$ and $\Omega \subset \mathrm{X}$ an open set. Let $p,q\in (1,\infty)$  be conjugate exponents.

Before passing to the proof of the main comparison theorem $\ref{thm}$, we establish few auxiliary lemmas.
\begin{lemma}\cite[Lemma 3.5]{11}\label{lem3.5}
Let $u \in L^p(\Omega), f\in L^q(\Omega)$. Define
$$
F(t) = \int_{\{u>t\}}(u-t) f \mathrm{d}\mathfrak{m}, \quad \forall t \in \mathbb{R} .
$$
Then $F$ is differentiable out of a countable set $C \subset \mathbb{R}$, and
$$
F^{\prime}(t)=-\int_{\{u>t\}} f \mathrm{d}\mathfrak{m}, \quad \forall t \in \mathbb{R} \backslash C .
$$	
\end{lemma}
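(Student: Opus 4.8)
The plan is to obtain the two one-sided derivatives of $F$ from explicit difference-quotient identities, and then to observe that they can disagree only on a countable set. First note that $F$ is finite for every $t$: since $p,q$ are conjugate and $\mathfrak{m}$ is a finite measure, $f\in L^1(\Omega)$ and $uf\in L^1(\Omega)$ by H\"older's inequality, hence $(u-t)f\,\chi_{\{u>t\}}\in L^1(\Omega)$ for every $t\in\mathbb{R}$.

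Fix $t\in\mathbb{R}$ and $h>0$. Using the disjoint decomposition $\{u>t\}=\{u>t+h\}\sqcup\{t<u\le t+h\}$, a short computation gives
\[
F(t+h)-F(t)=-h\int_{\{u>t+h\}}f\,\mathrm{d}\mathfrak{m}-\int_{\{t<u\le t+h\}}(u-t)f\,\mathrm{d}\mathfrak{m}.
\]
On $\{t<u\le t+h\}$ one has $0\le u-t\le h$, so after dividing by $h$ the second term is bounded in absolute value by $\int_{\{t<u\le t+h\}}|f|\,\mathrm{d}\mathfrak{m}$, which tends to $0$ as $h\downarrow0$ by dominated convergence because $\bigcap_{h>0}\{t<u\le t+h\}=\emptyset$. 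Moreover $\chi_{\{u>t+h\}}f\to\chi_{\{u>t\}}f$ pointwise as $h\downarrow0$ and is dominated by $|f|\in L^1$, so $\int_{\{u>t+h\}}f\to\int_{\{u>t\}}f$. Hence the right derivative exists for every $t$ and $F'_+(t)=-\int_{\{u>t\}}f\,\mathrm{d}\mathfrak{m}$.

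Replacing $t$ by $t-h$ in the same identity yields
\[
F(t)-F(t-h)=-h\int_{\{u>t\}}f\,\mathrm{d}\mathfrak{m}-\int_{\{t-h<u\le t\}}(u-t+h)f\,\mathrm{d}\mathfrak{m}.
\]
Here $\tfrac1h(u-t+h)$ equals $1$ on $\{u=t\}$ and lies in $(0,1)$ on $\{t-h<u<t\}$, a set whose $\mathfrak{m}$-measure tends to $0$ as $h\downarrow0$; splitting the last integral over these two pieces and letting $h\downarrow0$ gives $F'_-(t)=-\int_{\{u>t\}}f\,\mathrm{d}\mathfrak{m}-\int_{\{u=t\}}f\,\mathrm{d}\mathfrak{m}$, again for every $t$. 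Consequently $F'_-(t)-F'_+(t)=-\int_{\{u=t\}}f\,\mathrm{d}\mathfrak{m}$, which vanishes whenever $\mathfrak{m}(\{u=t\})=0$. Since the level sets $\{u=t\}$, $t\in\mathbb{R}$, are pairwise disjoint and $\mathfrak{m}$ is finite, the set $C:=\{t\in\mathbb{R}:\mathfrak{m}(\{u=t\})>0\}$ is at most countable; for $t\notin C$ the two one-sided derivatives coincide, so $F$ is differentiable at $t$ with $F'(t)=-\int_{\{u>t\}}f\,\mathrm{d}\mathfrak{m}$.

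The only points that require care are the two passages to the limit in the difference quotients, and these are routine applications of dominated convergence once the exact algebraic identities for $F(t\pm h)-F(t)$ are written down; the countability of $C$ is the standard fact that an $\mathfrak{m}$-measurable function has at most countably many level sets of positive $\mathfrak{m}$-measure. I expect no genuine obstacle beyond this bookkeeping.
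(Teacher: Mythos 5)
The paper does not prove this lemma itself; it states it and cites \cite[Lemma 3.5]{11}, so there is no in-paper proof against which to compare your approach. Your argument is correct and complete: the decomposition of $F(t+h)-F(t)$ over $\{u>t\}=\{u>t+h\}\sqcup\{t<u\le t+h\}$ is exact, the dominated-convergence passages are justified because $|f|\in L^1(\Omega)$ (as $q>1$ and $\mathfrak{m}(\Omega)<\infty$), both one-sided derivatives exist at every $t$ with $F'_+(t)=-\int_{\{u>t\}}f$ and $F'_-(t)=-\int_{\{u\ge t\}}f$, and the countability of $\{t:\mathfrak{m}(\{u=t\})>0\}$ gives the conclusion. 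This one-sided difference-quotient argument is the standard route for this kind of statement, and it directly delivers the natural countable exceptional set.
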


\begin{lemma}\label{4}
Let $f \in L^q(\Omega)$. Let $u \in W_0^{1,p}(\Omega)$ be a weak solution of $-\mathscr{L}_p u=f$. Then for $\mathscr{L}^1$-a.e. $t>0$, it holds:
$$
-\frac{\mathrm{d}}{\mathrm{d} t} \int_{\{|u|>t\}}|\nabla u|\mathrm{d} \mathfrak{m} \leq  (-\mu^{\prime}(t))^{1/q} \bigg(\int_{\{|u|>t\}}|f|\mathrm{d} \mathfrak{m}\bigg)^{1/p},
$$
where $\mu$ is the distribution function of $u$.
\begin{proof}
Let $t>0$ be fixed, and consider the following function: 
\begin{equation}\label{30}
v_t =\varphi_t\circ u, \text{ where }\varphi_t(x)=(x-t)^{+}-(x+t)^{-}\text{ for any }x\in \mathbb{R}.
\end{equation}
Since $u\in W_0^{1,p}(\Omega)$ and $\varphi_t(0)=0$, by Proposition  $\ref{propp}$(ii), $v_t$ still belongs to the space $W_0^{1,p}(\Omega)$, and thus it can be used as a test function in $(\ref{eq})$ to obtain
$$
-\mathscr{L}_p u(v_t)=\int_{\Omega} f v_t\mathrm{d} \mathfrak{m}=\int_{\{u>t\}}(u-t) f \mathrm{d} \mathfrak{m}-\int_{\{-u>t\}}(-u-t) f \mathrm{d} \mathfrak{m}.
$$
By applying Lemma $\ref{lem3.5}$ we obtain that, for $\mathscr{L}^1$-a.e. $t>0$, $t \mapsto \mathscr{L}_p u(v_t)$ is differentiable with
\begin{equation}\label{31}
\frac{\mathrm{d}}{\mathrm{d} t} \mathscr{L}_p u(v_t)=\int_{\{u>t\}} f \mathrm{d} \mathfrak{m}-\int_{\{u<-t\}} f \mathrm{d} \mathfrak{m}\leq \int_{\{|u|>t\}}|f| \mathrm{d} \mathfrak{m} .
\end{equation}
For fixed $t>0$ and $h>0$, we can explicitly write
\begin{align}
v_{t+h}-v_t
= \begin{cases}h & \text { if } u<-t-h \\
-(u+t) & \text { if }-t-h \leq u<-t \\
0 & \text { if }|u| \leq t \\
-(u-t) & \text { if } t<u \leq t+h \\
-h & \text { if } u>t+h.\label{33}
\end{cases}
\end{align}
Notice that, by linearity of $\mathscr{L}_p u$ and locality $(\ref{2.15})$ of minimal weak upper gradient, it follows from $(\ref{33})$ that 
\begin{align*}
 \frac{\mathscr{L}_p u(v_{t+h})-\mathscr{L}_p u(v_t)}{h}=\frac{\mathscr{L}_p u(v_{t+h}-v_t)}{h}=\frac{1}{h}\int_{\{t<|u| \leq t+h\}}|\nabla u|^p \mathrm{d} \mathfrak{m}.
\end{align*}
Consequently, for all $t>0$ and $h>0$, it holds that 
\begin{align*}
\frac{1}{h} \int_{\{t<|u| \leq t+h\}}|\nabla u| \mathrm{d} \mathfrak{m} & \leq\bigg(\frac{1}{h} \int_{\{t<|u| \leq t+h\}}|\nabla u|^p \mathrm{d} \mathfrak{m}\bigg)^{1/p}\bigg(\frac{\mathfrak{m}(\{t<|u| \leq t+h\})}{h}\bigg)^{1/q}\\
& =\bigg(\frac{\mathscr{L}_p u(v_{t+h})-\mathscr{L}_p u(v_t)}{h}\bigg)^{1/p}\bigg(-\frac{\mu(t+h)-\mu(t)}{h}\bigg)^{1/q}.
\end{align*}
Hence, letting $h \rightarrow 0$ and using $(\ref{31})$ we get exactly the desired result.
\end{proof}
\end{lemma}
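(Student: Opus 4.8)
The plan is to test the equation against a one-parameter family of truncations of $u$ and then differentiate the resulting identity in the truncation level. Fix $t>0$ and set $v_t:=\varphi_t\circ u$ with $\varphi_t(x):=(x-t)^{+}-(x+t)^{-}$. Since $\varphi_t$ is $1$-Lipschitz and $\varphi_t(0)=0$, the chain rule of Proposition \ref{propp}(ii) gives $v_t\in W_0^{1,p}(\Omega)$, so $v_t$ is admissible in the weak formulation $(\ref{28})$ and
\[
-\mathscr{L}_p u(v_t)=\int_{\Omega} f v_t\,\mathrm{d}\mathfrak{m}=\int_{\{u>t\}}(u-t)f\,\mathrm{d}\mathfrak{m}-\int_{\{-u>t\}}(-u-t)f\,\mathrm{d}\mathfrak{m}.
\]
Applying Lemma \ref{lem3.5} to $u$ and, separately, to $-u$, one obtains that $t\mapsto\mathscr{L}_p u(v_t)$ is differentiable off a countable set, with $\frac{\mathrm{d}}{\mathrm{d}t}\mathscr{L}_p u(v_t)=\int_{\{u>t\}}f\,\mathrm{d}\mathfrak{m}-\int_{\{u<-t\}}f\,\mathrm{d}\mathfrak{m}\le\int_{\{|u|>t\}}|f|\,\mathrm{d}\mathfrak{m}$.

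Next I would compute the increments of $t\mapsto\mathscr{L}_p u(v_t)$ geometrically. Since $v_t$ coincides with $u\mp t$ on $\{\pm u>t\}$ and vanishes identically on $\{|u|\le t\}$, the chain rules of Proposition \ref{propp}(ii), the locality relations $(\ref{2.14})$ and $(\ref{2.15})$, and the pointwise identity $Du(\nabla u)=|\nabla u|^2$ (which follows directly from the definition of $D^{\pm}$) give $Dv_t(\nabla u)=|\nabla u|^2$ $\mathfrak{m}$-a.e.\ on $\{|u|>t\}$ and $Dv_t(\nabla u)=0$ $\mathfrak{m}$-a.e.\ elsewhere. Combined with the linearity of $v\mapsto\mathscr{L}_p u(v)$ (Proposition \ref{propp}(iv)), this yields, for every $h>0$,
\[
\mathscr{L}_p u(v_{t+h})-\mathscr{L}_p u(v_t)=\mathscr{L}_p u(v_{t+h}-v_t)=\int_{\{t<|u|\le t+h\}}|\nabla u|^p\,\mathrm{d}\mathfrak{m}\ge 0,
\]
so $t\mapsto\mathscr{L}_p u(v_t)$ is nondecreasing and its a.e.\ derivative agrees with the formula of the first step. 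This identity is the one genuinely delicate point of the argument, since one has to keep careful track of the sets where $|\nabla v_t|$ vanishes; everything else is bookkeeping.

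Finally, Hölder's inequality on the level shell $\{t<|u|\le t+h\}$, together with $\mathfrak{m}(\{t<|u|\le t+h\})=\mu(t)-\mu(t+h)$, gives for all $t,h>0$
\[
\frac1h\int_{\{t<|u|\le t+h\}}|\nabla u|\,\mathrm{d}\mathfrak{m}\le\Big(\frac1h\int_{\{t<|u|\le t+h\}}|\nabla u|^p\,\mathrm{d}\mathfrak{m}\Big)^{1/p}\Big(\frac{\mu(t)-\mu(t+h)}{h}\Big)^{1/q}.
\]
Letting $h\downarrow 0$ at a point $t$ at which the three monotone functions $t\mapsto\int_{\{|u|>t\}}|\nabla u|\,\mathrm{d}\mathfrak{m}$, $t\mapsto\mathscr{L}_p u(v_t)$ and $\mu$ are all differentiable — hence for $\mathscr{L}^1$-a.e.\ $t>0$ — the left-hand side tends to $-\frac{\mathrm{d}}{\mathrm{d}t}\int_{\{|u|>t\}}|\nabla u|\,\mathrm{d}\mathfrak{m}$, the first factor on the right tends to $\big(\frac{\mathrm{d}}{\mathrm{d}t}\mathscr{L}_p u(v_t)\big)^{1/p}$, which by the first step is at most $\big(\int_{\{|u|>t\}}|f|\,\mathrm{d}\mathfrak{m}\big)^{1/p}$, and the second factor tends to $(-\mu'(t))^{1/q}$. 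Combining these three limits gives exactly the claimed inequality.
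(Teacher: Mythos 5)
Your proof is correct and follows essentially the same route as the paper's: same test function $v_t=\varphi_t\circ u$, same use of Lemma \ref{lem3.5} to differentiate the right-hand side, same computation of the increment $\mathscr{L}_p u(v_{t+h})-\mathscr{L}_p u(v_t)=\int_{\{t<|u|\le t+h\}}|\nabla u|^p\,\mathrm{d}\mathfrak{m}$, and same H\"older-plus-limit conclusion. The only cosmetic difference is that you derive this increment identity by computing $Dv_t(\nabla u)=\varphi_t'(u)\,Du(\nabla u)$ pointwise via the chain rule and the identity $Du(\nabla u)=|\nabla u|^2$, whereas the paper writes out $v_{t+h}-v_t$ explicitly as a piecewise function and invokes locality; both are the same bookkeeping.
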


Finally, we turn to the proof of Theorem $\ref{thm}$. The proof is along the lines of the approach in \cite{11}.
\begin{proof}[Proof of Theorem $\ref{thm}$]
Without loss of generality, we can assume that $u\neq 0$.
(i) Combining coarea formula $(\ref{coarea})$, Lemma $\ref{4}$, Propositions $\ref{3}(c)$, $\ref{levy}$ and Chain rule $(\ref{2.16})$ of minimal weak upper gradient, we obtain the following chain of inequalities:
\begin{align}
\mathcal{I}_{K, N}(\mu(t)) &\leq \mathrm{Per}(\{|u|>t\})\leq-\frac{\mathrm{d}}{\mathrm{d} t} \int_{\{|u|>t\}}|\nabla |u|| \mathrm{d}\mathfrak{m} = -\frac{\mathrm{d}}{\mathrm{d} t} \int_{\{|u|>t\}}|\nabla u| \mathrm{d}\mathfrak{m} \notag\\
& \leq(-\mu^{\prime}(t))^{1/q} \bigg(\int_{\{|u|>t\}}|f|\mathrm{d} \mathfrak{m}\bigg)^{1/p}\leq(-\mu^{\prime}(t))^{1/q} \bigg(\int_0^{\mu(t)} f^{\sharp}(s) \mathrm{d} s\bigg)^{1/p}\label{41}
\end{align}
for $\mathscr{L}^1$-a.e. $t>0$, which can be rewritten as
\begin{equation}\label{42}
1 \leq-\frac{\mu^{\prime}(t)}{ \mathcal{I}_{K, N}(\mu(t))^{\frac{p}{p-1}}} \bigg(\int_0^{\mu(t)} f^{\sharp}(s) \mathrm{d} s \bigg)^{\frac{1}{p-1}}=\frac{-\mu^{\prime}(t)}{\mathcal{I}_{K, N}(\mu(t))} \bigg(\frac{1}{\mathcal{I}_{K, N}(\mu(t))}\int_0^{\mu(t)} f^{\sharp}(s) \mathrm{d} s\bigg)^{\frac{1}{p-1}}
\end{equation}
for $\mathscr{L}^1$-a.e. $t \in(0, M)$, where $M=\operatorname{ess} \sup |u|\in (0,\infty]$. For simplicity, let
$$
F(\xi) = \int_0^{\xi} f^{\sharp}(s) \mathrm{d} s \text{ for any }\xi>0.
$$
Let now $0 \leq \tau^{\prime}<\tau \leq M$. Integrating $(\ref{42})$ from $\tau^{\prime}$ to $\tau$ we get
$$
\tau-\tau^{\prime} \leq  \int_{\tau^{\prime}}^\tau \frac{-\mu^{\prime}(t)}{\mathcal{I}_{K, N}(\mu(t))} \bigg(\frac{F(\mu(t))}{\mathcal{I}_{K, N}(\mu(t))}\bigg)^{\frac{1}{p-1}} \mathrm{d} t, \quad 0 \leq \tau^{\prime}<\tau \leq M .
$$
Using the change of variables $\xi=\mu(t)$ on the intervals where $\mu$ is continuous(see \cite[p.108]{40}), and observing that the integrand is nonnegative, we obtain
\begin{equation}\label{14}
\tau-\tau^{\prime} \leq \int_{\mu(\tau)}^{\mu(\tau^{\prime})}\frac{1}{\mathcal{I}_{K, N}(\xi)} \bigg(\frac{F(\xi)}{\mathcal{I}_{K, N}(\xi)}\bigg)^{\frac{1}{p-1}} \mathrm{d} \xi, \quad 0 \leq \tau^{\prime}<\tau \leq M.
\end{equation}
Let us fix $s \in(0, \mu(0))$ and let $\eta>0$ be a small enough parameter such that $u^{\sharp}(s)>\eta$; consider $\tau^{\prime}=0$ and $\tau=u^{\sharp}(s)-\eta$. Notice that, since $u^{\sharp}(s)$ is the infimum of the $\tilde{\tau}$ such that $\mu(\tilde{\tau})<s$, we have that $\mu(\tau) \geq s$. Using again the nonnegativity of the integrand, we obtain that
$$
u^{\sharp}(s)-\eta \leq \int_s^{\mu(0)} \frac{1}{\mathcal{I}_{K, N}(\xi)} \bigg(\frac{F(\xi)}{\mathcal{I}_{K, N}(\xi)}\bigg)^{\frac{1}{p-1}} \mathrm{d} \xi, \quad \forall s \in(0, \mu(0)) .
$$
Letting $\eta \downarrow 0$, enlarging the integration interval and notice that on $(\mu(0), \mathfrak{m}(\Omega)]$ the function $u^{\sharp}$ vanishes and $u^{\sharp}$ is left continuous on $[0,\mathfrak{m}(\Omega)]$ and right-continuous at $0$, we get that 
\begin{equation}\label{9}
u^{\sharp}(s) \leq
\int_s^{\mathfrak{m}(\Omega)}\frac{1}{\mathcal{I}_{K, N}(\xi)} \bigg(\frac{1}{\mathcal{I}_{K, N}(\xi)}\int_0^\xi f^{\sharp}(t)\mathrm{d}t\bigg)^{\frac{1}{p-1}} \mathrm{d} \xi
\quad \forall s \in[0, \mathfrak{m}(\Omega)].
\end{equation}
Finally, by the definition of the symmetrized function $u^{\star}=u^{\sharp} \circ H_{K, N}$, we obtain
$$
u^{\star}(x) \leq\int_{H_{K, N}(x)}^{\mathfrak{m}(\Omega)}\frac{1}{\mathcal{I}_{K, N}(\xi)} \bigg(\frac{1}{\mathcal{I}_{K, N}(\xi)}\int_0^\xi f^{\star}\circ H_{K,N}^{-1}(t)\mathrm{d}t\bigg)^{\frac{1}{p-1}} \mathrm{d} \xi
\quad \forall x \in[0,r_v].
$$
Now we can recognize that the right hand side coincides with the characterization of $w$ we obtained in $(\ref{22})$, since $r_v$ was chosen so that $H_{K, N}(r_v)=\mathfrak{m}(\Omega)$. In particular, $u^\star(0)=\operatorname{ess} \sup_{\Omega}|u|\leq w(0)<\infty$. 

Finally, if $(\mathrm{X},\mathrm{d},\mathfrak{m})=(J_{K,N},\mathrm{d}_{eu},\mathfrak{m}_{K,N})$, and assume that $f:J_{K,N}\rightarrow [0,\infty)$ is a nonincreasing and non-negative function, then, by Proposition $\ref{3}(e)$, we obtain $f^\star=f$ $\mathfrak{m}_{K,N}$-a.e. and $w^\star=w$ on $J_{K,N}$ since $w$ is continuous. That is to say, the Talenti comparison inequality is sharp. We complete the proof of (i).

(ii) We start by noticing that for $1\leq r\leq p$,
$$
\int_{\Omega}|\nabla u|^r \mathrm{d}\mathfrak{m}=\int_{\{|u|>0\}}|\nabla u|^r \mathrm{d}\mathfrak{m}
$$
since $|\nabla u|=0$ $\mathfrak{m}$-a.e. on $\{u=0\}$, by $(\ref{2.14})$. Let $M:=\operatorname{ess} \sup_{\Omega}|u|\in (0,+\infty)$; fix $t,h>0$. Using the H\"{o}lder inequality one gets
\begin{equation}\label{45}
\frac{1}{h} \int_{\{t<|u| \leq t+h\}}|\nabla u|^r \mathrm{d} \mathfrak{m} \leq\bigg(\frac{1}{h} \int_{\{t<|u| \leq t+h\}}|\nabla u|^p \mathrm{d} \mathfrak{m}\bigg)^{\frac{r}{p}}\bigg(\frac{\mathfrak{m}(\{t<|u| \leq t+h\})}{h}\bigg)^{\frac{p-r}{p}}
\end{equation}
By the very same computations we already performed in Lemma $\ref{4}$, letting $h$ tend to zero in  $(\ref{45})$, we obtain
$$
-\frac{\mathrm{d}}{\mathrm{d} t} \int_{\{|u|>t\}}|\nabla u|^r \mathrm{d}\mathfrak{m} \leq\bigg( \int_{\{|u|>t\}} |f| \mathrm{d} \mathfrak{m}\bigg)^{\frac{r}{p}}(-\mu^{\prime}(t))^{\frac{p-r}{p}},
$$
for $\mathscr{L}^1$-a.e. $t$. Let us now adopt again the notation
$$
F(\xi) = \int_0^{\xi} f^{\sharp}(s) \mathrm{d} s.
$$
Using Propositions $\ref{3}(c)$ again, we get that
\begin{equation}\label{47}
-\frac{\mathrm{d}}{\mathrm{d} t} \int_{\{|u|>t\}}|\nabla u|^r \mathrm{d}\mathfrak{m} \leq F(\mu(t))^{\frac{r}{p}}(-\mu^{\prime}(t))^{\frac{p-r}{p}}
\end{equation}
for $\mathscr{L}^1$-a.e. $t$. In order to obtain a clean term $\mu^{\prime}(t)$ at the right hand side, we multiply both sides of $(\ref{47})$ with the respective sides of $(\ref{42})$ raised at the power $r/p$. This gives, for $\mathscr{L}^1$-a.e. $t \in(0, M)$:
$$
-\frac{\mathrm{d}}{\mathrm{d} t} \int_{\{|u|>t\}}|\nabla u|^r \mathrm{d}\mathfrak{m} \leq\bigg(\frac{F(\mu(t))}{ \mathcal{I}_{K, N}(\mu(t))}\bigg)^{\frac{r}{p-1}}(-\mu^{\prime}(t)) .
$$
Combining this and coarea formula $(\ref{coarea})$, and changing the variables as usual with $\xi=\mu(t)$, the following estimate holds that
\begin{equation}\label{48}
\int_{\Omega}|\nabla u|^r \mathrm{d}\mathfrak{m} \leq \int_0^{\mathfrak{m}(\Omega)}\bigg(\frac{F(\xi)}{ \mathcal{I}_{K, N}(\xi)}\bigg)^{\frac{r}{p-1}} \mathrm{d} \xi .
\end{equation}
Finally, we recall that $w$ has an explicit expression we can differentiate: by differentiating $(\ref{21})$ (with datum $f^{\star}$), we have, for all $\rho \in(0, r_v)$,
$$
w^{\prime}(\rho)=-\bigg(\frac{1}{h_{K, N}(\rho)} \int_0^{H_{K, N}(\rho)}  f^{\star}(H_{K, N}^{-1}(t)) \mathrm{d} t\bigg)^{\frac{1}{p-1}}=-\bigg(\frac{F(H_{K, N}(\rho))}{ h_{K, N}(\rho)}\bigg)^{\frac{1}{p-1}} .
$$
Thus, the following identity holds that
\begin{equation}\label{49}
\int_0^{r_v}|w^{\prime}|^r \mathrm{d}\mathfrak{m}_{K, N}=\int_0^{r_v}\bigg(\frac{F(H_{K, N}(\rho))}{ h_{K, N}(\rho)}\bigg)^{\frac{r}{p-1}} h_{K, N}(\rho) \mathrm{d} \rho=\int_0^{\mathfrak{m}(\Omega)}\bigg(\frac{F(\xi)}{ \mathcal{I}_{K, N}(\xi)}\bigg)^{\frac{r}{p-1}} \mathrm{d} \xi,
\end{equation}
where we have used the change of variables $\xi=H_{K, N}(\rho)$ and the fact that $\mathcal{I}_{K, N}(\xi)=$ $h_{K, N}(H_{K, N}^{-1}(\xi))$. Comparing with$(\ref{48})$, we obtain the desired $L^r$-gradient estimate.
\end{proof}

\subsection{Rigidity in the Talenti-type theorem}
Let $u \in W_0^{1,p}(\Omega)$ and $w \in W_0^{1,p}([0, r_v),  \mathfrak{m}_{K, N})$ be as in Theorem $\ref{thm}$. The next problem we want to approach is the equality case, that is, what we can say about the original metric measure space when $u^{\star}=w$; in fact, we will obtain that if the equality is attained at least at one point, then the metric measure space is forced to have a particular structure, namely it is a spherical suspension, which is Theorem $\ref{thm5.6}$. Before passing to its statement
we need some definitions and results.

Let $(B, \mathrm{d}_B, \mathfrak{m}_B)$ and $(F, \mathrm{d}_F, \mathfrak{m}_F)$ be geodesic metric measure spaces and $f: B \rightarrow[0, \infty)$ be a Lipschitz function. Let $\mathrm{d}$ be the pseudo-distance on $B \times F$ defined by
$$
\mathrm{d}((p, x),(q, y)) = \inf \{L(\gamma) \mid \gamma(0)=(p, x), \gamma(1)=(q, y)\},
$$
where, for any absolutely continuous curve $\gamma=(\gamma_B, \gamma_F):[0,1] \rightarrow B \times F$,
$$
L(\gamma) = \int_0^1(|\dot{\gamma}_B|^2+(f \circ \gamma_B)^2|\dot{\gamma}_F|^2)^{\frac{1}{2}} \mathrm{d} t .
$$
Given $N>1$, we define $B \times{ }_f^N F$ to be the metric measure space
$$
((B \times F) / \sim, \mathrm{d}, \mathfrak{m}),
$$
where $\sim$ is the equivalence relation associated to the pseudo-distance $\mathrm{d}$ and $\mathfrak{m} = f^N \mathfrak{m}_B \times \mathfrak{m}_F$.\\
Given $N>2$, we say that a $\operatorname{RCD}(N-1, N)$ space $(\mathrm{X}, \mathrm{d}, \mathfrak{m})$ is a \textbf{spherical suspension} if it is isomorphic to $[0, \pi] \times{ }_{\sin }^{N-1} \mathrm{Y}$ for a $\operatorname{RCD}^*(N-2, N-1)$ space $(\mathrm{Y}, \mathrm{d}_{\mathrm{Y}}, \mathfrak{m}_{\mathrm{Y}})$ with $\mathfrak{m}_{\mathrm{Y}}(\mathrm{Y})=1$.

\begin{theorem}[Rigidity for L\'{e}vy-Gromov]\cite{13}
Let $(\mathrm{X}, \mathrm{d}, \mathfrak{m})$ be a $\operatorname{RCD}(N-1, N)$ space with $N>2$. Assume there exists $\bar{v} \in(0,1)$ such that $\mathcal{I}_{(\mathrm{X}, \mathrm{d}, \mathfrak{m})}(\bar{v})= \mathcal{I}_{N-1, N}(\bar{v})$. Then $(\mathrm{X}, \mathrm{d}, \mathfrak{m})$ is a spherical suspension.	
\end{theorem}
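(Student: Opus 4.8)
I would prove this via the \emph{localization} (needle decomposition) technique, which is also the engine behind the L\'{e}vy--Gromov inequality of Proposition~\ref{levy}; the point is that equality at one volume value forces \emph{every} needle to be a full model needle.

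First, fix an isoperimetric region: since $(\mathrm{X},\mathrm{d},\mathfrak{m})$ is compact (Proposition~\ref{prop5}) and $\mathfrak{m}$ is doubling and supports a Poincar\'{e} inequality, the direct method in $\mathrm{BV}$ produces a Borel set $E\subset\mathrm{X}$ with $\mathfrak{m}(E)=\bar v$ and $\mathrm{Per}(E)=\mathcal{I}_{(\mathrm{X},\mathrm{d},\mathfrak{m})}(\bar v)=\mathcal{I}_{N-1,N}(\bar v)$. Next, run the $L^1$-localization adapted to $E$ (equivalently, to the $L^1$-optimal transport from $\tfrac1{\bar v}\mathfrak{m}\llcorner E$ to $\tfrac1{1-\bar v}\mathfrak{m}\llcorner E^c$); since every $\mathrm{RCD}(N-1,N)$ space is essentially non-branching, this yields a partition of $\mathfrak{m}$-a.e.\ point of $\mathrm{X}$ into transport rays $\{X_\alpha\}_{\alpha\in Q}$, a probability measure $\mathfrak{q}$ on the quotient $Q$, and a disintegration $\mathfrak{m}=\int_Q\mathfrak{m}_\alpha\,\mathrm{d}\mathfrak{q}(\alpha)$ in which, for $\mathfrak{q}$-a.e.\ $\alpha$, the space $(X_\alpha,\mathrm{d},\mathfrak{m}_\alpha)$ is isomorphic to a one-dimensional $\mathrm{CD}(N-1,N)$ space, i.e.\ $\mathfrak{m}_\alpha=h_\alpha\mathscr{L}^1$ on an interval of length $\ell_\alpha\le\pi$, and (after the normalization $\mathfrak{m}_\alpha(X_\alpha)=1$) the balance condition $\mathfrak{m}_\alpha(E\cap X_\alpha)=\bar v$ holds.

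The argument is then one-dimensional. On $\mathfrak{q}$-a.e.\ ray one has $\mathrm{Per}(E\cap X_\alpha;X_\alpha)\ge\mathcal{I}_{(X_\alpha,\mathrm{d},\mathfrak{m}_\alpha)}(\bar v)$, and the sharp one-dimensional isoperimetric comparison for $\mathrm{CD}(N-1,N)$ densities gives $\mathcal{I}_{(X_\alpha,\mathrm{d},\mathfrak{m}_\alpha)}(\bar v)\ge\mathcal{I}_{N-1,N}(\bar v)$; since $\mathrm{Per}(E)$ dominates the $\mathfrak{q}$-integral of these one-dimensional perimeters, integrating in $\alpha$ recovers L\'{e}vy--Gromov. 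Because by hypothesis the two ends coincide, all of these inequalities are equalities for $\mathfrak{q}$-a.e.\ $\alpha$, whence $\mathcal{I}_{(X_\alpha,\mathrm{d},\mathfrak{m}_\alpha)}(\bar v)=\mathcal{I}_{N-1,N}(\bar v)$. The rigidity of the one-dimensional comparison then forces $\ell_\alpha=\pi$ and $h_\alpha$ proportional to a translate of $\sin^{N-1}$, for $\mathfrak{q}$-a.e.\ $\alpha$. In particular $\mathrm{X}$ contains a geodesic of length $\pi$, so its endpoints realize $\mathrm{d}=\pi$; together with the Bonnet--Myers bound $\operatorname{diam}(\mathrm{X})\le\pi\sqrt{(N-1)/(N-1)}=\pi$ of Proposition~\ref{prop5} we conclude $\operatorname{diam}(\mathrm{X})=\pi$. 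Finally I would invoke Ketterer's maximal diameter theorem: an $\mathrm{RCD}(N-1,N)$ space of diameter $\pi$ is isomorphic to $[0,\pi]\times_{\sin}^{N-1}\mathrm{Y}$ for some $\mathrm{RCD}^*(N-2,N-1)$ space $(\mathrm{Y},\mathrm{d}_{\mathrm{Y}},\mathfrak{m}_{\mathrm{Y}})$ with $\mathfrak{m}_{\mathrm{Y}}(\mathrm{Y})=1$; since $N>2$ this is precisely a spherical suspension, which completes the proof.

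The hard part is the localization step together with the two facts extracted from it: that the conditional measures $\mathfrak{m}_\alpha$ are genuine one-dimensional $\mathrm{CD}(N-1,N)$ densities, and that $\mathrm{Per}(E)$ is controlled from below by the $\mathfrak{q}$-integral of the one-dimensional perimeters of the traces $E\cap X_\alpha$. Both rest on essential non-branching of $\mathrm{RCD}(N-1,N)$ spaces and on the fine structure of $L^1$-optimal transport (existence and measurability of the ray decomposition, absolute continuity of the conditionals, and the curvature-dimension bound for the disintegrated densities). Once the problem is reduced to the one-dimensional model, the rigidity on each needle and the passage to a spherical suspension via the maximal diameter theorem are comparatively routine.
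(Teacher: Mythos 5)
The paper does not prove this theorem; it is taken as a black box from Cavalletti--Mondino~\cite{13} (with the underlying localization machinery from~\cite{12}). Your proposal correctly reconstructs that proof: the $L^1$-needle decomposition adapted to an isoperimetric minimizer, the one-dimensional $\mathrm{CD}(N-1,N)$ rigidity on each transport ray forcing model needles of length $\pi$, the consequent $\operatorname{diam}(\mathrm{X})=\pi$, and finally Ketterer's maximal-diameter theorem to obtain the spherical suspension structure.
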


\begin{theorem}[P\'{o}lya-Szeg\"{o} for $\operatorname{RCD}(N-1, N)$ spaces]\cite{18}
Let $(\mathrm{X}, \mathrm{d}, \mathfrak{m})$ be a $\operatorname{RCD}(N-1, N)$ space with $N >2$. Let $\Omega \subset \mathrm{X}$ be an open set with  $\mathfrak{m}(\Omega)=v \in(0,1)$ and let $r_v \in(0, \pi)$ such that $\mathfrak{m}_{N-1, N}([0, r_v])=v$. Then, for every $p \in(1, \infty)$, the following hold that\\
$(i)$ \underline{P\'{o}lya-Szeg\"{o} comparison}: for any $u \in W_0^{1, p}(\Omega)$, it holds that $u^{\star}(r_v)=0$ and
\begin{equation}\label{50}
\int_0^{r_v}|\nabla u^{\star}|^p \mathrm{d}\mathfrak{m}_{N-1, N} \leq \int_{\Omega}|\nabla u|^p \mathrm{d}\mathfrak{m} .
\end{equation}
$(ii)$ \underline{Rigidity}: if there exists $u \in W_0^{1,p}(\Omega)$ with $u \not \equiv 0$, achieving equality in $(\ref{50})$, then $(\mathrm{X}, \mathrm{d}, \mathfrak{m})$ is a spherical suspension.\\
$(iii)$ \underline{Rigidity for Lipschitz functions}: if there exists $u \in W_0^{1,p}(\Omega) \cap \operatorname{Lip}(\Omega)$ with $u \not \equiv 0$ and $|\nabla u| \neq 0$ $\mathfrak{m}$-a.e. in $\operatorname{supp}(u)$, achieving equality in $(\ref{50})$, then $(\mathrm{X}, \mathrm{d}, \mathfrak{m})$ is a spherical suspension and $u$ is radial: that is, $u$ is of the form $u=g(\mathrm{d}(\cdot, x_0))$, with $x_0$ being the tip of a spherical suspension structure of $\mathrm{X}$, and $g:[0, \pi] \rightarrow \mathbb{R}$ satisfying $|g|=u^{\star}$.	
\end{theorem}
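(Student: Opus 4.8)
The plan is to prove the comparison $(\ref{50})$ by the classical level-set argument, using the L\'evy--Gromov inequality (Proposition $\ref{levy}$) in place of the Euclidean isoperimetric inequality, and then to read off the two rigidity statements from the equality cases of the inequalities entering the estimate. First I would reduce to $u\geq 0$: passing from $u$ to $|u|$ changes neither side of $(\ref{50})$, by the chain rule $(\ref{2.16})$ and locality $(\ref{2.15})$ of the minimal weak upper gradient and the fact that $|u|^{\star}=u^{\star}$ (Proposition $\ref{3}$). Write $\mu(t):=\mathfrak{m}(\{u>t\})$. Since $|\nabla u|=0$ $\mathfrak{m}$-a.e. on $\{u=0\}$ by $(\ref{2.14})$, the coarea formula $(\ref{coarea})$ with $g=|\nabla u|^{p-1}\chi_{\{|\nabla u|>0\}}$ gives
$$
\int_{\Omega}|\nabla u|^{p}\,\mathrm{d}\mathfrak{m}=\int_{0}^{\infty}\Big(\int|\nabla u|^{p-1}\,\mathrm{d}\operatorname{Per}(\{u>r\})\Big)\,\mathrm{d}r,
$$
and with $g=|\nabla u|^{-1}\chi_{\{|\nabla u|>0\}}$ it yields $\int|\nabla u|^{-1}\chi_{\{|\nabla u|>0\}}\,\mathrm{d}\operatorname{Per}(\{u>r\})\leq -\mu'(r)$ for $\mathscr{L}^{1}$-a.e. $r$ (the part of $\{u>r\}$ on which $|\nabla u|=0$ only improves the inequality). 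H\"older's inequality with exponents $p,q$ against the measure $\operatorname{Per}(\{u>r\})$ gives, for a.e. $r$,
$$
\operatorname{Per}(\{u>r\})^{p}\leq\Big(\int|\nabla u|^{p-1}\,\mathrm{d}\operatorname{Per}(\{u>r\})\Big)\,(-\mu'(r))^{p-1},
$$
and combining this with $\operatorname{Per}(\{u>r\})\geq\mathcal{I}_{N-1,N}(\mu(r))$ and integrating in $r$ produces
$$
\int_{\Omega}|\nabla u|^{p}\,\mathrm{d}\mathfrak{m}\geq\int_{0}^{\infty}\mathcal{I}_{N-1,N}(\mu(r))^{p}\,(-\mu'(r))^{1-p}\,\mathrm{d}r.
$$

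The second step is to identify the right-hand side with $\int_{0}^{r_{v}}|\nabla u^{\star}|^{p}\,\mathrm{d}\mathfrak{m}_{N-1,N}$. The symmetrization $u^{\star}=u^{\sharp}\circ H_{N-1,N}$ is non-increasing, equimeasurable with $u$ (hence has the same $\mu$, Proposition $\ref{3}$), and $u^{\star}(r_{v})=0$ since $u\in W_{0}^{1,p}(\Omega)$; its superlevel sets are the intervals $\{u^{\star}>r\}=[0,H_{N-1,N}^{-1}(\mu(r)))$, whose perimeter in the model space is $h_{N-1,N}(H_{N-1,N}^{-1}(\mu(r)))=\mathcal{I}_{N-1,N}(\mu(r))$ by Lemma $\ref{lem2.12}$. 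On the one-dimensional model the identical coarea/H\"older chain holds with equality at each step — the level sets are exactly the isoperimetric sets and $|\nabla u^{\star}|$ is constant on each point-like level set — so that $\int_{0}^{r_{v}}|\nabla u^{\star}|^{p}\,\mathrm{d}\mathfrak{m}_{N-1,N}=\int_{0}^{\infty}\mathcal{I}_{N-1,N}(\mu(r))^{p}\,(-\mu'(r))^{1-p}\,\mathrm{d}r$ (a change of variables $\xi=\mu(r)$ makes this explicit). Comparing the two displays proves $(\ref{50})$.

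For the rigidity in $(ii)$: if equality holds in $(\ref{50})$ with $u\not\equiv 0$, then for $\mathscr{L}^{1}$-a.e. $r$ with $\mu(r)\in(0,1)$ equality must hold in L\'evy--Gromov, i.e. $\operatorname{Per}(\{u>r\})=\mathcal{I}_{N-1,N}(\mu(r))$; fixing one such $r$ and putting $\bar{v}:=\mu(r)\in(0,1)$ one gets $\mathcal{I}_{N-1,N}(\bar{v})\leq\mathcal{I}_{(\mathrm{X},\mathrm{d},\mathfrak{m})}(\bar{v})\leq\operatorname{Per}(\{u>r\})=\mathcal{I}_{N-1,N}(\bar{v})$, so the rigidity statement for L\'evy--Gromov recalled just above forces $(\mathrm{X},\mathrm{d},\mathfrak{m})$ to be a spherical suspension. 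For $(iii)$, equality in $(\ref{50})$ additionally forces equality in the H\"older step, so $|\nabla u|$ is $\operatorname{Per}(\{u=r\})$-a.e. constant on $\{u=r\}$ for a.e. $r$; under the extra assumptions that $u\in\operatorname{Lip}(\Omega)$ and $|\nabla u|\neq 0$ $\mathfrak{m}$-a.e. on $\operatorname{supp}(u)$, the sets $\{u>r\}$ are genuine isoperimetric regions for their own volume, so invoking the classification of isoperimetric regions in the spherical suspension $[0,\pi]\times{ }_{\sin}^{N-1}\mathrm{Y}$ — distance balls around a pole — together with a nestedness argument to place them all around the same pole $x_{0}$, one obtains $\{u>r\}=B_{\rho(r)}(x_{0})$ for a monotone $\rho$, whence $u=g(\mathrm{d}(\cdot,x_{0}))$ with $|g|=u^{\star}$ by equimeasurability.

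The step I expect to be the main obstacle is $(iii)$: upgrading the statement ``$\operatorname{Per}$-a.e. superlevel set is isoperimetric'' to the rigid radial form of $u$. This needs the precise description of isoperimetric regions in spherical suspensions, and a careful argument that the $\mathscr{L}^{1}$-null set of exceptional levels, and the set $\{|\nabla u|=0\}$, cannot spoil the concentricity of the superlevel sets. The auxiliary $L^{p}$/BV technicalities — absolute continuity of $\mu$, the coarea identity for $\int|\nabla u|^{-1}\,\mathrm{d}\operatorname{Per}$ and its behaviour on $\{|\nabla u|=0\}$, and the boundary value $u^{\star}(r_{v})=0$ — are routine once one has the tools recalled in Section $2$.
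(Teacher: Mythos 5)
The paper does not prove this theorem: it is imported verbatim from Mondino--Semola \cite{18}, so there is no ``paper's own proof'' to compare against. Your level-set plan---coarea, L\'evy--Gromov (Proposition~\ref{levy}), H\"older against the perimeter measure, and then a re-assembly on the one-dimensional model space, with the rigidity statements read off from the equality cases of L\'evy--Gromov and H\"older---is the route taken in \cite{18}, and your set-up for $(ii)$ and the algebra of $(i)$ are correct.

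There is, however, a real gap in the way you close $(i)$. The final identification
$$
\int_0^{r_v}|\nabla u^{\star}|^{p}\,\mathrm{d}\mathfrak{m}_{N-1,N}=\int_0^{\infty}\mathcal{I}_{N-1,N}(\mu(r))^{p}\,(-\mu^{\prime}(r))^{1-p}\,\mathrm{d}r
$$
is asserted by ``running the same coarea/H\"older chain on the model with equality,'' but this presupposes precisely what remains to be proved: that $u^{\star}\in W^{1,p}([0,r_v),\mathfrak{m}_{N-1,N})$, that $\mu$ is absolutely continuous and strictly decreasing where $0<\mu<1$, and that the change of variable $\xi=\mu(r)$ is legitimate. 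None of these are hypotheses; $u^{\star}\in W^{1,p}$ is part of the conclusion, and without it the left-hand side is a priori infinite. The correct order of operations---which is where most of the work in \cite{18} goes---is to first bootstrap, from the L\'evy--Gromov/H\"older estimate you derived, an explicit pointwise bound showing that $u^{\sharp}$ is locally absolutely continuous on $(0,\mathfrak{m}(\Omega))$, and only then conclude $u^{\star}\in W^{1,p}$ and compute its minimal weak upper gradient. Calling these ``routine $BV$ technicalities'' understates them; they are the content of the proof. Likewise your $(iii)$ is a plan rather than a proof: equality in H\"older does give $|\nabla u|$ $\operatorname{Per}$-a.e.\ constant on a.e.\ level, but going from ``a.e.\ superlevel set is isoperimetric'' to the radial form $u=g(\mathrm{d}(\cdot,x_0))$ requires the classification of isoperimetric sets in spherical suspensions together with a concentricity/nestedness argument across the exceptional null set of levels, and you supply neither.
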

We are ready to state the following rigidity result which build on top of the rigidity in the L\'{e}vy-Gromov and P\'{o}lya-Szeg\"{o} inequalities.
\begin{theorem}[Rigidity for Talenti]\label{thm5.6}
Let $(\mathrm{X}, \mathrm{d}, \mathfrak{m})$ be a $\operatorname{RCD}(N-1, N)$ space with $N>2$, and let $\Omega \subset \mathrm{X}$ be an open set with $\mathfrak{m}(\Omega)=v \in(0,1)$. Let $p,q\in (1,\infty)$  be conjugate exponents.

Let $f \in L^q(\Omega)$, with $f \neq 0$. Assume that $u \in W_0^{1,p}(\Omega)$ is a weak solution to $(\ref{eq})$. Let also $w \in W_0^{1,p}(I, \mathfrak{m}_{N-1, N})$ be a solution to $(\ref{eq00})$. Assume that $u^{\star}(\bar{x})=w(\bar{x})$ for a point $\bar{x} \in[0, r_v)$. Then:\\
$(i)$ $u^{\star}=w$ in the whole interval $[\bar{x}, r_v]$;\\
$(ii)$ $(\mathrm{X}, \mathrm{d}, \mathfrak{m})$ is a spherical suspension;\\
$(iii)$ if $\bar{x}=0, u \in \operatorname{Lip}(\Omega)$ and $|\nabla u| \neq 0$ $\mathfrak{m}$-a.e. in $\operatorname{supp}(u)$, then $u$ is radial.
\end{theorem}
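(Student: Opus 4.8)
The plan is to retrace the chain of inequalities in the proof of Theorem~\ref{thm}(i): the equality $u^\star(\bar x)=w(\bar x)$ at a single point will be shown to force equality at almost every level in~(\ref{41})--(\ref{42}), after which $(ii)$ follows from the rigidity in the L\'{e}vy--Gromov inequality and $(iii)$ from the rigidity in the P\'{o}lya--Szeg\"{o} inequality. Set $s_0:=H_{N-1,N}(\bar x)\in[0,v)$ and write
$$
G(\xi):=\frac{1}{\mathcal{I}_{N-1,N}(\xi)}\Bigl(\frac{1}{\mathcal{I}_{N-1,N}(\xi)}\int_0^\xi f^{\sharp}(t)\,\mathrm{d}t\Bigr)^{\frac{1}{p-1}},
$$
so that, by~(\ref{22}) with datum $f^\star$, $w(\rho)=\int_{H_{N-1,N}(\rho)}^{v}G(\xi)\,\mathrm{d}\xi$, $u^\star(\bar x)=u^{\sharp}(s_0)$, and the hypothesis reads $u^{\sharp}(s_0)=\int_{s_0}^{v}G(\xi)\,\mathrm{d}\xi$. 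Since $f\neq0$, $f^{\sharp}>0$ near $0$, hence $\int_0^\xi f^{\sharp}>0$ for all $\xi>0$; together with $\mathcal{I}_{N-1,N}>0$ on $(0,1)$ and $f\in L^1(\Omega)$ this gives $G\in(0,\infty)$ on $(0,v]$, so $w>0$ on $[0,r_v)$ and in particular $u^{\sharp}(s_0)=w(\bar x)>0$.

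\emph{Part $(i)$.} This step is soft and uses only~(\ref{9}) and~(\ref{14}). By~(\ref{9}), $u^{\sharp}(s)\le\int_s^{v}G$ for every $s$. Fix $s\in(s_0,v)$; if $u^{\sharp}(s)=u^{\sharp}(s_0)$ we are done since $u^{\sharp}(s_0)=\int_{s_0}^{v}G\ge\int_{s}^{v}G$, so assume $u^{\sharp}(s)<u^{\sharp}(s_0)$. Apply~(\ref{14}) with $\tau'=u^{\sharp}(s)$ and $\tau=u^{\sharp}(s_0)-\eta$ for $0<\eta<u^{\sharp}(s_0)-u^{\sharp}(s)$; using $\mu(u^{\sharp}(s_0)-\eta)\ge s_0$, $\mu(u^{\sharp}(s))\le s$ and $G\ge0$, one gets $u^{\sharp}(s_0)-\eta-u^{\sharp}(s)\le\int_{s_0}^{s}G$, and letting $\eta\downarrow0$ and inserting the hypothesis $u^{\sharp}(s_0)=\int_{s_0}^{v}G$ gives $u^{\sharp}(s)\ge\int_{s_0}^{v}G-\int_{s_0}^{s}G=\int_{s}^{v}G$. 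Hence $u^{\sharp}(s)=\int_{s}^{v}G$ for all $s\in[s_0,v]$, i.e. $u^\star=w$ on $[\bar x,r_v]$.

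\emph{Part $(ii)$.} Rewrite~(\ref{42}) as $1\le(-\mu'(t))\,G(\mu(t))$ for a.e. $t$. Then $\int_0^{u^{\sharp}(s_0)}1\,\mathrm{d}t=u^{\sharp}(s_0)$, while for every $\tau<u^{\sharp}(s_0)$ the change of variables $\xi=\mu(t)$ used to derive~(\ref{14}) gives $\int_0^{\tau}(-\mu')G(\mu)\,\mathrm{d}t\le\int_{\mu(\tau)}^{\mu(0)}G\le\int_{s_0}^{v}G=u^{\sharp}(s_0)$; letting $\tau\uparrow u^{\sharp}(s_0)$ (monotone convergence) yields $\int_0^{u^{\sharp}(s_0)}(-\mu')G(\mu)\,\mathrm{d}t\le u^{\sharp}(s_0)$. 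Since $(-\mu')G(\mu)\ge1$, equality of these two integrals forces $(-\mu'(t))G(\mu(t))=1$ for a.e. $t\in(0,u^{\sharp}(s_0))$, i.e. equality in~(\ref{42}) and hence in the entire chain~(\ref{41}) for a.e. such $t$. As $u^{\sharp}(s_0)>0$, pick $t^\ast\in(0,u^{\sharp}(s_0))$ where equality holds with $\mu(t^\ast)\in(0,1)$ (indeed $\mu(t^\ast)\le v<1$, and $\mu(t^\ast)=\mathfrak{m}(\{|u|>t^\ast\})>0$ because $t^\ast<u^{\sharp}(s_0)\le\|u\|_{L^\infty(\Omega)}$). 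Equality in the first link of~(\ref{41}) reads $\mathcal{I}_{N-1,N}(\mu(t^\ast))=\operatorname{Per}(\{|u|>t^\ast\})$, while Proposition~\ref{levy} and the definition of the isoperimetric profile give $\mathcal{I}_{N-1,N}(\mu(t^\ast))\le\mathcal{I}_{(\mathrm{X},\mathrm{d},\mathfrak{m})}(\mu(t^\ast))\le\operatorname{Per}(\{|u|>t^\ast\})$; hence $\mathcal{I}_{(\mathrm{X},\mathrm{d},\mathfrak{m})}(\mu(t^\ast))=\mathcal{I}_{N-1,N}(\mu(t^\ast))$ with $\mu(t^\ast)\in(0,1)$, and the rigidity in the L\'{e}vy--Gromov inequality shows that $(\mathrm{X},\mathrm{d},\mathfrak{m})$ is a spherical suspension.

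\emph{Part $(iii)$.} If $\bar x=0$ then $s_0=0$, so by $(i)$ one has $u^\star=w$ on all of $[0,r_v]$; therefore the $L^p$-gradient estimate~(\ref{40}) with $r=p$ reads $\int_\Omega|\nabla u|^p\,\mathrm{d}\mathfrak{m}\le\int_0^{r_v}|\nabla w|^p\,\mathrm{d}\mathfrak{m}_{N-1,N}=\int_0^{r_v}|\nabla u^\star|^p\,\mathrm{d}\mathfrak{m}_{N-1,N}$, which combined with the P\'{o}lya--Szeg\"{o} comparison~(\ref{50}) forces equality in~(\ref{50}). Since $u\in W_0^{1,p}(\Omega)\cap\operatorname{Lip}(\Omega)$ with $u\not\equiv0$ (as $f\neq0$) and $|\nabla u|\neq0$ $\mathfrak{m}$-a.e. on $\operatorname{supp}(u)$, the rigidity for Lipschitz functions in the P\'{o}lya--Szeg\"{o} theorem gives that $u$ is radial. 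The main obstacle is the equality analysis underlying $(ii)$: because the objects in~(\ref{41})--(\ref{42}) involve the a.e. derivative of the distribution function $\mu$ and the change of variables $\xi=\mu(t)$ for a merely monotone (possibly non-absolutely-continuous) function, one must argue carefully that equality of the two integrals above upgrades to the pointwise identity $(-\mu')G(\mu)=1$ on $(0,u^{\sharp}(s_0))$, and that this interval is nondegenerate and contains a level $t^\ast$ with $\mu(t^\ast)\in(0,1)$ --- which is exactly where the hypotheses $f\neq0$ (via $G$ finite and positive on $(0,v]$, and $w(\bar x)>0$) and $\bar x<r_v$ enter. By contrast $(i)$ is a pure rearrangement estimate, and $(iii)$ is a short combination of $(i)$ with the already-available $L^p$-gradient and P\'{o}lya--Szeg\"{o} inequalities.
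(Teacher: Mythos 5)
Your proposal is correct and reconstructs the proof that the paper itself omits (it just refers to the adaptation of \cite[Theorem 4.4]{11}): part (i) propagates the pointwise equality by integrating the differential estimate~(\ref{14}) to both sides of $s_0$, part (ii) upgrades a single-level equality to equality a.e.\ in~(\ref{42}), forces equality throughout the chain~(\ref{41}) and invokes the L\'evy--Gromov rigidity, and part (iii) pairs the $L^p$-gradient estimate~(\ref{40}) with the P\'olya--Szeg\"o rigidity. The integral-saturation argument you use for (ii) (comparing $\int_0^{u^{\sharp}(s_0)}1\,\mathrm{d}t$ with $\int_0^{u^{\sharp}(s_0)}(-\mu')G(\mu)\,\mathrm{d}t$) is a clean way to sidestep the delicate differentiation of $u^{\sharp}$, and your identification of where $f\neq 0$ and $\bar x<r_v$ are needed (to make $u^{\sharp}(s_0)>0$ and $\mu(t^*)\in(0,1)$) matches the structure of the argument in \cite{11}.
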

Since the proof of the above Theorem is a direct adaptation of the proof in \cite[Theorem 4.4]{11}, we omit it.

\subsection{Almost rigidity in the Talenti-type theorem}\label{sec}
Our aim in this subsection is to prove the almost rigidity in the Talenti-type theorem and the continuity of the local Poisson problem. 

Throughout this section, let $p,q\in (1,\infty)$ be conjugate exponents, and the following assumption will be made:
\begin{assumption}\label{assu}
Spaces: $\{\mathscr{X}_i\}_{i \in \mathbb{N}_+}=\{(\mathrm{X}_i, \mathrm{d}_i, \mathfrak{m}_i)\}_{i \in \mathbb{N}_+}$ and $\mathscr{X}=(\mathrm{X}, \mathrm{d},  \mathfrak{m})$ will be $\operatorname{RCD}(N-1, N)$ spaces with $N>2$; \\
Convergence of spaces: we will assume that $\mathscr{X}_i$ converge in the $\mathrm{mGH}$ sense to $\mathscr{X}$ and the following conditions hold:\\
$(\mathrm{GH1})$ $\mathrm{X}_i$ and $\mathrm{X}$ are all contained in a common metric space $(\mathrm{Y}, \mathrm{d})$, with $\mathrm{d}_i=\mathrm{d}|_{\mathrm{X}_i \times \mathrm{X}_i}$;\\
$(\mathrm{GH2})$ The measures $\mathfrak{m}_i$ weakly converge to $\mathfrak{m}$:
$$
\lim_{i \rightarrow \infty} \int_{\mathrm{Y}} \phi \mathrm{d}\mathfrak{m}_i=\int_{\mathrm{Y}} \phi \mathrm{d}\mathfrak{m} \text { for all } \phi \in {C}_{\mathrm{b}}(\mathrm{Y}).
$$
Base points and Convergence: $x_i\in \mathrm{X}_i$ and $x\in \mathrm{X}$, $x_i\rightarrow x$, meaning that $\lim_{i\rightarrow \infty}\mathrm{d}(x_i,x)=0$.
\end{assumption}

Let $B_{R_i}(x_i)$ and $B_R(x)$ be metric balls in $\mathrm{X}_i$ and $\mathrm{X}$ respectively. Let $f_i \in L^p(B_{R_i}(x_i),\mathfrak{m}_i),f \in L^p(B_R(x),\mathfrak{m})$; by extending such functions to be $0$ out of the balls on which they are defined, we can equivalently assume $f_i \in L^p(\mathrm{X}_i,\mathfrak{m}_i)$ and $f \in L^p(\mathrm{X},\mathfrak{m})$; by the assumption that the spaces $\mathrm{X}_i$ and $\mathrm{X}$ are contained in $\mathrm{Y}$, up to a further extension we actually have $f_i \in L^p(\mathrm{Y},\mathfrak{m}_i)$ and $f \in L^p(\mathrm{Y}, \mathfrak{m})$. In addition, by abuse of notations, we also consider $B_{R_i}(x_i)$ and $B_R(x)$ as open balls in $\mathrm{Y}$.

The following lemma is elementary but often useful.
\begin{lemma}\label{lem4.13}
$(i)$ Let $R_i \rightarrow R$ with $R_i,R\in (0,\infty)$. Then $\mathfrak{m}_i(B_{R_i}(x_i))\rightarrow \mathfrak{m}(B_R(x))$.	\\
$(ii)$ Let $(0,\infty)\ni R_i \rightarrow R\in [0,\infty]$ be such that $\mathfrak{m}_i(B_{R_i}(x_i))=v \in(0,1)$ for all $i \in \mathbb{N}_+$. Then $R\in (0,\infty)$ and $\mathfrak{m}(B_R(x))=v$.	
\begin{proof}
For part (i), the fact that $\mathfrak{m}_i$ weakly converge to $\mathfrak{m}$ implies 
\begin{align*}
\mathfrak{m}(U)&\leq \varliminf_{i\rightarrow \infty}\mathfrak{m}_i(U)\text{ for any }U\subset \mathrm{Y}\text{ open.}\\
\mathfrak{m}(F)&\geq \varlimsup_{i\rightarrow \infty}\mathfrak{m}_i(F)\text{ for any }F\subset \mathrm{Y}\text{ closed.}
\end{align*}
Let $R>0$. Given any $\epsilon\in (0,R)$, the fact that $x_i\rightarrow x$ and $R_i\rightarrow R$ implies $\mathrm{d}(x_i,x)<\epsilon/2$ and $|R_i-R|<\epsilon/2$ for enough large $i$. Thus 
\begin{align*}
\varlimsup_{i \rightarrow \infty}\mathfrak{m}_i(B_{R_i}(x_i))&\leq \varlimsup_{i \rightarrow \infty}\mathfrak{m}_i(\bar{B}_{R_i}(x_i))\leq \varlimsup_{i \rightarrow \infty}\mathfrak{m}_i(\bar{B}_{R+\epsilon}(x))\leq \mathfrak{m}(\bar{B}_{R+\epsilon}(x)),\\
\varliminf_{i \rightarrow \infty}\mathfrak{m}_i(B_{R_i}(x_i))&\geq \varliminf_{i \rightarrow \infty}\mathfrak{m}_i(B_{R-\epsilon}(x))\geq \mathfrak{m}(B_{R-\epsilon}(x)).
\end{align*}
Combining these and the fact that $\mathfrak{m}(\partial B_R(x))$ for every $R>0$, and letting $\epsilon\rightarrow 0$, we obtain that
\begin{equation*}
\lim_{i \rightarrow \infty}\mathfrak{m}_i(B_{R_i}(x_i))=\mathfrak{m}(B_{R}(x)).
\end{equation*}
To prove (ii), we first note that if $R\in (0,\infty)$, (i) implies (ii). Now, we argue by contradiction: assume first that $R=\infty$. Given $\epsilon,M>0$, the fact that $x_i\rightarrow x$ and $R_i\rightarrow \infty$ implies $\mathrm{d}(x_i,x)<\epsilon$ and $R_i>M$ for enough large $i$. Thus 
\begin{align*}
v=\varliminf_{i \rightarrow \infty}\mathfrak{m}_i(B_{R_i}(x_i))&\geq \varliminf_{i \rightarrow \infty}\mathfrak{m}_i(B_{M-\epsilon}(x))\geq \mathfrak{m}(B_{M-\epsilon}(x)).
\end{align*}
Letting $M\rightarrow \infty$, this means that $v\geq 1$, and thus we get a contradiction. Similarly, we also can prove that $R=0$ is impossible, since $v>0$. We complete the proof.
\end{proof}
\end{lemma}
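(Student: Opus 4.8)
The plan is to derive the whole statement from the Portmanteau characterisation of weak convergence of probability measures, together with two structural facts about $\mathrm{RCD}(K,N)$ spaces: metric spheres are $\mathfrak{m}$-negligible, and $\mathfrak{m}$ is atomless. Since $\mathfrak{m}_i \to \mathfrak{m}$ weakly in the sense of $(\mathrm{GH2})$ and all the measures are Borel probability measures on the common space $(\mathrm{Y},\mathrm{d})$, we have $\mathfrak{m}(U) \le \liminf_{i\to\infty} \mathfrak{m}_i(U)$ for every open $U \subseteq \mathrm{Y}$ and $\mathfrak{m}(C) \ge \limsup_{i\to\infty} \mathfrak{m}_i(C)$ for every closed $C \subseteq \mathrm{Y}$; everything else is bookkeeping with the triangle inequality.

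For $(i)$, fix $\epsilon \in (0,R)$. Since $x_i \to x$ and $R_i \to R$, for all $i$ large enough one has $\mathrm{d}(x_i,x) < \epsilon/2$ and $|R_i - R| < \epsilon/2$, which forces the inclusions $B_{R-\epsilon}(x) \subseteq B_{R_i}(x_i) \subseteq \bar B_{R+\epsilon}(x)$. Applying the Portmanteau inequalities to the open set on the left and the closed set on the right gives
\[ \mathfrak{m}(B_{R-\epsilon}(x)) \le \liminf_{i\to\infty} \mathfrak{m}_i(B_{R_i}(x_i)) \le \limsup_{i\to\infty} \mathfrak{m}_i(B_{R_i}(x_i)) \le \mathfrak{m}(\bar B_{R+\epsilon}(x)). \]
Letting $\epsilon \downarrow 0$, the left-hand side increases to $\mathfrak{m}(B_R(x))$ and the right-hand side decreases to $\mathfrak{m}(\bar B_R(x))$; since $\mathfrak{m}(\partial B_R(x)) = 0$ on an $\mathrm{RCD}(K,N)$ space, the two coincide and the stated convergence follows.

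For $(ii)$, if the limit radius $R$ lies in $(0,\infty)$ then $(i)$ immediately yields $\mathfrak{m}(B_R(x)) = \lim_{i} \mathfrak{m}_i(B_{R_i}(x_i)) = v$, so it suffices to rule out $R=\infty$ and $R=0$. If $R = \infty$, then for every $M>0$ one has $B_M(x) \subseteq B_{R_i}(x_i)$ for $i$ large (again by the triangle inequality, as $R_i \to \infty$ and $x_i \to x$), whence $v = \liminf_i \mathfrak{m}_i(B_{R_i}(x_i)) \ge \liminf_i \mathfrak{m}_i(B_M(x)) \ge \mathfrak{m}(B_M(x))$; but $\mathrm{X}$ is compact, so $\mathfrak{m}(B_M(x)) = 1$ once $M$ exceeds $\operatorname{diam}(\mathrm{X})$, contradicting $v<1$. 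If $R = 0$, then for every $\epsilon>0$ one has $B_{R_i}(x_i) \subseteq \bar B_{2\epsilon}(x)$ for $i$ large, so $v = \limsup_i \mathfrak{m}_i(B_{R_i}(x_i)) \le \mathfrak{m}(\bar B_{2\epsilon}(x))$; letting $\epsilon \downarrow 0$ and using that the limit space $\mathrm{X}$ is $\mathrm{RCD}(N-1,N)$ and hence $\mathfrak{m}$-atomless (a consequence of the Bishop--Gromov estimate in Proposition \ref{prop5}), we obtain $v \le \mathfrak{m}(\{x\}) = 0$, contradicting $v>0$. Hence $R \in (0,\infty)$ and $\mathfrak{m}(B_R(x)) = v$.

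The one ingredient that is not purely soft is the fact ``$\mathfrak{m}(\partial B_R(x)) = 0$ for every $R>0$'', used in $(i)$ to upgrade the sandwich estimate to a genuine limit; I would quote this as a standard property of $\mathrm{RCD}(K,N)$ spaces (it follows, for instance, from the coarea formula applied to a truncation of the distance function $\mathrm{d}(x,\cdot)$, whose minimal weak upper gradient equals $1$ $\mathfrak{m}$-almost everywhere). The remaining pieces --- the ball inclusions, the atomlessness, the compactness of $\mathrm{X}$ --- are routine and already contained in, or an easy consequence of, the material recalled earlier.
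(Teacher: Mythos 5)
Your proof mirrors the paper's argument closely: Portmanteau inequalities for open and closed sets combined with the elementary ball inclusions give part (i), and the same estimates furnish the two contradictions ($R=\infty$ and $R=0$) for part (ii); your justification of $\mathfrak{m}(\partial B_R(x))=0$ via locality of weak upper gradients, and the use of compactness in the $R=\infty$ case, are both sound variants of what the paper does. One small caveat: the monotonicity form of Bishop--Gromov recalled in Proposition \ref{prop5} gives only a \emph{lower} bound on the ratio $\mathfrak{m}(B_r(x))/\mathfrak{m}(B_R(x))$ and so does not, on its own, exclude an atom at $x$; that $\mathrm{RCD}(K,N)$ measures with $N<\infty$ are non-atomic (on a support with more than one point) is correct and standard, but it rests on deeper facts than that ratio estimate, so the parenthetical attribution should be replaced by a direct citation. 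The paper itself invokes this step only implicitly (``Similarly, $\ldots$ since $v>0$''), so spelling it out is an improvement in clarity even if the cited source of the fact is slightly off.
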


Now, we recall the suitable notions of weak and strong convergence for $f_i,f$.
\begin{definition}[$L^p$-Convergence]\label{def1}
Let $f_i \in L^p(\mathrm{X}_i,\mathfrak{m}_i)$ and $f \in L^p(\mathrm{X},\mathfrak{m})$. We say that:\\
$(a)$ $f_i$ $L^p$-weakly converge to $f$ if
\begin{equation}\label{6}
\lim_{i \rightarrow \infty} \int_{\mathrm{Y}} \phi f_i \mathrm{d}\mathfrak{m}_i=  \int_{\mathrm{Y}} \phi f \mathrm{d}\mathfrak{m} \text { for all } \phi \in {C}_{\mathrm{b}}(\mathrm{Y}) \text{ and } \sup_i\|f_i\|_{L^p}<\infty .
\end{equation}
$(b)$ $f_i$ $L^p$-strongly converge to $f$ if, in addition,
$$
\limsup_{i \rightarrow \infty}\|f_i\|_{L^p}\leq\|f\|_{L^p} .
$$
\end{definition}

\begin{remark}\cite[Theorem 5.4.4.]{23}\label{rmk3}
Let $f_i \in L^p(\mathrm{X}_i,\mathfrak{m}_i)$ and $f \in L^p(\mathrm{X},\mathfrak{m})$. $f_i$ $L^p$-strongly converge to $f$ if and only if for every $\zeta \in \mathrm{C}(\mathrm{Y} \times \mathbb{R})$ with $|\zeta(y, r)| \leq C(1+|r|^p),$ for some $C \geq 0$, we have
\begin{equation}\label{5}
\int \zeta(y, f_i(y)) \mathrm{d} \mathfrak{m}_i(y) \rightarrow \int \zeta(y, f(y)) \mathrm{d} \mathfrak{m}(y).
\end{equation}
In particular, $f_i$ $L^p$-strongly converge to $f$ implies that $f_i$ $L^{p^\prime}$-strongly converge to $f$, for any $p^\prime\in (1,p]$.
\end{remark}
\begin{proposition}[Lower semicontinuity and Compactness]\label{prop}
If $f_i \in L^{p}(\mathrm{X}_i, \mathfrak{m}_i)$ $L^{p}$-weakly converge to $f \in L^p(\mathrm{X}, \mathfrak{m})$, then
$$
\|f\|_{L^p} \leq \liminf _{i \rightarrow \infty}\|f_i\|_{L^{p}} .
$$
Moreover, any sequence $\{f_i\}_{i\in \mathbb{N}_+}$ with $f_i \in L^p(\mathrm{X}_i, \mathfrak{m}_i)$ such that $\sup_i\|f_i\|_{L^p}<\infty$ holds admits a $L^p$-weakly convergent subsequence.	
\end{proposition}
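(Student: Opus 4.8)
## Proof proposal for Proposition (Lower semicontinuity and Compactness)

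\textbf{Lower semicontinuity.} The plan is to exploit the characterization of the $L^p$-norm via duality against bounded continuous test functions, which is precisely the class appearing in the definition of $L^p$-weak convergence. Suppose $f_i$ $L^p$-weakly converge to $f$; in particular $C := \sup_i \|f_i\|_{L^p} < \infty$. First I would observe that for any $\varphi \in C_{\mathrm b}(\mathrm Y)$ with $\|\varphi\|_{L^q(\mathrm X,\mathfrak m)} \le 1$ (here $q$ the conjugate exponent, and the $L^q$-norm computed against the limit measure $\mathfrak m$), Hölder's inequality on $\mathrm X_i$ gives $\int_{\mathrm Y}\varphi f_i\,\mathrm d\mathfrak m_i \le \|f_i\|_{L^p}\,\|\varphi\|_{L^q(\mathrm X_i,\mathfrak m_i)}$. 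The subtlety is that $\|\varphi\|_{L^q(\mathrm X_i,\mathfrak m_i)}$ need not equal $\|\varphi\|_{L^q(\mathrm X,\mathfrak m)}$, but since $|\varphi|^q \in C_{\mathrm b}(\mathrm Y)$, the weak convergence $\mathfrak m_i \rightharpoonup \mathfrak m$ in (GH2) yields $\|\varphi\|_{L^q(\mathrm X_i,\mathfrak m_i)} \to \|\varphi\|_{L^q(\mathrm X,\mathfrak m)} \le 1$. Passing to the limit $i\to\infty$ and using $(\ref{6})$, we get $\int_{\mathrm Y}\varphi f\,\mathrm d\mathfrak m \le \liminf_i \|f_i\|_{L^p}$ for every such $\varphi$. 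Then I would take the supremum over $\varphi$ in a suitable dense subclass of the unit ball of $L^q(\mathrm X,\mathfrak m)$ — e.g. bounded continuous functions on $\mathrm Y$, which are dense in $L^q$ since $\mathfrak m$ is a Borel probability measure on the compact space $\mathrm X$ (recall $K=N-1>0$ forces compactness by Proposition $\ref{prop5}$) — to conclude $\|f\|_{L^p(\mathrm X,\mathfrak m)} \le \liminf_i \|f_i\|_{L^p}$.

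\textbf{Compactness.} Given $\{f_i\}$ with $\sup_i\|f_i\|_{L^p} =: C < \infty$, I want to extract a subsequence that $L^p$-weakly converges to some $f \in L^p(\mathrm X,\mathfrak m)$. The idea is to view the $f_i$ as (signed) measures $\nu_i := f_i\,\mathfrak m_i$ on $\mathrm Y$. Each $\nu_i$ has total variation $|\nu_i|(\mathrm Y) = \|f_i\|_{L^1(\mathfrak m_i)} \le \|f_i\|_{L^p}\,\mathfrak m_i(\mathrm Y)^{1/q} = C$ (using that $\mathfrak m_i$ are probability measures), and all $\nu_i$ are supported in a fixed compact set — namely, since $\mathrm X_i \to \mathrm X$ in mGH inside $\mathrm Y$ and $\mathrm X$ is compact, the supports lie in a common compact neighbourhood of $\mathrm X$. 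By weak-$*$ compactness of bounded sequences of measures on a compact metric space (Banach–Alaoglu / Prokhorov), a subsequence $\nu_i \rightharpoonup \nu$ weakly-$*$ against $C_{\mathrm b}(\mathrm Y)$. The main work is then to show $\nu = f\,\mathfrak m$ for some $f \in L^p(\mathrm X,\mathfrak m)$ with $\|f\|_{L^p}\le C$.

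\textbf{Identifying the limit.} For absolute continuity $\nu \ll \mathfrak m$: for any $\varphi \in C_{\mathrm b}(\mathrm Y)$, $\varphi\ge 0$, Hölder gives $\int \varphi\,\mathrm d\nu_i \le C\,\|\varphi\|_{L^q(\mathfrak m_i)}^{?}$ — more precisely $|\int\varphi\,\mathrm d\nu_i| \le \|f_i\|_{L^p(\mathfrak m_i)}\|\varphi\|_{L^q(\mathfrak m_i)}$, and passing to the limit using $\|\varphi\|_{L^q(\mathfrak m_i)}\to\|\varphi\|_{L^q(\mathfrak m)}$ yields $|\int\varphi\,\mathrm d\nu| \le C\,\|\varphi\|_{L^q(\mathrm X,\mathfrak m)}$ for all $\varphi \in C_{\mathrm b}(\mathrm Y)$. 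By density of $C_{\mathrm b}$ in $L^q(\mathrm X,\mathfrak m)$ this extends to all $\varphi \in L^q(\mathrm X,\mathfrak m)$, so $\varphi \mapsto \int\varphi\,\mathrm d\nu$ is a bounded linear functional on $L^q(\mathrm X,\mathfrak m)$ of norm $\le C$; by the Riesz representation theorem ($L^q$ reflexive, $1<q<\infty$) there is $f \in L^p(\mathrm X,\mathfrak m)$ with $\|f\|_{L^p}\le C$ and $\int\varphi\,\mathrm d\nu = \int\varphi f\,\mathrm d\mathfrak m$. In particular $\nu$ is concentrated on $\mathrm X$ and agrees with $f\mathfrak m$; unwinding, $\int_{\mathrm Y}\varphi f_i\,\mathrm d\mathfrak m_i \to \int_{\mathrm Y}\varphi f\,\mathrm d\mathfrak m$ for all $\varphi\in C_{\mathrm b}(\mathrm Y)$, which together with $\sup_i\|f_i\|_{L^p}<\infty$ is exactly $L^p$-weak convergence $(\ref{6})$.

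\textbf{Expected obstacle.} The delicate point throughout is that the $L^q$-norms of test functions are being measured against the varying measures $\mathfrak m_i$ rather than the fixed $\mathfrak m$; the fix — reducing to test functions $\varphi$ for which $|\varphi|^q$ is itself bounded and continuous, so that $(\mathrm{GH2})$ applies directly — is routine but must be done carefully. A secondary subtlety is ensuring the limit measure $\nu$ does not charge $\mathrm Y\setminus\mathrm X$; this follows because the $L^q$-control $|\int\varphi\,\mathrm d\nu|\le C\|\varphi\|_{L^q(\mathrm X,\mathfrak m)}$ forces $\nu$ to vanish on any set of $\mathfrak m$-measure zero, in particular on $\mathrm Y\setminus\mathrm X$. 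Both of these are standard manipulations once the measure-theoretic framework above is set up, so no genuinely hard step is anticipated — this is an "elementary but useful" lemma in the spirit already flagged by the author before Lemma $\ref{lem4.13}$.
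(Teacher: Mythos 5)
Your approach is the standard one for this type of statement, and the paper itself offers no proof of Proposition~\ref{prop} (it is treated as known, implicit in the references cited around Remark~\ref{rmk3} and Proposition~\ref{prop0}), so there is nothing in the paper to compare against beyond the formulation. The lower semicontinuity argument via duality against $C_{\mathrm b}(\mathrm Y)$ test functions, using $(\mathrm{GH2})$ applied to $|\varphi|^q$ to handle the varying $L^q$-norms, and density of $C(\mathrm X)$ in $L^q(\mathrm X,\mathfrak m)$ together with Tietze extension to $\mathrm Y$, is correct. The compactness argument via Prokhorov/Banach--Alaoglu applied to $\nu_i = f_i\mathfrak m_i$, followed by the Hölder-and-duality identification of the limit as $f\mathfrak m$ with $f\in L^p(\mathrm X,\mathfrak m)$, is also correct in its essentials, including the observation that the estimate $|\int\varphi\,\mathrm d\nu|\le C\|\varphi\|_{L^q(\mathrm X,\mathfrak m)}$ forces $\nu$ to charge only $\mathrm X$.

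The one place where the justification is off is the sentence ``the supports lie in a common compact neighbourhood of $\mathrm X$.'' An $\epsilon$-neighbourhood of a compact set in a general Polish space $\mathrm Y$ need \emph{not} have compact closure (think of a point in an infinite-dimensional Banach space), so ``compact neighbourhood'' is not the right reason. What \emph{is} true, and suffices, is that $\overline{\mathrm X\cup\bigcup_i\mathrm X_i}$ is compact: each $\mathrm X_i$ and $\mathrm X$ is compact, the $\mathrm X_i$ Hausdorff-converge to $\mathrm X$ inside $\mathrm Y$, and a standard diagonal/total-boundedness argument then shows the union is totally bounded, hence (being closed in the complete space $\mathrm Y$) compact. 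Alternatively, and more robustly, you can avoid the compact-container issue entirely: by Prokhorov's theorem the weak convergence $\mathfrak m_i\rightharpoonup\mathfrak m$ of probability measures on the Polish space $\mathrm Y$ already implies the family $\{\mathfrak m_i\}$ is tight, and then Hölder, $|\nu_i|(\mathrm Y\setminus K)\le\|f_i\|_{L^p}\,\mathfrak m_i(\mathrm Y\setminus K)^{1/q}\le C\epsilon^{1/q}$, transfers tightness to $\{|\nu_i|\}$, which is all Prokhorov needs. Either fix is a one-liner; the remainder of your argument stands.
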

\begin{proposition}\cite{0.3,20}\label{prop0}
Let $f_i\in L^p(\mathrm{X}_i,\mathfrak{m}_i),f \in L^p(\mathrm{X},\mathfrak{m})$, $g_i\in L^q(\mathrm{X}_i,\mathfrak{m}_i),g \in L^q(\mathrm{X},\mathfrak{m})$. Assume that $f_i$ $L^p$-weakly converge to $f$ and that $g_i$ $L^q$-strongly converge to $g$. Then
$$
\lim_{i \rightarrow \infty} \int f_i g_i \mathrm{d} \mathfrak{m}_i=\int f g \mathrm{d} \mathfrak{m}.
$$
\end{proposition}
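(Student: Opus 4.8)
The plan is to reduce, by density, to the case of a bounded continuous weight, and then to isolate the roles played by the weak convergence of $f_i$ and the strong convergence of $g_i$. Fix $\varepsilon>0$. Since $\mathfrak{m}$ is a Borel probability measure on the complete separable metric space $\mathrm{X}$, bounded Lipschitz functions are dense in $L^q(\mathrm{X},\mathfrak{m})$, and after extending such a function to $\mathrm{Y}$ (for instance by a McShane-type extension) we may choose $h\in C_{\mathrm{b}}(\mathrm{Y})$ with $\|g-h\|_{L^q(\mathrm{X},\mathfrak{m})}<\varepsilon$. The first---and only slightly delicate---step is to observe that this \emph{same} $h$ is asymptotically $L^q$-close to the sequence $g_i$: applying Remark $\ref{rmk3}$ to the $L^q$-strongly convergent sequence $(g_i)$ with the test function $\zeta(y,r):=|r-h(y)|^q$, which is continuous on $\mathrm{Y}\times\mathbb{R}$ and satisfies $|\zeta(y,r)|\le 2^{q-1}(|r|^q+\|h\|_{L^\infty}^q)\le C(1+|r|^q)$, we obtain $\int_{\mathrm{Y}}|g_i-h|^q\,\mathrm{d}\mathfrak{m}_i\to\int_{\mathrm{Y}}|g-h|^q\,\mathrm{d}\mathfrak{m}<\varepsilon^q$, hence $\limsup_{i\to\infty}\|g_i-h\|_{L^q(\mathfrak{m}_i)}\le\varepsilon$. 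This is exactly where strong (as opposed to merely weak) convergence of $g_i$ is used, via the nonlinear test functions allowed in Remark $\ref{rmk3}$.

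With $h$ now fixed, I would write
$$
\int f_ig_i\,\mathrm{d}\mathfrak{m}_i-\int fg\,\mathrm{d}\mathfrak{m}
=\int f_i(g_i-h)\,\mathrm{d}\mathfrak{m}_i
+\Big(\int f_ih\,\mathrm{d}\mathfrak{m}_i-\int fh\,\mathrm{d}\mathfrak{m}\Big)
+\int f(h-g)\,\mathrm{d}\mathfrak{m}
$$
and estimate the three pieces separately. By Hölder's inequality the first is at most $\big(\sup_i\|f_i\|_{L^p}\big)\|g_i-h\|_{L^q(\mathfrak{m}_i)}$, and since $M:=\sup_i\|f_i\|_{L^p}<\infty$ (this finiteness being part of the definition of $L^p$-weak convergence, Definition $\ref{def1}$), its $\limsup$ is bounded by $M\varepsilon$ thanks to the previous paragraph. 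The middle term tends to $0$ as $i\to\infty$ directly from the definition of $L^p$-weak convergence of $f_i$, taking the admissible test function $\phi=h\in C_{\mathrm{b}}(\mathrm{Y})$. The last term is bounded by $\|f\|_{L^p(\mathfrak{m})}\|h-g\|_{L^q(\mathfrak{m})}\le\|f\|_{L^p}\varepsilon$. Putting the three estimates together yields $\limsup_{i\to\infty}\big|\int f_ig_i\,\mathrm{d}\mathfrak{m}_i-\int fg\,\mathrm{d}\mathfrak{m}\big|\le(M+\|f\|_{L^p})\varepsilon$, and letting $\varepsilon\downarrow0$ gives the claim.

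The main obstacle is really the approximation step in the first paragraph: one must produce a \emph{single} bounded continuous weight $h$ that simultaneously approximates $g$ on the limit space and, asymptotically, the $g_i$ along the sequence; everything else is a routine $\varepsilon/3$-type splitting turning weak$\times$strong convergence into convergence of the pairings. An alternative to the Remark $\ref{rmk3}$ route would be to invoke the equivalence of strong convergence with weak convergence plus convergence of $L^q$-norms, together with the uniform convexity of $L^q$ for $q\in(1,\infty)$, but the argument above is the most self-contained given the tools already recorded in the excerpt.
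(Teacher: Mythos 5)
Your proof is correct. The paper itself gives no proof of this proposition; it is simply cited from the listed references, so there is no in-text argument to compare against. That said, the route you take is the standard one in that literature: reduce to a bounded continuous weight by density and then split the difference of pairings into three pieces, using weak convergence of $f_i$ against the fixed weight $h$, a uniform $L^p$ bound on $f_i$ for the first error term, and the fixed-space Hölder estimate for the last.

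The one genuinely nontrivial step is the one you flag yourself: turning $L^q$-strong convergence of $g_i$ into the asymptotic estimate $\limsup_i\|g_i-h\|_{L^q(\mathfrak{m}_i)}\le\|g-h\|_{L^q(\mathfrak{m})}$. This cannot be extracted from the linear test functions in Definition \ref{def1} alone; you correctly invoke the nonlinear characterization of strong convergence recorded in Remark \ref{rmk3} (i.e.\ \cite[Theorem 5.4.4]{23}) with $\zeta(y,r)=|r-h(y)|^q$, whose polynomial growth bound is exactly what that criterion requires. Your remark that one could instead appeal to the equivalence between $L^q$-strong convergence and ``$L^q$-weak plus norm convergence'' together with uniform convexity of $L^q$ is a valid alternative and is closer in spirit to how some of these convergence results are derived in \cite{20}; the route you chose is more self-contained given what the present paper records. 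One small detail worth making explicit: the uniform bound $M=\sup_i\|f_i\|_{L^p(\mathfrak{m}_i)}<\infty$ that controls the first term is built into the definition of $L^p$-weak convergence in Definition \ref{def1}, which you correctly note. No gaps.
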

\begin{definition}[$W^{1,p}$-Convergence]
Let $f_i \in W^{1,p}(\mathrm{X}_i,\mathfrak{m}_i)$ and $f \in W^{1,p}(\mathrm{X},\mathfrak{m})$. We say that:\\
$(a)$ $f_i$ $W^{1,p}$-weakly converge to $f$ if $f_i$ $L^p$-weakly converge to $f$ and $\sup_i \int |\nabla f_i|^p\mathrm{d}\mathfrak{m}_i$ is finite.\\
$(b)$ $f_i$ $W^{1,p}$-strongly converge to $f$ if $f_i$ $L^p$-strongly converge to $f$ and
$$\int |\nabla f|^p\mathrm{d}\mathfrak{m}=  \lim_{i\rightarrow \infty}\int |\nabla f_i|^p\mathrm{d}\mathfrak{m}_i.$$		
\end{definition}
\begin{definition}[local $W^{1,p}$-Convergence]
Let $f_i \in W^{1,p}(B_{R_i}(x_i),\mathfrak{m}_i)$ and $f \in W^{1,p}(B_R(x),\mathfrak{m})$. We say that:\\
$(a)$ $f_i$ $W^{1,p}$-weakly converge to $f$ if $f_i$ $L^p$-weakly converge to $f$ and $\sup_i \int_{B_{R_i}(x_i)} |\nabla f_i|^p\mathrm{d}\mathfrak{m}_i$ is finite.\\
$(b)$ $f_i$ $W^{1,p}$-strongly converge to $f$ if $f_i$ $L^p$-strongly converge to $f$ and 
$$\int_{B_R(x)} |\nabla f|^p\mathrm{d}\mathfrak{m}=  \lim_{i\rightarrow \infty}\int_{B_{R_i}(x_i)} |\nabla f_i|^p\mathrm{d}\mathfrak{m}_i.$$	
\end{definition}

Let us recall the two global results in \cite{21} about $W^{1,p}$ convergence. Notice that the assumption in \cite[Proposition 7.5, Theorem 8.1]{21} is satisfies, by Remark $\ref{rmk2}$. 
\begin{lemma}\label{lem1}
$(i)$ $[$Compactness$]$. Assume that $f_i \in W^{1,p}(\mathrm{Y},\mathfrak{m}_i)$ satisfy
$\sup_i\|f_i\|_{W^{1,p}}<\infty$. Then $(f_i)$ has a subsequence $(f_{i_j})$ such that $f_{i_j}$ $L^p$-strongly converge to $f\in W^{1,p}(\mathrm{Y},\mathfrak{m})$. In particular, if $f_i$ $W^{1,p}$-weakly converge to $f$, then $f_i$ $L^{p}$-strongly converge to $f$.\\
$(ii)$ $[$Mosco convergence of $p$-Cheeger energies$]$. $(a)$ For every sequence $i\mapsto f_i\in L^p(\mathrm{Y}, \mathfrak{m}_i)$ $L^p$-weakly converging to some $f\in L^p(\mathrm{Y},\mathfrak{m})$, we have
$$
\mathrm{Ch}_p(f) \leq \liminf_{i \rightarrow \infty} \mathrm{Ch}_{p}^i(f_i) .
$$
$(b)$ For every $f\in L^p(\mathrm{Y},\mathfrak{m})$, there exists a sequence $i\mapsto f_i\in L^p(\mathrm{Y},\mathfrak{m}_i)$ $L^{p}$-strongly convergent to $f$ such that
$$
\mathrm{Ch}_p(f)\geq \limsup_{i \rightarrow \infty} \mathrm{Ch}_{p}^i(f_i).
$$	
\end{lemma}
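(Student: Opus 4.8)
The plan is to obtain Lemma~\ref{lem1} as a direct consequence of the general stability theory for $p$-Cheeger energies under measured Gromov--Hausdorff convergence developed in \cite{21}, once one has checked that the quantitative structural hypotheses needed there hold uniformly along the sequence $\{\mathscr{X}_i\}_{i\in\mathbb{N}_+}$. Those hypotheses amount to a uniform local doubling bound, a uniform weak local Poincar\'e inequality, and a uniform ``small mass controls perimeter'' condition for the whole sequence. All three are available here because $N$ is fixed: by Proposition~\ref{prop5} every $\operatorname{RCD}(N-1,N)$ space is compact with diameter at most $\pi$, is locally doubling with a constant $C_D(R)$ depending only on $N$ and $R$, and supports the weak local Poincar\'e inequality \eqref{poin} with the universal constant $2^{N+3}$; and Remark~\ref{rmk2} furnishes a modulus $\omega$, depending only on $N$, such that $\mathfrak{m}_i(E)\le\omega(\epsilon)$ forces $\mathfrak{m}_i(E)\le\epsilon\,\mathrm{Per}(E)$ for every Borel $E\subset\mathrm{X}_i$ and every $i$. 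Since none of these constants depends on $i$, the hypotheses of \cite[Proposition~7.5, Theorem~8.1]{21} are met. Throughout, one regards all the $f_i$ (and $f$) as defined on the common space $\mathrm{Y}$, extended by zero outside $\mathrm{X}_i$ (resp. $\mathrm{X}$); since $\operatorname{supp}[\mathfrak{m}_i]=\mathrm{X}_i$ this affects neither the Sobolev norms nor the notions of $L^p$-convergence (Definition~\ref{def1}) and $W^{1,p}$-convergence recalled above.

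For part (i), the compactness statement is then exactly \cite[Proposition~7.5]{21}: a sequence $f_i\in W^{1,p}(\mathrm{Y},\mathfrak{m}_i)$ with $\sup_i\|f_i\|_{W^{1,p}}<\infty$ has a subsequence that $L^p$-strongly converges to some $f\in W^{1,p}(\mathrm{Y},\mathfrak{m})$. The ``in particular'' clause then follows by a routine subsequence argument. If $f_i$ $W^{1,p}$-weakly converges to $f$, then $\sup_i\|f_i\|_{W^{1,p}}<\infty$, so every subsequence of $(f_i)$ has a further subsequence $L^p$-strongly convergent to some $g\in W^{1,p}(\mathrm{Y},\mathfrak{m})$; $L^p$-strong convergence implies $L^p$-weak convergence, and $L^p$-weak limits are unique because testing \eqref{6} against $\phi\in C_{\mathrm b}(\mathrm{Y})$ already identifies the limiting measure $f\,\mathfrak{m}$. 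Hence $g=f$, and since the limit is independent of the chosen subsequence, the whole sequence $f_i$ $L^p$-strongly converges to $f$.

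For part (ii), the liminf inequality (a) and the existence of a recovery sequence (b) are precisely the two halves of the Mosco convergence of the $p$-Cheeger energies established in \cite[Theorem~8.1]{21} under the hypotheses verified above; here $\mathrm{Ch}_p^i$ denotes the $p$-Cheeger energy of $\mathscr{X}_i$, extended by $+\infty$ on $L^p(\mathrm{Y},\mathfrak{m}_i)\setminus\mathrm{S}^p(\mathrm{X}_i)$, and $\mathrm{Ch}_p$ is that of $\mathscr{X}$. I expect the only point genuinely requiring care to be the bookkeeping at the level of hypotheses: matching the abstract assumptions used in \cite{21} (stated there via uniform doubling together with a uniform Poincar\'e inequality, or equivalently via a uniform isoperimetric-type control) with the quantitative bounds of Proposition~\ref{prop5} and Remark~\ref{rmk2}, and confirming that the reduction to the common space $\mathrm{Y}$ by zero-extension is harmless. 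No new estimate beyond those already recorded in the excerpt is needed.
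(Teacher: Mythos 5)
Your proposal is correct and follows essentially the same route as the paper: the paper likewise presents Lemma~4.13 as a direct citation of \cite[Proposition~7.5, Theorem~8.1]{21}, with the single observation that the hypotheses there are met thanks to Remark~\ref{rmk2}. You merely spell out the verification of the uniform structural hypotheses (doubling, Poincar\'e, small-mass/perimeter control) and add the routine subsequence argument for the ``in particular'' clause, both of which are consistent with the paper's approach.
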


\begin{lemma}[Sobolev inequality]
For $f\in W^{1,p}(B_R(x),\mathfrak{m})$, we have
\begin{equation}\label{2.7}
\bigg(\fint_{B_R(x)}\bigg|f-\fint_{B_R(x)} f \mathrm{d}\mathfrak{m}\bigg|^{p^*} \mathrm{d}\mathfrak{m}\bigg)^{1 / p^*} \leq C\bigg(\fint_{B_R(x)} |\nabla f|^p \mathrm{d}\mathfrak{m}\bigg)^{1 / p},
\end{equation}
where $p^*=p N /(N-p)$ if $N>p$, $p^*$ can be any power in $(p, \infty)$ if $N \in(1,p]$ and $C:=C(K, N, p^*, R)>0$. 
\begin{proof}
First, $(\ref{2.7})$ can be proved for locally Lipschitz functions starting from the local Poincar\'{e} inequality $(\ref{poin})$, applying then \cite[Theorem 5.1]{8}. By density, it extends to global $W^{1,p}$-functions. Since $W^{1,p}(B_R(x),\mathfrak{m})$ locally coincide with global $W^{1,p}$-functions, a simple monotone approximation then provides the result in the class $W^{1,p}(B_R(x),\mathfrak{m})$.
\end{proof}
\end{lemma}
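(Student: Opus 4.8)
The plan is to establish \eqref{2.7} in three stages, following the classical route: first for Lipschitz functions, then for global $W^{1,p}(\mathrm{X})$ functions by density, and finally for the local Sobolev class of Definition \ref{defsobo} by a cutoff argument. \textbf{Step 1 (Lipschitz functions).} By Proposition \ref{prop5}, $(\mathrm{X},\mathrm{d},\mathfrak{m})$ is locally doubling and supports the weak local $(1,1)$-Poincar\'e inequality \eqref{poin}, with constants depending only on $K,N,R$ (and, since $K>0$, uniform in the center). By H\"older this upgrades to a $(1,p)$-Poincar\'e inequality, and then the Haj{\l}asz--Koskela self-improvement \cite[Theorem 5.1]{8} yields the $(p^{*},p)$ Sobolev--Poincar\'e inequality on $B_R(x)$, with $p^{*}=pN/(N-p)$ when $N>p$ and $p^{*}$ an arbitrary finite exponent larger than $p$ when $N\le p$, and with constant $C=C(K,N,p^{*},R)$. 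Applied to a Lipschitz function $u$, with its slope $\operatorname{lip}(u)$ as upper gradient and using that $\operatorname{lip}(u)=|\nabla u|$ $\mathfrak{m}$-a.e. on a Poincar\'e space, this is exactly \eqref{2.7} for $u$.

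\textbf{Step 2 (global $W^{1,p}(\mathrm{X})$).} Given $f\in W^{1,p}(\mathrm{X})$, I would pick Lipschitz functions $f_j\to f$ in $W^{1,p}(\mathrm{X})$ (density of Lipschitz functions on $\mathrm{RCD}$ spaces, cf. \cite{2}), apply Step 1 to each $f_j$ on $B_R(x)$, and let $j\to\infty$. The right-hand side converges since $|\nabla f_j|\to|\nabla f|$ in $L^p$; on the left, $f_j\to f$ in $L^p(B_R(x))$ --- hence $\mathfrak{m}$-a.e. along a subsequence --- and $\fint_{B_R(x)}f_j\,\mathrm{d}\mathfrak{m}\to\fint_{B_R(x)}f\,\mathrm{d}\mathfrak{m}$, so Fatou's lemma gives \eqref{2.7} for $f$ (in particular $f\in L^{p^{*}}(B_R(x))$). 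The same argument gives \eqref{2.7} on every ball of radius at most $R$, with a uniform constant.

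\textbf{Step 3 (the local Sobolev class).} Now let $f\in W^{1,p}(B_R(x),\mathfrak{m})$ in the sense of Definition \ref{defsobo}. For $0<\rho<R$ I would choose $\chi_\rho\in\operatorname{Lip}_c(B_R(x))$ with $0\le\chi_\rho\le1$ and $\chi_\rho\equiv1$ on $B_\rho(x)$. By Definition \ref{defsobo}, $f\chi_\rho\in\mathrm{S}^p(\mathrm{X})\cap L^p(\mathrm{X})$; by the Leibniz rule for minimal weak upper gradients together with the fact that $|\nabla\chi_\rho|$ is bounded and supported in $B_R(x)\setminus B_\rho(x)$ (where $f\in L^p$), one gets $|\nabla(f\chi_\rho)|\in L^p(\mathrm{X})$, so $f\chi_\rho\in W^{1,p}(\mathrm{X})$; moreover $f\chi_\rho=f$ and $|\nabla(f\chi_\rho)|=|\nabla f|$ $\mathfrak{m}$-a.e. on $B_\rho(x)$ by locality \eqref{2.15}. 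Applying Step 2 to $f\chi_\rho$ on $B_\rho(x)$ then gives \eqref{2.7} on $B_\rho(x)$ with $f$ and $|\nabla f|$ in place of $f\chi_\rho$ and $|\nabla(f\chi_\rho)|$. Finally I would let $\rho\uparrow R$ along a sequence, using that $\mathfrak{m}(B_\rho(x))\uparrow\mathfrak{m}(B_R(x))$ for open balls, that $\int_{B_\rho(x)}|\nabla f|^p\,\mathrm{d}\mathfrak{m}\uparrow\int_{B_R(x)}|\nabla f|^p\,\mathrm{d}\mathfrak{m}$ by monotone convergence, that $\fint_{B_\rho(x)}f\,\mathrm{d}\mathfrak{m}\to\fint_{B_R(x)}f\,\mathrm{d}\mathfrak{m}$ (as $f\in L^1(B_R(x))$), and Fatou's lemma on the left-hand side, to obtain \eqref{2.7} in full generality.

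The step I expect to be the main obstacle is Step 3: a function in $W^{1,p}(B_R(x),\mathfrak{m})$ need not be the restriction of a global Sobolev function, since it may degenerate near $\partial B_R(x)$, so Steps 1--2 cannot be invoked on $B_R(x)$ directly. Retreating to the concentric balls $B_\rho(x)$ and multiplying by $\chi_\rho$ produces honest global $W^{1,p}$ functions that coincide with $f$ on $B_\rho(x)$, and the exhaustion is harmless precisely because $\mathfrak{m}(B_\rho(x))\uparrow\mathfrak{m}(B_R(x))$ holds for \emph{open} balls without any hypothesis on $\mathfrak{m}(\partial B_R(x))$. A minor bookkeeping point is to treat the regimes $N>p$ and $N\le p$ uniformly through the self-improvement in \cite{8}.
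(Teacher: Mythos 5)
Your proposal is correct and follows essentially the same route as the paper's (rather terse) proof: establish \eqref{2.7} for Lipschitz functions via the local Poincar\'e inequality \eqref{poin} and the Haj\l{}asz--Koskela self-improvement \cite[Theorem 5.1]{8}, extend to global $W^{1,p}(\mathrm{X})$ by density, and then reach the local class $W^{1,p}(B_R(x),\mathfrak{m})$ by an exhaustion with cutoffs on concentric balls together with a monotone approximation. You have simply made explicit the details (locality \eqref{2.15}, the Leibniz rule, $\mathfrak{m}(B_\rho)\uparrow\mathfrak{m}(B_R)$ and Fatou) that the paper leaves implicit under the phrase ``a simple monotone approximation.''
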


\begin{lemma}\label{lem2.9}
If $f_i \in W^{1,p}(\mathrm{Y},\mathfrak{m}_i)$ $W^{1,p}$-weakly converge to $f\in W^{1,p}(\mathrm{Y},\mathfrak{m})$, then
\begin{equation}\label{0}
\liminf_{i \rightarrow \infty} \int g|\nabla f_i| \mathrm{d} \mathfrak{m}_i \geq \int g|\nabla f| \mathrm{d} \mathfrak{m}
\end{equation}
for any lower semicontinuous $g:\mathrm{Y}\rightarrow[0, \infty]$.
\begin{proof}
For $p=2$, $(\ref{0})$ has been proved in \cite[Lemma 5.8]{21}. Now we turn to the general case. Since any lower semicontinuous function is the monotone limit of a sequence of Lipschitz functions with bounded support, we can assume $g \in \operatorname{Lip}_{\mathrm{bs}}(\mathrm{Y})$.

Assume first that all $f_i,f$ are bounded in $L^{\infty}$ (and thus in $L^2$, recall that all $\mathfrak{m}_i,\mathfrak{m}$ are probability measures), and $f_i$ $L^2$-strongly converge to $f$. Let us fix $t>0$ and consider the functions $h_t^i f_i$, which are bounded in $L^\infty$ and equi-Lipschitz, (see \cite{29,30}). By \cite[Theorem 6.11]{19}, we obtain that $h_t^i f_i$ $L^{2}$-strongly converge to $h_t f \in W^{1,2}(\mathrm{Y},\mathfrak{m})$ and thus $h_t^i f_i$ $W^{1,2}$-weakly converge to $h_t f$. Hence, by \cite[Lemma 5.8]{21}, we have
\begin{equation}\label{00}
\liminf_{i \rightarrow \infty} \int g|\nabla h_t^i f_i| \mathrm{d} \mathfrak{m}_i \geq \int g|\nabla h_t f| \mathrm{d} \mathfrak{m}.
\end{equation}
Taking into account the inequality $|\nabla h_t^i f_i| \leq e^{-K t} h_t^i|\nabla f_i|$(which can be obtained by applying \cite[Corollary 3.5]{31} in combination with a simple approximate argument), we can estimate
\begin{align*}
\liminf_{i \rightarrow \infty} \int g|\nabla f_i| \mathrm{d} \mathfrak{m}_i & \geq \liminf_{i \rightarrow \infty} \int h_t^i g|\nabla f_i| \mathrm{d} \mathfrak{m}_i-\limsup_{i \rightarrow \infty} \int |h_t^i g-g| |\nabla f_i| \mathrm{d}\mathfrak{m}_i \\
& \geq e^{K t} \liminf_{i \rightarrow \infty} \int g|\nabla h_t^i f_i| \mathrm{d} \mathfrak{m}_i-C \limsup _{i \rightarrow \infty}\|h_t^i g-g\|_{L^q(\mathrm{Y}, \mathfrak{m}_i)},
\end{align*}
with $C=\sup_i(p\operatorname{Ch}_p^i(f_i))^{1 / p}$. The fact that
$\limsup_{t \rightarrow 0} \limsup_{i \rightarrow \infty} \int|h_t^i g-g|^q \mathrm{d}\mathfrak{m}_i=0$, (by \cite[Proposition 4.6]{21})
combining $(\ref{00})$ implies that 
\begin{equation*}
\liminf_{i \rightarrow \infty} \int g|\nabla f_i| \mathrm{d} \mathfrak{m}_i \geq \int g|\nabla f| \mathrm{d} \mathfrak{m}.
\end{equation*}

Eventually we consider the general case $f_i$. We consider the truncation $1$-Lipschitz functions $\varphi_N(t):=\max\{\min\{t,N\},-N\}$ and $f_i^N:=\varphi_N \circ f_i$. The fact that $f_i$ $W^{1,p}$-weakly converge to $f$ and Lemma $\ref{lem1}$(i) implies that $f_i$ $L^p$-strongly converge to $f$. Using Remark $\ref{rmk3}$, it is easy to check that $f_i^N$ $L^{p}$-strongly converge to $f^N:=\varphi_N \circ f$ and hence $f_i^N\mathfrak{m}_i$ weakly converge to $f^N\mathfrak{m}$. Combining this and $\|f_i^N\|_{L^\infty}\leq N$ for all $i$, one can prove that $f_i^N$ $L^2$-strongly converge to $f^N$. Hence, by what we previously proved, we obtain that
$$
\int g|\nabla f^N| \mathrm{d} \mathfrak{m} \leq \liminf_{i \rightarrow \infty} \int g|\nabla f_i^N| \mathrm{d} \mathfrak{m}_i  \leq \liminf_{i \rightarrow \infty}\int g|\nabla f_i| \mathrm{d} \mathfrak{m}_i.
$$
Letting $N \rightarrow \infty$, we complete the proof.
\end{proof}
\end{lemma}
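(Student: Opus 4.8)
The plan is to bootstrap from the $p=2$ case, already available from \cite[Lemma 5.8]{21}, by regularizing the $f_i$ via the heat semigroups $h_t^i$ on $\mathrm{X}_i$ and controlling the resulting error with the Bakry--Émery gradient contraction estimate. First I would reduce $g$ to a Lipschitz function with bounded support: any lower semicontinuous $g:\mathrm{Y}\to[0,\infty]$ is an increasing pointwise limit of such functions, so by monotone convergence it suffices to prove $(\ref{0})$ for $g\in\operatorname{Lip}_{\mathrm{bs}}(\mathrm{Y})$. Then I would reduce to the case where $f_i$ and $f$ are uniformly bounded in $L^\infty$, by truncating with the $1$-Lipschitz maps $\varphi_N(t)=\max\{\min\{t,N\},-N\}$ and setting $f_i^N=\varphi_N\circ f_i$; since $|\nabla f_i^N|\le|\nabla f_i|$ while $|\nabla(\varphi_N\circ f)|=\chi_{\{|f|<N\}}|\nabla f|\uparrow|\nabla f|$ $\mathfrak{m}$-a.e., knowing $(\ref{0})$ for the truncations yields it for $f_i,f$ upon letting $N\to\infty$. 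The point to check here is that $f_i^N$ still $W^{1,p}$-weakly converges to $f^N$: Lemma \ref{lem1}(i) gives $f_i\to f$ $L^p$-strongly, Remark \ref{rmk3} lets the continuous truncation pass to the limit, the uniform bound $\|f_i^N\|_{L^\infty}\le N$ upgrades this to $L^2$-strong convergence, and $|\nabla f_i^N|\le|\nabla f_i|$ gives the uniform $L^p$-bound on the gradients.

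For the core estimate I would assume $f_i,f$ uniformly bounded in $L^\infty$ with $f_i\to f$ $L^2$-strongly, fix $t>0$, and work with $h_t^i f_i$. These are equi-bounded in $L^\infty$ and equi-Lipschitz by the standard $L^\infty$-to-Lipschitz regularization of the heat flow, hence equi-bounded in $W^{1,2}$; by stability of the heat flow under mGH convergence (\cite[Theorem 6.11]{19}) they converge $L^2$-strongly, hence $W^{1,2}$-weakly, to $h_t f$, so the $p=2$ case gives $\liminf_i\int g|\nabla h_t^i f_i|\,\mathrm{d}\mathfrak{m}_i\ge\int g|\nabla h_t f|\,\mathrm{d}\mathfrak{m}$. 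To transfer this back to $f_i$ I would use the contraction $|\nabla h_t^i f_i|\le e^{-Kt}h_t^i|\nabla f_i|$ (from \cite[Corollary 3.5]{31} via a routine approximation) together with the self-adjointness of $h_t^i$, obtaining
\[
\int g|\nabla f_i|\,\mathrm{d}\mathfrak{m}_i\ \ge\ e^{Kt}\int g|\nabla h_t^i f_i|\,\mathrm{d}\mathfrak{m}_i\ -\ \Big(\sup_j\||\nabla f_j|\|_{L^p}\Big)\,\|h_t^i g-g\|_{L^q(\mathfrak{m}_i)},
\]
the error term coming from Hölder applied to $\int|h_t^i g-g|\,|\nabla f_i|\,\mathrm{d}\mathfrak{m}_i$.

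Finally I would take $\liminf_{i\to\infty}$ in the last display and insert the $p=2$ inequality, getting the lower bound $e^{Kt}\int g|\nabla h_t f|\,\mathrm{d}\mathfrak{m}-C\limsup_i\|h_t^i g-g\|_{L^q(\mathfrak{m}_i)}$ with $C=\sup_i\||\nabla f_i|\|_{L^p}$. Letting $t\downarrow0$, the first term tends to $\int g|\nabla f|\,\mathrm{d}\mathfrak{m}$ because $h_t f\to f$ in $W^{1,2}$ and $g$ is bounded with bounded support, while the second term vanishes by the uniform-in-$i$ $L^q$-continuity of the heat flow at $t=0$ (\cite[Proposition 4.6]{21}); undoing the truncation ($N\to\infty$) and the Lipschitz approximation of $g$ then completes the proof.

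The hard part will be keeping straight the hierarchy of convergence notions in the core estimate: the $p=2$ lemma needs $L^2$-strong convergence of $h_t^i f_i$, which only becomes available after passing to the uniformly bounded truncations, so the order of the two reductions is essential; and since the gradient contraction estimate is usually stated for $f\in L^2\cap\operatorname{Lip}$, one must be careful to justify its use for general $f_i$ by approximation. Everything else is routine bookkeeping with monotone convergence and Hölder's inequality.
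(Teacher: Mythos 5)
Your proposal is correct and follows essentially the same path as the paper: reduce to $g\in\operatorname{Lip}_{\mathrm{bs}}$ by monotone approximation, reduce to $L^\infty$-bounded $f_i$ by the $1$-Lipschitz truncations $\varphi_N$ (using $|\nabla f_i^N|\le|\nabla f_i|$ and $|\nabla f^N|\uparrow|\nabla f|$), then regularize by $h_t^i$, invoke the $p=2$ lemma for $h_t^i f_i\to h_t f$, transfer back via the Bakry--\'{E}mery contraction $|\nabla h_t^i f_i|\le e^{-Kt}h_t^i|\nabla f_i|$ and self-adjointness, bound the error $\int|h_t^ig-g|\,|\nabla f_i|\,\mathrm{d}\mathfrak{m}_i$ by H\"{o}lder with $C=\sup_i\||\nabla f_i|\|_{L^p}$, and kill it as $t\downarrow 0$ by the uniform $L^q$-continuity of the heat flow. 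The ordering concerns you flag are handled identically in the paper (the core estimate is stated under the $L^\infty$ and $L^2$-strong hypotheses, and the truncation supplies them), so there is no material difference.
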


\begin{proposition}\label{prop4}
Let $R_i,R\in (0,\infty)$ with $R_i\rightarrow R$ and $f_i \in W^{1,p}(B_{R_i}(x_i),\mathfrak{m}_i)$ with $\sup_i\|f_i\|_{W^{1,p}}<\infty$. \\
$(i)$ $[$Compactness$]$. There exist $f \in W^{1,p}(B_R(x),\mathfrak{m})$ and a subsequence $\{f_{i_j}\}$ such that $f_{i_j}$ $L^p$-strongly converge to $f$. In particular, if $f_i$ $W^{1,p}$-weakly converge to $f$, then $f_i$ $L^{p}$-strongly converge to $f$.\\
$(ii)$ $[$Lower semicontinuity$]$. If $f_i$ $W^{1,p}$-weakly converge to $f$, then $f\in W^{1,p}(B_R(x),\mathfrak{m})$ and
\begin{equation}\label{000}\liminf_{i \rightarrow \infty} \int_{B_{R_i}(x_i)} |\nabla f_i|^p\mathrm{d}\mathfrak{m}_i \geq \int_{B_R(x)} |\nabla f|^p\mathrm{d}\mathfrak{m}.
\end{equation}	
$(iii)$	If $f_i$ $W^{1,p}$-strongly converge to $f$, then $|\nabla f_i|$ $L^p$-strongly converge to $|\nabla f|$.
\begin{proof}
For any $r\in (0,R)$, fix $\varphi^r \in \operatorname{Lip}_c(B_R(x))$ with $0 \leq \varphi^r \leq 1$ and that $\varphi^r \equiv 1$ on $B_r(x)$. Here, we remark that $B_r(x),B_R(x)$ are open balls in $\mathrm{Y}$.\\
\textbf{Step 1}: there exist $f \in L^{p}(B_R(x),\mathfrak{m})$ and a subsequence $\{f_{i_j}\}$ such that $f_{i_j}$ $L^p$-strongly converge to $f$.
Note that by $(\ref{2.7})$, Lemma $\ref{lem4.13}$ and the fact that $\sup_i\|f_i\|_{W^{1,p}}<\infty$, we have
$$
\sup_i\|f_i\|_{L^{p^*}}<\infty.
$$
Thus, by the $L^p$-weak compactness(see Proposition $\ref{prop})$, up to a subsequence, we can assume that $f_i$ $L^p$-weakly converge to $f \in L^{p^*}(B_R(x),\mathfrak{m})$.

Since $\varphi^r f_i$ $L^p$-weakly converge to $\varphi^r f$ with $\sup_i\|\varphi^r f_i\|_{W^{1,p}}<\infty$, Lemma $\ref{lem1}$(i) yields that $\varphi^r f_i$ $L^p$-strongly converge to $\varphi^r f$, and that $\varphi^r f \in W^{1,p}(\mathrm{Y},\mathfrak{m})$. The H\"{o}lder inequality yields
\begin{equation}\label{7}
\|\varphi^r f_i-f_i\|_{L^p} \leq \mathfrak{m}_i(B_{R_i}(x_i) \backslash B_r(x))^{1/p-1 / p^*}\|f_i\|_{L^{p^*}},
\end{equation}
for enough large $i$. Combining this and the facts that  $\lim_i\mathfrak{m}_i(B_{R_i}(x_i) \backslash B_r(x))=\mathfrak{m}(B_R(x) \backslash B_r(x))$, $0\leq \varphi^r\leq 1$ and  $\varphi^r f_i$ $L^p$-strongly converge to $\varphi^r f$, we obtain that
\begin{align*}
\varlimsup_{i \rightarrow \infty}\|f_i\|_{L^p}&
\leq \varlimsup_{i \rightarrow \infty} \|\varphi^rf_i-f_i\|_{L^p}+\|\varphi^rf_i\|_{L^p}\\
& \leq \mathfrak{m}(B_R(x) \backslash B_r(x))^{1/p-1/p^*}\sup_i \|f_i\|_{L^{p^*}}+\|\varphi^rf\|_{L^p}\\
&\leq \mathfrak{m}(B_R(x) \backslash B_r(x))^{1/p-1/p^*}\sup_i \|f_i\|_{L^{p^*}}+\|f\|_{L^p}.
\end{align*}
Letting $r\rightarrow R$ and recalling that $p^*>p$, we obtain that $\varlimsup_{i \rightarrow \infty}\|f_i\|_{L^p}\leq \|f\|_{L^p}$ and thus $f_i$ $L^p$-strongly converge to $f$.\\
\textbf{Step 2}: we prove (ii), and thus the $L^p$-strongly limit in \textbf{Step 1} belongs to $ W^{1,p}(B_R(x),\mathfrak{m})$. Assume without loss of generality that $|\nabla f_i|$ $L^p$-weakly converge to $g \in L^p(B_R(x),\mathfrak{m})$. For any $\varphi\in \mathrm{Lip}_c(B_R(x))$, as usual, one can prove $\varphi f\in W^{1,p}(\mathrm{Y},\mathfrak{m})$, and thus $|\nabla f|$ is well defined. Next, we fix an open set $A \Subset B_R(x)$, then there exists $s \in(0, R)$ with $A \Subset B_s(x)$. Since the sequence $f_i \varphi^s$ $W^{1,p}$-weakly converges to $f \varphi^s$, from Lemma $\ref{lem2.9}$ with $g$ equal to the characteristic function of $A$ we get
$$
\int_{\bar{A}} g \mathrm{d}\mathfrak{m}\geq \limsup _i \int_A |\nabla f_i| \mathrm{d} \mathfrak{m}_i\geq \liminf_i \int_A |\nabla (\varphi^s f_i)| \mathrm{d} \mathfrak{m}_i \geq \int_A |\nabla (\varphi^s f)|\mathrm{d}\mathfrak{m}=\int_A |\nabla f|\mathrm{d}\mathfrak{m} .
$$
Now, since any open set $B \subset B_R(x)$ can be written as $\cup_i A_i$, with $A_i \Subset A_{i+1}$, the previous inequality gives $\int_B g \mathrm{d}\mathfrak{m} \geq \int_B |\nabla f| \mathrm{d}\mathfrak{m}$. Since $B$ is arbitrary, this proves the inequality $g \geq |\nabla f|$ and we obtain $(\ref{000})$ immediately, by the $L^p$-weak lower semicontinuity(see Proposition $\ref{prop})$. \\
\textbf{Step 3}: we prove (iii). Assume that $f_i$ $W^{1,p}$-strongly converge to $f$ and $|\nabla f_i|$ $L^p$-strongly converge to $g$. Using the $L^p$-weak lower semicontinuity again, we have that 
\begin{equation*}
\int |\nabla f|^p\mathrm{d}\mathfrak{m} =\lim_{i\rightarrow \infty} \int |\nabla f_i|^p\mathrm{d}\mathfrak{m}_i\geq \int g^p\mathrm{d}\mathfrak{m}.
\end{equation*}
Since $g\geq |\nabla f|$ by \textbf{Step 2}, we must have $g=|\nabla f|$ $\mathfrak{m}$-a.e. and thus $|\nabla f_i|$ $L^p$-strongly converge to $|\nabla f|$. 
\end{proof}
\end{proposition}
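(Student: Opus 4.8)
The plan is to reduce all three statements to the corresponding global results on the ambient space $\mathrm{Y}$, namely the compactness and Mosco-convergence statements of Lemma \ref{lem1} and the lower-semicontinuity Lemma \ref{lem2.9}. The device is to multiply the $f_i$ by cutoffs $\varphi^r\in\operatorname{Lip}_c(B_R(x))$ with $0\le\varphi^r\le 1$ and $\varphi^r\equiv 1$ on $B_r(x)$, to work with the global functions $\varphi^r f_i$ on $\mathrm{Y}$, and then to let $r\uparrow R$. The one extra ingredient needed for this passage is a uniform higher integrability of the $f_i$: from the Sobolev inequality $(\ref{2.7})$, the convergence $\mathfrak{m}_i(B_{R_i}(x_i))\to\mathfrak{m}(B_R(x))$ of Lemma \ref{lem4.13}, and the hypothesis $\sup_i\|f_i\|_{W^{1,p}}<\infty$, one obtains $\sup_i\|f_i\|_{L^{p^*}}<\infty$ for some $p^*>p$.

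For (i), I would first extract via Proposition \ref{prop} a subsequence along which $f_i$ converges $L^p$-weakly to some $f\in L^{p^*}(B_R(x),\mathfrak{m})$. For fixed $r<R$ the functions $\varphi^r f_i$ are bounded in $W^{1,p}(\mathrm{Y},\mathfrak{m}_i)$, so Lemma \ref{lem1}(i) gives that $\varphi^r f_i$ converges $L^p$-strongly to $\varphi^r f\in W^{1,p}(\mathrm{Y},\mathfrak{m})$. H\"older's inequality bounds the truncation error by $\|\varphi^r f_i-f_i\|_{L^p}\le\mathfrak{m}_i(B_{R_i}(x_i)\setminus B_r(x))^{1/p-1/p^*}\|f_i\|_{L^{p^*}}$; letting $i\to\infty$ and then $r\uparrow R$ and using $\mathfrak{m}_i(B_{R_i}(x_i)\setminus B_r(x))\to\mathfrak{m}(B_R(x)\setminus B_r(x))$ yields $\limsup_i\|f_i\|_{L^p}\le\|f\|_{L^p}$, i.e. $L^p$-strong convergence. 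Uniqueness of the limit then upgrades this to the full sequence whenever $f_i$ converges $W^{1,p}$-weakly.

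For (ii), given $f_i\to f$ $W^{1,p}$-weakly, the cutoff argument first shows $\varphi f\in W^{1,p}(\mathrm{Y})$ for every $\varphi\in\operatorname{Lip}_c(B_R(x))$, so that $|\nabla f|\in L^p(B_R(x))$ is well defined by Definition \ref{defsobo}; extracting a further subsequence, assume $|\nabla f_i|\to g$ $L^p$-weakly. Fixing an open set $A$ with $A\Subset B_s(x)\Subset B_R(x)$, the sequence $\varphi^s f_i$ converges $W^{1,p}$-weakly to $\varphi^s f$ as global functions, so Lemma \ref{lem2.9}, applied with a lower-semicontinuous approximation of $\chi_A$, gives $\int_{\bar A}g\,\mathrm{d}\mathfrak{m}\ge\liminf_i\int_A|\nabla(\varphi^s f_i)|\,\mathrm{d}\mathfrak{m}_i\ge\int_A|\nabla f|\,\mathrm{d}\mathfrak{m}$. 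Exhausting an arbitrary open $B\subset B_R(x)$ by relatively compact open subsets yields $g\ge|\nabla f|$ $\mathfrak{m}$-a.e., and then $L^p$-weak lower semicontinuity (Proposition \ref{prop}) gives $\int_{B_R(x)}|\nabla f|^p\,\mathrm{d}\mathfrak{m}\le\int g^p\,\mathrm{d}\mathfrak{m}\le\liminf_i\int_{B_{R_i}(x_i)}|\nabla f_i|^p\,\mathrm{d}\mathfrak{m}_i$; in particular $f\in W^{1,p}(B_R(x),\mathfrak{m})$, which is $(\ref{000})$.

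For (iii), if $f_i\to f$ $W^{1,p}$-strongly then $\int_{B_{R_i}(x_i)}|\nabla f_i|^p\,\mathrm{d}\mathfrak{m}_i\to\int_{B_R(x)}|\nabla f|^p\,\mathrm{d}\mathfrak{m}$; extracting $|\nabla f_i|\to g$ $L^p$-weakly, lower semicontinuity gives $\int g^p\,\mathrm{d}\mathfrak{m}\le\int|\nabla f|^p\,\mathrm{d}\mathfrak{m}$, while (ii) gives $g\ge|\nabla f|$ $\mathfrak{m}$-a.e., forcing $g=|\nabla f|$ and $\|\,|\nabla f_i|\,\|_{L^p}\to\|\,|\nabla f|\,\|_{L^p}$, hence $|\nabla f_i|\to|\nabla f|$ $L^p$-strongly (for the full sequence, by uniqueness of the limit). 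The main obstacle I expect is precisely this local-to-global passage: Lemma \ref{lem1} and Lemma \ref{lem2.9} are stated for functions on all of $\mathrm{Y}$, so the truncation error near $\partial B_R(x)$ must be controlled uniformly in $i$ --- which is exactly where the higher integrability $\sup_i\|f_i\|_{L^{p^*}}<\infty$ and the mass convergence $\mathfrak{m}_i(B_{R_i}(x_i)\setminus B_r(x))\to\mathfrak{m}(B_R(x)\setminus B_r(x))$ come in --- while keeping every ball $B_r(x)$ inside the common space $\mathrm{Y}$.
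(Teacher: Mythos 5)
Your proposal reproduces the paper's proof in essentially every detail: the uniform $L^{p^*}$ bound from the Sobolev inequality and $\mathfrak{m}_i(B_{R_i}(x_i))\to\mathfrak{m}(B_R(x))$, the cutoff functions $\varphi^r$, the use of Lemma~\ref{lem1}(i) for the local-to-global passage in Step~1, the exhaustion-by-compactly-contained-opens argument together with Lemma~\ref{lem2.9} to get $g\ge|\nabla f|$ in Step~2, and the lower-semicontinuity squeeze in Step~3. The only cosmetic difference is that you speak of ``a lower-semicontinuous approximation of $\chi_A$,'' whereas no approximation is needed: $\chi_A$ is itself lower semicontinuous for open $A$, and the paper applies Lemma~\ref{lem2.9} with $\chi_A$ directly.
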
 

\begin{lemma}\label{lem}
$(a)$ For any $f \in W_0^{1,p}(B_R(x),\mathfrak{m})$, up to  a subsequence, there exist $f_i \in W_0^{1,p}(B_{R_i}(x_i),\mathfrak{m}_i)$ such that $f_i$ $W^{1,p}$-strongly converge to $f$.	\\
$(b)$ If $f_i\in W_{0}^{1,p}(B_{R_i}(x_i),\mathfrak{m}_i)$ $L^p$-weakly converge to $f\in W^{1,p}(B_{R}(x),\mathfrak{m})$, then 
$f \in \hat{W}_0^{1,p}(B_R(x),\mathfrak{m})$.
\begin{proof}
For part (a), assume first that $f\in \mathrm{Lip}_c(B_R(x))\subset W^{1,p}(\mathrm{Y},\mathfrak{m})$. Then there exists a sequence of uniformly bounded functions $\{f_n\}\subset \mathrm{Lip}_{\mathrm{bs}}(\mathrm{Y})$ such that $f_n\rightarrow f$ and $\mathrm{Lip}_a(f_n) \rightarrow|\nabla f|$ in $L^p(\mathrm{Y},\mathfrak{m})$, (see for instance \cite{25}), where $\mathrm{Lip}_a(f_n)$ is the asymptotic Lipschitz constant of $f_n$.  For any $n$, the fact that the upper semicontinuity of the asymptotic Lipschitz constant implies that
$$
\limsup_{i \rightarrow \infty}  \operatorname{Ch}_{p}^i(f_n) \leq \limsup_{i \rightarrow \infty} \frac{1}{p}\int \operatorname{Lip}_a^{p}(f_n) \mathrm{d} \mathfrak{m}_i \leq\frac{1}{p}\int \operatorname{Lip}_a^p(f_n) \mathrm{d} \mathfrak{m}.
$$
Since $f_n|_{\mathrm{X}_i}$ $L^{p}$-strongly converge to $f_n|_{\mathrm{X}}$, there exists a subsequence $\{n_k\}$ such that $f_k|_{\mathrm{X}_{n_k}}$ $L^{p}$-strongly converge to $f$ and $\limsup_k \operatorname{Ch}_{p}^{n_k}(f_k) \leq \mathrm{Ch}_p(f)$.

Let $\epsilon>0$ with $\operatorname{supp} f \subset B_{R-3 \epsilon}(x)$ and $(0,\epsilon)\ni\epsilon_i\rightarrow 0$. The fact that $\epsilon_i\rightarrow 0, R_i\rightarrow R$ implies that there exists $\varphi_i \in D(\Delta_i)$ be satisfying
$$
0 \leq \varphi_i \leq 1,\;\; \varphi_i|_{B_{R_i-3\epsilon_i}(x_i)} \equiv 1, \quad \operatorname{supp} \varphi_i \subset B_{R_i-2\epsilon_i}(x_i), |\nabla \varphi_i|+|\Delta_i \varphi_i| \leq C(K, N,R):=C,
$$
(see \cite[Lemma 3.1]{26}). Hence, without loss of generality we can assume that $\varphi_i$ $W^{1,2}$-strongly converge to some $\varphi \in D(\Delta)$(see \cite[Corollary 5.5, Proposition 7.5]{21}). Then it is not difficult to check that $g_k:=\varphi_{n_k}f_k|_{\mathrm{X}_{n_k}}$ $L^p$-weakly converge to $\varphi f=f$, and, using Proposition $\ref{prop4}$, it remains only to prove that 
\begin{equation*}
\varlimsup_{k\rightarrow \infty}\int |\nabla g_k|^p\mathrm{d}\mathfrak{m}_{n_k}\leq \int |\nabla f|^p\mathrm{d}\mathfrak{m}.
\end{equation*}
This is obtained by a direct argument:
\begin{align*}
\bigg(\int |\nabla g_k|^p\mathrm{d}\mathfrak{m}_{n_k} \bigg)^{1/p}&\leq \bigg(\int |\nabla \varphi_{n_k}|^p|f_k|^p\mathrm{d}\mathfrak{m}_{n_k} \bigg)^{1/p}+\bigg(\int |\nabla f_k|^p\mathrm{d}\mathfrak{m}_{n_k} \bigg)^{1/p}\\
&\leq C\sup_k \|f_k\|_{L^\infty} \mathfrak{m}_{n_k}(B_{R_{n_k}-2\epsilon_{n_k}}(x_{n_k})\backslash B_{R_{n_k}-3\epsilon_{n_k}}(x_{n_k}))^{1/p}+\bigg(\int |\nabla f_k|^p\mathrm{d}\mathfrak{m}_{n_k} \bigg)^{1/p}\\
&\rightarrow \bigg(\int |\nabla f|^p\mathrm{d}\mathfrak{m} \bigg)^{1/p}\text{ as }k\rightarrow \infty.
\end{align*}

For the general case: $f\in W_0^{1,p}(B_R(x),\mathfrak{m})$, we can obtain the conclusion by a diagonal argument.

To prove (b), let $z \in \bar{B}_R(x)^c$ and let $r>0$ with $\bar{B}_r(z) \subset\bar{B}_R(x)^c$ and let $\varphi \in \operatorname{Lip}(\mathrm{X})$
with $\operatorname{supp} \varphi \subset B_r(z)$. Then, applying the proof of (a) to $f:=\varphi$ shows that, up to a subsequence, there exists $\varphi_i \in \operatorname{Lip}(\mathrm{X}_i)$ $W^{1,q}$-strongly converge to $\varphi$ such that $\operatorname{supp} \varphi_i \subset B_r(z_i)$, where $\mathrm{X}_i\ni z_i  \rightarrow z$, i.e. $\mathrm{d}(z_i,z)=0$. In particular, using Proposition $\ref{prop0}$, we deduce that 
$$
\int \varphi f \mathrm{d}\mathfrak{m}=\lim_{i \rightarrow \infty} \int \varphi_i f_i \mathrm{d}\mathfrak{m}_i=0 .
$$
Since $z,r,\varphi$ is arbitrary, this proves that $f \in \hat{W}_0^{1,p}(B_R(x),\mathfrak{m})$.
\end{proof}
\end{lemma}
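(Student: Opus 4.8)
The plan is to establish (a) first for $f$ in the dense class $\mathrm{Lip}_c(B_R(x))$ and then recover the general case by a diagonal argument, using that $\mathrm{Lip}_c(B_R(x))$ is dense in $W_0^{1,p}(B_R(x),\mathfrak m)$ by definition. So fix $f\in\mathrm{Lip}_c(B_R(x))$, regarded as a global Lipschitz function on $\mathrm Y$ (and hence on each $\mathrm X_i$ and on $\mathrm X$). First I would approximate $f$ by equi-bounded $f_n\in\mathrm{Lip}_{\mathrm{bs}}(\mathrm Y)$ with $f_n\to f$ and $\mathrm{Lip}_a(f_n)\to|\nabla f|$ in $L^p(\mathrm Y,\mathfrak m)$, a standard density property of $\mathrm{RCD}$ spaces. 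Restricting each $f_n$ to $\mathrm X_i$, the upper semicontinuity of the asymptotic Lipschitz constant along mGH-convergence gives $\limsup_i\mathrm{Ch}_p^i(f_n)\le\frac1p\int\mathrm{Lip}_a^p(f_n)\,\mathrm d\mathfrak m$, while $f_n|_{\mathrm X_i}$ converges $L^p$-strongly to $f_n|_{\mathrm X}$ since it is bounded and continuous; a diagonal extraction then yields a subsequence $\{n_k\}$ with $f_k|_{\mathrm X_{n_k}}$ converging $L^p$-strongly to $f$ and $\limsup_k\mathrm{Ch}_p^{n_k}(f_k)\le\mathrm{Ch}_p(f)$.

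The crucial point — and the main obstacle — is that these approximants live on all of $\mathrm X_i$, not inside $B_{R_i}(x_i)$, so they must be truncated without losing $W^{1,p}$-energy. Since $R_i\to R$ and $\operatorname{supp}f\subset B_{R-3\epsilon}(x)$ for small $\epsilon>0$, I would invoke a standard existence result for good cutoffs $\varphi_i\in D(\Delta_i)$ with $0\le\varphi_i\le1$, $\varphi_i\equiv1$ on $B_{R_i-3\epsilon_i}(x_i)$, $\operatorname{supp}\varphi_i\subset B_{R_i-2\epsilon_i}(x_i)$, and $|\nabla\varphi_i|+|\Delta_i\varphi_i|$ uniformly bounded, for suitable $\epsilon_i\to0$. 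Then $g_k:=\varphi_{n_k}\,f_k|_{\mathrm X_{n_k}}$ belongs to $W_0^{1,p}(B_{R_{n_k}}(x_{n_k}),\mathfrak m_{n_k})$, still converges $L^p$-weakly to $\varphi f=f$, and by the Leibniz rule its gradient differs from that of $f_k$ only on the annulus $B_{R_{n_k}-2\epsilon_{n_k}}(x_{n_k})\setminus B_{R_{n_k}-3\epsilon_{n_k}}(x_{n_k})$, whose measure is negligible in the limit; this yields $\limsup_k\int|\nabla g_k|^p\,\mathrm d\mathfrak m_{n_k}\le\int|\nabla f|^p\,\mathrm d\mathfrak m$. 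Combined with the lower semicontinuity in Proposition~\ref{prop4}(ii), we get $g_k\to f$ $W^{1,p}$-strongly, and general $f\in W_0^{1,p}(B_R(x),\mathfrak m)$ is handled by diagonalizing over the $\mathrm{Lip}_c$-approximation.

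For (b) I would argue by duality against test functions living far from $B_R(x)$. Pick $z\notin\bar B_R(x)$ and $r>0$ with $\bar B_r(z)\cap\bar B_R(x)=\emptyset$, and $\varphi\in\mathrm{Lip}(\mathrm X)$ with $\operatorname{supp}\varphi\subset B_r(z)$. Applying the construction of part (a) (with exponent $q$ in place of $p$) produces, up to a subsequence, $\varphi_i\in\mathrm{Lip}(\mathrm X_i)$ converging $W^{1,q}$-strongly to $\varphi$ with $\operatorname{supp}\varphi_i\subset B_r(z_i)$ for some $z_i\to z$. For $i$ large, $\operatorname{supp}\varphi_i$ is disjoint from $B_{R_i}(x_i)$, so $\int\varphi_i f_i\,\mathrm d\mathfrak m_i=0$; passing to the limit via the weak--strong pairing (Proposition~\ref{prop0}) gives $\int\varphi f\,\mathrm d\mathfrak m=0$. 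Since $z,r,\varphi$ are arbitrary, $f$ vanishes $\mathfrak m$-a.e.\ outside $\bar B_R(x)$, i.e.\ $f\in\hat W_0^{1,p}(B_R(x),\mathfrak m)$. I expect the second stage of (a) — producing recovery sequences genuinely supported in $B_{R_i}(x_i)$ with asymptotically optimal energy — to be where all the work concentrates; the cutoff/Laplacian-bound input and the careful matching of the shrinking radii $R_i-3\epsilon_i$ against $\operatorname{supp}f$ are the delicate parts, whereas (b) follows quickly from it.
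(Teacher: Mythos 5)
Your proposal follows essentially the same route as the paper's proof: for (a) you approximate $f\in\mathrm{Lip}_c(B_R(x))$ by bounded Lipschitz functions on $\mathrm Y$ with converging asymptotic Lipschitz constants, diagonalize to control the Cheeger energy, multiply by the good cutoffs $\varphi_i\in D(\Delta_i)$ of \cite[Lemma 3.1]{26} to land in $W_0^{1,p}(B_{R_i}(x_i))$, and then estimate the energy by Minkowski plus the vanishing annulus measure exactly as in the paper, closing with lower semicontinuity and a final diagonalization; for (b) you use the same far-away test functions $\varphi$ and the weak--strong pairing. The only minor looseness is the phrase ``its gradient differs from that of $f_k$ only on the annulus'' --- the paper's actual bound splits $\nabla g_k$ into $f_k\nabla\varphi_{n_k}$ plus $\varphi_{n_k}\nabla f_k\le\nabla f_k$ and controls the first term on the annulus and the second term globally --- but the idea and the resulting estimate are the same.
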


In order to obtain the almost rigidity result, we state two auxiliary lemmas. The case $p=2$ is already covered by \cite[Lemma 4.10, Lemma 4.12]{11} and the proofs contained therein can be straightforward adapted to the present one. First, $L^p$-strong convergence of maps implies the almost pointwise convergence of the distribution functions to the distribution function of the limit.
\begin{lemma}[Convergence of distribution functions]\label{lem4.10}
Let $g_i \in L^p(\mathrm{X}_i,\mathfrak{m}_i)$ and $g \in L^p(\mathrm{X},\mathfrak{m})$. Let $\mu_i:=\mu_{g_i}$ and $\mu_g$ be the distribution functions of $g_i$ and $g$ respectively. If $g_i$ $L^p$-strongly converge to $g$ , then $\mu_i(t)$ converges to $\mu(t)$ for every $t \in(0,+\infty) \backslash C$, where $C$ is a countable set.	
\end{lemma}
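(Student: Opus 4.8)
The plan is to realize each distribution value $\mu_i(t)$ as the integral of the indicator of a sublevel set of $|g_i|$ and then invoke the functional characterization of $L^p$-strong convergence recorded in Remark \ref{rmk3}. Since that characterization only applies to \emph{continuous} integrands, the first step is to sandwich the indicator between continuous cut-offs: fix $t>0$ and $\epsilon\in(0,t)$, and choose $\phi_\epsilon^{+},\phi_\epsilon^{-}\colon[0,\infty)\to[0,1]$ continuous with $\phi_\epsilon^{+}\equiv 1$ on $[t,\infty)$, $\phi_\epsilon^{+}\equiv 0$ on $[0,t-\epsilon]$, $\phi_\epsilon^{-}\equiv 1$ on $[t+\epsilon,\infty)$, $\phi_\epsilon^{-}\equiv 0$ on $[0,t]$, and affine in between, so that $\phi_\epsilon^{-}\le\chi_{(t,\infty)}\le\phi_\epsilon^{+}$ on $[0,\infty)$. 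Then $\zeta^{\pm}(y,r):=\phi_\epsilon^{\pm}(|r|)$ depends only on $r$, hence lies in $\mathrm{C}(\mathrm{Y}\times\mathbb{R})$, and $|\zeta^{\pm}|\le 1\le 1\cdot(1+|r|^p)$, so Remark \ref{rmk3} gives
\[
\int\phi_\epsilon^{\pm}(|g_i|)\,\mathrm{d}\mathfrak{m}_i\;\longrightarrow\;\int\phi_\epsilon^{\pm}(|g|)\,\mathrm{d}\mathfrak{m}\qquad(i\to\infty).
\]

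Next I would pass to the limit in the sandwich. Since $\mu_i(t)=\int\chi_{(t,\infty)}(|g_i|)\,\mathrm{d}\mathfrak{m}_i$, the bounds $\phi_\epsilon^{-}(|g_i|)\le\chi_{(t,\infty)}(|g_i|)\le\phi_\epsilon^{+}(|g_i|)$ together with the convergence just displayed yield
\[
\int\phi_\epsilon^{-}(|g|)\,\mathrm{d}\mathfrak{m}\;\le\;\liminf_{i\to\infty}\mu_i(t)\;\le\;\limsup_{i\to\infty}\mu_i(t)\;\le\;\int\phi_\epsilon^{+}(|g|)\,\mathrm{d}\mathfrak{m}.
\]
Now let $\epsilon\downarrow 0$: pointwise $\phi_\epsilon^{-}(s)\to\chi_{(t,\infty)}(s)$ and $\phi_\epsilon^{+}(s)\to\chi_{[t,\infty)}(s)$, so by dominated convergence (the integrands are bounded by $1$ and $\mathfrak{m}$ is a probability measure) the outer integrals tend to $\mathfrak{m}(\{|g|>t\})$ and $\mathfrak{m}(\{|g|\ge t\})$ respectively. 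Hence
\[
\mathfrak{m}(\{|g|>t\})\;\le\;\liminf_{i\to\infty}\mu_i(t)\;\le\;\limsup_{i\to\infty}\mu_i(t)\;\le\;\mathfrak{m}(\{|g|\ge t\})\;=\;\mu_g(t)+\mathfrak{m}(\{|g|=t\}).
\]

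Finally I would conclude with the elementary fact that, $\mathfrak{m}$ being finite, the set $C:=\{t>0:\mathfrak{m}(\{|g|=t\})>0\}$ is at most countable; for every $t\in(0,\infty)\setminus C$ the displayed chain collapses to $\lim_{i\to\infty}\mu_i(t)=\mu_g(t)$, which is the claim. There is no genuine obstacle in this argument; the only points that require a little care are verifying that the cut-offs bracket $\chi_{(t,\infty)}$ in the correct direction, noting that $\phi_\epsilon^{+}$ converges to the indicator of the \emph{closed} superlevel set $\{|g|\ge t\}$ (this is precisely why the countable exceptional set $C$ is unavoidable), and observing that a continuous function of $r$ alone qualifies as an admissible test integrand in Remark \ref{rmk3}.
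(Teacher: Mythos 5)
Your argument is correct, and it is essentially the same one the paper relies on: the paper simply defers to \cite[Lemma 4.10]{11} for the case $p=2$ (asserting the adaptation is straightforward), and that reference proceeds exactly by sandwiching the superlevel-set indicator between continuous truncations, invoking the test-function characterization of $L^p$-strong convergence, and then disposing of the at most countably many atoms of the pushforward $|g|_\sharp\mathfrak{m}$. Your write-up fills in cleanly what the paper leaves implicit; the only cosmetic remark is that one could equally phrase the countable-exceptional-set step as saying $C$ is the set of discontinuities of the monotone function $t\mapsto\mu_g(t)$.
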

Secondly, $W^{1,p}$-weakly convergence of functions implies $L^p$-strong convergence of the Schwarz symmetrizations.
\begin{lemma}\label{lem4.12}
Let $R_i,R\in (0,\infty)$ with $R_i\rightarrow R$. Let $g_i \in W_0^{1,p}(B_{R_i}(x_i),\mathfrak{m}_i)$, $g\in W^{1,p}(B_R(x),\mathfrak{m})$. Assume that $g_i$ $W^{1,p}$-weakly converge to $g$. Then, up to a subsequence, $g_i^{\star}$ converge to $g^{\star}$ in $L^p(J_{N-1,N},\mathfrak{m}_{N-1,N})$.	
\end{lemma}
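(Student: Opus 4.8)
The plan is to upgrade the $W^{1,p}$-weak convergence of $g_i$ successively to $L^p$-strong convergence, then to convergence of the distribution functions, then to pointwise $\mathfrak{m}_{N-1,N}$-a.e. convergence of the rearrangements on $J_{N-1,N}$, and finally to $L^p(J_{N-1,N},\mathfrak{m}_{N-1,N})$-convergence. For the first two reductions: since $g_i$ $W^{1,p}$-weakly converge to $g$, Proposition \ref{prop4}(i) gives that $g_i$ $L^p$-strongly converge to $g$; combining the definition of $L^p$-strong convergence with the lower semicontinuity of Proposition \ref{prop} (or directly Remark \ref{rmk3} with $\zeta(y,r)=|r|^p$) yields $\|g_i\|_{L^p(\mathfrak{m}_i)}\to\|g\|_{L^p(\mathfrak{m})}$. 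By Lemma \ref{lem4.10}, the distribution functions $\mu_i(t):=\mathfrak{m}_i(\{|g_i|>t\})$ converge to $\mu(t):=\mathfrak{m}(\{|g|>t\})$ for all $t\in(0,\infty)\setminus C$ with $C$ countable, and by Lemma \ref{lem4.13}(i) the masses $\mathfrak{m}_i(B_{R_i}(x_i))$ converge to $\mathfrak{m}(B_R(x))$, so the intervals on which $g_i^\star$ is supported converge to that of $g^\star$. Throughout we view $g_i^\star,g^\star$ as functions on all of $J_{N-1,N}$, extended by $0$, i.e. $g_i^\star=g_i^\sharp\circ H_{N-1,N}$ with $g_i^\sharp$ (resp. $g^\sharp$) set to $0$ beyond $\mathfrak{m}_i(B_{R_i}(x_i))$ (resp. $\mathfrak{m}(B_R(x))$).

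The heart of the argument is the pointwise convergence $g_i^\sharp(s)\to g^\sharp(s)$ off a countable set. Recalling $g^\sharp(s)=\inf\{\tau:\mu(\tau)<s\}$, one has automatically $\mu(t)\ge s$ whenever $t<g^\sharp(s)$ and $\mu(t)<s$ whenever $t>g^\sharp(s)$. Call $s$ \emph{good} when $\mu(t)>s$ (not merely $\ge s$) for every $t<g^\sharp(s)$; if $s$ is not good, then $\mu(t_0)=s$ for some $t_0<g^\sharp(s)$, which forces $\mu\equiv s$ on the nondegenerate interval $[t_0,g^\sharp(s))$, and since these intervals are pairwise disjoint for distinct $s$, the set of non-good $s$ is countable. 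For a good $s\notin C$ with $0<g^\sharp(s)<\infty$: choosing $t>g^\sharp(s)$ with $t\notin C$ gives $\mu_i(t)\to\mu(t)<s$, hence $g_i^\sharp(s)\le t$ eventually; choosing $t<g^\sharp(s)$ with $t\notin C$ gives $\mu_i(t)\to\mu(t)>s$, hence $g_i^\sharp(s)\ge t$ eventually. Letting $t\to g^\sharp(s)$ through $C^c$ gives $g_i^\sharp(s)\to g^\sharp(s)$ (the cases $g^\sharp(s)=0$ and $s>\mathfrak{m}(B_R(x))$ being immediate from the zero extension). Composing with the homeomorphism $H_{N-1,N}$, which sends countable sets to countable sets, we obtain $g_i^\star\to g^\star$ pointwise $\mathscr{L}^1$-a.e., hence $\mathfrak{m}_{N-1,N}$-a.e., on $J_{N-1,N}$.

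To conclude, observe that by Proposition \ref{3}(b) we have $\|g_i^\star\|_{L^p(J_{N-1,N},\mathfrak{m}_{N-1,N})}=\|g_i\|_{L^p(\mathfrak{m}_i)}\to\|g\|_{L^p(\mathfrak{m})}=\|g^\star\|_{L^p(J_{N-1,N},\mathfrak{m}_{N-1,N})}$. Since $\mathfrak{m}_{N-1,N}$ is a fixed finite measure, $\mathfrak{m}_{N-1,N}$-a.e. convergence together with convergence of the $L^p$-norms forces convergence in $L^p(J_{N-1,N},\mathfrak{m}_{N-1,N})$ (by the standard argument: Fatou applied to $2^{p-1}(|g_i^\star|^p+|g^\star|^p)-|g_i^\star-g^\star|^p\ge0$), i.e. $g_i^\star\to g^\star$ in $L^p(J_{N-1,N},\mathfrak{m}_{N-1,N})$, without even passing to a subsequence. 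As an alternative to this last step one may use Vitali's theorem: the bound $\sup_i\|g_i\|_{W^{1,p}}<\infty$ together with the Sobolev inequality $(\ref{2.7})$ and Proposition \ref{3}(b) gives $\sup_i\|g_i^\star\|_{L^{p^*}(\mathfrak{m}_{N-1,N})}<\infty$ for some $p^*>p$, which makes $\{|g_i^\star|^p\}$ uniformly integrable.

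The only genuinely delicate point is the inversion step — namely, showing that the exceptional set there is countable, which comes down to the observation that the heights at which $\mu$ is constant on a nondegenerate interval form a countable family. Everything else is a routine assembly of the quoted convergence results, mirroring the case $p=2$ in \cite{11}.
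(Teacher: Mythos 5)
Your proof is correct and is exactly the argument the paper implicitly invokes here: the paper does not write out a proof of this lemma but only cites the $p=2$ case in \cite{11} and declares the adaptation to general $p$ straightforward, and the chain you assemble is precisely that adaptation -- Proposition~\ref{prop4}(i) upgrades $W^{1,p}$-weak to $L^{p}$-strong convergence, Lemma~\ref{lem4.10} yields a.e.\ convergence of the distribution functions, your ``good $s$'' inversion (with the countable exceptional set coming from nondegenerate flat stretches of $\mu$, together with the countable set $C$ from Lemma~\ref{lem4.10}) gives a.e.\ convergence of $g_i^{\sharp}$ to $g^{\sharp}$, and the norm-plus-a.e.\ convergence step (Brezis--Lieb/Scheff\'e, or the Vitali route via $(\ref{2.7})$ that you sketch) upgrades this to $L^{p}(J_{N-1,N},\mathfrak{m}_{N-1,N})$ convergence. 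Your remark that no subsequence extraction is actually needed is a correct, if minor, sharpening of the statement as written.
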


\begin{theorem}[Almost rigidity]\label{thm7}
Let $p,q\in (1,\infty)$  be conjugate exponents. For every $\epsilon>0$, $N>2$, $v \in (0,1)$, $0<c_l \leq c_u<\infty$, there exists $\delta=\delta(\epsilon, p,N, v, c_l/c_u)>0$ such that the following statement holds. Assume that:\\
$(i)$ $(\mathrm{X}, \mathrm{d}, \mathfrak{m})$ is a $\mathrm{RCD}(N-1, N)$ space and $\Omega=B_R(x) \subset \mathrm{X}$ is an open ball with $\mathfrak{m}(\Omega)=v$;\\
$(ii)$ $f \in W_0^{1,q}(\Omega, \mathfrak{m})$ with $c_l \leq\|f\|_{L^q(\Omega, \mathfrak{m})} \leq\|f\|_{W^{1,q}(\Omega, \mathfrak{m})} \leq c_u$;\\
$(iii)$ $u \in W_0^{1,p}(\Omega, \mathfrak{m})$ is a weak solution of $-\mathscr{L}_p u=f$;\\
$(iv)$ $w \in W_0^{1,p}([0, r_v), \mathfrak{m}_{N-1, N})$ is a weak solution of $-\Delta_{p,N-1, N} w=f^{\star}$.

If $\|u^{\star}-w\|_{L^p([0, r_v), \mathfrak{m}_{N-1, N})}<\delta$, then there exists a spherical suspension $(Z, \mathrm{d}_Z, \mathfrak{m}_Z)$ such that
$$
\mathrm{d}_{\mathrm{mGH}}((\mathrm{X}, \mathrm{d}, \mathfrak{m}),(Z, \mathrm{d}_Z, \mathfrak{m}_Z))<\epsilon.
$$		
\begin{proof}
\textbf{Step 1}. We argue by contradiction: assume that there exist $\bar{\epsilon}, \bar{N}, \bar{v}, \bar{c}$ such that for any $i \in \mathbb{N}_+$ we can find a $\operatorname{RCD}(\bar{N}-1, \bar{N})$ space $(\mathrm{X}_i, \mathrm{d}_i, \mathfrak{m}_i)$, a ball $\Omega_i=B_{R_i}(x_i) \subset \mathrm{X}_i$ with $\mathfrak{m}_i(\Omega_i)=\bar{v}$, and functions $f_i \in W_0^{1,q}(\Omega_i,\mathfrak{m}_i)$, $u_i \in W_0^{1,p}(\Omega_i, \mathfrak{m}_i)$, $w_i \in W_0^{1,p}([0, r_{\bar{v}}), \mathfrak{m}_{\bar{N}-1, \bar{N}})$ such that for any $i \in \mathbb{N}_+$,
\begin{align*}
& -\mathscr{L}_p u_i=f_i \text { weakly, }-\Delta_{p,\bar{N}-1, \bar{N}} w_i=f_i^{\star} \text { weakly, }  \\
& \bar{c} \leq\|f_i\|_{L^q(\Omega, \mathfrak{m})} \leq\|f_i\|_{W^{1,q}(\Omega, \mathfrak{m})} \leq 1, \quad\|u_i^{\star}-w_i\|_{L^p([0, r_{\bar{v}}), \mathfrak{m}_{\bar{N}-1, \bar{N}})}<\frac{1}{i},
\end{align*}
and moreover
\begin{equation}\label{55}
\inf \{\mathrm{d}_{\mathrm{mGH}}((\mathrm{X}_i, \mathrm{d}_i, \mathfrak{m}_i),(Z, \mathrm{d}_{Z}, \mathfrak{m}_{Z})):(Z, \mathrm{d}_{\mathrm{Z}}, \mathfrak{m}_{Z}) \text { is a spherical suspension }\} \geq \bar{\epsilon}
\end{equation}
Up to subsequences, we can assume that $(\mathrm{X}_i, \mathrm{d}_i, \mathfrak{m}_i)$ converge to a $\operatorname{RCD}(N-1, N)$ space $(\mathrm{X}, \mathrm{d}, \mathfrak{m})$(by Theorem $\ref{com}$) and $x_i\rightarrow x\in \mathrm{X}$, and Assumption $\ref{assu}$ is satisfied; moreover, $\mathfrak{m}(B_R(x))=\bar{v}$ by Lemma $\ref{lem4.13}$;\\
\textbf{Step 2}:
We claim that $f_i$ $L^q$-strongly converge to a function $f \in L^q(B_R(x),\mathfrak{m})$ and $f_i^{\star}$ converges to $f^\star$ in $L^q([0, r_{\bar{v}}), \mathfrak{m}_{\bar{N}-1, \bar{N}})$, $u_i$ $L^p$-strongly converge to a function $u\in \hat{W}_0^{1,p}(B_R(x),\mathfrak{m})$ and $u_i^{\star}$ converges to $u^\star$ in $L^p([0, r_{\bar{v}}), \mathfrak{m}_{\bar{N}-1, \bar{N}})$, and $w_i$ converge to a function $w\in W_0^{1,p}([0, r_{\bar{v}}), \mathfrak{m}_{\bar{N}-1, \bar{N}})$ in $L^p([0, r_{\bar{v}}), \mathfrak{m}_{\bar{N}-1, \bar{N}})$. Indeed, first we note that the following identity was proved in $(\ref{22})$:
\begin{equation}\label{eq5}
w_i(\rho)=\int_{H_{\bar{N}-1, \bar{N}}(\rho)}^{H_{\bar{N}-1, \bar{N}}(r_{\bar{v}})}\frac{1}{\mathcal{I}_{\bar{N}-1, \bar{N}}(\sigma)} \bigg(\frac{1}{\mathcal{I}_{\bar{N}-1, \bar{N}}(\sigma)}\int_0^\sigma f_i^\star\circ H_{\bar{N}-1,\bar{N}}^{-1}(t)\mathrm{d}t\bigg)^{\frac{1}{p-1}} \mathrm{d} \sigma, \quad \forall \rho \in[0, r_{\bar{v}}].
\end{equation}
From the fact that $\sup_i\|f_i\|_{W^{1,q}}\leq 1$ by assumption, it follows that $\sup_i\|f_i^\star\|_{L^{q}}\leq 1$ by Proposition $\ref{3}$(b). The estimates $(\ref{es1})$,$(\ref{es3})$ and $(\ref{es2})$ imply that $w_i$ are also uniformly bounded in $W^{1,p}([0, r_{\bar{v}}), \mathfrak{m}_{\bar{N}-1, \bar{N}})$.
Using the Talenti-type Theorem $\ref{thm}$ and Proposition $\ref{3}$(b) again, we deduce that $\|u_i\|_{W^{1,p}}$ are uniformly bounded as well. Thus, by Proposition $\ref{prop4}$ and Lemma $\ref{lem}$(b), (and up to subsequences), $u_i$ $L^p$-strongly converge to a function $u\in \hat{W}_0^{1,p}(B_R(x),\mathfrak{m})$, $f_i$ $L^q$-strongly converge to a function $f\in \hat{W}_0^{1,q}(B_R(x),\mathfrak{m})$ and $w_i$ $L^p$-strongly converge to a function $w\in W_0^{1,p}([0,r_{\bar{v}}),\mathfrak{m}_{\bar{N}-1,\bar{N}})$. In particular, $w_i$ converge to $w$ in $L^p(J_{\bar{N}-1,\bar{N}},\mathfrak{m}_{\bar{N}-1,\bar{N}})$ by reflexivity of $L^p$ spaces. Moreover, by Lemma $\ref{lem4.12}$, $u_i^{\star}$ converge to $u^{\star}$ in $L^p(J_{\bar{N}-1,\bar{N}},\mathfrak{m}_{\bar{N}-1,\bar{N}})$ and $f_i^{\star}$ converge to $f^{\star}$ in $L^q(J_{\bar{N}-1,\bar{N}},\mathfrak{m}_{\bar{N}-1,\bar{N}})$. \\
\textbf{Step 3}: We claim that $w$ is a weak solution to $-\Delta_{p,\bar{N}-1,\bar{N}}w=f^\star$. The facts that $w_i$ converge to $w$ in $L^p(J_{\bar{N}-1,\bar{N}},\mathfrak{m}_{\bar{N}-1,\bar{N}})$, $f_i^{\star}$ converge to $f^{\star}$ in $L^q(J_{\bar{N}-1,\bar{N}},\mathfrak{m}_{\bar{N}-1,\bar{N}})$ and $(\ref{eq5})$ imply that $w$ can be represented as
\begin{equation}\label{eq6}
w(\rho)=\int_{H_{\bar{N}-1, \bar{N}}(\rho)}^{H_{\bar{N}-1, \bar{N}}(r_{\bar{v}})}\frac{1}{\mathcal{I}_{\bar{N}-1, \bar{N}}(\sigma)} \bigg(\frac{1}{\mathcal{I}_{\bar{N}-1, \bar{N}}(\sigma)}\int_0^\sigma f^\star\circ H_{\bar{N}-1,\bar{N}}^{-1}(t)\mathrm{d}t\bigg)^{\frac{1}{p-1}} \mathrm{d} \sigma, \quad \forall \rho \in[0, r_{\bar{v}}].
\end{equation}
We obtain the claim by Proposition $\ref{prop1}$. \\
\textbf{Step 4}. Notice that
$$
\mathrm{d}_{\mathrm{mGH}}((\mathrm{X}, \mathrm{d}, \mathfrak{m}),(Z, \mathrm{d}_{Z}, \mathfrak{m}_{Z})) \geq \bar{\epsilon}
$$
for any spherical suspension $(Z, \mathrm{d}_{Z}, \mathfrak{m}_{Z})$, by $(\ref{55})$. However, by the $L^p$-strong convergence of $u_i^{\star}$ to $u^{\star}$ and the $L^p$-strong convergence of $w_i$ to $w$, one has
$$
\|u^{\star}-w\|_{L^p([0, r_{\bar{v}}), \mathfrak{m}_{\bar{N}-1, \bar{N}})}=\lim_{i \rightarrow \infty}\|u_i^{\star}-w_i\|_{L^p([0, r_{\bar{v}}), \mathfrak{m}_{\bar{N}-1, \bar{N}})}=0,
$$
which implies that $u^{\star}=w$. This implies that $\mu_u(t)=\nu(t)$ for any $t>0$, where $\nu$ is the distribution function of $w$. Moreover, since $f$ is the $L^q$-strong limit of the $f_i$'s, it has $L^q$-norm bounded from below by $\bar{c}$, thus it is different from $0$ on a non-negligible set. This implies that $w\neq 0$. Recall that $(\ref{41})$ says that 
\begin{equation*}
\mathcal{I}_{\bar{N}-1, \bar{N}}(\mu_u(t))^{\frac{p}{p-1}}\leq (\mathrm{Per}(\mu_u(t)))^{\frac{p}{p-1}}\leq -\mu_u^{\prime}(t) \bigg(\int_0^{\mu_u(t)} f_i^{\sharp}(s) \mathrm{d} s\bigg)^{\frac{1}{p-1}},\text{ for $\mathscr{L}^1$-a.e. }t>0.
\end{equation*}
Letting $i\rightarrow \infty$ and using $(\ref{eq6})$, we have, for $\mathscr{L}^1$-a.e. $t\in (0,\mathrm{ess}\sup |u|)$,
\begin{equation*}
1\leq \frac{(\mathrm{Per}(\mu_u(t)))^{\frac{p}{p-1}}}{\mathcal{I}_{\bar{N}-1, \bar{N}}(\mu_u(t))^{\frac{p}{p-1}}}\leq \frac{-\mu_u^{\prime}(t)}{\mathcal{I}_{\bar{N}-1, \bar{N}}(\mu_u(t))^{\frac{p}{p-1}}}\bigg(\int_0^{\mu_u(t)} f^{\sharp}(s) \mathrm{d} s\bigg)^{\frac{1}{p-1}}=\nu^{\prime}(t)w^\prime(\nu(t))=1.
\end{equation*}
Hence, for any such $t$, the superlevel set $\{|u|>t\}$ satisfies $\mathcal{I}_{\bar{N}-1, \bar{N}}(\mathfrak{m}(\{|u|>t\}))=\operatorname{Per}(\{|u|>t\})$ and $\mathfrak{m}(\{|u|>t\})\in (0,1)$. By the rigidity in the L\'{e}vy-Gromov inequality, this implies that $(\mathrm{X}, \mathrm{d}, \mathfrak{m})$ is a spherical suspension.		
\end{proof}
\end{theorem}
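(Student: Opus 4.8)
The plan is to argue by contradiction and reduce the almost-rigidity statement to the \emph{exact} rigidity in the L\'evy-Gromov inequality \cite{13}. Suppose the claim is false: fix $\bar\epsilon,N,v,c_l,c_u$ and, for every $i$, an $\mathrm{RCD}(N-1,N)$ space $(\mathrm{X}_i,\mathrm{d}_i,\mathfrak{m}_i)$, a ball $\Omega_i=B_{R_i}(x_i)$ with $\mathfrak{m}_i(\Omega_i)=v$, and functions $f_i,u_i,w_i$ as in $(ii)$--$(iv)$ with $\|u_i^{\star}-w_i\|_{L^p([0,r_v),\mathfrak{m}_{N-1,N})}\to0$, while $\mathrm{d}_{\mathrm{mGH}}(\mathrm{X}_i,Z)\geq\bar\epsilon$ for every spherical suspension $Z$. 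Each $\mathrm{X}_i$ has diameter $\leq\pi$, so $R_i\leq\pi$; by Theorem $\ref{com}$ and Lemma $\ref{lem4.13}$, after passing to a subsequence we place everything in a common metric space $\mathrm{Y}$ so that Assumption $\ref{assu}$ holds, $\mathrm{X}_i\to\mathrm{X}$ in mGH with $\mathrm{X}$ an $\mathrm{RCD}(N-1,N)$ space, $R_i\to R\in(0,\pi]$, and $\mathfrak{m}(B_R(x))=v$; the target interval $[0,r_v)$ is independent of $i$ because $v$ is fixed.

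First I would extract limits of the data. Since $\sup_i\|f_i\|_{W^{1,q}}\leq c_u$, Proposition $\ref{3}(b)$ gives $\sup_i\|f_i^{\star}\|_{L^q}\leq c_u$, so by the representation $(\ref{22})$ and the estimates $(\ref{es1})$, $(\ref{es3})$, $(\ref{es2})$ (with $r=p$) the $w_i$ are bounded in $W^{1,p}([0,r_v),\mathfrak{m}_{N-1,N})$ (and in $L^{\infty}$, uniformly in $i$, by the quantitative bound in the proof of Proposition $\ref{prop1}$); then Theorem $\ref{thm}$ and Proposition $\ref{3}(b)$ give $\sup_i\|u_i\|_{W^{1,p}}<\infty$. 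Using Lemma $\ref{lem1}$, Proposition $\ref{prop4}(i)$ and Lemma $\ref{lem}(b)$, along a further subsequence $f_i\to f$ strongly in $L^q$ and $u_i\to u$ strongly in $L^p$, with $f\in\hat{W}_0^{1,q}(B_R(x),\mathfrak{m})$ and $u\in\hat{W}_0^{1,p}(B_R(x),\mathfrak{m})$; by weak $W^{1,p}$-compactness and the compact embedding on the fixed interval, $w_i\to w$ strongly in $L^p([0,r_v),\mathfrak{m}_{N-1,N})$; and by Lemma $\ref{lem4.12}$, $u_i^{\star}\to u^{\star}$ in $L^p$ and $f_i^{\star}\to f^{\star}$ in $L^q$ over $J_{N-1,N}$. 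Passing to the limit in $(\ref{22})$ (the inner integrals converge since $f_i^{\star}\to f^{\star}$ in $L^q$, with equi-integrability from $(\ref{25})$) identifies $w$ with the function built from $(\ref{22})$ with datum $f^{\star}$, so by Proposition $\ref{prop1}$, $w$ is the unique weak solution of $-\Delta_{p,N-1,N}w=f^{\star}$ on $[0,r_v)$.

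The core is to turn the equality $u^{\star}=w$ --- which follows from $\|u_i^{\star}-w_i\|_{L^p}\to0$ and the two strong convergences --- into an isoperimetric statement for $\mathrm{X}$. Since $\|f\|_{L^q}=\lim_i\|f_i\|_{L^q}\geq c_l>0$, $f\not\equiv0$; because $f^{\sharp}$ is non-increasing and nonzero, $(\ref{21})$ shows that $w$ is continuous and strictly decreasing on $[0,r_v]$, that $M:=\mathrm{ess}\sup_{B_R(x)}|u|=w(0)>0$, and that $u^{\star}=w$ forces $u^{\sharp}=w^{\sharp}$ on $[0,v]$, hence (equimeasurability, Proposition $\ref{3}(a)$) $\mu_u=\mu_w=:\nu$. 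A direct computation from the explicit $w$ shows that $\nu$ is absolutely continuous and strictly decreasing on $(0,M)$ with $\nu(0)=v$ and
$$-\nu'(t)=\frac{\mathcal{I}_{N-1,N}(\nu(t))^{q}}{F(\nu(t))^{1/(p-1)}},\qquad F(\xi):=\int_0^\xi f^{\sharp}(s)\,\mathrm{d}s,$$
for a.e.\ $t\in(0,M)$; i.e.\ $\nu$ saturates the inequality $(\ref{42})$. I would then write the chain $(\ref{41})$ for $(u_i,f_i)$ on $\Omega_i$, raise its outer terms to the power $q$ and integrate in $t$:
$$\int_0^\infty\mathcal{I}_{N-1,N}(\mu_{u_i}(t))^{q}\,\mathrm{d}t \leq \int_0^\infty\mathrm{Per}(\{|u_i|>t\})^{q}\,\mathrm{d}t \leq \int_0^\infty(-\mu_{u_i}'(t))\,F_i(\mu_{u_i}(t))^{1/(p-1)}\,\mathrm{d}t \leq \int_0^v F_i(\xi)^{1/(p-1)}\,\mathrm{d}\xi,$$
with $F_i(\xi)=\int_0^\xi f_i^{\sharp}$, the last step being the change of variables $\xi=\mu_{u_i}(t)$ (the inequality absorbing any jump or singular part of $\mu_{u_i}$, exactly as in the proof of Theorem $\ref{thm}$). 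By Lemma $\ref{lem4.10}$, $\mu_{u_i}(t)\to\nu(t)$ for a.e.\ $t$, and $f_i^{\sharp}\to f^{\sharp}$ in $L^q([0,1])$ (from $f_i^{\star}\to f^{\star}$ in $L^q$), so $F_i\to F$ uniformly on $[0,v]$; since $\sup_i M_i<\infty$, dominated convergence shows the first integral tends to $\int_0^M\mathcal{I}_{N-1,N}(\nu(t))^{q}\,\mathrm{d}t$ and the last to $\int_0^v F(\xi)^{1/(p-1)}\,\mathrm{d}\xi$, and by the identity above for $\nu$ and the substitution $\xi=\nu(t)$ these two limits coincide; call the common value $A$. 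Since $u_i\to u$ strongly in $L^p$ implies $\chi_{\{|u_i|>t\}}\to\chi_{\{|u|>t\}}$ in $L^1$ for a.e.\ $t$, lower semicontinuity of perimeter along mGH convergence together with Fatou's lemma gives $\int_0^M\mathrm{Per}(\{|u|>t\})^{q}\,\mathrm{d}t\leq A$, while Proposition $\ref{levy}$ gives $\mathrm{Per}(\{|u|>t\})\geq\mathcal{I}_{N-1,N}(\nu(t))$ for a.e.\ $t$; hence $\mathrm{Per}(\{|u|>t\})=\mathcal{I}_{N-1,N}(\mathfrak{m}(\{|u|>t\}))$ with $\mathfrak{m}(\{|u|>t\})=\nu(t)\in(0,1)$ for a.e.\ $t\in(0,M)$. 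Fixing one such $t$ and using Proposition $\ref{levy}$ once more, $\mathcal{I}_{(\mathrm{X},\mathrm{d},\mathfrak{m})}(\nu(t))=\mathcal{I}_{N-1,N}(\nu(t))$, so by the rigidity in the L\'evy-Gromov inequality \cite{13}, $(\mathrm{X},\mathrm{d},\mathfrak{m})$ is a spherical suspension --- contradicting $\mathrm{d}_{\mathrm{mGH}}(\mathrm{X}_i,\mathrm{X})\to0$ and $\mathrm{d}_{\mathrm{mGH}}(\mathrm{X}_i,Z)\geq\bar\epsilon$ for all spherical suspensions $Z$.

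The bookkeeping of convergences in the second step is routine, being supplied by the lemmas of this subsection (Lemmas $\ref{lem1}$, $\ref{lem4.10}$, $\ref{lem4.12}$, $\ref{lem}$ and Proposition $\ref{prop4}$). I expect the main obstacle to be the squeeze at the end: one must check that both outer terms of the integrated Talenti chain converge to the \emph{same} constant $A$, which relies on (i) identifying the limit $w$ exactly with the explicit model solution, so that $\nu=\mu_u$ exactly saturates $(\ref{42})$, and (ii) lower semicontinuity of perimeter under mGH convergence applied to the superlevel sets $\{|u_i|>t\}$ --- only with both does the chain pinch to equality in the L\'evy-Gromov inequality for the limit space, which is what activates the rigidity theorem.
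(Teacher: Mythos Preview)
Your argument tracks the paper's through Steps~1--3: the contradiction setup, the extraction of mGH and $L^p/L^q$ limits via the compactness lemmas, and the identification of the limit $w$ with the explicit model solution from Proposition~\ref{prop1} are essentially identical.

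The divergence is in how equality in L\'evy--Gromov is reached on the limit space. The paper asserts the \emph{pointwise} chain
\[
\mathcal{I}_{N-1,N}(\mu_u(t))^{q}\leq\mathrm{Per}(\{|u|>t\})^{q}\leq -\mu_u'(t)\,F(\mu_u(t))^{1/(p-1)}
\]
directly for the limit $u$ on $\mathrm{X}$ and then observes that $\mu_u=\nu$ makes the right-hand side equal to the left, pinching the middle term. The second inequality, however, is precisely the Talenti estimate, and the paper does not show that $u$ is a weak solution of $-\mathscr{L}_p u=f$ on the limit space (Theorem~\ref{thm2} only covers $p\geq2$ under the extra hypothesis~\eqref{wo}); nor does a pointwise limit in $-\mu_{u_i}'(t)$ make sense. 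Your route sidesteps this by integrating the chain~\eqref{41} for each $u_i$ over $t$, passing to the limit only in the two \emph{outer} integrals (where convergence is clean: $\mu_{u_i}\to\nu$ a.e.\ plus dominated convergence on the left, $F_i\to F$ uniformly on the right), recognizing via the exact ODE $-\nu'(t)=\mathcal{I}_{N-1,N}(\nu(t))^{q}/F(\nu(t))^{1/(p-1)}$ that these two limits coincide, and then squeezing $\int_0^M\mathrm{Per}(\{|u|>t\})^{q}\,\mathrm{d}t$ between them via Fatou and lower semicontinuity of perimeter under mGH convergence with $L^1$-strong convergence of the superlevel indicators. This is a genuine and correct alternative: it avoids ever needing $u$ to solve anything on $\mathrm{X}$, at the price of importing the mGH lower semicontinuity of perimeter (standard in the RCD literature, though not stated in the present paper). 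Both routes conclude identically via the rigidity in L\'evy--Gromov.
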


\subsection{The continuity of the local Poisson problem}
\begin{theorem}\label{thm2}
Let $p,q\in (1,\infty)$  be conjugate exponents. Under the Assumption $\ref{assu}$, let $R_i,R\in (0,\infty)$ be such that $R_i\rightarrow R$ and $\mathfrak{m}_i(B_{R_i}(x_i))= v \in(0,1)$ and 
\begin{equation}\label{wo}
W_0^{1,p}(B_R(x),\mathfrak{m})=\hat{W}_0^{1,p}(B_R(x),\mathfrak{m})
.
\end{equation} 
Let $f_i \in W_0^{1,q}(B_{R_i}(x_i),\mathfrak{m}_i)$ with $\sup_i\|f_i\|_{W^{1,q}}<\infty$. Assume that $u_i \in W_0^{1,p}(B_{R_i}(x_i),\mathfrak{m}_i)$ are weak solutions to
$$
\begin{cases}
-\mathscr{L}_p u_i=f_i & \text { in } B_{R_i}(x_i) \\ u_i=0 & \text { on } \partial B_{R_i}(x_i)
\end{cases}
$$
and $w_i \in W_0^{1,p}([0,r_{v}), \mathfrak{m}_{N-1, N})$ are weak solutions to
$$
\begin{cases}
-\Delta_{p,N-1, N} w_i=f_i^{\star} \text { in }[0, r_{v}) \\ 
w_i(r_{v})=0.\end{cases}
$$
where $r_{v} = H_{N-1, N}^{-1}(v)$. Then, up to a subsequence:\\
$(i)$ $f_i$ $L^q$-strongly converge to a function $f \in L^q(B_R(x),\mathfrak{m})$ with $\mathfrak{m}(B_R(x))=v$; $f_i^{\star}$ converge to $f^{\star}$ in $L^q(J_{N-1,N},\mathfrak{m}_{N-1,N})$;\\
$(ii)$ $w_i$ $W^{1,p}$-strongly converge to a weak solution $w$ to
\begin{equation}\label{eq2}
\begin{cases}
-\Delta_{p,N-1, N} w=f^{\star}  \text { in }[0, r_{v}) \\ 
w(r_{v})=0.
\end{cases}
\end{equation}
If $p\geq 2$, in addition, then, up to a subsequence, \\
$(iii)$ $u_i$ $W^{1,p}$-strongly converge to a weak solution $u$ to
\begin{equation}\label{eq1}
\begin{cases}
-\mathscr{L}_p u=f & \text { in } B_R(x) \\ 
u=0 & \text { on } \partial B_R(x);
\end{cases}
\end{equation}
\begin{proof}
As in the proof of Theorem $\ref{thm7}$, up to subsequences, we can assume that $f_i$ $L^q$-strongly converge to a function $f \in L^q(B_R(x),\mathfrak{m})$ and $f_i^{\star}$ converges to $f^\star$ in $L^q([0, r_{v}), \mathfrak{m}_{N-1, N})$, $u_i$ $L^p$-strongly converge to a function $u\in \hat{W}_0^{1,p}(B_R(x),\mathfrak{m})$ and $u_i^{\star}$ converges to $u^\star$ in $L^p([0, r_{v}), \mathfrak{m}_{N-1, N})$, and $w_i$ $L^p$-strongly converge to a weak solution $w\in W_0^{1,p}([0, r_{v}), \mathfrak{m}_{N-1, N})$ of $-\Delta_{p,N-1, N} w=f^{\star}$. 

We claim that $u_i$ $W^{1,p}$-strongly converge to $u$. It suffices to prove that 
\begin{equation}\label{12}
\varlimsup_{i\rightarrow \infty}\int |\nabla u_i|^p\mathrm{d}\mathfrak{m}_i\leq \int |\nabla u|^p\mathrm{d}\mathfrak{m}.
\end{equation}
Indeed, since $u\in W_0^{1,p}(B_R(x),\mathfrak{m})$, Lemma $\ref{lem}$(a) says that up to a subsequence, there exist $v_i\in$ $W_0^{1,p}(B_{R_i}(x_i),\mathfrak{m}_i)$ such that $v_i$ $W^{1,p}$-strongly converge to $u$. Then, from the facts that $u_i$ is a weak solution to $(\ref{eq1})$ and $u_i,v_i\in W_0^{1,p}(B_{R_i}(x_i),\mathfrak{m}_i)$ can be used as a test function, we deduce that 
\begin{align}
\int f_i u_i-f_i v_i \mathrm{d}\mathfrak{m}_i & =-\mathscr{L}_p u_i(u_i)+\mathscr{L}_p u_i(v_i)\label{eq0}\\
&=\int|\nabla u_i|^{p} \mathrm{d}\mathfrak{m}_i-\int|\nabla u_i|^{p-2} Dv_i(\nabla u_i) \mathrm{d} \mathfrak{m}_i\notag\\
& \geq \int|\nabla u_i|^{p} \mathrm{d} \mathfrak{m}_i-\int|\nabla u_i|^{p-1}|\nabla v_i| \mathrm{d} \mathfrak{m}_i \notag\\
& \geq \int|\nabla u_i|^{p}\mathrm{d}\mathfrak{m}_i-\int\bigg(\frac{p-1}{p}|\nabla u_i|^{p}+\frac{1}{p}|\nabla v_i|^{p}\bigg) \mathrm{d}\mathfrak{m}_i \notag\\
& = \int \frac{1}{p}|\nabla u_i|^{p} \mathrm{d}\mathfrak{m}_i-\int \frac{1}{p}|\nabla v_i|^{p} \mathrm{d}\mathfrak{m}_i.\notag
\end{align}
Since $f_i$ $L^{q}$-strongly converge to $f$ and $u_i-v_i$ $L^{p}$-strongly converge to $0$, the integral in the left of $(\ref{eq0})$ converges to $0$, by Proposition $\ref{prop0}$. Thus, letting $i\rightarrow \infty$ in $(\ref{eq0})$, we obtain the desired inequality $(\ref{12})$. Similarly, we can prove that $w_i$ $W^{1,p}$-strongly converge to $w$.

To prove (iii), we first claim that if $g_i$ $W^{1,p}$-strongly converge to $g$ and $h_i$ $W^{1,p}$-strongly converge to $h$, then 
it holds
\begin{equation}\label{2}
\lim_{i\rightarrow \infty}\mathscr{L}_p g_i(h_i)=\mathscr{L}_p g(h).
\end{equation}
For $p=2$, $(\ref{2})$ has been proved in \cite[Theorem 5.4]{21}. We can assume that $p>2$. Indeed, on the one hand, by Remark $\ref{rmk3}$ and Proposition $\ref{prop4}$(iii), the fact that $g_i$ $W^{1,p}$-strongly converge to $g$ implies that $g_i$ $W^{1,2}$-strongly converge to $g$, similarly for $h_i,h$. Since $Dh_i(\nabla g_i)$ is bounded in $L^{p/2}$,  \cite[Theorem 5.7]{21} implies that $Dh_i(\nabla g_i)$ $L^{p/2}$-weakly converge to $Dg(\nabla h)$.
On the other hand, by \cite[Proposition 2.18(vii)]{0.3}, the fact that $|Dg_i|$ $L^p$-strongly converge to $|Dg|$ implies that $|Dg_i|^{p-2}$ $L^{\frac{p}{p-2}}$-strongly converge to $|Dg|^{p-2}$. Using Proposition $\ref{prop0}$, we obtain $(\ref{2})$ immediately.

Now we fix a $\psi \in W_0^{1,p}(B_R(x),\mathfrak{m})$. By Lemma $\ref{lem}$(a) again, there exist $\psi_i \in W_0^{1,p}(B_{R_i}(x_i),\mathfrak{m}_i)$ such that $\psi_i$ $W^{1,p}$-strongly converge to $\psi$. Since $u_i$ is a weak solution to $(\ref{eq1})$ again, we have
\begin{equation}\label{13}
-\mathscr{L}_p u_i(\psi_i)=\int  f_i \psi_i \mathrm{d} \mathfrak{m}_i.
\end{equation}
Since $\psi_i$ and $u_i$ $W^{1,p}$-strongly converge to $\psi$ and $u$, respectively, and $\psi_i$(resp. $f_i$) $L^{p}$(resp. $L^q$)-strongly converge to $\psi$(resp. $f$), we can use Propositions $\ref{prop0}$ and $(\ref{2})$ to deduce that 
\begin{equation*}
-\mathscr{L}_p u(\psi)=\int  f \psi \mathrm{d} \mathfrak{m},
\end{equation*}
this implies that $u$ is a weak solution to $(\ref{eq1})$. 
\end{proof}
\end{theorem}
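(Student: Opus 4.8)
The plan is to run the compactness-and-identification scheme of the proof of Theorem~\ref{thm7}, adding the ingredient --- needed only for part~$(iii)$ --- that $\mathscr{L}_p$ is continuous along $W^{1,p}$-strongly convergent sequences when $p\ge 2$. First I would collect the a priori bounds. From $\sup_i\|f_i\|_{W^{1,q}}<\infty$ and Proposition~\ref{3}(b) one gets $\sup_i\|f_i^\star\|_{L^q}\le\sup_i\|f_i\|_{L^q}<\infty$; inserting this into the explicit representation \eqref{22} and into the estimates \eqref{es1}, \eqref{es3}, \eqref{es2} bounds $w_i$ uniformly in $W^{1,p}([0,r_v),\mathfrak{m}_{N-1,N})$, and the Talenti comparison Theorem~\ref{thm} (the bound $u_i^\star\le w_i$ together with \eqref{40} for $r=p$, and Proposition~\ref{3}(b) again) bounds $u_i$ uniformly in $W^{1,p}$. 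Hence, along a (not relabeled) subsequence, Proposition~\ref{prop4}(i) gives $f_i$ $L^q$-strongly converging to some $f\in W^{1,q}(B_R(x))$ and $u_i$ $L^p$-strongly converging to some $u\in W^{1,p}(B_R(x))$, with $\mathfrak{m}(B_R(x))=v$ by Lemma~\ref{lem4.13}; Lemma~\ref{lem}(b) forces these limits into $\hat{W}_0^{1,q}(B_R(x))$, resp.\ $\hat{W}_0^{1,p}(B_R(x))$, and hypothesis \eqref{wo} upgrades the latter to $u\in W_0^{1,p}(B_R(x))$; and Lemma~\ref{lem4.12} yields $f_i^\star\to f^\star$ and $u_i^\star\to u^\star$ in $L^q$, resp.\ $L^p$, on $J_{N-1,N}$. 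This proves $(i)$.

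Next I would identify $w$ and upgrade its convergence. Passing to the limit in \eqref{22} written for $w_i$ --- the inner integrals converge by the $L^q$-strong (hence $L^1$) convergence of $f_i^\star$, the outer one by dominated convergence using \eqref{es1}, \eqref{es3}, \eqref{es2} --- shows that $w_i\to w$ in $L^p$ with $w$ given by \eqref{22} with datum $f^\star$, which by Proposition~\ref{prop1} is the weak solution of \eqref{eq2}; differentiating \eqref{21} in the same way gives $w_i'\to w'$ in $L^p$, so the convergence is $W^{1,p}$-strong, proving $(ii)$. For $(iii)$, assume $p\ge 2$; the crux is $\limsup_i\int|\nabla u_i|^p\,\mathrm{d}\mathfrak{m}_i\le\int|\nabla u|^p\,\mathrm{d}\mathfrak{m}$, since, combined with the lower semicontinuity of Proposition~\ref{prop4}(ii), it gives $W^{1,p}$-strong convergence of $u_i$ to $u$, hence $|\nabla u_i|\to|\nabla u|$ $L^p$-strongly by Proposition~\ref{prop4}(iii). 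To obtain this bound I would exploit monotonicity: by Lemma~\ref{lem}(a) (this is where \eqref{wo} enters, ensuring $u\in W_0^{1,p}(B_R(x))$) pick $v_i\in W_0^{1,p}(B_{R_i}(x_i))$ with $v_i\to u$ $W^{1,p}$-strongly, test the equation for $u_i$ against $u_i-v_i$, and estimate, using \eqref{24} and Young's inequality,
\begin{align*}
\int f_i(u_i-v_i)\,\mathrm{d}\mathfrak{m}_i
&=\int|\nabla u_i|^p\,\mathrm{d}\mathfrak{m}_i-\int|\nabla u_i|^{p-2}Dv_i(\nabla u_i)\,\mathrm{d}\mathfrak{m}_i\\
&\ge\int|\nabla u_i|^p\,\mathrm{d}\mathfrak{m}_i-\int|\nabla u_i|^{p-1}|\nabla v_i|\,\mathrm{d}\mathfrak{m}_i\\
&\ge\frac1p\int|\nabla u_i|^p\,\mathrm{d}\mathfrak{m}_i-\frac1p\int|\nabla v_i|^p\,\mathrm{d}\mathfrak{m}_i.
\end{align*}
The left-hand side tends to $0$ by Proposition~\ref{prop0} ($f_i$ $L^q$-strongly, $u_i-v_i$ $L^p$-strongly to $0$), and $\int|\nabla v_i|^p\to\int|\nabla u|^p$ by strong convergence.

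It remains to show $u$ solves \eqref{eq1}. Fix $\psi\in W_0^{1,p}(B_R(x))$, pick $\psi_i\in W_0^{1,p}(B_{R_i}(x_i))$ with $\psi_i\to\psi$ $W^{1,p}$-strongly (Lemma~\ref{lem}(a), again using \eqref{wo}), and pass to the limit in $-\mathscr{L}_p u_i(\psi_i)=\int f_i\psi_i\,\mathrm{d}\mathfrak{m}_i$. The right-hand side converges by Proposition~\ref{prop0}; the delicate point is the continuity
\[
\mathscr{L}_p u_i(\psi_i)=-\int|\nabla u_i|^{p-2}D\psi_i(\nabla u_i)\,\mathrm{d}\mathfrak{m}_i\ \longrightarrow\ -\int|\nabla u|^{p-2}D\psi(\nabla u)\,\mathrm{d}\mathfrak{m}=\mathscr{L}_p u(\psi).
\]
For $p=2$ this is known; for $p>2$ I would factor the integrand, using that $|\nabla u_i|\to|\nabla u|$ $L^p$-strongly implies $|\nabla u_i|^{p-2}\to|\nabla u|^{p-2}$ $L^{p/(p-2)}$-strongly, while $D\psi_i(\nabla u_i)$ is bounded in $L^{p/2}$ and converges $L^{p/2}$-weakly to $D\psi(\nabla u)$ (which rests on the $W^{1,2}$-strong convergence of $u_i,\psi_i$, obtained from their $W^{1,p}$-strong convergence via Remark~\ref{rmk3}, and on the calculus of Proposition~\ref{propp}), and then invoking Proposition~\ref{prop0} with the conjugate exponents $p/(p-2)$ and $p/2$.

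\textbf{The main obstacle is this last step}: the weak--strong factorization of the nonlinear flux $|\nabla u_i|^{p-2}D\psi_i(\nabla u_i)$, and within it the $L^{p/2}$-weak stability of the pairing $D\psi_i(\nabla u_i)$ along $W^{1,2}$-convergent sequences. This is precisely where the restriction $p\ge 2$ is essential: for $p<2$ the factor $|\nabla u_i|^{p-2}$ is singular where $\nabla u_i$ is small, no clean splitting is available, and a genuinely different (Minty-type monotonicity) argument would be required. A secondary, structural point --- built into the hypotheses --- is that the limit $u$ must lie in $W_0^{1,p}(B_R(x))$ and not merely in $\hat{W}_0^{1,p}(B_R(x))$; this is the content of \eqref{wo}, and it is what allows the test and recovery sequences $\psi_i,v_i$ to be produced via Lemma~\ref{lem}(a).
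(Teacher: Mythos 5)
Your proposal is correct and follows essentially the same route as the paper: compactness from the a priori bounds supplied by the explicit formula and the Talenti comparison, hypothesis \eqref{wo} promoting the $L^p$-limit $u$ from $\hat{W}_0^{1,p}(B_R(x))$ to $W_0^{1,p}(B_R(x))$ so that Lemma~\ref{lem}(a) can supply recovery and test sequences, the Young-inequality monotonicity trick for the $\limsup$ bound on the $p$-Cheeger energies, and, for $p\ge 2$, the weak--strong factorization of the flux $|\nabla u_i|^{p-2}D\psi_i(\nabla u_i)$ into an $L^{p/(p-2)}$-strongly convergent factor and an $L^{p/2}$-weakly convergent factor. The only cosmetic divergence is that for the $W^{1,p}$-strong convergence of the $w_i$ you propose to differentiate the explicit formula \eqref{21} and pass to the limit directly, while the paper simply reruns the monotonicity argument; both work.
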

\begin{remark}
We emphasize that the condition $(\ref{wo})$ is necessary. We consider a trivial mGH convergent sequence and a convergent sequence of points: 
$$
(J_{N-1,N}, \mathrm{d}_{eu}, \mathfrak{m}_{N-1,N}) \stackrel{\mathrm{mGH}}{\longrightarrow}(J_{N-1,N}, \mathrm{d}_{eu}, \mathfrak{m}_{N-1,N})\text{ and }x_i:=\pi/4-\epsilon_i\rightarrow x:=\pi/4
$$
as $\epsilon_i \downarrow 0$. For simplicity, we set $s_i=\pi/2-\epsilon_i$. Let $0\neq f\in \mathrm{Lip}_c((0,\pi/4))$ be a nonnegative function. Let $R_i=R=\pi/4$ such that $B_{R_i}(x_i)=[0,s_i)$ and  $B_R(x)=(0,\pi/2)$, and let $f_i=f$. Proposition $\ref{prop1}$ says that the unique weak solution of the Poisson problem
\begin{equation*}
\begin{cases}
- \Delta_{p,N-1, N} w =f_i \text { in } [0,s_i), \\ 
w(s_i)=0.
\end{cases}
\end{equation*}
can be represented as
$$
w_i(\rho)=\int_{H_{N-1, N}(\rho)}^{H_{N-1, N}(s_i)}\frac{1}{\mathcal{I}_{N-1, N}(\sigma)} \bigg(\frac{1}{\mathcal{I}_{N-1, N}(\sigma)}\int_0^\sigma f\circ H_{N-1,N}^{-1}(t)\mathrm{d}t\bigg)^{\frac{1}{p-1}} \mathrm{d} \sigma,\quad \rho\in [0,s_i].
$$
The $W^{1,p}$ strong limit function of $w_i$
$$
w(\rho)=\int_{H_{N-1, N}(\rho)}^{H_{N-1, N}(\pi/2)}\frac{1}{\mathcal{I}_{N-1, N}(\sigma)} \bigg(\frac{1}{\mathcal{I}_{N-1, N}(\sigma)}\int_0^\sigma f\circ H_{N-1,N}^{-1}(t)\mathrm{d}t\bigg)^{\frac{1}{p-1}} \mathrm{d} \sigma,\quad \rho\in [0,\pi/2].
$$
is continuous on $[0,\pi/2]$ and $w(0)\neq 0$, so it is not in the space $W_0^{1,p}((0,\pi/2),\mathfrak{m}_{N-1,N})$.
\end{remark}

\section{Applications}
\subsection{Improved Sobolev embeddings}
\begin{theorem}
Let $(\mathrm{X}, \mathrm{d}, \mathfrak{m})$ be a $\operatorname{RCD}(K, N)$ space with $K>0$, $N \in(1, \infty)$ and $\Omega \subset \mathrm{X}$  an open set with $v=\mathfrak{m}(\Omega) \in(0,1)$. Let $p,q\in (1,\infty)$  be conjugate exponents.

Let $u\in W_0^{1,p}(\Omega)$ and $f \in L^q(\Omega)$. Assume that $u$ is a weak solution of $-\mathscr{L}_pu=f$. Then the following statements hold:\\
$(i)$ If $f \in L^s(\Omega)$ with $\frac{N}{p}<s \leq \infty$, then $\|u\|_{L^{\infty}(\Omega)} \leq c_1(K, N, v, p,s)\|f\|_{L^s(\Omega)}^{\frac{1}{p-1}}$, where
\begin{align}
c_1(K, N, v, p,s) = \int_0^{\mathfrak{m}(\Omega)} \frac{\xi^{\frac{s-1}{s(p-1)}}}{\mathcal{I}_{K, N}(\xi)^{\frac{p}{p-1}}} \mathrm{d} \xi<\infty,
\end{align}
where we adopt the convention that $\frac{1}{s}=0$ if $s=\infty$.\\
$(ii)$ If $f \in L^s(\Omega)$ with $0<s \leq \frac{N}{p}$, and $t \geq 1$ is such that $\frac{t}{p-1}(\frac{1}{s}-\frac{p}{N})<1$, then $\|u\|_{L^t(\Omega)}\leq c_2(K, N, v, p,s,t)\|f\|_{L^s(\Omega)}^{\frac{1}{p-1}}$, where
\begin{align*}
 c_2(K, N, v, p,s,t)  =\bigg(\int_0^v\bigg(\int_x^{v} \frac{\xi^{\frac{s-1}{s(p-1)}}}{\mathcal{I}_{K, N}(\xi)^{\frac{p}{p-1}}} \mathrm{d} \xi\bigg)^t \mathrm{d} x\bigg)^{\frac{1}{t}}<\infty .
\end{align*}
\begin{proof}
Recall that $u^{\sharp}$ satisfies the following inequality (see $(\ref{9})$):
$$
u^{\sharp}(x) \leq
\int_x^{\mathfrak{m}(\Omega)}\frac{1}{\mathcal{I}_{K, N}(\xi)} \bigg(\frac{1}{\mathcal{I}_{K, N}(\xi)}\int_0^\xi f^{\sharp}(t)\mathrm{d}t\bigg)^{\frac{1}{p-1}}\mathrm{d} \xi
\quad \forall x \in[0, \mathfrak{m}(\Omega)].
$$
If $f \in L^s(\Omega)$, then by H\"{o}lder inequality, by equimeasurability of $f$ and $f^{\sharp}$, and by the characterization of the isoperimetric profile on $J_{K, N}$(Lemma $\ref{lem2.12}$),
\begin{equation}\label{10}
u^{\sharp}(x) \leq \|f\|_{L^s(\Omega)}^{\frac{1}{p-1}} \int_x^{\mathfrak{m}(\Omega)} \frac{\xi^{\frac{s-1}{s(p-1)}}}{h_{K, N}(H_{K, N}^{-1}(\xi))^{\frac{p}{p-1}}} \mathrm{d} \xi,
\end{equation}
with the convention that $1/s=0$ if $s=\infty$. Now by the estimates on $h_{K, N}$(Lemma $\ref{lem2.11}$), there exists a constant $C_1>0$ only depending on $K>0, N \in(1, \infty)$ and $v=\mathfrak{m}(\Omega) \in(0,1)$ such that for all $\xi \in[0, \mathfrak{m}(\Omega)]$
$$
h_{K, N}(H_{K, N}^{-1}(\xi)) \geq C_1 \xi^{\frac{N-1}{N}} .
$$
We can thus draw the following conclusions:\\
\textbf{Case 1}: If $\frac{N}{p}<s \leq \infty$, then
$$
\int_x^{\mathfrak{m}(\Omega)} \frac{\xi^{\frac{s-1}{s(p-1)}}}{h_{K, N}(H_{K, N}^{-1}(\xi))^{\frac{p}{p-1}}} \mathrm{d} \xi \leq \frac{1}{C_1^{\frac{p}{p-1}}} \int_0^{\mathfrak{m}(\Omega)} \xi^{\frac{1}{p-1}(\frac{p}{N}-\frac{1}{s}+1-p)} \mathrm{d} \xi<\infty, \quad \forall x \in[0, \mathfrak{m}(\Omega)] .
$$
By $(\ref{10})$ and by equimeasurability of $u$ and $u^{\sharp}$, this implies (i).\\
\textbf{Case 2}: If $s=\frac{N}{p}$ and $t \geq 1$, then
$$
\bigg(\int_x^{\mathfrak{m}(\Omega)} \frac{\xi^{\frac{s-1}{s(p-1)}}}{h_{K, N}(H_{K, N}^{-1}(\xi))^{\frac{p}{p-1}}} \mathrm{d} \xi\bigg)^t \leq C_1^{-\frac{pt}{p-1}}
(\log v-\log x)^t
$$
and thus
\begin{align*}
\|u\|_{L^t(\Omega)}^t=\|u^{\sharp}\|_{L^t([0, v))}^t & \leq \|f\|_{L^s(\Omega)}^{\frac{t}{p-1}} \int_0^v\bigg(\int_x^{\mathfrak{m}(\Omega)} \frac{\xi^{\frac{s-1}{s(p-1)}}}{h_{K, N}(H_{K, N}^{-1}(\xi))^{\frac{p}{p-1}}} \mathrm{d} \xi\bigg)^t \mathrm{d}x \\
& \leq \frac{\|f\|_{L^s(\Omega)}^{\frac{t}{p-1}}}{C_1^{\frac{pt}{p-1}}} \int_0^1(-\log x)^t\mathrm{d} x<\infty,
\end{align*}
where we used $t\geq 1$.\\
\textbf{Case 3}: If $0< s<\frac{N}{p}$ and $t \geq 1$, with $\frac{t}{p-1}(\frac{1}{s}-\frac{p}{N})<1$, then
$$
\bigg(\int_x^{\mathfrak{m}(\Omega)} \frac{\xi^{\frac{s-1}{s(p-1)}}}{h_{K, N}(H_{K, N}^{-1}(\xi))^{\frac{p}{p-1}}} \mathrm{d} \xi\bigg)^t\leq \frac{1}{C_1^{\frac{pt}{p-1}}\frac{1}{(p-1)^t}(\frac{1}{s}-\frac{p}{N})^t}(x^{\frac{1}{p-1}(\frac{p}{N}-\frac{1}{s})}-v^{\frac{1}{p-1}(\frac{p}{N}-\frac{1}{s})})^t
$$
and thus
\begin{align*}
\|u\|_{L^t(\Omega)}^t=\|u^{\sharp}\|_{L^t([0, v))}^t & \leq \|f\|_{L^s(\Omega)}^{\frac{t}{p-1}} \int_0^v\bigg(\int_x^{\mathfrak{m}(\Omega)} \frac{\xi^{\frac{s-1}{s(p-1)}}}{h_{K, N}(H_{K, N}^{-1}(\xi))^{\frac{p}{p-1}}} \mathrm{d} \xi\bigg)^t\mathrm{d}x \\
& \leq \frac{\|f\|_{L^s(\Omega)}^{\frac{t}{p-1}}}{C_1^{\frac{pt}{p-1}}\frac{1}{(p-1)^t}(\frac{1}{s}-\frac{p}{N})^t} \int_0^v(x^{\frac{1}{p-1}(\frac{p}{N}-\frac{1}{s})}-v^{\frac{1}{p-1}(\frac{p}{N}-\frac{1}{s})})^t\mathrm{d} x<\infty.
\end{align*}
\end{proof}
\end{theorem}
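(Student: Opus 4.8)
The plan is to run everything off the pointwise bound for the decreasing rearrangement $u^{\sharp}$ that was already isolated inside the proof of Theorem~\ref{thm}, namely inequality~(\ref{9}):
\[
u^{\sharp}(x) \leq \int_x^{\mathfrak{m}(\Omega)}\frac{1}{\mathcal{I}_{K, N}(\xi)} \bigg(\frac{1}{\mathcal{I}_{K, N}(\xi)}\int_0^\xi f^{\sharp}(t)\,\mathrm{d}t\bigg)^{\frac{1}{p-1}} \mathrm{d} \xi ,\qquad x\in[0,\mathfrak{m}(\Omega)].
\]
The first step is to estimate the inner integral by H\"older's inequality, $\int_0^\xi f^{\sharp}\,\mathrm{d}t \le \|f^{\sharp}\|_{L^s([0,\mathfrak{m}(\Omega)])}\,\xi^{1-1/s}$, and then invoke the equimeasurability $\|f^{\sharp}\|_{L^s}=\|f\|_{L^s(\Omega)}$ from Proposition~\ref{3}(b); after raising to the power $\tfrac1{p-1}$ this puts a factor $\xi^{\frac{s-1}{s(p-1)}}$ in the numerator (with the convention $1/s=0$ for $s=\infty$) and leaves only $\mathcal{I}_{K,N}(\xi)^{-p/(p-1)}$ to be controlled.

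The second step is a lower bound on the isoperimetric profile near $0$. Since $\mathcal{I}_{K,N}=h_{K,N}\circ H_{K,N}^{-1}$ (Lemma~\ref{lem2.12}), and $v<1$ forces $H_{K,N}^{-1}([0,v])$ to lie in a fixed compact subinterval of $J_{K,N}$, Lemma~\ref{lem2.11} yields a constant $C_1=C_1(K,N,v)>0$ with $\mathcal{I}_{K,N}(\xi)\ge C_1\,\xi^{(N-1)/N}$ for all $\xi\in[0,\mathfrak{m}(\Omega)]$. Substituting this reduces the problem to deciding the finiteness of elementary one-variable integrals with power integrands (and a logarithm in the borderline case).

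The third step is the case split on $s$. If $N/p<s\le\infty$ the exponent of $\xi$ in $\int_0^{\mathfrak{m}(\Omega)}\xi^{\frac1{p-1}(\frac pN-\frac1s+1-p)}\,\mathrm{d}\xi$ exceeds $-1$, so the integral converges and, being independent of $x$, bounds $u^{\sharp}(x)\le c_1\|f\|_{L^s}^{1/(p-1)}$ for every $x$; together with the equimeasurability of $u$ and $u^{\sharp}$ this gives part~(i). If $0<s\le N/p$ I would instead raise the tail integral to the power $t$ and integrate in $x$ over $(0,v)$, using $\|u\|_{L^t(\Omega)}=\|u^{\sharp}\|_{L^t([0,v))}$: for $s=N/p$ the tail behaves like $(\log v-\log x)$ and $\int_0^1(-\log x)^t\,\mathrm{d}x<\infty$ for every $t\ge1$; for $s<N/p$ the tail is comparable to $x^{\frac1{p-1}(\frac pN-\frac1s)}-v^{\frac1{p-1}(\frac pN-\frac1s)}$, and the hypothesis $\frac t{p-1}(\frac1s-\frac pN)<1$ is precisely what makes $\int_0^v x^{\frac t{p-1}(\frac pN-\frac1s)}\,\mathrm{d}x$ converge, yielding part~(ii). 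I do not expect a genuine conceptual obstacle: the only delicate point is the bookkeeping of the three exponent regimes and verifying that the threshold $s=N/p$ really produces the logarithmic factor rather than a power.
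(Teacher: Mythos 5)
Your proposal is correct and mirrors the paper's proof essentially step for step: the same starting point (the pointwise bound on $u^{\sharp}$ from $(\ref{9})$), the same H\"older estimate on the inner integral combined with equimeasurability and Lemma~\ref{lem2.12}, the same lower bound $\mathcal{I}_{K,N}(\xi)\gtrsim \xi^{(N-1)/N}$ from Lemma~\ref{lem2.11}, and the same three-way case split with the identical exponent bookkeeping, including the logarithmic borderline at $s=N/p$. There is no gap and no material divergence in method.
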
 

\subsection{Rayleigh–Faber–Krahn–B\'{e}rard–Meyer comparison theorem}
Let $(\mathrm{X}, \mathrm{d}, \mathfrak{m})$ be a metric measure space. Let $\Omega \subset \mathrm{X}$ be an open set and $p\in (1,\infty)$.
\begin{definition}
For any non-zero function $w \in W^{1,p}(\Omega)$ we define the Rayleigh quotient to be
$$
\mathcal{R}_{\Omega}(w) = \frac{{\int_{\Omega}|\nabla w|^p \mathrm{d}\mathfrak{m}}}{{\int_{\Omega} |w|^p \mathrm{d}\mathfrak{m}}} .
$$
We say that:\\
$(i)$ $\lambda_{\mathrm{X}}^p(\Omega) = \inf \{\mathcal{R}_{\Omega}(w) \mid w \in W_0^{1,p}(\Omega), w \neq 0\}$ is the first eigenvalue of the $p$-Laplacian in $\Omega$ $($with Dirichlet homogeneous conditions$)$;\\
$(ii)$ $u \in W_0^{1,p}(\Omega)$ is a first eigenfunction of the $p$-Laplacian in $\Omega$ $($with Dirichlet homogeneous conditions$)$ if it minimizes $\mathcal{R}_{\Omega}$ among functions $w \in W_0^{1,p}(\Omega), w \neq 0$.
\end{definition}
When $(\mathrm{X}, \mathrm{d}, \mathfrak{m})=(J_{K, N}, \mathrm{d}_{eu}, \mathfrak{m}_{K, N})$ with $K>0$, $N \in(1, \infty)$, $v \in(0,1)$, and $\Omega=[0, H_{K, N}^{-1}(v))$, we will denote the first eigenvalue with $\lambda_{K, N, v}^p$.

The following result was proved by Mondino and Semola in \cite{18}. We give below an alternative proof based instead on Talenti's comparison theorem.
\begin{theorem}\label{thm4}
Let $(\mathrm{X}, \mathrm{d}, \mathfrak{m})$ be a $\operatorname{RCD}(K, N)$ space with $K>0$, $N \in(1, \infty)$, and let $\Omega \subset \mathrm{X}$ be an open set with measure $v = \mathfrak{m}(\Omega) \in(0,1)$. Then:\\
$(i)$ $\lambda_{\mathrm{X}}^p(\Omega) \geq \lambda_{K, N, v}^p$;\\
$(ii)$ There exists a first eigenfunction of the  $p$-Laplacian in $\Omega$; such an eigenfunction can be chosen to be strictly positive and continuous in $\Omega$;\\
$(iii)$ If $u$ is a positive first eigenfunction of the  $p$-Laplacian in $\Omega$, then $0<u^{\star} \leq w$ in $[0, r_v)$, where $r_v=H_{K, N}^{-1}(v)$ and $w$ is a solution to $-\Delta_{p,K, N} w= \lambda_{\mathrm{X}}^p(\Omega) (u^{\star})^{p-1}$ in $[0, r_v)$ with $w(r_v)=0$.
\begin{proof}
\textbf{Step 1}: A first eigenfunction exists. Let $\{u_n\}_n$ be a minimizing sequence for $\mathcal{R}_{\Omega}$ with $u_n \in W_0^{1,p}(\Omega)$, $\|u_n\|_{L^p}=1$ and $\int_{\Omega}|\nabla u_n|^p \mathrm{d}\mathfrak{m} \searrow \lambda_{\mathrm{X}}^p(\Omega)$. Without loss of generality, we can assume that $\sup_n \|u_n\|_{W^{1,p}}<\infty$ and thus, by Lemma $\ref{lem1}$(i), up to a subsequence, the sequence $u_n$ converges to a function $u \in W_0^{1,p}(\Omega)$ in $L^p(\Omega)$. Thus $\|u\|_{L^p}=1$; moreover, from the very definition of $\lambda_{\mathrm{X}}^p(\Omega)$ and by Lemma $\ref{lem1}$(ii), it holds that
$$
\lambda_{\mathrm{X}}^p(\Omega) \leq \int_{\Omega}|\nabla u|^p \mathrm{d}\mathfrak{m} \leq \liminf_{n \rightarrow \infty} \int_{\Omega}|\nabla u_n|^p \mathrm{d}\mathfrak{m}=\lambda_{\mathrm{X}}^p(\Omega).
$$
Thus $u$ is a first eigenfunction.\\
\textbf{Step 2}: Any first eigenfunction $u \in W_0^{1,p}(\Omega)$ is a weak solution to
\begin{equation}\label{eq3}
\left\{\begin{array}{rlrl}
-\mathscr{L}_pu & =\lambda_{\mathrm{X}}^p(\Omega) |u|^{p-2}u & & \text { in } \Omega \\
u & =0 & & \text { on } \partial \Omega.
\end{array}\right.
\end{equation}
This relies on a standard variational argument: for any $\psi\in W_0^{1,p}(\Omega)$, we can explicitly compute the derivative of $\epsilon \mapsto \mathcal{R}_{\Omega}(u+\epsilon \psi)$ at $\epsilon=0$:
$$
\frac{\mathrm{d}}{\mathrm{d} \epsilon} \mathcal{R}_{\Omega}(u+\epsilon \psi)\bigg|_{\epsilon=0}=p \frac{-\mathscr{L}_pu(\psi)-\lambda_{\mathrm{X}}^p(\Omega) \int_{\Omega} |u|^{p-2}u \psi \mathrm{d}\mathfrak{m}}{\|u\|_{L^p}^p} .
$$
Since this derivative must vanish, the identity $-\mathscr{L}_pu(\psi)=\lambda_{\mathrm{X}}^p(\Omega) \int_{\Omega} |u|^{p-2}u \psi \mathrm{d}\mathfrak{m}$ needs to hold for any $\psi \in W_0^{1,p}(\Omega)$, which proves that $u$ is a weak solution to $(\ref{eq3})$.\\
\textbf{Step 3}. Since $\operatorname{RCD}(K, N)$ spaces are locally doubling and support a weak local Poincar\'{e} inequality(see Proposition $\ref{prop5}$), the Harnack type results proved in \cite[Theorem 5.1]{32} hold in these spaces and hence any first eigenfunction is continuous, and is positive in $\Omega$ up to multiplying by a constant(Corollary 5.7 and Corollary 5.8 therein). From now on, we always assume that $u$ is positive.\\
\textbf{Step 4}. Let now $w$ be a solution to $-\Delta_{p,K, N} w= (\lambda_{\mathrm{X}}^p(\Omega) u^{p-1})^\star=\lambda_{\mathrm{X}}^p(\Omega) (u^{\star})^{p-1}$(by Proposition $\ref{3}$(d)) in $[0, r_v)$ with $w(r_v)=0$. By the definition of $w$ and by using $w$ itself as a test function, it holds that $\int_0^{r_v}|w^\prime|^{p} \mathrm{d}\mathfrak{m}_{K, N}=\lambda_{\mathrm{X}}^p(\Omega) \int_0^{r_v}  (u^{\star})^{p-1} w \mathrm{d}\mathfrak{m}_{K, N}$; by the Talenti-type theorem and the fact that $u$ is positive, it holds that $0<u^{\star} \leq w$ on $[0,r_v)$. Thus
$$
\lambda_{K, N, v}^p \leq \frac{\int_0^{r_v}|w^\prime|^p \mathrm{d}\mathfrak{m}_{K, N}}{\int_0^{r_v} |w|^p \mathrm{d}\mathfrak{m}_{K, N}}=\lambda_{\mathrm{X}}^p(\Omega) \frac{\int_0^{r_v} ( u^{\star})^{p-1}w \mathrm{d}\mathfrak{m}_{K, N}}{\int_0^{r_v} |w|^p \mathrm{d}\mathfrak{m}_{K, N}} \leq \lambda_{\mathrm{X}}^p(\Omega) \frac{\int_0^{r_v} |w|^p \mathrm{d}\mathfrak{m}_{K, N}}{\int_0^{r_v} |w|^p \mathrm{d}\mathfrak{m}_{K, N}}=\lambda_{\mathrm{X}}^p(\Omega).
$$
\end{proof}
\end{theorem}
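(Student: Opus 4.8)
The plan is to obtain (ii) by the direct method of the calculus of variations, to derive the Euler--Lagrange equation by a standard first-variation computation, to import the Harnack/De Giorgi--Nash--Moser theory for the $p$-Laplacian on PI spaces for continuity and positivity, and then to feed the positive eigenfunction into the Talenti-type comparison Theorem~\ref{thm} in order to get both the pointwise bound in (iii) and the Faber--Krahn inequality (i) through a Rayleigh-quotient estimate on the model space.

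\emph{Existence and Euler--Lagrange equation.} First I would take a minimizing sequence $u_n\in W_0^{1,p}(\Omega)$ with $\|u_n\|_{L^p}=1$ and $\int_\Omega|\nabla u_n|^p\,\mathrm{d}\mathfrak{m}\downarrow\lambda_{\mathrm X}^p(\Omega)$; this sequence is bounded in $W^{1,p}$, so by the compactness statement in Lemma~\ref{lem1}(i) (applied with all spaces equal to $\mathrm X$) a subsequence converges in $L^p(\Omega)$ to some $u\in W_0^{1,p}(\Omega)$ with $\|u\|_{L^p}=1$, and the lower semicontinuity of the $p$-Cheeger energy from Lemma~\ref{lem1}(ii) forces $\int_\Omega|\nabla u|^p\,\mathrm{d}\mathfrak{m}\le\lambda_{\mathrm X}^p(\Omega)$, hence $u$ is a minimizer. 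Differentiating $\epsilon\mapsto\mathcal R_\Omega(u+\epsilon\psi)$ at $\epsilon=0$ for arbitrary $\psi\in W_0^{1,p}(\Omega)$, using that $v\mapsto\mathscr L_p u(v)$ is a bounded linear functional, shows that every first eigenfunction solves $-\mathscr L_p u=\lambda_{\mathrm X}^p(\Omega)|u|^{p-2}u$ weakly. Since $|u|$ is again a minimizer one may take $u\ge0$, and since $\operatorname{RCD}(K,N)$ spaces are locally doubling and support a weak local Poincar\'e inequality (Proposition~\ref{prop5}), the Harnack-type results of \cite{32} apply to give that $u$ is continuous and, being a nonnegative nontrivial solution, strictly positive on all of $\Omega$.

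\emph{Talenti comparison and positivity of $u^\star$.} A positive first eigenfunction $u$ solves $-\mathscr L_p u=f$ with $f:=\lambda_{\mathrm X}^p(\Omega)\,u^{p-1}$, which lies in $L^q(\Omega)$ since $(p-1)q=p$ and $u\in L^p$. Theorem~\ref{thm}(i) then yields $u^\star\le w$ on $[0,r_v]$, where $w$ is the solution of $-\Delta_{p,K,N}w=f^\star$ with $w(r_v)=0$; moreover $f^\star=\lambda_{\mathrm X}^p(\Omega)(u^\star)^{p-1}$ because $t\mapsto\lambda_{\mathrm X}^p(\Omega)t^{p-1}$ is strictly increasing and continuous, so Proposition~\ref{3}(d) applies. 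The strict inequality $u^\star>0$ on $[0,r_v)$ follows from $\mathfrak m(\{u>0\})=\mathfrak m(\Omega)=v$ together with the definition of $u^\sharp$: one has $u^\sharp(s)>0$ for every $s<v$, and $u^\star(x)=u^\sharp(H_{K,N}(x))$ with $H_{K,N}(x)<v$ whenever $x<r_v$.

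\emph{Faber--Krahn inequality.} By the explicit formula of Proposition~\ref{prop1}, $w\ge0$, $w\in W_0^{1,p}([0,r_v),\mathfrak m_{K,N})$, and $w\not\equiv0$ because $f^\star\not\equiv0$; using $w$ itself as a test function in its own weak formulation gives $\int_0^{r_v}|w^{\prime}|^p\,\mathrm{d}\mathfrak m_{K,N}=\lambda_{\mathrm X}^p(\Omega)\int_0^{r_v}(u^\star)^{p-1}w\,\mathrm{d}\mathfrak m_{K,N}$. Since $0\le u^\star\le w$ forces $(u^\star)^{p-1}w\le w^p$, we conclude
\[
\lambda_{K,N,v}^p\ \le\ \mathcal R_{[0,r_v)}(w)\ =\ \lambda_{\mathrm X}^p(\Omega)\,\frac{\displaystyle\int_0^{r_v}(u^\star)^{p-1}w\,\mathrm{d}\mathfrak m_{K,N}}{\displaystyle\int_0^{r_v}w^p\,\mathrm{d}\mathfrak m_{K,N}}\ \le\ \lambda_{\mathrm X}^p(\Omega),
\]
which proves (i).

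I expect the main obstacle to be the regularity and positivity step. It is the only place where the argument leaves the machinery developed in this paper and appeals to external PDE theory (Harnack inequalities for $p$-energy quasiminimizers on PI spaces, as in \cite{32}), and one has to use the strong maximum principle form of that theory --- not merely continuity --- to conclude pointwise positivity of $u$ on \emph{all} of $\Omega$; this is precisely what guarantees $\mathfrak m(\{u>0\})=v$ and hence $u^\star>0$ throughout $[0,r_v)$, which in turn is needed for the strict conclusion in (iii).
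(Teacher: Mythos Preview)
Your proposal is correct and follows essentially the same approach as the paper: direct method for existence, first variation for the Euler--Lagrange equation, Harnack theory from \cite{32} for continuity and strict positivity, then Talenti's comparison (Theorem~\ref{thm}) combined with Proposition~\ref{3}(d) to obtain $0<u^\star\le w$, and finally the Rayleigh-quotient estimate on $w$ to deduce $\lambda_{K,N,v}^p\le\lambda_{\mathrm X}^p(\Omega)$. Your explicit justification of $u^\star>0$ on $[0,r_v)$ via $\mathfrak m(\{u>0\})=v$ is slightly more detailed than the paper's, but the argument is the same.
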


Finally, we recall that the following rigid result which will be used in the next subsection.
\begin{theorem}[Rigidity for the $p$-spectral gap]\label{thm4.5}\cite{18}
Let $(\mathrm{X}, \mathrm{d}, \mathfrak{m})$ be an $\operatorname{RCD}(N-1, N)$ space with $N>2$. Let $\Omega \subset \mathrm{X}$ be an open set such that $\mathfrak{m}(\Omega)=v \in(0,1)$ and suppose that  $\lambda_{\mathrm{X}}^p(\Omega) = \lambda_{N-1, N, v}^p$.

Then $(\mathrm{X},\mathrm{d},\mathfrak{m})$ is a spherical suspension, and the first eigenfunction of the $p$-Laplacian in $\Omega$ is unique up to scalar factor $($and it coincides with the radial one$)$.
\end{theorem}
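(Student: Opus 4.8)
The plan is to read off rigidity from the equality case of the Rayleigh--Faber--Krahn--B\'erard--Meyer comparison (Theorem \ref{thm4}) and feed it into the rigidity in the Talenti-type theorem (Theorem \ref{thm5.6}) and in the L\'evy--Gromov inequality. Set $\lambda:=\lambda_{\mathrm{X}}^p(\Omega)=\lambda_{N-1,N,v}^p$. By Theorem \ref{thm4}(ii)--(iii) there is a positive, continuous first eigenfunction $u\in W_0^{1,p}(\Omega)$, which by Step 2 of the proof of Theorem \ref{thm4} weakly solves $-\mathscr{L}_p u=\lambda u^{p-1}=:f$, with $f\in L^q(\Omega)$ and $f\not\equiv0$. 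Since $t\mapsto\lambda t^{p-1}$ is strictly increasing, continuous and vanishes at $0$, Proposition \ref{3}(d) gives $f^{\star}=\lambda(u^{\star})^{p-1}$; let $w\in W_0^{1,p}([0,r_v),\mathfrak{m}_{N-1,N})$ solve $-\Delta_{p,N-1,N}w=f^{\star}$ with $w(r_v)=0$, which is strictly positive on $[0,r_v)$ by the explicit formula \eqref{21}, and recall from Theorem \ref{thm4}(iii) that $0<u^{\star}\le w$ on $[0,r_v)$.

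The first step is to upgrade this to $u^{\star}=w$. Retracing the chain of inequalities in Step 4 of the proof of Theorem \ref{thm4},
\[
\lambda_{N-1,N,v}^p\ \le\ \frac{\int_0^{r_v}|w'|^p\,\mathrm{d}\mathfrak{m}_{N-1,N}}{\int_0^{r_v}|w|^p\,\mathrm{d}\mathfrak{m}_{N-1,N}}\ =\ \lambda\,\frac{\int_0^{r_v}(u^{\star})^{p-1}w\,\mathrm{d}\mathfrak{m}_{N-1,N}}{\int_0^{r_v}|w|^p\,\mathrm{d}\mathfrak{m}_{N-1,N}}\ \le\ \lambda\ =\ \lambda_{N-1,N,v}^p ,
\]
so all inequalities are equalities; in particular $\int_0^{r_v}(u^{\star})^{p-1}w\,\mathrm{d}\mathfrak{m}_{N-1,N}=\int_0^{r_v}w^p\,\mathrm{d}\mathfrak{m}_{N-1,N}$. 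Because $0<u^{\star}\le w$ with $w>0$ on $[0,r_v)$, the pointwise bound $(u^{\star})^{p-1}w\le w^p$ is saturated $\mathfrak{m}_{N-1,N}$-a.e., hence $u^{\star}=w$ a.e.; since $w$ is continuous and $u^{\star}$ is non-increasing, left-continuous and right-continuous at $0$, this forces $u^{\star}=w$ on all of $[0,r_v)$, in particular $u^{\star}(0)=w(0)$.

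Now $u$ is a weak solution of \eqref{eq} with $f\not\equiv0$, $w$ is a weak solution of \eqref{eq00} with datum $f^{\star}$, and $u^{\star}(\bar x)=w(\bar x)$ at $\bar x=0\in[0,r_v)$, so Theorem \ref{thm5.6} applies and gives that $(\mathrm{X},\mathrm{d},\mathfrak{m})$ is a spherical suspension, say $\mathrm{X}=[0,\pi]\times_{\sin}^{N-1}\mathrm{Y}$ with tip $x_0$. For uniqueness and radiality, note first that, $u^{\star}=w$ having datum $\lambda(u^{\star})^{p-1}=\lambda w^{p-1}$, the profile $u^{\star}$ is a first Dirichlet eigenfunction of the one-dimensional $p$-Laplacian on $[0,r_v)$, hence is determined up to a multiplicative constant by the elementary uniqueness theory in dimension one. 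Second, retracing \eqref{41}--\eqref{42} as in the proof of Theorem \ref{thm5.6}, equality additionally forces $\operatorname{Per}(\{u>t\})=\mathcal{I}_{N-1,N}(\mathfrak{m}(\{u>t\}))$ for $\mathscr{L}^1$-a.e.\ $t\in(0,\operatorname{ess\,sup} u)$, so each such superlevel set is an isoperimetric region of volume in $(0,1)$; by the (sharp form of the) rigidity in the L\'evy--Gromov inequality \cite{13}, which moreover pins such extremal sets down as metric balls centered at a pole, and since the $\{u>t\}$ are nested and shrink to a point as $t\uparrow\operatorname{ess\,sup} u$, all of them are balls $B_{\rho(t)}(x_0)$; hence $u=g(\mathrm{d}(\cdot,x_0))$ is radial. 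Finally, $u^{\star}=w>0$ on $[0,r_v)$ and equimeasurability (Proposition \ref{3}(a)) give $\mathfrak{m}(\{u>0\})=\mathfrak{m}_{N-1,N}([0,r_v))=v=\mathfrak{m}(\Omega)$, so $\{u>0\}=\Omega=B_{r_v}(x_0)$ up to $\mathfrak{m}$-null sets, whose center is determined since $r_v<\pi$; the radial profile $g$ then solves on $[0,r_v)$ the same eigenvalue problem with the same boundary data as $u^{\star}$, so it is determined up to a scalar. As every real first eigenfunction has a definite sign (Step 3 of the proof of Theorem \ref{thm4}), this proves uniqueness up to a scalar factor and coincidence with the radial eigenfunction.

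The main obstacle will be the implication "$\{u>t\}$ is an isoperimetric region $\Rightarrow$ it is a metric ball centered at a pole of the suspension": it rests on the fine structure of isoperimetric regions in spherical suspensions furnished by the rigidity part of the Cavalletti--Mondino L\'evy--Gromov theorem, the only substantial input beyond the results already developed here (notably Theorems \ref{thm4} and \ref{thm5.6}). An alternative bypassing this structure would be to prove that the first eigenfunction is locally Lipschitz with $|\nabla u|\neq0$ $\mathfrak{m}$-a.e.\ on $\operatorname{supp}(u)$ and then invoke Theorem \ref{thm5.6}(iii) directly; since such regularity for $p$-Laplace eigenfunctions on a general $\operatorname{RCD}(N-1,N)$ space is itself delicate, I would keep the isoperimetric route as primary.
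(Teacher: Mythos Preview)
The paper does not prove this theorem; it is quoted from \cite{18} (Mondino--Semola), where the rigidity is obtained via the rigidity case of the P\'olya--Szeg\H{o} inequality (also recalled here without proof). Your reconstruction instead routes through the Talenti rigidity (Theorem \ref{thm5.6}), which is a legitimate alternative for the spherical-suspension conclusion: tracing equality back through Step 4 of Theorem \ref{thm4} to get $u^{\star}=w$ and then invoking Theorem \ref{thm5.6}(ii) is clean and correct. Both routes ultimately reduce to forcing equality in L\'evy--Gromov on a nontrivial level set.

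For the uniqueness/radiality part your argument is more delicate than you indicate. The step you flag---``isoperimetric region $\Rightarrow$ ball centered at a pole''---is indeed the content of the Cavalletti--Mondino rigidity in \cite{13}, but that result only says each isoperimetric set is a ball centered at \emph{one of the two} tips; you need the nesting plus the fact that $\mu(t)\to 0$ as $t\uparrow\sup u$ (so that eventually the superlevel sets are too small to be centered at opposite tips) to pin down a single $x_0$. A second loose end: to prove uniqueness you must run the argument starting from an \emph{arbitrary} first eigenfunction, not just the one you fixed; your final sentence gestures at this via the sign property from Step 3 of Theorem \ref{thm4}, but the logical order would be cleaner stated upfront. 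Finally, ``elementary uniqueness theory in dimension one'' for the first $p$-eigenfunction on $[0,r_v)$ is true but not entirely for free---it needs either an ODE comparison argument or the observation that the explicit formula \eqref{21} makes the map $u^{\star}\mapsto w$ a strict contraction on rays. None of these are fatal, but they are exactly the places where \cite{18} does nontrivial work (via P\'olya--Szeg\H{o} rigidity for Lipschitz functions and regularity of eigenfunctions), so your alternative route does not quite make the argument more self-contained.
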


\subsection{A reverse H\"{o}lder inequality for the first Dirichlet eigenfunctions}
Let $(\mathrm{X}, \mathrm{d}, \mathfrak{m})$ be a $\operatorname{RCD}(N-1, N)$ space with $N>2$, and $\Omega \subset \mathrm{X}$ an open set such that $\mathfrak{m}(\Omega)=v \in(0,1)$. Let $p>1$. Theorem $\ref{thm4}$ says that:
$$
\lambda_{N-1, N, v}^p \leq \lambda_{\mathrm{X}}^p(\Omega).
$$
From the variational characterization of the first eigenvalue, the fact that $\lambda_{N-1, N, t}^p \rightarrow \infty$ as $t \rightarrow 0$, and that $t \mapsto \lambda_{N-1, N, t}^p$ is a continuous function, it follows that there exists $\alpha \in(0, v]$ such that $\lambda_{N-1, N, \alpha}^p=\lambda_{\mathrm{X}}^p(\Omega)$. Moreover, Theorem $\ref{thm4.5}$ says that the first eigenfunction of the $p$-Laplacian in $[0, r_\alpha)$, which we will denote by $[0,r_\alpha]\ni s\mapsto z(s)$, is unique up to scalar factor.
\begin{theorem}\label{thm3}
Let $p>1,r>0$ and let $(\mathrm{X}, \mathrm{d}, \mathfrak{m})$ be a $\operatorname{RCD}(N-1, N)$ space with $N>2$, $\Omega \subset \mathrm{X}$ an open set such that $\mathfrak{m}(\Omega)=v \in(0,1)$, and $u\in W_0^{1,p}(\Omega)$ a positive first eigenfunction of the $p$-Laplacian in $\Omega$. Suppose that $\alpha \leq v$ is such that $\lambda_{N-1, N, \alpha}^p=\lambda_{\mathrm{X}}^p(\Omega)$. Let $z:[0, r_\alpha] \rightarrow [0,\infty)$ be the first eigenfunction of the $p$-Laplacian in $[0, r_\alpha)$, scaled such that:
$$
\int_{\Omega} u^r \mathrm{d} \mathfrak{m}=\int_0^{r_\alpha} z^r \mathrm{d} \mathfrak{m}_{N-1, N} .
$$
Then, for all $t \geq r$, it holds that:
\begin{equation}\label{1.6}
\frac{(\int_{\Omega} u^t \mathrm{d} \mathfrak{m})^{\frac{1}{t}}}{(\int_{\Omega} u^r \mathrm{d} \mathfrak{m})^{\frac{1}{r}}} \leq \frac{(\int_0^{r_\alpha} z^t\mathrm{d} \mathfrak{m}_{N-1, N})^{\frac{1}{t}}}{(\int_0^{r_\alpha} z^r \mathrm{d} \mathfrak{m}_{N-1, N})^{\frac{1}{r}}}.
\end{equation}
Furthermore, if the equality holds for some $t>\max\{1,r\}$ then $(\mathrm{X}, \mathrm{d}, \mathfrak{m})$ is a spherical suspension.
\end{theorem}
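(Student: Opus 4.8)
The plan is to compare the decreasing rearrangement $\varphi:=u^{\sharp}$ of $u$ on $[0,v]$ with the (zero‑extended) rearrangement $\psi:=z^{\sharp}$ of $z$, to prove a Chiti‑type single‑crossing property for $\varphi$ and $\psi$, and then to deduce $(\ref{1.6})$ — together with its rigidity — from an elementary convexity estimate for once‑crossing functions with equal $L^{r}$‑norms.

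First I would record the integral relations satisfied by the two profiles. By Theorem \ref{thm4} the eigenfunction $u$ is a weak solution of $-\mathscr{L}_{p}u=\lambda u^{p-1}$ with $\lambda:=\lambda_{\mathrm{X}}^{p}(\Omega)=\lambda_{N-1,N,\alpha}^{p}$, and, since $u\in L^{p}(\Omega)$, the datum $\lambda u^{p-1}$ lies in $L^{q}(\Omega)$ with decreasing rearrangement $\lambda(u^{\sharp})^{p-1}$ (Proposition \ref{3}(d)). Writing $\Phi(s):=\int_{0}^{s}\varphi^{p-1}\,\mathrm{d}\sigma$ and $\mathcal{I}:=\mathcal{I}_{N-1,N}$, inequality $(\ref{9})$ from the proof of Theorem \ref{thm} gives
\begin{equation*}
\varphi(s)\le\lambda^{\frac{1}{p-1}}\int_{s}^{v}\frac{\Phi(\xi)^{\frac{1}{p-1}}}{\mathcal{I}(\xi)^{\frac{p}{p-1}}}\,\mathrm{d}\xi,\qquad s\in[0,v].
\end{equation*}
On the model interval, $z$ may be taken nonincreasing (its Schwarz symmetrization is again a minimizer by the P\'olya--Szeg\"o inequality \cite{18}, and the first eigenfunction in $[0,r_{\alpha})$ is unique up to a factor by Theorem \ref{thm4.5}), so $\lambda z^{p-1}$ coincides with its Schwarz symmetrization and the explicit solution $(\ref{22})$ of Proposition \ref{prop1} yields, with $\Psi(s):=\int_{0}^{s}\psi^{p-1}\,\mathrm{d}\sigma$, the \emph{identity}
\begin{equation*}
\psi(s)=\lambda^{\frac{1}{p-1}}\int_{s}^{\alpha}\frac{\Psi(\xi)^{\frac{1}{p-1}}}{\mathcal{I}(\xi)^{\frac{p}{p-1}}}\,\mathrm{d}\xi,\qquad s\in[0,\alpha],
\end{equation*}
while $\psi\equiv0$ on $(\alpha,v]$. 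Finally, by equimeasurability (Proposition \ref{3}(a)) the normalization hypothesis reads $\int_{0}^{v}\varphi^{r}\,\mathrm{d}s=\int_{0}^{v}\psi^{r}\,\mathrm{d}s$, and $\int_{\Omega}u^{t}\,\mathrm{d}\mathfrak{m}=\int_{0}^{v}\varphi^{t}\,\mathrm{d}s$, $\int_{0}^{r_{\alpha}}z^{t}\,\mathrm{d}\mathfrak{m}_{N-1,N}=\int_{0}^{v}\psi^{t}\,\mathrm{d}s$ for every $t>0$ (finite, since $u,z\in L^{\infty}$ by Theorem \ref{thm} and the measures are finite).

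The core step — and the one I expect to demand the most work — is the Chiti‑type comparison: with the above normalization there exists $s_{1}\in(0,\alpha]$ such that $\varphi\le\psi$ on $[0,s_{1}]$ and $\varphi\ge\psi$ on $[s_{1},v]$. I would prove it by contradiction, adapting \cite{Chi82}. If $\varphi\ge\psi$ (respectively $\varphi\le\psi$) on all of $[0,v]$, the equal‑$L^{r}$ constraint forces $\varphi=\psi$ a.e., which is the equality case discussed below; otherwise $\varphi$ and $\psi$ cross, and if the crossing were not of the single type above, then modifying $u$ along an intermediate band of its level sets — replacing the profile $\varphi$ there by the strictly smaller profile $\psi$ — produces, via the P\'olya--Szeg\"o inequality \cite{18} and the two displayed relations, an admissible function with Rayleigh quotient strictly below $\lambda_{\mathrm{X}}^{p}(\Omega)=\lambda_{N-1,N,\alpha}^{p}$, which is impossible. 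The delicate points will be the low regularity of the one‑dimensional rearrangements and carrying out the energy estimates inside a metric‑measure space, but the local doubling and Poincar\'e structure of $\mathrm{RCD}(N-1,N)$ spaces together with the machinery of Section 2 should suffice.

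Granting the Chiti comparison, fix $t\ge r$, set $\Theta(x):=x^{t/r}$ (convex and nondecreasing on $[0,\infty)$ since $t/r\ge1$), $c_{0}:=\varphi(s_{1})^{r}$ and $h:=\psi^{r}-\varphi^{r}$. As $\varphi$ is nonincreasing and left‑continuous, $\varphi^{r}\ge c_{0}$ where $h\ge0$ (on $[0,s_{1}]$) and $\varphi^{r}\le c_{0}$ where $h\le0$ (on $[s_{1},v]$), so the tangent‑line inequality for $\Theta$ and the monotonicity of $\Theta'$ give $\psi^{t}-\varphi^{t}=\Theta(\psi^{r})-\Theta(\varphi^{r})\ge\Theta'(\varphi^{r})h\ge\Theta'(c_{0})h$ pointwise on $[0,v]$; integrating and using $\int_{0}^{v}h\,\mathrm{d}s=0$ gives $\int_{\Omega}u^{t}\,\mathrm{d}\mathfrak{m}\le\int_{0}^{r_{\alpha}}z^{t}\,\mathrm{d}\mathfrak{m}_{N-1,N}$, which together with the equal $L^{r}$‑quantities is exactly $(\ref{1.6})$. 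If moreover equality holds in $(\ref{1.6})$ for some $t_{0}>\max\{1,r\}$, then all the preceding inequalities become equalities $\mathscr{L}^{1}$‑a.e.; since $t_{0}>r$ makes $\Theta$ strictly convex, this forces $\psi^{r}=\varphi^{r}$, i.e.\ $\varphi=\psi$ a.e.\ on $[0,v]$, hence $\varphi\equiv0$ on $(\alpha,v]$; as $u>0$ on $\Omega$ this gives $v=\mathfrak{m}(\Omega)\le\alpha$, so $\alpha=v$ and $\lambda_{\mathrm{X}}^{p}(\Omega)=\lambda_{N-1,N,\alpha}^{p}=\lambda_{N-1,N,v}^{p}$, whence $(\mathrm{X},\mathrm{d},\mathfrak{m})$ is a spherical suspension by the rigidity in the spectral‑gap comparison (Theorem \ref{thm4.5}).
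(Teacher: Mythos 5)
Your overall architecture — reduce to a Chiti‑type single‑crossing statement for $\varphi:=u^{\sharp}$ and $\psi:=z^{\sharp}$ (zero‑extended to $[0,v]$), then close with a convexity estimate and a strict‑convexity rigidity argument — is exactly the architecture the paper adopts: the key ingredients are the differential inequality $(\ref{3.3})$ for $u^{\sharp}$, the differential identity $(\ref{3.4})$ for $z^{\sharp}$, and the (unnumbered) Chiti's Comparison Theorem, after which the paper derives Theorem \ref{thm3} along the lines of \cite[Theorem 3.2]{42}. Your convexity step ($\Theta(x)=x^{t/r}$, the two‑sided tangent‑line estimate pivoted at $c_0=\varphi(s_1)^r$, then $\int h=0$) is a clean and correct way to pass from the single‑crossing property to $(\ref{1.6})$, and your rigidity chain ($t_0>r\Rightarrow\Theta$ strictly convex $\Rightarrow\varphi=\psi$ a.e.\ $\Rightarrow\alpha=v\Rightarrow$ spherical suspension by Theorem \ref{thm4.5}) is also correct; you also correctly observe that $u>0$ on $\Omega$ forces $\varphi>0$ on $[0,v)$, ruling out $\alpha<v$.

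The gap is in the Chiti step, which both you and the paper treat briefly. Your sketch — ``if the crossing is not single, replace $\varphi$ by the strictly smaller profile $\psi$ on an intermediate band of level sets and obtain an admissible profile with Rayleigh quotient strictly below $\lambda$'' — is not clearly justified. It is not evident that lowering the profile on a band lowers the Rayleigh quotient (both numerator and denominator decrease), and P\'olya--Szeg\"o by itself only compares $u$ with $u^{\star}$, not the spliced 1D profile with $z$. The paper's actual mechanism (in the Lemma following Theorem \ref{thm3}, which treats the $\sup$‑normalization $z(0)=u^{\star}(0)$) is subtler: one forms $a:=\max_{[0,\alpha]}z^{\sharp}/u^{\sharp}$, splices $a\,u^{\sharp}$ with $z^{\sharp}$ at the touching point $s_0$, and shows via $(\ref{3.3})$, $(\ref{3.4})$ and an integration by parts that the spliced profile has Rayleigh quotient $\le\lambda_{N-1,N,\alpha}^{p}$, hence \emph{is} a first eigenfunction; the uniqueness of the one‑dimensional first eigenfunction (Theorem \ref{thm4.5}) then forces $a=1$, not a strict‑inequality contradiction. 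Passing from that $\sup$‑normalized comparison to the $L^{r}$‑normalized single‑crossing still requires a short additional argument (following \cite[Theorem 1.2]{42}). So your high‑level plan is the right one and the downstream steps are sound, but as written the Chiti step would need to be replaced by (or expanded into) the dilate‑and‑splice, equality‑case argument rather than the strict Rayleigh‑quotient drop you propose.
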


\begin{lemma}
Let $p>1$ and let $(\mathrm{X}, \mathrm{d}, \mathfrak{m})$ be a $\operatorname{RCD}(N-1, N)$ space with $N>2$, $\Omega \subset \mathrm{X}$ an open set such that $\mathfrak{m}(\Omega)=v \in(0,1)$, and $u\in W_0^{1,p}(\Omega)$ a positive first eigenfunction of the $p$-Laplacian in $\Omega$. Suppose that $\alpha \leq v$ is such that $\lambda_{N-1, N, \alpha}^p=\lambda_{\mathrm{X}}^p(\Omega)$. Let $z:[0, r_\alpha] \rightarrow [0,\infty)$ be the first eigenfunction of the $p$-Laplacian in $[0, r_\alpha)$, scaled such that:
$$
z(0)=u^\star(0).
$$	
Then:\\
- either $\alpha=v$ and $z(s)=u^\star(s)$ for every $s \in[0, r_v]$;\\
- or $\alpha<v$ and $z(s) \leq u^\star(s)$ for every $s \in[0, r_\alpha]$.
\begin{proof}
If $\alpha=v$, we have $\lambda_{\mathrm{X}}^p(\Omega)=\lambda_{N-1, N, v}^p$. Theorem $\ref{thm4}$, $\ref{thm4.5}$ say that $\mathrm{X}$ is a spherical suspension and $z(s)=u^\star(s)$ for every $s \in[0, r_v]$.

Now we turn to the case $\alpha<v$. $(\ref{14})$ says that for any $0 \leq t^{\prime}<t < \sup_\Omega u$ it holds:
$$
t-t^{\prime} \leq \int_{\mu(t)}^{\mu(t^{\prime})} \frac{(\lambda_{N-1, N, \alpha}^p)^{\frac{1}{p-1}}}{\mathcal{I}_{N-1, N}(\xi)^{\frac{p}{p-1}}}\bigg(\int_0^{\xi} (u^{\sharp})^{p-1}(t) \mathrm{d} t\bigg)^{\frac{1}{p-1}} \mathrm{d} \xi .
$$
Hence, letting $0 \leq s<s^{\prime} \leq \mu(0)=v$, setting $t=u^{\sharp}(s)-\epsilon, t^{\prime}=u^{\sharp}(s^{\prime})$ for $\epsilon>0$ small enough and then taking $\epsilon \rightarrow 0$ (noting that by manipulating equi-measurability, $\mu(u^{\sharp}(s^{\prime})) \leq s^{\prime}$ and $\mu(u^{\sharp}(s)-\epsilon) \geq s)$:
$$
u^{\sharp}(s)-u^{\sharp}(s^{\prime}) \leq \int_s^{s^{\prime}}\frac{(\lambda_{N-1, N, \alpha}^p)^{\frac{1}{p-1}}}{\mathcal{I}_{N-1, N}(\xi)^{\frac{p}{p-1}}}\bigg(\int_0^{\xi} (u^{\sharp})^{p-1}(t) \mathrm{d} t\bigg)^{\frac{1}{p-1}} \mathrm{d} \xi.
$$
Combining this and the fact that $u$ is bounded, it follows that $u^{\sharp}$ is Lipschitz continuous on $[0, v]$ and thus $\mathscr{L}^1$-a.e. on $(0, v)$. Hence,  we have that the following estimate holds for $\mathscr{L}^1$-a.e. $s \in(0, v)$:
\begin{equation}\label{3.3}
-\frac{\mathrm{d}}{\mathrm{d}s} u^{\sharp}(s) \leq \frac{(\lambda_{N-1, N, \alpha}^p)^{\frac{1}{p-1}}}{\mathcal{I}_{N-1, N}(s)^{\frac{p}{p-1}}}\bigg(\int_0^{s} (u^{\sharp})^{p-1}(t) \mathrm{d} t\bigg)^{\frac{1}{p-1}}.
\end{equation}
Furthermore, by Proposition $\ref{prop1}$, we have the following equality,
\begin{equation}\label{3.4}
-\frac{\mathrm{d}}{\mathrm{d} s} z^{\sharp}(s)=\frac{(\lambda_{N-1, N, \alpha}^p)^{\frac{1}{p-1}}}{\mathcal{I}_{N-1, N}(s)^{\frac{p}{p-1}}}\bigg(\int_0^{s} (z^{\sharp})^{p-1}(t) \mathrm{d} t\bigg)^{\frac{1}{p-1}} \quad \text { for every } s \in(0, \alpha) .
\end{equation}
Now, from the above estimates we know that $u^{\sharp}$ and $z^{\sharp}$ are continuous functions on $[0, \alpha]$, with $u^{\sharp}>0$ on $[0, \alpha]$. Thus, $\frac{z^{\sharp}(s)}{u^{\sharp}(s)}$ is also a continuous function on $[0, \alpha]$ and we can set:
$$
a:=\max _{s \in[0, \alpha]} \frac{z^{\sharp}(s)}{u^{\sharp}(s)} .
$$
We claim that $a\leq 1$. Assume by contradiction that $a>1$, and set:
$$
s_0:=\inf \{s \in[0, \alpha]: a u^{\sharp}(s)=z^{\sharp}(s)\} .
$$
Clearly it holds that $s_0>0$, since $u^{\sharp}(0)=z^{\sharp}(0)$ and both functions are right-continuous at $0$. Next, define:
$$
w^{\sharp}(s):= \begin{cases}a u^{\sharp}(s), & \text { for } 0 \leq s \leq s_0 \\ z^{\sharp}(s), & \text { for } s_0 \leq s \leq \alpha .\end{cases}
$$
Combining this definition and  $(\ref{3.3})$, $(\ref{3.4})$, we have:
\begin{equation}\label{3.7}
-\frac{\mathrm{d}}{\mathrm{d} s} w^{\sharp}(s) \leq \frac{(\lambda_{N-1, N, \alpha}^p)^{\frac{1}{p-1}}}{\mathcal{I}_{N-1, N}(s)^{\frac{p}{p-1}}}\bigg(\int_0^{s} (w^{\sharp})^{p-1}(t) \mathrm{d} t\bigg)^{\frac{1}{p-1}}, \quad \text { for $\mathscr{L}^1$-a.e. } s \in(0, s_0) \cup(s_0, \alpha).
\end{equation}
Letting $w:=w^{\sharp}\circ H_{N-1,N}$, we can see that:
\begin{align*}
\int_0^{r_\alpha}|w^{\prime}|^p \mathrm{d} \mathfrak{m}_{N-1, N}&=\int_0^{r_\alpha}(-w^\prime)(x)|w^{\sharp\prime}(H_{N-1,N}(x))|^{p-1} h_{N-1, N}(x)^p \mathrm{d} x\\
&\leq \lambda_{N-1, N, \alpha}^p\int_0^{r_\alpha}(-w^\prime)(x)\int_0^{H_{N-1,N}(x)} (w^{\sharp})^{p-1}(t) \mathrm{d} t \mathrm{d} x\\
&=\lambda_{N-1, N, \alpha}^p\int_0^{r_\alpha}|w|^p\mathfrak{m}_{N-1, N},
\end{align*}
where we have used the estimate $(\ref{3.7})$ and the fact $w^{\sharp{\prime}} \leq 0$ for the inequality, and integrated by parts for the first equality noting that we have $w^{\sharp}(\alpha)=0$. Thus, $w$ minimizes the Rayleigh quotient and therefore it is a first eigenfunction of the $p$-Laplacian in $[0, r_\alpha)$. Henceforth, $w$ is a multiple of $z$ and thus $a u^{\sharp}(s)=z^{\sharp}(s)$ on $s \in[0, s_0]$. Since $u^{\sharp}(0)=u^\star(0)=z(0)=z^{\sharp}(0)$, we conclude that $a=1$, contradicting that $a>1$. Hence, we prove the claim, in other words, $z(s) \leq u^\star(s)$ for every $s \in[0, r_\alpha]$. In particular, we complete the proof of Lemma.
\end{proof}
\end{lemma}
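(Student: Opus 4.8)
The plan is to deduce $(\ref{1.6})$ from the Chiti-type comparison contained in the preceding lemma by the classical Chiti rearrangement scheme, and then to read off the rigidity from the equality case. Since both sides of $(\ref{1.6})$ are invariant under rescaling $z\mapsto cz$, and since by equimeasurability (Proposition $\ref{3}$(b)) one has $\int_{\Omega}u^{s}\,\mathrm{d}\mathfrak{m}=\int_{0}^{v}(u^{\sharp})^{s}\,\mathrm{d}s$ and $\int_{0}^{r_{\alpha}}z^{s}\,\mathrm{d}\mathfrak{m}_{N-1,N}=\int_{0}^{\alpha}(z^{\sharp})^{s}\,\mathrm{d}s$ for every $s>0$, it suffices to compare the one-dimensional profiles $u^{\sharp}$ on $[0,v]$ and $z^{\sharp}$ on $[0,\alpha]$ (extended by $0$ on $(\alpha,v]$), with $z$ normalized as in the statement so that $\int_{0}^{v}(u^{\sharp})^{r}\,\mathrm{d}s=\int_{0}^{\alpha}(z^{\sharp})^{r}\,\mathrm{d}s$. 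If $\alpha=v$, then $\lambda_{\mathrm{X}}^{p}(\Omega)=\lambda_{N-1,N,v}^{p}$, so $(\mathrm{X},\mathrm{d},\mathfrak{m})$ is a spherical suspension by Theorem $\ref{thm4.5}$ and the lemma gives $z^{\sharp}=u^{\sharp}$, so $(\ref{1.6})$ holds with equality; hence assume $\alpha<v$ from now on. Let $\tilde z$ be the first eigenfunction on $[0,r_{\alpha})$ normalized by $\tilde z(0)=u^{\star}(0)$, so $z=c\tilde z$ for some $c>0$; since the lemma gives $\tilde z^{\sharp}\le u^{\sharp}$ on $[0,\alpha]$ with $\tilde z^{\sharp}(0)=u^{\sharp}(0)$, and since $u^{\sharp}>0$ on $[0,v)$ (because $\mu_{u}(0)=v$), comparing $L^{r}$-norms forces $c>1$; in particular $z^{\sharp}(0)=c\,u^{\sharp}(0)>u^{\sharp}(0)$, whereas $z^{\sharp}(\alpha)=0<u^{\sharp}(\alpha)$.

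The first and central step is a single-crossing property: there exists $\tau\in(0,\alpha)$ with $z^{\sharp}\ge u^{\sharp}$ on $[0,\tau]$ and $z^{\sharp}\le u^{\sharp}$ on $[\tau,v]$. Set $\tau:=\max\{s\in[0,\alpha]:z^{\sharp}(s)\ge u^{\sharp}(s)\}$; by continuity and the endpoint values above, $\tau\in(0,\alpha)$. Define $w^{\sharp}:=\max(z^{\sharp},u^{\sharp})$ on $[0,\tau]$, $w^{\sharp}:=z^{\sharp}$ on $[\tau,\alpha]$, and $w:=w^{\sharp}\circ H_{N-1,N}$ on $[0,r_{\alpha}]$. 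Then $w^{\sharp}$ is continuous, non-increasing, $w^{\sharp}(\alpha)=0$, and, combining the differential inequality $(\ref{3.3})$ for $u^{\sharp}$ with the differential equality $(\ref{3.4})$ (which is scale-invariant, hence valid for $z^{\sharp}=c\tilde z^{\sharp}$) and using that $w^{\sharp}$ dominates the relevant profile pointwise, one checks that for $\mathscr{L}^{1}$-a.e. $s\in(0,\alpha)$ one has $-\frac{\mathrm{d}}{\mathrm{d}s}w^{\sharp}(s)\le\frac{(\lambda_{N-1,N,\alpha}^{p})^{1/(p-1)}}{\mathcal{I}_{N-1,N}(s)^{p/(p-1)}}\big(\int_{0}^{s}(w^{\sharp})^{p-1}\big)^{1/(p-1)}$. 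Running the computation at the end of the lemma's proof verbatim (raise to the power $p-1$, multiply by $h_{N-1,N}^{p}$, use $\mathcal{I}_{N-1,N}=h_{N-1,N}\circ H_{N-1,N}^{-1}$, integrate, and integrate by parts using $w^{\sharp}(\alpha)=0$) gives $\int_{0}^{r_{\alpha}}|w'|^{p}\,\mathrm{d}\mathfrak{m}_{N-1,N}\le\lambda_{N-1,N,\alpha}^{p}\int_{0}^{r_{\alpha}}|w|^{p}\,\mathrm{d}\mathfrak{m}_{N-1,N}$, so $w$ minimizes the Rayleigh quotient on $[0,r_{\alpha})$ and is therefore a scalar multiple of $z$ (uniqueness up to scalars on the model interval, as already used in the lemma's proof). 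Comparing $w^{\sharp}$ with $z^{\sharp}$ on $[\tau,\alpha]$ forces that scalar to be $1$, hence $w^{\sharp}=z^{\sharp}$ on all of $[0,\alpha]$, i.e. $\max(z^{\sharp},u^{\sharp})=z^{\sharp}$ on $[0,\tau]$, which is the crossing claim. I expect this to be the main obstacle: it is the only part requiring work beyond the lemma, and it requires care to verify that $w^{\sharp}$ inherits the differential inequality across $s=\tau$ and that $w\in W_{0}^{1,p}([0,r_{\alpha}),\mathfrak{m}_{N-1,N})$ despite the possibly non-Lipschitz behaviour of $z^{\sharp}$ near $0$.

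The last step is the standard Chiti convexity argument. Put $\delta(s):=(z^{\sharp}(s))^{r}-(u^{\sharp}(s))^{r}$ on $[0,v]$, so $\int_{0}^{v}\delta=0$, $\delta\ge0$ on $[0,\tau]$ and $\delta\le0$ on $[\tau,v]$. For $t\ge r$, convexity of $x\mapsto x^{t/r}$ gives $(u^{\sharp})^{t}-(z^{\sharp})^{t}\le\frac{t}{r}(u^{\sharp})^{t-r}\big((u^{\sharp})^{r}-(z^{\sharp})^{r}\big)=-\frac{t}{r}(u^{\sharp})^{t-r}\delta$; since $(u^{\sharp})^{t-r}$ is non-increasing, splitting at $\tau$ and using $\int_{0}^{v}\delta=0$ yields $\int_{0}^{v}(u^{\sharp})^{t-r}\delta\ge(u^{\sharp}(\tau))^{t-r}\int_{0}^{v}\delta=0$, whence $\int_{0}^{v}(u^{\sharp})^{t}\le\int_{0}^{v}(z^{\sharp})^{t}$. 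Together with $\int_{0}^{v}(u^{\sharp})^{r}=\int_{0}^{v}(z^{\sharp})^{r}$ and equimeasurability this is $(\ref{1.6})$. Finally, if equality holds in $(\ref{1.6})$ for some $t^{*}>\max\{1,r\}$, then all the above inequalities are equalities; the strict convexity of $x\mapsto x^{t^{*}/r}$ (valid since $t^{*}/r>1$) forces $u^{\sharp}=z^{\sharp}$ $\mathscr{L}^{1}$-a.e. on $[0,v]$, which is impossible on $(\alpha,v)$ since there $z^{\sharp}\equiv0<u^{\sharp}$ — unless $\alpha=v$. Hence $\alpha=v$, so $\lambda_{\mathrm{X}}^{p}(\Omega)=\lambda_{N-1,N,v}^{p}$, and Theorem $\ref{thm4.5}$ gives that $(\mathrm{X},\mathrm{d},\mathfrak{m})$ is a spherical suspension.
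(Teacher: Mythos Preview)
Your proposal proves the wrong statement. The statement in question is the \emph{lemma}: under the normalization $z(0)=u^{\star}(0)$, either $\alpha=v$ and $z\equiv u^{\star}$ on $[0,r_v]$, or $\alpha<v$ and $z\le u^{\star}$ on $[0,r_\alpha]$. Your write-up instead establishes Theorem~\ref{thm3}, the reverse H\"older inequality $(\ref{1.6})$, and in doing so you explicitly \emph{invoke} the lemma as an input in the first paragraph (``the lemma gives $\tilde z^{\sharp}\le u^{\sharp}$ on $[0,\alpha]$''). Since the lemma is a prerequisite for $(\ref{1.6})$ in the paper's architecture --- it feeds into Chiti's comparison theorem, which in turn yields $(\ref{1.6})$ --- you have not supplied an argument for the lemma at all; you have assumed it.

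If your intent was to prove Theorem~\ref{thm3}, then your outline is the standard Chiti scheme and matches what the paper does (the paper defers the details to \cite{42}). Your single-crossing step is precisely the Chiti comparison theorem; the splicing construction $w^{\sharp}=\max(z^{\sharp},u^{\sharp})$ on $[0,\tau]$, $w^{\sharp}=z^{\sharp}$ on $[\tau,\alpha]$ does inherit the differential inequality a.e.\ (since on each piece $w^{\sharp}$ dominates the relevant profile and both $(\ref{3.3})$ and $(\ref{3.4})$ are monotone in the cumulative integral), and the Rayleigh-quotient computation then forces $w^{\sharp}=z^{\sharp}$ as you say. The convexity argument in your final paragraph is correct. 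But none of this is a proof of the lemma itself; to prove the lemma you would need to carry out the contradiction argument with $a=\max_{[0,\alpha]}z^{\sharp}/u^{\sharp}$ that the paper gives, or something equivalent, \emph{without} presupposing the conclusion.
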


\begin{theorem}[Chiti's Comparison Theorem]
With the same assumptions as in Theorem $\ref{thm3}$. Then there exists $r_1 \in(0, r_\alpha)$ such that
$$
\begin{cases}u^\star(s) \leq z(s) & \text { for } 0 \leq s \leq r_1 \\ z(s) \leq u^\star(s) & \text { for } r_1 \leq s \leq r_\alpha.\end{cases}
$$
\end{theorem}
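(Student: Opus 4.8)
The plan is to follow the classical Chiti argument, bootstrapping on the preceding Lemma.

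\textbf{Reduction.} First I would dispose of the case $\alpha=v$: then $\lambda^{p}_{N-1,N,v}=\lambda^{p}_{\mathrm{X}}(\Omega)$, so by Theorem $\ref{thm4.5}$ (equivalently, the first alternative of the preceding Lemma) the space is a spherical suspension and $z=u^{\star}$ on $[0,r_{v}]$, and any $r_{1}\in(0,r_{v})$ works. So assume $\alpha<v$. Let $z_{0}$ be the first eigenfunction on $[0,r_{\alpha})$ normalised by $z_{0}(0)=u^{\star}(0)$; the preceding Lemma gives $z_{0}\le u^{\star}$ on $[0,r_{\alpha}]$. Since the first eigenfunction on the model interval is unique up to a positive scalar (as already used in the proof of the preceding Lemma), $z=c\,z_{0}$ for some $c>0$, and I would check that the normalisation $\int_{\Omega}u^{r}\,\mathrm{d}\mathfrak{m}=\int_{0}^{r_{\alpha}}z^{r}\,\mathrm{d}\mathfrak{m}_{N-1,N}$ together with $z_{0}\le u^{\star}$ and $\int_{r_{\alpha}}^{r_{v}}(u^{\star})^{r}\,\mathrm{d}\mathfrak{m}_{N-1,N}>0$ (which holds because $u>0$ in $\Omega$, so $u^{\star}>0$ on $[0,r_{v})$) forces $c>1$. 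Consequently $z(0)=c\,u^{\star}(0)>u^{\star}(0)$, whereas $z(r_{\alpha})=0<u^{\star}(r_{\alpha})=u^{\sharp}(\alpha)$; recalling from the proof of the preceding Lemma that $u^{\sharp}$, hence $u^{\star}$, is continuous, the closed set $E:=\{s\in[0,r_{\alpha}]:z(s)\ge u^{\star}(s)\}$ contains a neighbourhood of $0$ but not one of $r_{\alpha}$.

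\textbf{Single crossing --- the main step.} I claim $E=[0,r_{1}]$ with $r_{1}:=\sup\{s:z\ge u^{\star}\text{ on }[0,s]\}\in(0,r_{\alpha})$, which is exactly the assertion. Assuming not, set $r_{2}:=\inf\{s>r_{1}:z(s)\ge u^{\star}(s)\}\le r_{\alpha}$, so that $z=u^{\star}$ at $r_{1}$ and $r_{2}$, $z<u^{\star}$ on $(r_{1},r_{2})$, and $z\ge u^{\star}$ on $[0,r_{1}]$. I would then introduce the competitor $\tilde{z}\in W^{1,p}_{0}([0,r_{\alpha}),\mathfrak{m}_{N-1,N})$ given by $\tilde{z}:=u^{\star}$ on $[r_{1},r_{2}]$ and $\tilde{z}:=z$ on $[0,r_{1}]\cup[r_{2},r_{\alpha}]$: it is non-increasing, equals $z$ near $r_{\alpha}$ (so $\tilde{z}(r_{\alpha})=0$), and $\tilde{z}\ge z$ with strict inequality on $(r_{1},r_{2})$. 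Passing to decreasing rearrangements, $\tilde{z}^{\sharp}$ equals $z^{\sharp}$ outside an interval $[\bar{r}_{1},\bar{r}_{2}]$ and $u^{\sharp}$ on it, with $z^{\sharp}\ge u^{\sharp}$ on $[0,\bar{r}_{1}]$ and $u^{\sharp}\ge z^{\sharp}$ on $[\bar{r}_{1},\bar{r}_{2}]$; combining the differential inequality $(\ref{3.3})$ for $u^{\sharp}$, the identity $(\ref{3.4})$ for $z^{\sharp}$ (valid for $z$ by $(p-1)$-homogeneity), and the resulting comparisons $\int_{0}^{s}(\tilde{z}^{\sharp})^{p-1}\ge\int_{0}^{s}(z^{\sharp})^{p-1}$ for $s\ge\bar{r}_{2}$ and $\int_{0}^{s}(\tilde{z}^{\sharp})^{p-1}\ge\int_{0}^{s}(u^{\sharp})^{p-1}$ for $s\in[\bar{r}_{1},\bar{r}_{2}]$, I would verify that $\tilde{z}^{\sharp}$ satisfies, for $\mathscr{L}^{1}$-a.e. $s\in(0,\alpha)$,
$$
-\frac{\mathrm{d}}{\mathrm{d}s}\tilde{z}^{\sharp}(s)\le\frac{(\lambda^{p}_{N-1,N,\alpha})^{\frac{1}{p-1}}}{\mathcal{I}_{N-1,N}(s)^{\frac{p}{p-1}}}\Big(\int_{0}^{s}(\tilde{z}^{\sharp})^{p-1}(t)\,\mathrm{d}t\Big)^{\frac{1}{p-1}}.
$$
The integration-by-parts computation from the proof of the preceding Lemma then gives $\int_{0}^{r_{\alpha}}|\tilde{z}'|^{p}\,\mathrm{d}\mathfrak{m}_{N-1,N}\le\lambda^{p}_{N-1,N,\alpha}\int_{0}^{r_{\alpha}}|\tilde{z}|^{p}\,\mathrm{d}\mathfrak{m}_{N-1,N}$, i.e. $\mathcal{R}_{[0,r_{\alpha})}(\tilde{z})\le\lambda^{p}_{N-1,N,\alpha}$; since the reverse inequality holds by the variational characterisation of $\lambda^{p}_{N-1,N,\alpha}$, $\tilde{z}$ is a first eigenfunction on $[0,r_{\alpha})$, hence a scalar multiple of $z$, hence (as $\tilde{z}=z$ on $[0,r_{1}]$ with $r_{1}>0$) $\tilde{z}\equiv z$, forcing $u^{\star}=z$ on $[r_{1},r_{2}]$ --- contradicting $z<u^{\star}$ on $(r_{1},r_{2})$.

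\textbf{Conclusion.} Thus $E=[0,r_{1}]$, giving $u^{\star}\le z$ on $[0,r_{1}]$ and $z\le u^{\star}$ on $[r_{1},r_{\alpha}]$. The delicate point --- and the one I would expect to take the most care --- is the verification that $\tilde{z}^{\sharp}$ inherits the differential inequality across the two splice points: it is precisely because the modification $[r_{1},r_{2}]$ sits inside a region where $z\ge u^{\star}$ (so that the accumulated integrals $\int_{0}^{s}(z^{\sharp})^{p-1}$ never overtake $\int_{0}^{s}(\tilde{z}^{\sharp})^{p-1}$) that the inequality propagates on $[\bar{r}_{2},\alpha]$; one must also confirm $\tilde{z}$ is admissible (in $W^{1,p}_{0}$, non-increasing, vanishing at $r_{\alpha}$) so that the Lemma's integration by parts applies unchanged.
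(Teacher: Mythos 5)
Your argument is correct and is exactly the classical Chiti splice-and-compare proof that the paper alludes to when it cites Gunes--Mondino \cite[Theorems 1.2, 3.2]{42} and states that the proof is a ``slight modification'' via $(\ref{3.3})$ and $(\ref{3.4})$. The delicate verification you flag --- that $\tilde z^{\sharp}$ inherits the differential inequality on $[\bar r_1,\bar r_2]$ and on $[\bar r_2,\alpha]$ via the sign pattern of $z^{\sharp}-u^{\sharp}$ --- indeed goes through, and the resulting Rayleigh-quotient bound together with uniqueness of the first eigenfunction mirrors precisely the mechanism used in the proof of the preceding Lemma, so your reconstruction matches the intended route.
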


The proof of the Chiti's Comparison Theorem and Theorem $\ref{thm3}$ are slight modifizations of \cite[Theorem 1.2, 3.2]{42} by using $(\ref{3.3})$ and $(\ref{3.4})$ and we omit them. Next, we prove an almost stable version of Theorem $\ref{thm3}$.
\begin{theorem}[almost stability in the reverse H\"{o}lder inequality]\label{thm8}
Let $p>1$, $N \in(2, \infty)$, $v \in(0,1)$ and $\lambda>0$ be given. Then there exists $C=C(N, v, p,\lambda)>0$ and $\tilde{\delta}=\tilde{\delta}(N, v, p,\lambda)>0$ such that the following holds.

Let $(\mathrm{X}, \mathrm{d}, \mathfrak{m})$ be a $\operatorname{RCD}(N-1, N)$ space. Let $\Omega \subset \mathrm{X}$ be an open set with $\mathfrak{m}(\Omega)=v \in(0,1)$ and $\lambda_{\mathrm{X}}^p(\Omega)=\lambda$. Let $u$ and $z$ be as in the assumptions of Theorem $\ref{thm3}$, scaled such that:
\begin{equation}\label{1.75}
\int_{\Omega} u^{p-1} \mathrm{d} \mathfrak{m}=\int_0^{r_\alpha} z^{p-1} \mathrm{d}\mathfrak{m}_{N-1, N}.
\end{equation}
If there exist $\delta \in(0, \tilde{\delta})$ and an unbounded subset $Q \subset(p-1, \infty)$ such that
\begin{equation}\label{1.7}
\begin{cases}
\|z\|_{L^t([0, r_\alpha), \mathfrak{m}_{N-1, N})}^{p-1}-\|u\|_{L^t(\Omega, \mathfrak{m})}^{p-1}<\delta, \quad \text { for all } t \in Q,& \text{ if } p\geq 2,\\
(\|z\|_{L^t([0, r_\alpha), \mathfrak{m}_{N-1, N})}-\|u\|_{L^t(\Omega, \mathfrak{m})})^{p-1}<\delta, \quad \text { for all } t \in Q,& \text{ if } p\in (1,2),\\
\end{cases}
\end{equation}
then
$$
(\pi-\operatorname{diam}(\mathrm{X}))^N \leq C \delta^{1/p}.
$$
\end{theorem}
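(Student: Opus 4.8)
The plan is to convert the hypothesis into a uniform closeness, on a fixed interval bounded away from the two endpoints, between the decreasing rearrangement $u^{\sharp}$ of $u$ and the rearrangement $\tilde z^{\sharp}$ of the model eigenfunction, to transport this closeness through the first-order relation $(\ref{3.3})$ to the isoperimetric profile of $\mathrm{X}$, and to conclude with the sharp isoperimetric inequality under a diameter upper bound. After the normalisation $\|u\|_{L^{p-1}(\Omega)}=1$ (so that also $\int_0^{r_\alpha}z^{p-1}\,\mathrm{d}\mathfrak{m}_{N-1,N}=1$ by $(\ref{1.75})$), Chiti's comparison theorem supplies $r_1\in(0,r_\alpha)$ with $u^{\star}\le z$ on $[0,r_1]$, $z\le u^{\star}$ on $[r_1,r_\alpha]$, and $z\equiv 0\le u^{\star}$ on $[r_\alpha,r_v]$, while the Lemma preceding Chiti shows that the model eigenfunction $\tilde z$ on $[0,r_\alpha)$ renormalised by $\tilde z(0)=u^{\star}(0)$ satisfies $\tilde z\le u^{\star}$ on $[0,r_\alpha]$, with $z=c\,\tilde z$ and $c=c(\delta)\ge1$ fixed by $c^{p-1}\int_0^{r_\alpha}\tilde z^{p-1}\,\mathrm{d}\mathfrak{m}_{N-1,N}=1$.

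Since $Q$ is unbounded and $\|z\|_{L^t([0,r_\alpha),\mathfrak{m}_{N-1,N})}\to z(0)$, $\|u\|_{L^t(\Omega,\mathfrak{m})}=\|u^{\star}\|_{L^t([0,r_v),\mathfrak{m}_{N-1,N})}\to u^{\star}(0)$ as $t\to\infty$, passing to the limit in $(\ref{1.7})$ gives $z(0)^{p-1}-u^{\star}(0)^{p-1}\le\delta$ if $p\ge2$ and $(z(0)-u^{\star}(0))^{p-1}\le\delta$ if $p\in(1,2)$. Writing $z(0)=c\,u^{\star}(0)$, using $u^{\star}(0)\ge v^{-1/(p-1)}$ (from $\|u\|_{L^{p-1}(\Omega)}=1$ and $\mathfrak{m}(\Omega)=v$), and noting that $1-\int_0^{r_\alpha}\tilde z^{p-1}\,\mathrm{d}\mathfrak{m}_{N-1,N}$ equals $\int_0^{r_\alpha}\big((u^{\star})^{p-1}-\tilde z^{p-1}\big)\mathrm{d}\mathfrak{m}_{N-1,N}+\int_{r_\alpha}^{r_v}(u^{\star})^{p-1}\mathrm{d}\mathfrak{m}_{N-1,N}$ with both summands nonnegative, one obtains
$$\int_0^{r_\alpha}\!\big((u^{\star})^{p-1}-\tilde z^{p-1}\big)\,\mathrm{d}\mathfrak{m}_{N-1,N}\ \le\ \delta\quad(p\ge2),\qquad \le\ \delta^{1/(p-1)}\quad(p\in(1,2)).$$
Now fix $I_\alpha:=[\alpha/4,\alpha/2]$; since $\alpha$ is the fixed number with $\lambda_{N-1,N,\alpha}^p=\lambda$, it satisfies $\alpha_0(N,p,\lambda)\le\alpha\le v<1$, so after the substitution $s=H_{N-1,N}(\rho)$ the quantities $\mathcal{I}_{N-1,N}(s)$ and $\int_0^{s}(\tilde z^{\sharp})^{p-1}$ are pinched between positive constants depending only on $N,v,p,\lambda$ on $I_\alpha$, and $(\ref{3.3})$ together with the model ODE $(\ref{3.4})$ makes $u^{\sharp}$ and $\tilde z^{\sharp}$ Lipschitz on $I_\alpha$ with a controlled constant. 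Combining the convexity bounds $a^{p-1}-b^{p-1}\ge(a-b)^{p-1}$ ($p\ge2$) resp.\ $a^{p-1}-b^{p-1}\ge c_0(a-b)$ ($p<2$, with $c_0=c_0(u^{\sharp}(0))$) for $a\ge b\ge0$ with the elementary principle ``$L^{p-1}$- or $L^{1}$-smallness plus a Lipschitz bound $\Rightarrow$ $L^{\infty}$-smallness'' yields, in either regime,
$$\|u^{\sharp}-\tilde z^{\sharp}\|_{L^{\infty}(I_\alpha)}\ \le\ C(N,v,p,\lambda)\,\delta^{1/p},$$
which is precisely why the hypothesis carries a $(p-1)$-st power in the two cases; one also records that $\int_0^{s}\big((u^{\sharp})^{p-1}-(\tilde z^{\sharp})^{p-1}\big)\,\mathrm{d}\sigma\le C\delta^{1/p}$ for $s\in I_\alpha$, this nonnegative quantity being nondecreasing in $s$.

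Next I transport this to the isoperimetry of $\mathrm{X}$. Repeating the chain $(\ref{41})$ but invoking only $\operatorname{Per}(\{|u|>t\})\ge\mathcal{I}_{(\mathrm{X},\mathrm{d},\mathfrak{m})}(\mathfrak{m}(\{|u|>t\}))$ in place of L\'evy--Gromov, $u^{\sharp}$ satisfies $(\ref{3.3})$ with the genuine profile $\mathcal{I}_{(\mathrm{X},\mathrm{d},\mathfrak{m})}\ge\mathcal{I}_{N-1,N}$ substituted for $\mathcal{I}_{N-1,N}$, whereas $\tilde z^{\sharp}$ satisfies $(\ref{3.4})$ with equality and with $\mathcal{I}_{N-1,N}$. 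Assume, towards a bound, that $\mathcal{I}_{(\mathrm{X},\mathrm{d},\mathfrak{m})}(s)-\mathcal{I}_{N-1,N}(s)\ge\eta$ for all $s\in I_\alpha$; integrating the two relations over $I_\alpha$, using $u^{\sharp}(\alpha/4)-u^{\sharp}(\alpha/2)=\big(\tilde z^{\sharp}(\alpha/4)-\tilde z^{\sharp}(\alpha/2)\big)+O(\delta^{1/p})$ from the previous display, and the elementary inequality $(\mathcal{I}_{N-1,N}+\eta)^{p/(p-1)}-\mathcal{I}_{N-1,N}^{p/(p-1)}\ge\tfrac{p}{p-1}\,\mathcal{I}_{N-1,N}^{1/(p-1)}\eta$ valid on $I_\alpha$, a short rearrangement forces $\eta\le C(N,v,p,\lambda)\,\delta^{1/p}$. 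Consequently there is $s_0\in I_\alpha$, in particular $s_0$ in a compact subinterval of $(0,1)$ fixed by $N,v,p,\lambda$, with
$$\mathcal{I}_{(\mathrm{X},\mathrm{d},\mathfrak{m})}(s_0)-\mathcal{I}_{N-1,N}(s_0)\ \le\ C(N,v,p,\lambda)\,\delta^{1/p},$$
i.e.\ the superlevel set $\{u>u^{\sharp}(s_0)\}$, of measure $s_0$, almost achieves equality in the L\'evy--Gromov inequality.

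Finally, $\operatorname{RCD}(N-1,N)$ spaces are essentially non-branching, so the Cavalletti--Milman sharp isoperimetric inequality under a diameter upper bound gives $\mathcal{I}_{(\mathrm{X},\mathrm{d},\mathfrak{m})}\ge\mathcal{I}_{N-1,N,D}$, where $D:=\operatorname{diam}(\mathrm{X})$ and $\mathcal{I}_{N-1,N,D}$ is the isoperimetric profile of the one-dimensional model of diameter $D$; a direct computation on these one-dimensional models --- the model of diameter $D<\pi$ is $J_{N-1,N}=[0,\pi]$ with a total mass $\simeq(\pi-D)^N$ removed near the two ends --- gives $\mathcal{I}_{N-1,N,D}(s_0)-\mathcal{I}_{N-1,N}(s_0)\ge c(N,s_0)(\pi-D)^N$ with $c(N,s_0)$ bounded below on the fixed subinterval. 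Combining with the previous display,
$$(\pi-\operatorname{diam}(\mathrm{X}))^N\ \le\ \frac{1}{c(N,s_0)}\big(\mathcal{I}_{(\mathrm{X},\mathrm{d},\mathfrak{m})}(s_0)-\mathcal{I}_{N-1,N}(s_0)\big)\ \le\ C(N,v,p,\lambda)\,\delta^{1/p},$$
which is the assertion; the threshold $\tilde\delta=\tilde\delta(N,v,p,\lambda)$ is chosen so small that every ``$\delta$ small'' proviso above holds, in particular that the gap in the penultimate display falls inside the range where the elementary inequalities were used. The step I expect to be the main obstacle is the third one: making the passage from ``$u^{\sharp}$ close to the model profile'' to ``the isoperimetric deficit of a level set of controlled volume is $O(\delta^{1/p})$'' quantitative, which requires carrying the convexity estimates for $s\mapsto s^{p-1}$ and the Lipschitz bound on $u^{\sharp}$ carefully through the nonlinear first-order relation $(\ref{3.3})$ --- this is exactly what produces the $\delta^{1/p}$ rate and dictates the two forms of the hypothesis $(\ref{1.7})$ --- together with the sharp one-dimensional comparison of Cavalletti--Milman type invoked at the end.
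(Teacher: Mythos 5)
Your proof is broadly sound but takes a genuinely different route from the paper's, so a comparison is worthwhile. The paper's argument pivots on a single clever integration: starting from the chain $(\ref{41})$ it integrates the ratio $\operatorname{Per}(\{u>t\})^q/\mathcal{I}_{N-1,N}(\mu(t))^q$ in $t$ over the full level-set range $(u^\sharp(y),u^\sharp(0))$, changes variables with $s=\mu(t)$, uses $(\ref{1.10})$ (that $\int_0^s (u^\sharp)^{p-1}\le\int_0^s (z^\sharp)^{p-1}$, a consequence of Chiti) to replace $u^\sharp$ by $z^\sharp$ in the right-hand side, and then evaluates the resulting model integral exactly via $(\ref{3.4})$ to get $z^\sharp(0)+O(\delta^{1/p})$. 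The crux is then the ``claim'' that one can pick $y\in(\alpha,\alpha+\delta^{1/p})$ with $u^\sharp(y)\le v^{1/(p-1)}\delta^{1/p}$; this controls the tail of $u^\sharp$ beyond $\alpha$ and is proved by establishing monotonicity of $(z^\sharp)^{p-1}-(u^\sharp)^{p-1}$ (resp.\ $z^\sharp-u^\sharp$ when $p<2$) on $[0,s_1]$ directly from $(\ref{3.3})$--$(\ref{3.4})$ --- it is here, and not in the averaging step, that the two cases $p\ge 2$ and $p\in(1,2)$ of $(\ref{1.7})$ are used. This yields a single level $t_0$ with $\operatorname{Per}(\{u>t_0\})^q/\mathcal{I}_{N-1,N}(\mu(t_0))^q\le 1+C\delta^{1/p}$ and then $[\mathrm{CMS19}]$ applies. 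You instead fix a compact subinterval $I_\alpha=[\alpha/4,\alpha/2]$, pass from $(\ref{1.7})$ (in the limit $t\to\infty$ along $Q$, exactly as the paper does) and the Chiti/pre-Chiti lemmas to an $L^{p-1}$- or $L^1$-smallness of $u^\sharp-\tilde z^\sharp$ on $[0,\alpha]$, upgrade it to $L^\infty$-smallness on $I_\alpha$ via a Lipschitz bound, and then perturb the differential inequality for $u^\sharp$ --- written with the genuine profile $\mathcal{I}_{(\mathrm{X},\mathrm{d},\mathfrak{m})}$ --- against the exact ODE $(\ref{3.4})$ to force the isoperimetric deficit $\inf_{I_\alpha}(\mathcal{I}_{(\mathrm{X},\mathrm{d},\mathfrak{m})}-\mathcal{I}_{N-1,N})\lesssim\delta^{1/p}$. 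Both endings are the same ($[\mathrm{CMS19}]$ in the paper; your invocation of ``Cavalletti--Milman'' plus a one-dimensional computation is precisely the content of $[\mathrm{CMS19}]$, Lemma 3.1--3.2). The paper's averaging step is considerably cleaner: it does not need the $L^1\Rightarrow L^\infty$ upgrade, the Lipschitz constant, or the nonlinear perturbation of the ODE, all of which you correctly flag as the main obstacle and leave only sketched (in particular, the claim ``$\eta\le C\delta^{1/p}$ after a short rearrangement'' needs the expansion $(\mathcal I_{N-1,N}+\eta)^{-p/(p-1)}\le\mathcal I_{N-1,N}^{-p/(p-1)}(1-c\eta)$, the positivity of $\tilde z^\sharp(\alpha/4)-\tilde z^\sharp(\alpha/2)$ as a constant of $N,p,\lambda$, and control of $\int_0^s(\tilde z^\sharp)^{p-1}$ from below on $I_\alpha$ --- none of which breaks, but all of which must be supplied). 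One genuine merit of your route: working on a fixed interval $I_\alpha\Subset(0,v)$ sidesteps the degeneration of the Lipschitz estimate for $u^\sharp$ near $s=0$, whereas the paper's global Lipschitz assertion on $[0,v]$ is, strictly speaking, only needed (and only clearly valid) on compact subintervals of $(0,v)$.

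Two points to tighten if you carry this out in full. First, for $p\in(1,2)$ your lower bound $a^{p-1}-b^{p-1}\ge c_0(a-b)$ uses $c_0=(p-1)u^\sharp(0)^{p-2}$, which requires an \emph{upper} bound on $u^\sharp(0)$; this does hold (via $u^\sharp(0)\le z^\sharp(0)$ from Chiti and the fact that $z$ is a fixed normalised model function once $N,p,\lambda,v$ are fixed) but needs to be said. Second, when you invoke the isoperimetric profile $\mathcal{I}_{(\mathrm{X},\mathrm{d},\mathfrak{m})}(s_0)$, remember it is an infimum; to feed a concrete almost-minimiser into the quantitative L\'evy--Gromov lemma you should pick a Borel set $E$ with $\mathfrak{m}(E)=s_0$ and $\operatorname{Per}(E)\le\mathcal{I}_{(\mathrm{X},\mathrm{d},\mathfrak{m})}(s_0)+\delta^{1/p}$, which costs nothing but should appear explicitly.
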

\begin{proof}
Assume that $(\mathrm{X}, \mathrm{d}, \mathfrak{m})$ is an $\operatorname{RCD}(N-1, N)$ space. Let $\Omega \subset \mathrm{X}$ be an open set with $\mathfrak{m}(\Omega)=v \in(0,1)$ and $\lambda_{\mathrm{X}}^p(\Omega)=\lambda$. Let $u$ and $z$ be as in the assumptions of Theorem $\ref{thm3}$, scaled such that $(\ref{1.75})$ holds. We extend $z^\sharp$ to $[0,\infty)$ by taking it to be $0$ on $(\alpha,\infty)$ and extend $u^\sharp$ to $[0,\infty)$ by taking it to be $0$ on $(v,\infty)$. Now by the assumptions of the theorem and Chiti's comparison theorem, there exists $s_1 \in(0, \alpha)$ such that
\begin{equation}\label{1.9}
u^{\sharp}(s) \leq z^{\sharp}(s) \text{ for } s \in[0, s_1] \text{ and } z^{\sharp}(s) \leq u^{\sharp}(s) \text{ for } s \in[s_1, v].
\end{equation}

Now if $s\in [0,s_1]$, we have 
$$
\int_0^s(u^{\sharp}(t))^{p-1} \mathrm{d} t \leq \int_0^s(z^{\sharp}(t))^{p-1} \mathrm{d} t.
$$
If instead $s \in[s_1, v]$, using $(\ref{1.75})$, we have
\begin{align*}
\int_0^s(u^{\sharp}(t))^{p-1} \mathrm{d} t & =\int_0^v(u^{\sharp}(t))^{p-1} \mathrm{d} t-\int_s^v(u^{\sharp}(t))^{p-1} \mathrm{d} t \leq \int_0^v(u^{\sharp}(t))^{p-1} \mathrm{d} t-\int_s^v(z^{\sharp}(t))^{p-1} \mathrm{d} t \\
& =\int_0^v(z^{\sharp}(t))^{p-1} \mathrm{d} t-\int_s^v(z^{\sharp}(t))^{p-1} \mathrm{d} t=\int_0^s(z^{\sharp}(t))^{p-1} \mathrm{d} t
\end{align*}
Hence, we obtain that
\begin{equation}\label{1.10}
\int_0^s(u^{\sharp}(t))^{p-1} \mathrm{d} t \leq \int_0^s(z^{\sharp}(t))^{p-1} \mathrm{d} t, \quad \text { for all } s \in[0, v].
\end{equation}

Recall that $(\ref{41})$ says that 
$$
\mathcal{I}_{N-1, N}(\mu(t))^q\leq \mathrm{Per}(\{u>t\})^q\leq (-\mu^{\prime}(t))\bigg(\int_0^{\mu(t)} \lambda (u^{\sharp})^{p-1}(s) \mathrm{d} s\bigg)^{q/p} ,\quad\text{ for }\mathscr{L}^1\text{-a.e. }t>0.
$$
Dividing the inequality by $\mathcal{I}_{N-1, N}(\mu(t))^q$, we obtain
$$
\frac{(\operatorname{Per}(\{u>t\}))^q}{\mathcal{I}_{N-1, N}(\mu(t))^q} \leq \frac{- \mu^{\prime}(t)}{\mathcal{I}_{N-1, N}(\mu(t))^q}\bigg(\int_0^{\mu(t)} \lambda (u^{\sharp})^{p-1}(s) \mathrm{d} s\bigg)^{q/p},\quad\text{ for }\mathscr{L}^1\text{-a.e. }t\in (0,\sup_\Omega u).
$$
Fix $\delta>0$ and $y\in (\alpha,\alpha+\delta^{1/p})$. Integrating from $u^{\sharp}(y)$ to $u^{\sharp}(0)$ yields
$$
\int_{u^{\sharp}(y)}^{u^{\sharp}(0)} \frac{(\operatorname{Per}(\{u>t\}))^q}{\mathcal{I}_{N-1, N}(\mu(t))^q} \mathrm{d} t \leq \int_{u^{\sharp}(y)}^{u^{\sharp}(0)} \frac{- \mu^{\prime}(t)}{\mathcal{I}_{N-1, N}(\mu(t))^q} \bigg(\int_0^{\mu(t)} \lambda (u^{\sharp})^{p-1}(s) \mathrm{d} s\bigg)^{q/p} \mathrm{d} t .
$$
Performing the change of variable $s=\mu(t)$, noting that $\mu(u^{\sharp}(y)) \leq y \leq \alpha+\delta^{1/p}$ and applying $(\ref{1.10})$ it follows that
\begin{equation}\label{3.26}
\int_{u^{\sharp}(y)}^{u^{\sharp}(0)} \frac{(\operatorname{Per}(\{u>t\}))^q}{\mathcal{I}_{N-1, N}(\mu(t))^q} \mathrm{d} t \leq \int_0^{\alpha+\delta^{1/p}} \frac{1}{\mathcal{I}_{N-1, N}(s)^q} \bigg(\int_0^{s} \lambda (z^{\sharp})^{p-1}(t) \mathrm{d} t\bigg)^{q/p} \mathrm{d}s.
\end{equation}
We next estimate the right hand side of $(\ref{3.26})$ by splitting the integral in the two contributions: on $[0, \alpha]$ and on $[\alpha, \alpha+\delta^{1/p}]$. Noting that $q=\frac{p}{p-1}$, the former contribution is controlled by using $(\ref{3.4})$
\begin{equation}\label{3.27}
\int_0^\alpha \frac{\lambda^{\frac{1}{p-1}}}{\mathcal{I}_{N-1, N}(s)^{\frac{p}{p-1}}}\bigg(\int_0^{s} (z^{\sharp})^{p-1}(t) \mathrm{d} t\bigg)^{\frac{1}{p-1}} \mathrm{d} s=z^{\sharp}(0)-z^{\sharp}(\alpha)=z^{\sharp}(0) .
\end{equation}
Noting that $z^{\sharp}=0$ on $(\alpha, \alpha+\delta^{1/p})$ and that anything that depends on $\alpha$ is in fact fixed by fixing $\lambda$, the latter contribution can be estimate as
\begin{align}
\int_\alpha^{\alpha+\delta^{1/p}} \frac{\lambda^{\frac{1}{p-1}}}{\mathcal{I}_{N-1, N}(s)^{\frac{p}{p-1}}}\bigg(\int_0^{s} (z^{\sharp})^{p-1}(t) \mathrm{d} t\bigg)^{\frac{1}{p-1}} \mathrm{d} s& =\int_\alpha^{\alpha+\delta^{1/p}} \frac{\lambda^{\frac{1}{p-1}}}{\mathcal{I}_{N-1, N}(s)^{\frac{p}{p-1}}}\bigg(\int_0^{\alpha} (z^{\sharp})^{p-1}(t) \mathrm{d} t\bigg)^{\frac{1}{p-1}} \mathrm{d} s\notag\\
& \leq C(N, v,p, \lambda) \delta^{1/p},\label{3.28}
\end{align}
where in the last equality we used that $\mathcal{I}_{N-1, N}$ does not vanish on the compact set $[\alpha, v]$ and hence is bounded below by a constant that only depends on $N, \lambda,v$. Combining $(\ref{3.26})$- $(\ref{3.28})$ gives that (from here on $C(N, v, p,\lambda)$ may change from line to line):
$$
\frac{1}{u^{\sharp}(0)-u^{\sharp}(y)} \int_{u^{\sharp}(y)}^{u^{\sharp}(0)} \frac{(\operatorname{Per}(\{u>t\}))^q}{\mathcal{I}_{N-1, N}(\mu(t))^q} \mathrm{d} t \leq \frac{z^{\sharp}(0)+C(N, v, p,\lambda) \delta^{1/p}}{u^{\sharp}(0)-u^{\sharp}(y)},
$$
so that there exists $t_0 \in(u^{\sharp}(y), u^{\sharp}(0))$ such that
\begin{align}
\frac{(\operatorname{Per}(\{u>t_0\}))^q}{\mathcal{I}_{N-1, N}(\mu(t_0))^q} & \leq \frac{z^{\sharp}(0)+C(N, v, p,\lambda) \delta^{1/p}}{u^{\sharp}(0)-u^{\sharp}(y)} .\label{3.29}
\end{align}

\vspace{0.5cm}

Now, given that $\delta>0$ and an unbounded subset $Q$ of $(p-1, \infty)$ such that $(\ref{1.7})$ holds. We claim that there exists $y \in(\alpha, \alpha+\delta^{1/p})$ such that
\begin{equation}\label{3.24}
u^{\sharp}(y)=\bigg(\frac{1}{\delta^{1/p}} \int_\alpha^{\alpha+\delta^{1/p}} u^{\sharp}(s)^{p-1} \mathrm{d} s \bigg)^{\frac{1}{p-1}}\leq v^{\frac{1}{p-1}}\delta^{1/p}.
\end{equation}
We will prove this claim by considering two cases:

\textbf{Case 1}: $p\geq 2$. $(\ref{1.7})$ says that
$$
\|z\|_{L^t([0, r_\alpha), \mathfrak{m}_{N-1, N})}^{p-1}-\|u^\star\|_{L^t([0,r_v), \mathfrak{m}_{N-1,N})}^{p-1}=\|z\|_{L^t([0, r_\alpha), \mathfrak{m}_{N-1, N})}^{p-1}-\|u\|_{L^t(\Omega, \mathfrak{m})}^{p-1}<\delta,\text{ for all }t\in Q.
$$
Combining this and Theorem $\ref{thm3}$, we deduce that
\begin{equation}\label{1.8}
0\leq (z^{\sharp}(0)-u^{\sharp}(0))^{p-1}\leq z^{\sharp}(0)^{p-1}-u^{\sharp}(0)^{p-1}=\|z\|_{L^{\infty}([0, r_\alpha))}^{p-1}-\|u^\star\|_{L^{\infty}([0, r_v))}^{p-1}\leq \delta,
\end{equation}
where we used $p\geq 2$. Combining $(\ref{1.10})$, $(\ref{1.9})$ and the estimates $(\ref{3.3})$ and $(\ref{3.4})$, we can deduce that
\begin{align*}
\frac{\mathrm{d}}{\mathrm{d} s}(z^{\sharp}(s)^{p-1}-u^{\sharp}(s)^{p-1}) &=(p-1)z^{\sharp}(s)^{p-2}\frac{\mathrm{d}}{\mathrm{d} s}z^{\sharp}(s)-(p-1)u^{\sharp}(s)^{p-2}\frac{\mathrm{d}}{\mathrm{d} s}u^{\sharp}(s)\\
&\leq (p-1)\frac{(\lambda_{N-1, N, \alpha}^p)^{\frac{1}{p-1}}}{\mathcal{I}_{N-1, N}(s)^{\frac{p}{p-1}}}\bigg[u^{\sharp}(s)^{p-2}\bigg(\int_0^{s} (u^{\sharp})^{p-1}(t) \mathrm{d} t\bigg)^{\frac{1}{p-1}}\\
&\quad -z^{\sharp}(s)^{p-2}\bigg(\int_0^{s} (z^{\sharp})^{p-1}(t) \mathrm{d} t\bigg)^{\frac{1}{p-1}}\bigg]\leq 0,
\end{align*}
for $\mathscr{L}^1$-a.e. $s \in[0, s_1]$, where we used $p\geq 2$. Thus $z^{\sharp}(s)^{p-1}-u^{\sharp}(s)^{p-1}$ is nonincreasing on $[0,s_1]$ and, using $(\ref{1.75})$ and $(\ref{1.8})$, we get the estimate that
\begin{align*}
\int_\alpha^{\alpha+\delta^{1/p}} u^{\sharp}(s)^{p-1} \mathrm{d} s&=\int_\alpha^{\alpha+\delta^{1/p}} u^{\sharp}(s)^{p-1}-z^{\sharp}(s)^{p-1} \mathrm{d} s  \leq \int_{s_1}^v u^{\sharp}(s)^{p-1}-z^{\sharp}(s)^{p-1} \mathrm{d} s \\
& =\int_0^{s_1} z^{\sharp}(s)^{p-1}-u^{\sharp}(s)^{p-1} \mathrm{d} s \leq s_1(z^{\sharp}(0)^{p-1}-u^{\sharp}(0)^{p-1}) \leq v \delta .
\end{align*}
This implies the claim immediately.

\textbf{Case 2}: $p\in (1,2)$. $(\ref{1.7})$ says that
$$
(\|z\|_{L^t([0, r_\alpha), \mathfrak{m}_{N-1, N})}-\|u^\star\|_{L^t([0,r_v), \mathfrak{m}_{N-1,N})})^{p-1}=(\|z\|_{L^t([0, r_\alpha), \mathfrak{m}_{N-1, N})}-\|u\|_{L^t(\Omega, \mathfrak{m})})^{p-1}<\delta,\text{for all }t\in Q.
$$
Combining this and Theorem $\ref{thm3}$, we deduce that
\begin{equation}\label{1..8}
0\leq (z^{\sharp}(0)-u^{\sharp}(0))^{p-1}=(\|z\|_{L^{\infty}([0, r_\alpha))}-\|u^\star\|_{L^{\infty}([0, r_v))})^{p-1}\leq \delta .
\end{equation}
Combining $(\ref{1.10})$ and the estimates $(\ref{3.3})$ and $(\ref{3.4})$, we can deduce that
\begin{align*}
\frac{\mathrm{d}}{\mathrm{d} s}(z^{\sharp}(s)-u^{\sharp}(s))
\leq \frac{(\lambda_{N-1, N, \alpha}^p)^{\frac{1}{p-1}}}{\mathcal{I}_{N-1, N}(s)^{\frac{p}{p-1}}}\bigg[\bigg(\int_0^{s} (u^{\sharp})^{p-1}(t) \mathrm{d} t\bigg)^{\frac{1}{p-1}} -\bigg(\int_0^{s} (z^{\sharp})^{p-1}(t) \mathrm{d} t\bigg)^{\frac{1}{p-1}}\bigg]\leq 0,
\end{align*}
for $\mathscr{L}^1$-a.e. $s \in[0, v]$. Thus $z^{\sharp}(s)-u^{\sharp}(s)$ is nonincreasing on $[0,v]$ and, using $(\ref{1.75})$ and $(\ref{1..8})$, we get the estimate that
\begin{align*}
\int_\alpha^{\alpha+\delta^{1/p}} u^{\sharp}(s)^{p-1} \mathrm{d} s&=\int_\alpha^{\alpha+\delta^{1/p}} u^{\sharp}(s)^{p-1}-z^{\sharp}(s)^{p-1} \mathrm{d} s  \leq \int_{s_1}^v u^{\sharp}(s)^{p-1}-z^{\sharp}(s)^{p-1} \mathrm{d} s \\
& =\int_0^{s_1} z^{\sharp}(s)^{p-1}-u^{\sharp}(s)^{p-1} \mathrm{d} s \leq\int_0^{s_1} (z^{\sharp}(s)-u^{\sharp}(s))^{p-1} \mathrm{d} s\\
&\leq  s_1(z^{\sharp}(0)-u^{\sharp}(0))^{p-1}\leq v \delta,
\end{align*}
where we used $p\in (1,2)$. This implies the claim immediately. Hence, we complete the proof of the claim.

\vspace{0.5cm}

Next, pick $\tilde{\delta}>0$ such that
$$
z^{\sharp}(0)-\tilde{\delta}^{\frac{1}{p-1}}>\frac{z^{\sharp}(0)}{2} \text { and } \frac{z^{\sharp}(0)}{2}-v^{\frac{1}{p-1}}\tilde{\delta}^{1/p}>\frac{z^{\sharp}(0)}{4} .
$$
Note that $\tilde{\delta}>0$ depends only on $N, v,p, \lambda$. Then, using $(\ref{3.24})$ and $(\ref{1.8})$, $(\ref{1..8})$, we get 
$$
u^{\sharp}(0)-u^{\sharp}(y)>z^{\sharp}(0)-\delta^{\frac{1}{p-1}}-u^{\sharp}(y) > \frac{z^{\sharp}(0)}{2}-v^{\frac{1}{p-1}}\delta^{1/p} \geq \frac{z^{\sharp}(0)}{4}, \quad \text { for any } \delta \in(0, \tilde{\delta}) .
$$
Plugging $(\ref{3.24})$ and the last estimate into $(\ref{3.29})$ yields 
$$
\frac{(\operatorname{Per}(\{u>t_0\}))^q}{\mathcal{I}_{N-1, N}(\mu(t_0))^q} \leq 1+C(N, v,p, \lambda)(\delta^{\frac{1}{p-1}}+\delta^{\frac{1}{p}}), \quad \text { for any } \delta \in(0, \tilde{\delta}) .
$$
Picking $\tilde{\delta}>0$ smaller if necessary, but again only depending on $N, v,p, \lambda$, we infer that
\begin{equation}\label{3.30}
\frac{(\operatorname{Per}(\{u>t_0\}))^q}{\mathcal{I}_{N-1, N}(\mu(t_0))^q} \leq 1+C(N, v,p, \lambda) \delta^{\frac{1}{p}}, \quad \text { for any } \delta \in(0, \tilde{\delta}) .
\end{equation}

Combining $(\ref{3.30})$ with the B\'{e}rard-Besson-Gallot-type quantitative improvement of the L\'{e}vy-Gromov isoperimetric inequality obtained for the present non-smooth framework in \cite[Lemma 3.1, 3.2]{CMS19}, gives that there exists some constant $C=C(N,v,p,\lambda)>0$ and $\tilde{\delta}=\tilde{\delta}(N, v,p, \lambda)>0$ such that
$$
(\pi-\operatorname{diam}(\mathrm{X}))^N \leq C \delta^{\frac{1}{p}}, \quad \text { for any } \delta \in(0, \tilde{\delta}) .
$$
We complete the proof.
\end{proof}

\begin{ack}
The author would like to express her gratitude to
Professor Jiayu Li for his encouragement and Liming Yin for constant encouragement and discussions.
	
\end{ack}
\vspace{.2in}

{\small}
\end{document}